\numberwithin{equation}{section}
\numberwithin{figure}{section}
\newtheorem{thm}{Theorem}[section]
\newtheorem{lemma}[thm]{Lemma}
\newtheorem{prop}[thm]{Proposition}
\newtheorem{cor}[thm]{Corollary}
\newtheorem{const}[thm]{Construction}
\newtheorem{defi}[thm]{Definition}
\newtheorem{nota}[thm]{Notation}
\newtheorem{hyp}[thm]{Hypothesis}
\theoremstyle{definition}
\newtheorem{exa}[thm]{Example}
\newtheorem{rem}[thm]{Remark}
\newcommand{\C}{\mathbb{C}}
\newcommand{\A}{\mathbb{A}}
\newcommand{\M}{\mathcal{M}}
\renewcommand{\P}{\mathbb{P}}
\renewcommand{\epsilon}{\varepsilon}
\newcommand{\R}{\mathbb{R}}
\newcommand{\Q}{\mathbb{Q}}
\newcommand{\Z}{\mathbb{Z}}
\newcommand{\D}{\mathcal{D}}
\newcommand{\sC}{\mathscr{C}}
\newcommand{\sB}{\mathscr{B}}
\newcommand{\cX}{\mathcal{X}}
\newcommand{\cO}{\mathcal{O}}
\renewcommand{\geq}{\geqslant}
\renewcommand{\ge}{\geqslant}
\renewcommand{\leq}{\leqslant}
\renewcommand{\le}{\leqslant}
\newcommand\angles[1]{\langle #1 \rangle}
\newcommand{\Bl}{\operatorname{Bl}}
\newcommand{\Mbar}{\overline{\mathcal{M}}}
\newcommand{\Spec}{\operatorname{Spec}}
\newcommand{\Ob}{\operatorname{Ob}}
\newcommand{\Stab}{\operatorname{Stab}}
\newcommand{\Def}{\operatorname{Def}}
\newcommand{\oH}{\operatorname{H}}
\newcommand{\Proj}{\operatorname{Proj}}
\newcommand{\Gal}{\operatorname{Gal}}
\newcommand{\GL}{\operatorname{GL}}
\newcommand{\Pic}{\operatorname{Pic}}
\newcommand{\Set}{\operatorname{Set}}
\newcommand{\bir}{\operatorname{bir}}
\newcommand{\mass}{\operatorname{mass}}
\newcommand{\ev}{\operatorname{ev}}
\newcommand{\Int}{\operatorname{Int}}
\newcommand{\Rul}{\operatorname{Rul}}
\newcommand{\Aut}{\operatorname{Aut}}
\newcommand{\Hom}{\operatorname{Hom}}
\newcommand{\Disc}{\operatorname{Disc}}
\newcommand{\Tr}{\operatorname{Tr}}
\newcommand{\Res}{\operatorname{Res}}
\newcommand{\Div}{\operatorname{div}}
\newcommand{\rk}{\operatorname{rk}}
\newcommand{\wt}{\operatorname{weight}}
\newcommand{\GW}{\operatorname{GW}}
\newcommand{\sGW}{\mathcal{GW}}
\newcommand{\lra}[1]{\langle #1 \rangle}
\newcommand{\Fet}{\mathrm{F\acute{e}t}}
\newcommand{\et}{\mathrm{\acute{e}t}}
\newcommand{\odp}{\mathrm{odp}}
\newcommand{\unr}{\mathrm{unr}}
\begin{document}

\title{A quadratic Abramovich-Bertram formula}

\author{Erwan Brugallé}
\address{Erwan Brugallé, Nantes Université, Laboratoire de
 Mathématiques Jean Leray, 2 rue de la Houssinière, F-44322 Nantes Cedex 3,
France}
\email{erwan.brugalle@math.cnrs.fr}

\author{Kirsten Wickelgren}
\address{Kirsten Wickelgren, Department of Mathematics, Duke University, 120 Science Drive
Room 117 Physics, Box 90320, Durham, NC 27708-0320, USA}
\email{kirsten.wickelgren@duke.edu}

\subjclass{Primary 14N35, 14F42; Secondary 53D45, 19G38.}
\keywords{$\mathbb{A}^1$-homotopy theory, Quadratic Gromov--Witten invariants, del Pezzo surfaces, Lefschetz fibration}

\date{June 2025}

\begin{abstract}
Quadratic Gromov--Witten invariants allow one to obtain an arithmetically meaningful count of curves satisfying constraints over a field $k$ without assuming that $k$ is the field of complex or real numbers. This paper studies the behavior of quadratic genus $0$ Gromov--Witten invariants during an algebraic analogue of surgery on del Pezzo surfaces. For this, we define and study (twisted) binomial coefficients in the Grothendieck--Witt group, building on work of Serre. We obtain a formula expressing the quadratic genus $0$ Gromov--Witten invariants of surfaces obtained as a smoothing of a given nodal surface in terms of those of the one having the largest Picard group. We give applications to quadratic Gromov--Witten invariants of rational del Pezzo surfaces of degree at least 7, some cubic surfaces, 
for point constraints defined over quadratic extensions of $k$, as well as an invariance result under a Dehn twist.

\end{abstract}

\maketitle
\tableofcontents

\section{Introduction}

This paper studies the behavior of quadratic genus $0$ Gromov--Witten invariants during an algebraic analogue of surgery on del Pezzo surfaces. Quadratic Gromov--Witten invariants \cite{degree} allow one to obtain an arithmetically meaningful count of curves satisfying constraints over a field $k$ with some additional hypotheses, but without assuming that $k$ is the field of complex or real numbers. Curves are weighted by elements of the Grothendieck--Witt group $\GW(k)$, recalled in Section~\ref{section:BackgroundGW}, to obtain an invariant sum in $\GW(k)$, partially generalizing Welschinger's beautiful invariants over the real numbers \cite{Welschinger-invtsReal4mflds,Welopendim4}. See Section \ref{section:BackgroundA1Gromov-Witten} for discussion of quadratic genus $0$ Gromov--Witten invariants. See also \cite{Levine-Welschinger} for a version valued in $\sGW(\textrm{Sym}^n S \setminus \Delta) \ncong \GW(k)$. We also refer to \cite{IKS17} for an algebro-geometric proof that Welschinger invariants are well-defined for real del Pezzo surfaces, and to \cite{Bru18} for a strengthening of the original invariance statement by Welschinger.

A del Pezzo surface over $k$ is a non-singular algebraic surface $S$ over $k$ with $-K_S$ ample, where $K_S$ is the canonical divisor of $S$. If $k$ is algebraically closed, such a del Pezzo surface is either $\P^1_k\times \P^1_k$  or $\P^2_k$ blown-up at $r\le 8$ closed points in general position. Without the assumption that $k$ is algebraically closed, interesting twists arise. For example the quadric $Q(d) = \{x^2 - y^2 + z^2 -d w^2 = 0 \}$ in $\P^3_k$ with $d$ in $k^* $ is a del Pezzo surface, which  is isomorphic to the restriction of scalars $\Res_{k[\sqrt{d}]/k} \P^1$. In particular $Q(d_1)$ and $Q(d_2)$ are isomorphic if and only if $d_1$ and $d_2$ have the same class in $k^* \setminus (k^*)^2$, and $Q(1)=\P^1_k\times \P^1_k$. When $k=\R$, one obtains different topological spaces $Q(d)(\R)$
depending on the sign of $d$, which differ by a surgery where  a $[0,1] \times S^1$ replaced by an $S^0 \times B^2$. See Figure \ref{fig:surgery} for a picture in the affine chart $w=1$.
\begin{figure}[h!]
  \begin{center}
  \begin{tabular}{ccc}
  \includegraphics[width=2cm, angle=0]{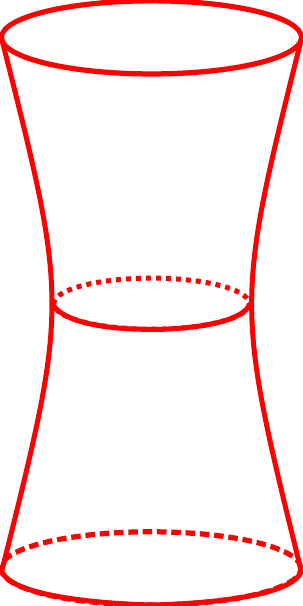}
  & 
  \hspace{8ex} &
  \includegraphics[width=2cm, angle=0]{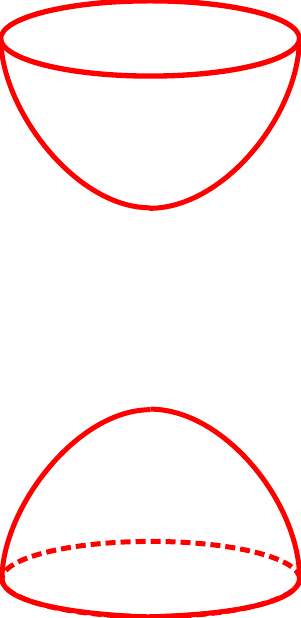}
  \\ \\ $Q(1)(\R)$ && $Q(-1)(\R)$
  \end{tabular}
  \end{center}
  \caption{The real parts of $Q(1)$ and $Q(-1)$ differ by a $1$-surgery}
  \label{fig:surgery}
  \end{figure}
  All these different quadrics can be degenerated to the nodal quadrics $Q(0)$, i.e. can be incorporated in the family of quadrics $ \{x^2 - y^2 + z^2 -t w^2 = 0 \}$ over $k[[t]]$. Our strategy, which  goes back to the work  \cite{AbramovichBertram} over $\C$, is then roughly to relate the enumerative geometry of $Q(d)$, with $d\in k^*$, to the enumerative geometry of $Q(0)$, or rather of its desingularization given by the second Hirzebruch surface $\mathbb F_2$. 
  Eliminating terms coming from $\mathbb F_2$, one then obtains surprisingly simple relations among quadratic invariants of the $Q(d)$'s for different $d$'s.

Another nodal degeneration of del Pezzo surfaces arises when a configuration of $r$ points on $\P^2_k$ in general position moves into a special configuration, such as when $3$ distinct points lie on a line or $6$ points lie on a conic. Blowing up the special configuration of $r$ points produces a uninodal del Pezzo surface $\widetilde{S}$, which has a unique rational curve $E$ with self-intersection $-2$ crushed by the anticanonical embedding to produce a unique node. (See Definition \ref{df:uninodal_del_Pezzo}.) We obtain a family $\cX \to \Spec k[[t]]$ where the general fiber is a del Pezzo surface and the special fiber is a uninodal del Pezzo surface. We define a $1$-nodal Lefschetz fibration of del Pezzo surfaces (see Definition \ref{df:1-nodal_Lefschetz_fibration}) to describe this sort of degeneration. Pulling back $\cX \to \Spec k[[t]]$ by $t \mapsto d' t^2$ for $d' \in k^*$(and, respectively, blowing up the node) produces general fibers  (and families) which differ by some analogue of surgery in the following sense. The special fiber of the family has two components $\widetilde S$ and $Q(d)$, see Section \ref{sec:sugery} for a precise relation between $d'$ and $d$. They are glued along $E$ in $\widetilde S$ and a smooth hyperplane section in $Q(d)$. Over $\mathbb{C}$, one can consider an analogous family over a small disk. The homeomorphism type of the general fiber is then the connected sum of $\widetilde{S}(\C)$ and $Q(d)(\C)$, obtained by removing a neighborhood of $E$ and a neighborhood of the hyperplane section and then gluing together along the boundary. Over $\R$ this connected sum can be done equivariantly, and one then obtains the general fiber for different $d'$ by the surgery relating the different $Q(d)(\R)$'s.

In the algebraic setting, we define a $d$-surgery $\cX(d) \to \Spec k[[t]]$ of a $1$-nodal Lefschetz fibration $\cX \to \Spec k[[t]]$ as the aforementioned family, where $d'$ is so that the special fiber $\cX(d)_0$ of $\cX(d)$ has  $\widetilde S$ and $Q(d)$ as irreducible components. We let $\Sigma(d)$ denote the general fiber. See Construction \ref{const:d-surgery}. Our main result expresses the quadratic genus $0$ Gromov--Witten invariants of $\Sigma(d)$ from those of $\Sigma(1)$. 

A  $1$-nodal Lefschetz fibration $\cX \to \Spec k[[t]]$ of del Pezzo surfaces has a vanishing cycle $\gamma\in \Pic(\Sigma(1))$ (see Definition~\ref{df:vanishing_cycle}), and there is a canonical injection $\Pic \Sigma(d) \hookrightarrow \Pic \Sigma(1)$ identifying $\Pic(\Sigma(d))$ with $\gamma^\perp$ when $d\notin k^2$. (See Corollary~\ref{cor:mapPicSigma_to_PicS}.) Let $\cX_0 \to \Spec k$ denote the special fiber. A special case of our main result is as follows. Given an algebraic surface $S$, we denote by $K_S$ its canonical divisor. Given a finite  extension $k\to \sigma$, note that $\sigma\otimes_k k((t))$ is canonically isomorphic to $\sigma((t))$.

\begin{thm}\label{thm:N-surgery-intro}
Let $\cX \to \Spec k[[t]]$ be a $1$-nodal Lefschetz fibration of del Pezzo surfaces of degree at least 4. Suppose $k$ is a characteristic $0$ field, and $\Sigma_1$ is $k((t))$-rational and $\cX_0$ is $k$-rational. Then for all $D$ in $\Pic \Sigma(d)$,  and all finite étale extensions $k\to\sigma$ of degree $-K_{\Sigma(d)}\cdot D-1$ we have
  \begin{align*}
  N_{\Sigma(d),D,\sigma((t)) }
  & = N_{\Sigma(1),D,\sigma((t)) }+
 (\langle 2 \rangle - \langle 2d \rangle) \sum_{j\ge 1} (-1)^j N_{\Sigma(1),D-j\gamma,\sigma((t)) } .
\end{align*}
\end{thm}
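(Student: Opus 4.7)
My plan is to apply a quadratic degeneration formula to the family $\cX(d) \to \Spec k[[t]]$ and then subtract the analogous identity for $d = 1$, so that the contributions from the $\widetilde S$-component of the special fiber cancel pairwise and only a $Q(d)$-vs-$Q(1)$ correction survives.

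First, for the degeneration step: a rational curve on the general fiber $\Sigma(d)$ of class $D$ specializes to a pair $(C_1,C_2)$ with $C_1 \subset \widetilde S$ and $C_2 \subset Q(d)$, glued along the gluing locus (the $(-2)$-curve $E \subset \widetilde S$ and a smooth hyperplane section of $Q(d)$) at some number $j\ge 0$ of matched points. Under the identification $\Pic \Sigma(d) \hookrightarrow \Pic \Sigma(1) = \Pic \widetilde S$ from Corollary~\ref{cor:mapPicSigma_to_PicS}, the class of $C_1$ becomes $D - j\gamma$, while the class of $C_2$ on $Q(d)$ is forced by $j$. A quadratic degeneration/regeneration formula then writes $N_{\Sigma(d),D,\sigma((t))}$ as a $\GW(k)$-valued sum, indexed by $j$ and by how the $\sigma((t))$-point constraints are distributed between the two components, of products of a $\widetilde S$-factor (independent of $d$) and a $Q(d)$-factor.

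Second, I would compute the $Q(d)$-contribution using the twisted binomial coefficients in $\GW(k)$ developed in Section~\ref{section:BackgroundGW}. Because $Q(d) \cong \Res_{k[\sqrt d]/k}\P^1$ has two rulings swapped by $\Gal(k(\sqrt d)/k)$, and because the assumption on the del Pezzo degree constrains $C_2$ to be a union of rulings, the $j$-th $Q(d)$-factor should differ from its $d=1$ analogue by exactly $(-1)^j(\langle 2 \rangle - \langle 2d \rangle)$: the element $\langle 2d\rangle$ encodes the discriminant of the ruling pair of $Q(d)$, while the sign and the alternation in $j$ arise from the inclusion-exclusion structure of the twisted binomial coefficient. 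Subtracting the $d=1$ degeneration identity from that for general $d$ then kills the $\widetilde S$-factors for each fixed $j$ and produces a sum of the form $(\langle 2 \rangle - \langle 2d \rangle) \sum_{j\ge 1} (-1)^j M_{\widetilde S, D-j\gamma}$, where $M_{\widetilde S, D-j\gamma}$ is the common $\widetilde S$-factor. Finally, applying the $d=1$ degeneration formula in reverse (the relations are upper-triangular in $j$ once $Q_0(1)$ is normalized to the unit) identifies $M_{\widetilde S, D-j\gamma}$ with $N_{\Sigma(1), D-j\gamma, \sigma((t))}$, yielding the claimed identity.

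The main obstacle is the computation in the second step: one must carefully track $\GW$-weights coming from the Galois twist of the two rulings, from the matching of tangent data at the gluing nodes, and from distributing the $\sigma((t))$-point constraints between $\widetilde S$ and $Q(d)$, and then verify that the difference of $Q(d)$- and $Q(1)$-factors collapses to the clean expression $(-1)^j(\langle 2\rangle - \langle 2d\rangle)$. The twisted binomial coefficients built on Serre's work, together with the isometric embedding $\Pic \Sigma(d) \hookrightarrow \Pic \Sigma(1)$ as $\gamma^\perp$ and the $k$-rationality hypotheses that guarantee the relevant quadratic GW invariants are defined, are the essential tools in carrying out this step.
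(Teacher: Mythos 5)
Your overall strategy coincides with the paper's: degenerate to the special fiber $\widetilde S\cup_E Q(d)$, show that limit curves are a component on $\widetilde S$ glued to rulings of $Q(d)$, encode the count of ruling-assignments by (twisted) enriched binomial coefficients, and compare with $d=1$ so that only a $Q(d)$-versus-$Q(1)$ correction survives. However, there is a genuine gap at the crux of the argument. The $j$-th $Q(d)$-weight attached to a component $f_1\colon C_1\to\widetilde S$ with intersection profile $\sigma'=\Int_E(f_1)$ is the twisted binomial coefficient ${\sigma'[\sqrt{d}]/k \choose j}\langle d^j\rangle$, and its difference from the untwisted $d=1$ weight ${\sigma'/k\choose j}$ is \emph{not} the scalar $(-1)^j(\langle 2\rangle-\langle 2d\rangle)$ you assert; by Proposition~\ref{pr:Binomial_identity} and Corollary~\ref{cr:Useful_Binomial_identity} it equals $(-1)^j(\langle 2\rangle-\langle 2d\rangle)\sum_{l=0}^{j-1}(-1)^l{\sigma'/k\choose l}$, an alternating partial sum depending on $\sigma'$. (Already for $\sigma'$ split of degree $4$ and $j=2$ the difference has the form $-3(\langle 2\rangle-\langle 2d\rangle)$, not $\langle 2\rangle-\langle 2d\rangle$.) Correspondingly, your final step is also off: the raw $\widetilde S$-count $M_{\widetilde S,D-j\gamma}$ is not equal to $N_{\Sigma(1),D-j\gamma,\sigma((t))}$, and inverting the triangular system would produce an alternating combination rather than an identification. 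What actually happens is that, after substituting the correct identity and exchanging the order of summation over $j$ and $l$, the partial sums recombine with the $\widetilde S$-counts into the $d=1$ degeneration formula for the classes $D-l\gamma$, which is how $\sum_{l\ge 1}(-1)^lN_{\Sigma(1),D-l\gamma,\sigma((t))}$ appears. Your two misstatements are "dual" to the correct bookkeeping and happen to target the right final formula, but as written neither step is true, and the identity of Proposition~\ref{pr:Binomial_identity} is itself nontrivial: the paper proves it by reducing to Witt invariants in the sense of Serre, checking on multiquadratic algebras, and inducting.

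Separately, you treat the "quadratic degeneration formula" as available off the shelf, whereas establishing it is most of the work in Section~\ref{sec:enumerative LF}: one needs the classification of specializations (Lemma~\ref{lm:describe_specialized_curves}), the finite étaleness of the relative moduli space over $k[[t]]$ (Theorem~\ref{thm:moduli-stack-finite-etale}, via the vanishing of deformations and obstructions in Lemma~\ref{lemma:unique_generalization}), and the invariance of the quadratic weight under specialization via the discriminant of the double point locus (Proposition~\ref{prop:invariance_NA1_weight}). You flag some of these issues, which is good, but they are genuine inputs rather than routine verifications.
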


Some comments are appropriate before outlining the proof of Theorem \ref{thm:N-surgery-intro}. The hypothesis that $\Sigma_1$ is rational implies that $\Sigma_1$ is $\A^1$-connected. This connectivity is used in \cite{degree} to show that the quadratic Gromov--Witten invariant is well-defined. Implicit in the statement of Theorem~\ref{thm:N-surgery-intro} is that the left hand side $ N_{\Sigma_d,D,\sigma}$ is well-defined in an appropriate sense, even though $\Sigma_d$ is not assumed to be $k$-rational or $\A^1$-connected. See Remark~\ref{rem:independence_tildep_woassuming_connectivity}. 

It is not necessary to assume that $k$ has characteristic $0$. More general results are contained in Theorem~\ref{thm:N-surgery} and Corollary~\ref{cor:N-surgery-5}.

  The term $\langle 2 \rangle - \langle 2d \rangle$ in Theorems \ref{thm:N-surgery-intro} and \ref{thm:N-surgery} is the difference of the $\A^1$-Euler characteristics \cite{Hoyois_lef} \cite{Levine-EC} \cite{AMBOWZ22} of $\Sigma(1)$ and $\Sigma(d)$ up to a factor of $\langle -1 \rangle$, at least in characteristic $0$. See Proposition~\ref{prop:differenceA1ChiLFdP} and Remark~\ref{rem:Denef_Loeser}. 

When $k=\R$, Theorems \ref{thm:N-surgery-intro} and \ref{thm:N-surgery} are a particular case of \cite[Theorem 2.1]{Bru18}. As said above the strategy  used in this paper goes back to the work  \cite{AbramovichBertram} over $\C$, where Abramovich and Bertram relate some enumerative invariants  of $\P^1_\C\times \P^1_\C$ and $\mathbb F_2$ using a family analogous to those described above. It has been realized later that a ``semi-stable degeneration" interpretation of the Abramovich-Bertram method could provide successful computations of Welschinger invariants of real symplectic $4$-manifolds \cite{BruPui15}. The real version of Theorems \ref{thm:N-surgery-intro} and \ref{thm:N-surgery} eventually appeared in \cite{Bru18}, generalizing earlier versions from \cite{IKS15,Bru16}. 

A motto of Theorem \ref{thm:N-surgery} can be summarized: \emph{quadratic genus $0$ Gromov--Witten invariants of a rational surface $S$ reduce to the twists of $S$ for which the action of $\Gal(\overline k/k)$ on the Picard group is trivial}. This suggests a stronger invariance property of quadratic invariants than the one currently known, generalizing \cite[Theorem 1.3 and Remark 1.4]{Bru18}, and which constitutes one of the roots of the forthcoming paper \cite{BruRauWic25}. 

\medskip
The proof of Theorem~\ref{thm:N-surgery} counts curves on the general fiber by showing the count is equal to the count of the specializations. Let us assume for the moment that $k$ is algebraically closed. The special fiber is isomorphic to $\widetilde S \cup_E Q(d)$, with $\widetilde S$ independent of $d$. One shows that, as in the case over $\C$, the specializations are reducible curves with one component $f_{\widetilde S}: C \to \widetilde S$ on $\widetilde S$ glued to components on $Q(d)$ at the points of intersection of $f_{\widetilde S}(C)$ with $E$. These curves on $Q(d)$ are either of bidegree $(0,1)$ or $(1,0)$. Let $I$ denote the number of points where $f_{\widetilde S}(C)$ and $E$ intersect. In order for the reducible curve on $\widetilde S \cup_E Q(d)$ to be the specialization of a curve on the general fiber of the correct class in the Picard group, there will be some fixed $b$, with $0 \leq b \leq I$, such that exactly $b$ of the components on $Q(d)$ have bidegree $(0,1)$ and the remaining $I-b$ components on $Q(d)$ have bidegree $(1,0)$. It follows that for the same  $f_{\widetilde S}$, one obtains exactly ${I \choose b}$ reducible curves on the special fiber of the correct form by making different choices of the $b$ points out of $I$. 

To make the corresponding count for quadratic Gromov--Witten invariants when $k$ is not algebraically closed, we must remember that the intersection of $f_{\widetilde S}(C)$ and $E$ is some $k$-algebra of degree $I$. We need to appropriately count the number of reducible curves with component $f_{\widetilde S}$ by replacing ${I \choose b}$ by the appropriate count of the number of ways to choose $b$ of the intersection points when this intersection is a finite \'etale $k$-algebra of degree $I$.

To accomplish this, this paper introduces new combinatorial techniques in $\GW(k)$. We define (twisted) binomial coefficients in $\GW(k)$ in Section~\ref{Section:twisted_binomial_coefficients_section}. This defines ${A/k \choose b}$ for $A$ a finite \'etale $k$-algebra as well as a twist appearing in our context, generalizing operations defined by Serre \cite[30.12-30.14, Appendix B]{Garibaldi-Serre-Merkurjev}. Certain combinatorial identities have an enriched version, including Pascal's identity (see Proposition~\ref{prop:basic_binomial_identities_scheme}). The analogue of Pascal's triangle over a finite field is computed in \cite{Chen-Wickelgren} by  Chen and the second named author. In Section \ref{sec:binomial identity} we prove an enriched version of the binomial identity used in the proof of \cite[Theorem 2.1]{Bru18}. 
These enriched combinatorial identities allow one to completely eliminate terms from curve counting on $\widetilde S$. This cancelation produces the simple wall-crossing formula of Theorem~\ref{thm:N-surgery-intro} relating quadratic genus $0$ Gromov--Witten invariants of $\Sigma_d$'s for different $d$'s. 

\medskip
In Section~\ref{sec:applications} we give several applications. In Section \ref{sec:ex quad} and \ref{sec:ex bup}, we reduce the computation of genus $0$ quadratic Gromov--Witten invariants of rational del Pezzo surfaces of degree at least 7 to the case of toric del Pezzo surfaces. Note that these toric cases have been covered by Jaramillo Puentes, Markwig, Pauli, and R\"ohrle  \cite{PauliPuentes23,JPMPR25} for multiquadratic extensions $\sigma$. We combine our results with theirs. Blowing up at a colliding pair of points reduces point constraints associated to degree $2$ field extensions to invariants of the blow-up without the degree $2$ extension point condition. We give some applications to cubic surfaces in Section~\ref{section:enumerative_cubic_surfaces}.
We end by showing in Section \ref{sec:dehn} that  $N_{\Sigma_1,D,\sigma}=N_{\Sigma_1,D+(D\cdot \gamma)\gamma,\sigma}$, generalizing the invariance of Gromov-Witten and Welschinger invariants under (real) Dehn twists. A conjectural expression of genus 0 quadratic Gromov--Witten invariants of all rational surfaces is proposed in \cite{BruRauWic25}.

\subsection*{Acknowledgements} We warmly thank Thomas Dedieu, Burt Totaro, Ilya Tyomkin, and Ravi Vakil for useful discussions. We thank the organizers of the Mathematisches Forschungsinstitut Oberwolfach workshop {\em Tropical Methods in Geometry} for introducing us and providing a stimulating environment. We thank Andr\'es Jaramillo Puentes for helpful comments on a preliminary draft. EB was partially supported by l’Agence Nationale de la Recherche (ANR), project ANR-22-CE40-0014. KW was partially supported by National Science Foundation Awards DMS-2103838 and DMS-2405191. KW thanks Duke University for supporting research travel with family. For the purpose of open access, the authors apply a CC-BY public copyright licence to any Author Accepted Manuscript (AAM) version arising from this submission.

\section{Background}\label{sec:Background}

\subsection{Notation}
We will denote by $k$ a field of characteristic not 2. In Sections \ref{sec:enumerative LF} and \ref{sec:applications}, we will furthermore assume $k$ to be perfect.
A \emph{del Pezzo surface} is a non-singular algebraic surface $S$ over a field $k$ with $-K_S$ ample, where $K_S$ is the canonical divisor of $S$. The degree of $S$ is defined as $K_S^2$. If $k$ is algebraically closed, such del Pezzo surface is either $\P^1_k\times \P^1_k$  or $\P^2_k$ blown-up at $r\le 8$ points in general position, see for example \cite[Chapter 8]{Dolg12} . In the former case $d_S=8$, while $d_S=9-r$ in the latter case.
We denote by $\Pic(S)$ the group of invertible sheaves on $S$ under tensor product. Note that it is the $k$-points of the Picard scheme of $S$.

Let $k$ be a field and let $k \to L$ be a field extension. Let $X$ be a $k$-scheme, $D\in \Pic(X)$ and let $\sigma$ be an \'etale $k$-algebra. Then $S_L = S \times_{\Spec k} \Spec L$ denotes the basechange, $D_L$ denotes the image of $D$ under the pullback map $\Pic(X) \to \Pic (X_L)$ and $\sigma_L := \sigma \otimes_k L$.

Given a field $k$, we denote by $k^s$ its separable closure, and $\overline k$ its algebraic closure.

\subsection{Grothendieck--Witt group}\label{section:BackgroundGW}

Let $k$ be a field. Let $\GW(k)$ denote the Grothendieck--Witt ring of $k$, defined to be the group completion of the semi-ring of isomorphism classes of symmetric non-degenerate bilinear forms over $k$. See for example \cite{milnor73} for more information. For $a$ in $k^*$, let $\langle a \rangle$ denote the class in $\GW(k)$ of the bilinear form $k \times k \to k$ given by $(x,y) \mapsto a xy$. For a finite separable extension $k \subseteq E$, let $\Tr_{E/k}: \GW(E) \to \GW(k)$ denote the abelian group homomorphism taking the class of a nondegenerate symmetric bilinear form $\beta: V \times V \to E$ to the the class of $V \times V \to E \stackrel{\Tr_{E/k}}{\longrightarrow} k$, where the map $\Tr_{E/k}: E \to k$ is the trace from Galois theory.

\subsection{Genus 0 quadratic Gromov--Witten invariants}\label{section:BackgroundA1Gromov-Witten}

The works \cite{degree}\cite{KLSW-relor} define quadratic Gromov--Witten invariants in $\GW(k)$ for smooth del Pezzo surfaces over a field $k$ under the following hypotheses. Let $S$ be a smooth del Pezzo surface over $k$ and let $D$ be an effective Cartier divisor on $S$. There is a moduli stack $M_{0,n}(S,D)$ parametrizing stable, $n$-pointed, degree $D$ maps from a smooth genus $0$ curve to $S$ with a compactification $\Mbar_{0,n}(S,D)$ parametrizing stable, $n$-pointed, degree $D$ maps from genus $0$ curves allowed to have nodal singularities to $S$ \cite{Abramovich--Oort-mixed_char}. A geometric point of $M_{0,n}(S,D)$ is the class of a map $f: \P^1 \to S$ over an algebraically closed field together with a choice of $n$ distinct rational points $p_1,\ldots, p_n$ of $\P^1$. Let $M^{\bir}_0(S, D) \subset M_{0,n}(S,D)$ denote the open subsheme with geometric points $[(f: \P^1 \to S, p_1,\ldots,p_n)]$ such that $\P^1 \stackrel{f}{\to} f(\P^1)$ is birational. See \cite[Section 2]{KLSW-relor}. 

\begin{hyp}\label{hyp:SDk_with_N}
 Assume $k$ is perfect of characteristic not $2$ or~$3$.  Let $S$ be a smooth del Pezzo over $k$ and $D$ a Cartier divisor on $S$. Moreover, assume that $d_S\ge 4$, or $d_S=3$ and $n:= -K_S \cdot D-1\neq 5$, or $d_S = 2$ and $n\ge 6$. If $k$ is positive characteristic and $d_S=2$, assume additionally that for every effective $D' \in Pic(S)$, there is a geometric point $f$ in each irreducible component of $M^{\bir}_0(S, D')$ with $f$ unramified. 
\end{hyp}

Let $(S,D,k)$ be as in Hypothesis~\ref{hyp:SDk_with_N}. Let $\sigma = \prod_{i=1}^r L_i $ be a finite \'etale product of field extensions $L_i$ of $k$ with $\sum_{i=1}^n [L_i: k] = -K_S \cdot D -1$. Assume additionally that $S$ is  $\mathbb{A}^1$-connected.

\begin{defi}
Let $N_{S,D, \sigma}$ in $\GW(k)$ denote the quadratic invariants of \cite[Theorems 1 and 2]{degree}.
\end{defi}

\begin{rem}
In \cite[Theorems 1 and 2]{degree}, the class $D$ is assumed not to be an $m$-fold multiple of a $-1$-curve for $m>1$. However, in this case, it is consistent to define the invariants to be $0$, which we do here.
\end{rem}

The invariants $N_{S,D, \sigma}$ have the following enumerative meaning. Let $u : \mathbb{P}_{k(u)} \to S_{k(u)}$ be a rational curve defined over some field extension $k(u)$ of $k$. For every node $x$ of $u(\mathbb{P}_{k(u)})$, the two tangent directions at $x$ define a (degree at most two) extension $k(x) \subset k(x)[\sqrt{ \delta(x)}]$, with $\delta(x) \in k(x)^*/(k(x)^*)^2$. Then define the {\em mass} of the node $x$ in $\GW(k(u))$ by
\[
 \operatorname{mass}(x) = \langle N_{k(x)/k(u)} \delta(x) \rangle
\] where $N_{k(x)/k(u)}: k(x)^* \to k(u)^*$ denotes the norm from Galois theory. For more information, see \cite[Definition 1.1]{degree}. 

We use the notation $\Res$ to denote Weil restriction or restriction of scalars. See for example \cite[Section 4.5]{Poonen-rational_points_on_varieties}. Consider $\Res_{\sigma/k} S \cong \prod_{i=1}^r \Res_{L_i/k} S$. For any finite, separable extension $F$ of $k$, the tensor product $F_i=L_i \otimes_k F$ is a finite product of field extensions. Then $\sigma_F = \prod_{i=1}^j F_i$ is the product of all these field extensions. By definition of the restriction of scalars, there is a canonical bijection between the $F$-points $p$ of $\Res_{\sigma/k} S$ and points $(p_1,\ldots, p_j)$ of $S$ with residue fields $(F_1,\ldots, F_j)$. Consider an element of $\GW(k)$ as determining an element of $\GW(F)$ by the pullback map $\GW(k) \to \GW(F)$ corresponding to the extension $k \to F$.

The enumerative meaning of the invariants $N_{S,D, \sigma}$ under the hypotheses given in their definition is the following: there is a dense open subset $U$ of $\prod_{i=1}^r \Res_{L_i/k} S $ with the property that for any point $p$ of $U$ we have the equality
\begin{equation}\label{eq:Nenum}
N_{S,D, \sigma} = \sum_{\substack{u \text{ degree } D \\ \text{ rational curve on } S \\\text{ thru the points} \\ p_1, \ldots, p_j}} \Tr_{k(u)/k(p)}\prod_{x \text{ node}\text{ of }u} \mass(x)
\end{equation} in $\GW(k(p))$ where $p_1,\ldots, p_j$ denote the points of $S$ with residue fields $\sigma_{k(p)}$ corresponding to $p$. See \cite[Sections 1.2.4 and 8]{degree}. We will denote $\Tr_{k(u)/k(p)}\prod_{x \text{ node}\text{ of }u} \mass(x)$ by $\wt_{k(p)}(u)$ and consider $N_{S,D, \sigma}$ as an enriched count of rational curves where each rational curve contributes its weight to the count.

We show invariance under basechange
\[
N_{X_F,D_F,\sigma_F} = N_{X,D,\sigma} \otimes_{k} F \in \GW(F).
\] of the quadratic $\A^1$ Gromov--Witten invariants in Proposition~\ref{pr:NA1-stable-basechange-at-point} under suitable hypotheses.

\begin{rem}\label{rem:non perfect field}
  This invariance under base change allows to define quadratic invariants in some cases for 
  non perfect fields. 
   Given $k\to F$ is an extension of a perfect field $k$, we define 
   \[
    N_{X_F,D_F,\sigma_F} :=  N_{X,D,\sigma} \otimes_{k} F \in \GW(F).
  \]

\end{rem}

It is often convenient to know that $U$ has a rational point, because then \eqref{eq:Nenum} holds in $\GW(k)$. This is the case when $S$ is $k$-rational and $k$ is infinite. When $k$ is finite and $S$ is $k$-rational, we can only ensure that $U$ has a point whose $p$ such that $[k(p):k]$ is odd. However this will be enough to recover  $N_{X,D,\sigma} \in \GW(k)$ since the map $\GW(k) \to \GW(F)$ is injective for any odd degree extension $k\to F$ by \cite[Satz 10]{Pfister66}. Hence, thanks to Proposition \ref{pr:NA1-stable-basechange-at-point}, the element $N_{X,D,\sigma} \in \GW(k)$ is entirely determined by the invariant $ N_{X,D,\sigma} \otimes_{k} k(p) \in \GW(k(p))$. We include a proof of this well-known fact for completeness.

\begin{prop}\label{prop:rational_scheme_points}
Let $S$ be a $k$-rational scheme and let $k \to \sigma \cong \prod_{i=1}^r L_i$ be a finite $k$-algebra. Then any dense open subset $W$ of $\Res_{\sigma/k} S \cong \prod_{i=1}^r \Res_{L_i/k} S$ contains a point $p$ with $[k(p):k]$ finite and odd. If $k$ is infinite, we may choose $p$ such that $k(p) = k$.
\end{prop}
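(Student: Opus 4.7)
The plan is to reduce the problem to a point-counting question on a dense open subset of affine space by exploiting the $k$-rationality of $S$, then handle the infinite and finite field cases separately.

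First I would choose dense open subschemes $U \subset S$ and $U' \subset \mathbb{A}^n_k$ together with an isomorphism $U \cong U'$, which exist because $S$ is $k$-rational (with $n = \dim S$). Since Weil restriction preserves open immersions, $\Res_{\sigma/k} U \cong \Res_{\sigma/k} U'$ embeds as an open subscheme of $\Res_{\sigma/k} S$, while $\Res_{\sigma/k} U'$ is an open subscheme of $\Res_{\sigma/k} \mathbb{A}^n_\sigma \cong \mathbb{A}^N_k$ with $N = n \dim_k \sigma$ (the latter isomorphism is the standard identification $R \otimes_k \sigma \cong R^{\dim_k \sigma}$ of underlying $k$-modules, applied functorially). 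One verifies after base change to a splitting field of $\sigma$, where $\Res_{\sigma/k}$ becomes a product of copies of the scheme, that $\Res_{\sigma/k} U$ is non-empty, and since $\Res_{\sigma/k} S$ is geometrically integral (being a product over $k$ of geometrically integral factors after restriction) a non-empty open is dense. Hence $W \cap \Res_{\sigma/k} U$ transports via the above identification to a non-empty open $W' \subset \mathbb{A}^N_k$, and it suffices to produce the desired point in $W'$.

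If $k$ is infinite, $k$-rational points are Zariski dense in $\mathbb{A}^N_k$, so the non-empty open $W'$ contains a $k$-point, which pulls back to a $k$-point of $W$. If $k$ is finite, I would argue by a counting estimate: the complement $Z = \mathbb{A}^N_k \setminus W'$ is contained in a hypersurface $\{f = 0\}$ for some nonzero $f \in k[x_1,\ldots,x_N]$ of degree $d$, so the Schwartz--Zippel bound gives $|Z(k')| \leq d\,|k'|^{N-1}$ for every finite extension $k'/k$, whence $|W'(k')| \geq |k'|^N - d\,|k'|^{N-1} > 0$ as soon as $|k'| > d$. Choosing $k'$ to be an odd-degree extension of $k$ with $|k'| > d$ produces a $k'$-point of $W'$, whose residue field is a subfield of $k'$ containing $k$ and therefore has odd degree over $k$.

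The main obstacle is the reduction step: verifying that $\Res_{\sigma/k} U$ is a non-empty (hence dense) open subscheme of $\Res_{\sigma/k} S$, and making the identification $\Res_{\sigma/k} \mathbb{A}^n_\sigma \cong \mathbb{A}^N_k$ compatibly with the open immersion $U' \hookrightarrow \mathbb{A}^n_k$. Both are standard properties of Weil restriction, but they deserve to be stated carefully; once in place, the remaining counting arguments on $\mathbb{A}^N_k$ are elementary.
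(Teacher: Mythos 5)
Your proposal is correct. The reduction to a dense open of affine space via $k$-rationality and Weil restriction is the same as the paper's (the paper states it more tersely, simply reducing to $S=\A^n$ and $\Res_{\sigma/k}S \cong \A^m$; your care about $\Res_{\sigma/k}$ preserving open immersions and density is a welcome elaboration of a step the paper leaves implicit). The infinite-field case is also the same in substance: density of rational points in a nonempty open of $\A^N_k$. Where you genuinely diverge is the finite-field case. The paper argues by induction on the dimension $m$ of the affine space: it slices $W$ by a linearly embedded hyperplane $\A^{m-1}_L$ defined over an odd-degree extension $L/k$ chosen so that $W\cap \A^{m-1}_L$ is nonempty (hence dense), and reduces to $m=1$, where the complement of $W$ is finite and $\A^1_k$ has infinitely many closed points of odd degree. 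You instead trap the complement of $W'$ inside a hypersurface $\{f=0\}$ and apply the Schwartz--Zippel bound $|Z(k')|\le d\,|k'|^{N-1}$ over an odd-degree extension $k'$ with $|k'|>d$, producing a $k'$-point whose residue field has odd degree over $k$ by multiplicativity. Both arguments are valid; yours replaces the geometric induction by an elementary point count and is arguably shorter for the finite-field half, while the paper's slicing argument has the mild advantage of treating the infinite and finite cases uniformly within a single induction.
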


\begin{proof}
Since $S$ contains a dense open subset isomorphic to a dense open subset of $\A^n$, we may reduce to the case where $S=\A^n$. Then $\Res_{\sigma/k} S \cong \A^m$ for some positive integer $m$. We prove by induction on $m$ that any dense open subset of $\A^m$ contains a point $p$ with $[k(p):k]$ finite and odd and that when $k$ is infinite, we may choose $p$ with $[k(p):k]=1$. For $m=1$, any non-empty open is the complement of finitely many points. Since there are infinitely many points $p$ with $[k(p):k]$ finite and odd, and since when $k$ is infinite, there are infinitely many points $p$ with $[k(p):k]=1$, the case $m=1$ is true. Suppose inductively that the result holds for $m-1$. Since there are infinitely many linear subspaces $\A^{m-1}_L \to \A^m$ of codimension $1$ with $k\subseteq L$ a field extension with $[L:k]$ odd, and there are only finitely many components of $\A^n \setminus W$, we can find such a linearly embedded $\A^{m-1}_L $ with $W \cap \A^{m-1}_L $ nonempty, whence dense. By induction there is a point $p$ of $W \cap \A^{m-1}_L $ with $[k(p): L]$ finite and odd. Thus $[k(p):k] =[k(p): L][L:k] $ is finite and odd. In the case where $k$ is infinite, we may assume that $L=k$ and $k(p) = L$ because of the existence of infinitely many such linearly embedded hyperplanes and rational points.
\end{proof}

Even though quadratic Gromov--Witten invariants are elements of  $\GW(k)$, they specialize to  numerical invariants. 
For example the rank of quadratic Gomov--Witten invariants recovers usual Gromov-Witten invariants, while taking their signature when $k= \R$ recovers Welschinger invariants.  
To our knowledge only a few computations of genus 0 quadratic Gromov--Witten invariants are known up to now, which include the following works:
\begin{itemize}
   \item 
  $D=-K_S$ the anticanonical class of $S$ via $\A^1$-Euler characteristic computations \cite{degree};
 \item  $S$ is a toric del Pezzo surface and $\sigma$ is a multiquadratic extension, by tropical methods \cite{PauliPuentes23,JPMPR25};
 \item $S$ is a rational del Pezzo surface of degree 3 and $\sigma=k^n$, by combining  \cite{PauliPuentes23} together with results from this paper and floor decompositions with respect to a conic \cite{BRWunp};
 \item $S$ is a rational del Pezzo surface of degree at least 7 and $\sigma$ is a multiquadratic extension, by combining our main result here with \cite{PauliPuentes23,JPMPR25}, see Section \ref{sec:applications}.
\end{itemize}

\section{Lefschetz fibrations of del Pezzo surfaces}\label{sec:sugery}

In this section we define Lefschetz fibrations and their surgeries. We relate the Picard groups of their central and generic fibers in the case of del Pezzo surfaces, and identify the vanishing cycle of such  Lefschetz fibration. 

\subsection{Surgeries of Lefschetz fibrations}
\begin{defi}\label{def:1-nodal-Lefschetz-fibration}
Define a {\em $1$-nodal Lefschetz fibration} to be a flat, projective family $\cX \to \Spec k[[t]]$ of surfaces satisfying the following. The total space $\cX$ is regular. The generic fiber is a smooth algebraic surface. The special fiber has a single node $q$ defined over $k$ and no other singularities. The completed local ring $\widehat{\cO_{\cX,q}}$ at $q$ is isomorphic to $k[[t,x,y,z]]/(Q(x,y,z) +  t)$ with $Q(x,y,z)$ a homogeneous degree $2$ element of $k[x,y,z]$ defining a non-singular quadric. 
\end{defi}

It is proved in \cite{KKMS73} that any family with a singular central fiber can be turned into a semi-stable degeneration after finitely blow-ups and base changes. In the case of $1$-nodal Lefschetz fibration over an separably closed field of characteristic different from 2, it is classical that the base change $t\mapsto t^2$ followed by a single blow-up produces such semi-stable reduction. When $k$ contains non-square elements, several base changes are possible. This leads to the following notion of $d$-surgery.

\begin{const}\label{const:d-surgery}
Let $\cX \to \Spec k[[t]]$ be a $1$-nodal Lefschetz fibration  and let $d$ be in $k^*$. We define the {\em $d$-surgery} $\cX(d)$ of $\cX$ as follows. Let $\delta$ be the discriminant of the quadratic form $Q(x,y,z)$ as in Definition~\ref{def:1-nodal-Lefschetz-fibration}. Pullback $\cX$ by the map $ \Spec k[[t]] \to \Spec k[[t]]$ sending $t$ to $\frac{d}{\delta}t^2$ and call the resulting scheme $\widetilde{\cX}(d)$.  The pullback $\widetilde{\cX}(d)$ has a node $q$ at $(x,y,z,t) = (0,0,0,0)$. Then $\cX(d)$ is defined to be $\cX(d) =\Bl_{q} \widetilde{\cX}(d)$.
\end{const}

\begin{rem}
Note that the discriminant of the non-singular quadric given by the zero locus of $Q(x,y,z) +  \frac{d}{\delta}t^2$ in $\P^3_k$ is $d$. 
\end{rem}

We describe the structure of the central fiber  $\cX(d)_0$ of $\cX(d)$ in the next two propositions, which in particular show that $\cX(d)$ is indeed a semi-stable family (i.e. $\cX(d)_0$ is the union of smooth surfaces intersecting transversely).

\begin{prop}\label{prop:classification_quadric_surfaces_in_P3}
Suppose $k$ is not characteristic $2$.  Let $X$ be the quadric in $\mathbb{P}^3_k$ determined by $Q(x,y,z) +  \frac{d}{\delta}t^2 = 0$. Then $X$ is isomorphic to $\Res_{\frac{k[\epsilon]}{(\epsilon^2 - d)}/k} C$ where $C$ is a smooth, proper conic over $k[\epsilon]/(\epsilon^2 - d)$. The Galois action on  $\Pic X_{k^s} \cong \mathbb{Z}^2$ factors through $\Gal(k[\sqrt{d}]/k)$, whose non-trivial element (when $d\notin k^2$) swaps the two factors of $\mathbb{Z}$.

\end{prop}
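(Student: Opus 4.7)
The plan is to identify $X$ with a Weil restriction through Galois descent of its two rulings, the key input being that the discriminant of the defining quadratic form controls the splitting of the rulings.

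First, the Gram matrix of $Q(x,y,z)+\frac{d}{\delta}t^2$ is block-diagonal with blocks of determinants $\delta$ and $\frac{d}{\delta}$, so the total discriminant is $d$ in $k^*/(k^*)^2$, and $X\subset\mathbb{P}^3_k$ is a smooth quadric surface of discriminant $d$. Over the separable closure $X_{k^s}\cong\mathbb{P}^1_{k^s}\times\mathbb{P}^1_{k^s}$, so $\Pic(X_{k^s})\cong\mathbb{Z}^2$ is generated by the two rulings $[R_1]$, $[R_2]$. Any $\mathbb{Z}$-linear automorphism of $\mathbb{Z}^2$ preserving the intersection pairing and the canonical class either fixes or swaps the generators, so the Galois action factors through $\Gal(F/k)$ for some quadratic étale $k$-algebra $F$ acting by the swap.

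The key step is to identify $F$ with $k[\sqrt{d}]$. I would invoke the classical correspondence between rank~$4$ quadratic forms and their Clifford algebras: the center of the even Clifford algebra $\mathrm{Cl}^+(q)$ of the defining form $q$ is isomorphic to $k[\sqrt{d}]$ and acts through opposite characters on the two half-spinor representations that correspond to the two rulings. A more hands-on alternative is to diagonalize $q$ and exhibit explicit families of $2$-dimensional isotropic subspaces defined over $k[\sqrt{d}]$ realizing the two rulings.

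Granted this identification, write $L=k[\epsilon]/(\epsilon^2-d)$ with nontrivial $k$-automorphism $\sigma$ when $d\notin(k^*)^2$, and let $C$ be one ruling viewed as a smooth $L$-conic. The map sending a pair of lines from opposite rulings to their unique intersection point yields an $L$-isomorphism $X_L\cong C\times_L C^\sigma$, which matches the canonical decomposition $(\Res_{L/k}C)_L\cong C\times_L C^\sigma$ together with its Galois descent datum (swap of factors composed with $\sigma$). By Galois descent one concludes $X\cong\Res_{L/k}C$ over $k$; the case $d\in(k^*)^2$ is analogous with $L=k\times k$, and the description of the Galois action on $\Pic X_{k^s}$ is immediate from the ruling identification. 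The main obstacle is precisely the identification of $F$ with $k[\sqrt{d}]$: although classical, this step is not purely formal and relies on either Clifford algebra input or a careful normal-form computation linking the arithmetic invariant $d$ of $q$ to the Galois-theoretic splitting of the rulings; the remainder is routine Galois descent.
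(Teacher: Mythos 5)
Your proof is correct and follows the same overall skeleton as the paper's: split $X$ over $k^s$ as $\P^1\times\P^1$, observe the Galois action on $\Pic X_{k^s}\cong\Z^2$ preserves the intersection form and hence factors through a swap governed by a quadratic \'etale algebra $L$, and then identify $L$ with $k[\epsilon]/(\epsilon^2-d)$. The difference lies in how you propose to carry out that last identification. The paper uses the exact sequence $1\to\P\GL_2(k^s)\times\P\GL_2(k^s)\to\Aut(\P^1\times\P^1)_{k^s}\to\Z/2\to1$, views $X$ as a class $\gamma_X\in H^1(k,\Aut((\P^1\times\P^1)_{k^s}))$, notes that $L$ is classified by $q_*\gamma_X\in H^1(k,\Z/2)$, and finishes by diagonalizing $Q$ and examining cocycles explicitly. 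Your primary route instead invokes the even Clifford algebra: for a rank~$4$ form the center of $\mathrm{Cl}^+(q)$ is the discriminant algebra $k[\sqrt{d}]$ (note the signed discriminant coincides with $\det q$ in rank $4$, so no sign issue arises here), and its two idempotents over $k[\sqrt d]$ separate the two rulings. This is a legitimate and arguably more conceptual identification, tying the splitting field of the rulings directly to a classical invariant of $q$; your ``hands-on alternative'' of exhibiting isotropic planes over $k[\sqrt d]$ after diagonalization is essentially what the paper's cocycle examination amounts to. Both your write-up and the paper leave this key step at the level of a citation/sketch, so the level of rigor is comparable. One small point in your favor: you rule out the action by $-\mathrm{id}$ on $\Z^2$ by noting Galois preserves the canonical class (or the effective cone), a point the paper elides when asserting the action ``must act through the swap map.''
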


\begin{proof}
First note that the quadratic form $xy - zt = 0$ determines $\P^1_k \times \P^1_k$ by the Segre embedding. The algebraic identify $4xy = (x+y)^2 - (x-y)^2$ shows that the zero locus of $xy - zt$ to be isomorphic to that of $ x^2 +y^2 - z^2 - t^2$. 

Thus after base change to $k^s$, we have an isomorphism $X_{k^s} \cong \P^1_{k^s} \times \P^1_{k^s}$, coming from a diagonalization of the associated quadratic form and the existence in $k^s$ of the square roots of the diagonal entries. It follows that $X \cong \Res_{L/k} C$ for $L$ a degree $2$ \'etale extension of $k$ and $C$ a smooth, proper, conic curve over $L$. See for example \cite[Proof of Proposition 9.4.12(1)]{Poonen-rational_points_on_varieties}. 

The Picard group $\Pic X_{k^s} \cong \Pic (\P^1_{k^s} \times \P^1_{k^s})$ is isomorphic to $\mathbb{Z}^2$ with the standard symplectic intersection form. As the action of $\Gal(k^s/k)$ preserves this intersection form it must act through the swap map. We have an isomorphism $(\Res_{L/k} C)_L \cong C \times C^{\sigma}$, where $C^{\sigma}$ is the Galois conjugate of $C$ over $k$ (\cite[Section 4.6 Exercise 4.7]{Poonen-rational_points_on_varieties}). Over the separable closure $C_{k^s} \times *$ and $* \times (C^{\sigma})_{k^s}$ represent a basis of the Picard group. Acting the Galois group on these elements, we see that the action factors through $\Gal(L/k)$, whose non-trivial element swaps the two factors of $\mathbb{Z}$.

It remains to show that $L \cong k[\epsilon]/(\epsilon^2 -d)$. There is a short exact sequence 
\[
1 \to \P\GL_2(k^s) \times \P\GL_2 (k^s) \to \Aut(\P^1 \times \P^1)_{k^s} \stackrel{q_*}{\to} \mathbb{Z}/2 \to 1
\] where the quotient maps takes an automorphism to the associated automorphism of the Picard group. The twist $X$ of $\P^1 \times \P^1$ determines an element $\gamma_X$ of $H^1(k, \Aut((\P^1 \times \P^1)_{k^s}))$. The field extension $L$ is determined by the image $q_*\gamma_X$ of $\gamma_X$ in $H^1(k, \mathbb{Z}/2)$. See \cite[Proof of Proposition 9.4.12(1)]{Poonen-rational_points_on_varieties}. Since we can diagonalize any quadratic form over $k$, we may assume that $Q(x,y,z) +  \frac{d}{\delta}t^2$ is diagonal. The claim then follows from an explicit examination of cocycles. 
\end{proof}

Let $C$ be a smooth, proper, plane conic such that $\Res_{\frac{k[\epsilon]}{(\epsilon^2 - d)}/k} C$ is isomorphic to the zero locus of $Q(x,y,z) +  \frac{d}{\delta}t^2 = 0$ in $\mathbb{P}^3_k$ as in Proposition~\ref{prop:classification_quadric_surfaces_in_P3}.

\begin{prop}\label{d-surgery-central-fiber}
The central fiber $\cX(d)_0$ of $\cX(d) \to \Spec k[[t]]$ is the union of two components, one isomorphic to the blow-up $\widetilde{S}:=\Bl_q (\cX_0)$ at the node of the central fiber $\cX_0$ of $\cX$, and one isomorphic to $\Res_{\frac{k[\epsilon]}{(\epsilon^2 - d)}/k} C$. These two components are glued along the exceptional divisor $E$ of $\Bl_q  (\cX_0)$, which is a  hyperplane section in $\Res_{\frac{k[\epsilon]}{(\epsilon^2 - d)}/k} C$ isomorphic to the zero locus of $Q(x,y,z) = 0$ in $\P^2_k$. Both components are smooth and projective over $k$.
\end{prop}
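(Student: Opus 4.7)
The plan is to reduce to an explicit local computation near the node $q$, since the blow-up $\cX(d) \to \widetilde{\cX}(d)$ is an isomorphism away from $q$. Over the complement of $q$ the central fiber of $\widetilde{\cX}(d)$ equals $\cX_0 \setminus \{q\}$ (base change by $t \mapsto (d/\delta)t^2$ pulls $\{t=0\}$ back to $\{t=0\}$), so all of the interesting structure comes from the local picture at $q$, which by hypothesis is $\Spec k[[t,x,y,z]]/(Q(x,y,z)+(d/\delta)t^2)$.

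First I would compute the blow-up $\Bl_q \widetilde{\cX}(d)$ in standard affine charts of $\Bl_0 \A^4$. In the $t$-chart (with $x=tu,y=tv,z=tw$) the defining equation factors as $t^2(Q(u,v,w)+d/\delta)$, so the strict transform is smooth at every point of the exceptional locus, and in an $x$-chart (assuming $Q$ involves $x^2$, with $t=xs,y=xv,z=xw$) the defining equation becomes $x^2(Q(1,v,w)+(d/\delta)s^2)$. Collecting the charts, the exceptional divisor of $\cX(d) \to \widetilde{\cX}(d)$ is the projectivized tangent cone of the threefold at $q$, namely the quadric $\{Q(x,y,z)+(d/\delta)t^2 = 0\} \subset \P^3_k$; this quadric has discriminant $d$ up to squares, so Proposition~\ref{prop:classification_quadric_surfaces_in_P3} identifies it with $\Res_{k[\epsilon]/(\epsilon^2-d)/k} C$.

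Next I would identify the other component. Writing $\cX_0$ locally as $\{t=0\}$, its pullback to the $x$-chart is $(xs)|_{\tilde\cX(d)} = 0$, which splits as the exceptional divisor $\{x=0\}$ together with the strict transform $\{s=0, Q(1,v,w)=0\}$. The latter is exactly the $x$-chart of $\Bl_q \cX_0$ (same chart computation, but done on the surface $\{Q=0\} \subset \A^3$ directly), so globally the second component of $\cX(d)_0$ is $\widetilde S = \Bl_q \cX_0$. Non-degeneracy of $Q$ gives smoothness of $\widetilde S$ via the Jacobian criterion on $Q(1,v,w)=0$, and smoothness of the quadric $Q+(d/\delta)t^2$ in $\P^3$ (using $d \in k^*$) gives smoothness of the other component; projectivity is inherited from $\cX$ through base change and blow-up of a point.

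Finally I would check the gluing. The two components meet set-theoretically along $\widetilde S \cap E_\text{excep}$, which in the $x$-chart is $\{x=0, s=0, Q(1,v,w)=0\}$. Globally this is the projectivized tangent cone $\{Q(x,y,z)=0\} \subset \P^2_k$, which is simultaneously the exceptional conic $E$ of $\widetilde S \to \cX_0$ and the hyperplane section $\{t=0\}$ of the $\P^3$-quadric $E_\text{excep}$. The main subtlety I expect is bookkeeping: distinguishing the blow-up of the threefold (whose exceptional divisor is a surface) from the induced blow-up of the surface $\cX_0$ (whose exceptional divisor is the curve $E$), and verifying that the scheme-theoretic preimage of $\{t=0\}$ really is the reduced union of these two smooth surfaces meeting transversely along $E$ rather than carrying hidden multiplicities — this will follow from the chart calculations above, where the two branches $\{x=0\}$ and $\{s=0\}$ of the total transform of $\{t=0\}$ each appear with multiplicity one.
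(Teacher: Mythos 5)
Your proof is correct and follows the same overall route as the paper's: compute the blow-up locally at the node, identify the exceptional divisor of $\Bl_q\widetilde{\cX}(d)$ with the projective quadric $\{Q(x,y,z)+(d/\delta)t^2=0\}\subset\P^3_k$, identify the strict transform of $\cX_0$ with $\Bl_q\cX_0$, and observe that the two components meet along the conic $E=\{Q(x,y,z)=0\}\subset\P^2_k$, which is both the exceptional curve of $\Bl_q\cX_0$ and a hyperplane section of the quadric. The paper packages the exceptional-divisor identification as $\Proj$ of the associated graded ring of the maximal ideal (noting that passing to the completion is harmless because it preserves the associated graded), whereas you spell out the standard affine charts of $\Bl_0\A^4$ explicitly; your version has the modest advantage of making the reducedness and the multiplicity-one decomposition of the total transform of $\{t=0\}$ visible, points the paper asserts without detail.
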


\begin{proof}

The exceptional divisor of $\Bl_{(x,y,z,t)} \Spec k[[x,y,z,t]]$ is $\Proj \oplus_n (x,y,z,t)^n/(x,y,z,t)^{n+1} \cong \P^3_k$. One then computes that the exceptional divisor of $\Bl_{(x,y,z,t)} \Spec k[[x,y,z,t]]/(Q(x,y,z) +  \frac{d}{\delta}t^2)$ is the zero locus of $Q(x,y,z) +  \frac{d}{\delta}t^2$ in $\P^3_k$.  Let $U \cong \Spec A$ be an affine neighborhood of $q$ in $\widetilde{\cX}(d)$ and let $\mathfrak{q} \subset A$ denote the prime ideal corresponding to $q$. The exceptional divisor of $\Bl_q \widetilde{\cX}(d)$ is isomorphic to $\Proj \oplus_n \mathfrak{q}^n/\mathfrak{q}^{n+1}$. The map $A \to \hat{A}_{\mathfrak{q}}$ induces an isomorphism of $k$-algebras $\oplus_n \mathfrak{q}^n/\mathfrak{q}^{n+1} \to (\mathfrak{q}\hat{A}_{\mathfrak{q}})^n/ (\mathfrak{q}\hat{A}_{\mathfrak{q}})^{n+1}$. Since $\hat{A}_{\mathfrak{q}} \cong k[[t,x,y,z]]/(Q(x,y,z) +  \frac{d}{\delta}t^2)$, it follows that the exceptional divisor of $\Bl_q \widetilde{\cX}$ is isomorphic to the zero locus of $Q(x,y,z) +  \frac{d}{\delta}t^2$ in $\P^3_k$. By Proposition \ref{prop:classification_quadric_surfaces_in_P3}, we have that this exceptional divisor is  isomorphic to $\Res_{\frac{k[\epsilon]/(\epsilon^2 - d)}{k}} C$. 

The central fiber $\cX(d)_0$ has a component corresponding to the exceptional divisor of $\Bl_q (\widetilde{\cX}(d))$ and a component isomorphic to $\Bl_q ( \cX_0)$ glued along the exceptional divisor $E$ of $\Bl_q  (\cX_0)$.  It follows from the definition that $E$ is the intersection of  $\Bl_q ( \cX_0)$ with the hyperplane $t=0$ of the exceptional divisor of $\Bl_{(x,y,z,t)} \Spec k[[x,y,z,t]]$. In particular $E$ is isomorphic to $\Proj k[x,y,z]/(Q(x,y,z))$ and is non-singular.

The surface $\widetilde S$ is smooth and projective over $k$ because $\cX$ is projective with a single node in the special fiber.
\end{proof}

\begin{nota}\label{nt:widetildeSE}
For the $d$-surgery $\cX(d)$ of a $1$-nodal Lefschetz fibration $\cX \to \Spec k[[t]]$, let $\widetilde S$ denote the component of $\cX(d)_0$ isomorphic to $\Bl_q (\cX_0)$ as in Proposition~\ref{d-surgery-central-fiber}. Let $E$ denote the exceptional divisor of $\Bl_q (\cX_0)$. Let $Q(d)$ denote the component of the special fiber $\cX(d)_0$ isomorphic to $\Res_{\frac{k[\epsilon]}{(\epsilon^2 - d)}/k} C$. Finally, let $\Sigma(d)$ denote the generic fiber of $\cX(d)$.
\end{nota}

When there is an ambiguity on the surface where we consider the intersection product, we  denote by $\cdot_{\widetilde S}$ and  $\cdot_{Q(d)}$  the intersection product on $\Pic \widetilde S$ and $\Pic Q(d)$, respectively. We thank Thomas Dedieu for communicating us the proof of the following classical lemma (see also \cite[Proposition 8.1.9]{Dolg12}).

\begin{lemma}\label{lm:EE=-2}
  Let  $\cX(d)$  be the $d$-surgery of a $1$-nodal Lefschetz fibration $\cX \to \Spec k[[t]]$. Then we have 
   $E \cdot_{\widetilde S} E = -2$.
\end{lemma}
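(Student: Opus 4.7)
The plan is to identify the singularity of $\cX_0$ at $q$ as an ordinary double point ($A_1$ surface singularity) and then exploit the classical fact that blowing up such a singularity produces a $(-2)$-curve. First, I would note that by Definition~\ref{def:1-nodal-Lefschetz-fibration} we have $\widehat{\cO_{\cX_0,q}} \cong k[[x,y,z]]/(Q(x,y,z))$ with $Q$ a non-degenerate quadratic form, so $q$ is a rational double point of type $A_1$ on the surface $\cX_0$. By Proposition~\ref{d-surgery-central-fiber}, the exceptional divisor $E \subset \widetilde S$ is isomorphic to the smooth conic $\{Q=0\} \subset \P^2_k$, which is a twisted form of $\P^1_k$ and therefore has arithmetic genus $p_a(E) = 0$.

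Next I would apply the adjunction formula on the smooth surface $\widetilde S$:
\[
-2 \;=\; 2p_a(E) - 2 \;=\; E \cdot_{\widetilde S} E + K_{\widetilde S} \cdot_{\widetilde S} E,
\]
so the lemma is equivalent to showing $K_{\widetilde S} \cdot_{\widetilde S} E = 0$. This is the content of the statement that the blow-up $\pi: \widetilde S \to \cX_0$ of an $A_1$ surface singularity is crepant, i.e.\ $K_{\widetilde S} = \pi^* K_{\cX_0}$ (with $K_{\cX_0}$ understood as the dualizing divisor of the Gorenstein surface $\cX_0$). Granting this, the projection formula gives $K_{\widetilde S} \cdot_{\widetilde S} E = K_{\cX_0} \cdot \pi_*E = 0$ since $E$ is $\pi$-contracted, and the lemma follows.

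To prove crepancy, I would work in a formal (or étale) neighborhood embedding $\cX_0$ in a smooth ambient $3$-fold $A = \Spec k[[x,y,z]]$ as the principal divisor $\{Q = 0\}$, and compare with the blow-up $\pi_3: \widetilde A = \Bl_0 A \to A$ whose exceptional divisor is $E_3 \cong \P^2_k$. Because $Q$ vanishes to order $2$ at the origin, the total transform decomposes as $\pi_3^*\{Q=0\} = \widetilde S + 2E_3$, where $\widetilde S$ is the strict transform (the blow-up of $\cX_0$ at $q$), and $E = \widetilde S \cap E_3$. Combining the blow-up formula $K_{\widetilde A} = \pi_3^* K_A + 2E_3$ with the adjunction identities $K_{\cX_0} = (K_A + \{Q=0\})|_{\cX_0}$ and $K_{\widetilde S} = (K_{\widetilde A} + \widetilde S)|_{\widetilde S}$, the $\pm 2E_3$ contributions cancel and yield $K_{\widetilde S} = \pi^* K_{\cX_0}$, as required.

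The main obstacle is this crepancy computation, since it is the only step where the specific geometry of an $A_1$ singularity enters (as opposed to, say, a non-canonical surface singularity where the discrepancy would be nonzero). Once this is established, the rest is a direct application of adjunction and the identification of $E$ as a smooth conic from Proposition~\ref{d-surgery-central-fiber}. Alternatively, one could quote the classification of Du Val singularities, for which the minimal resolution of an $A_n$ point is a chain of $n$ $(-2)$-curves, and specialize to $n=1$; this is essentially what \cite[Proposition 8.1.9]{Dolg12} provides.
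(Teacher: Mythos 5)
Your proof is correct, but it takes a genuinely different route from the paper's. The paper stays inside the total space of the degeneration: since $Q(d)+\widetilde S$ is the class of a fiber of $\cX(d)\to\Spec k[[t]]$, the triple intersection $(Q(d)+\widetilde S)\cdot Q(d)\cdot\widetilde S$ vanishes, giving $E\cdot_{\widetilde S}E=-\,E\cdot_{Q(d)}E=-2$ because $E$ is a hyperplane section of the quadric $Q(d)$. You instead never touch $\cX(d)$ or $Q(d)$: you work entirely on the resolution $\widetilde S\to\cX_0$ of the $A_1$ point, combining adjunction ($p_a(E)=0$ for the smooth conic $E$) with the crepancy of the resolution, the latter obtained by the standard comparison with the blow-up of the origin in a formal smooth ambient threefold. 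Both arguments are sound. The paper's computation is shorter and reuses structure it needs anyway (the fiber class relation also yields $\widetilde S^2=Q(d)^2=-E$ in the proof of Proposition~\ref{pr:varphi_surjective}); your argument is self-contained at the level of the surface singularity, makes no reference to the chosen smoothing, and is essentially the classical Du Val computation that the paper itself points to via \cite[Proposition 8.1.9]{Dolg12}. The only point to flag is cosmetic: the discrepancy computation should be phrased as determining the coefficient $a$ in $K_{\widetilde S}=\pi^*K_{\cX_0}+aE$ (a formal-local quantity), rather than as literal intersection theory on $\Spec k[[x,y,z]]$; you signal this correctly by working in a formal or \'etale neighborhood, so there is no genuine gap.
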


\begin{proof}
Since the curve $E$ is a hyperplane section in $Q(d)$, 
we have   $E \cdot_{Q(d)} E = 2$. We show that $E \cdot_{\widetilde S} E = - E \cdot_{Q(d)} E$ which then proves the lemma. The class of $Q(d)+ \widetilde S $ in $\Pic \cX(d)$ is the class of a fiber of the family, so it has $0$ intersection with any class in Chow contained in a fiber. It follows that $(Q(d)+ \widetilde S) \cdot Q(d) \cdot  \widetilde S =0$, that is $Q(d)^2 \cdot \widetilde S + \widetilde S^2 \cdot Q(d)=0$. Since  $E=Q(d)\cap \widetilde S$, we have that $Q(d)^2 \cdot \widetilde S= E \cdot_{Q(d)} E$ and $ \widetilde S^2 \cdot Q(d)=E \cdot_{\widetilde S} E$.
\end{proof}

\subsection{Picard groups of $d$-surgeries and vanishing cycle}
In the rest of the paper, we will only be interested in the following particular cases of $1$-nodal Lefschetz fibrations. A \emph{nodal del Pezzo surface} is a normal algebraic surface with nodes as its only singularities, such that its canonical sheaf $\omega_S$ is invertible and $\omega_S^{-1}$ is ample. Note the difference with \cite[Definition 8.1.2]{Dolg12}, where a del Pezzo surface is allowed to be singular.

\begin{defi}\label{df:uninodal_del_Pezzo}
The minimal resolution of a nodal del Pezzo surface with a single node is called a \emph{uninodal del Pezzo surface}. 
\end{defi}

In particular, the surface $S$ contains a smooth rational curve $E$ with $E^2=-2$ (equivalently $K_{S}\cdot E=0$), and for any other reduced and irreducible curve $C$ in $S$ we have $-K_{S}\cdot C>0$. See also \cite[4.1]{Vakil-curves_rational_surfaces}.
  
\begin{exa}
  By \cite[Theorem 8.1.13 and Corollary 8.1.17]{Dolg12}, examples of uninodal del Pezzo surfaces include  the second Hirzebruch surface $\P_{\P^1}(\cO \oplus \cO(2))$, the blow-up of $\P^2_k$ at two infinitely near points (ie two consecutive blow-ups of $\P^2_k$ at a point, the second point being on the exceptional divisor of the first blow-up), and the blow up of $\P^2_k$ at 6 points on a conic, no 3 of which are on a line. These are the examples we will be concerned with in Section \ref{sec:applications}.
\end{exa}

\begin{defi}\label{df:1-nodal_Lefschetz_fibration}
 A \emph{$1$-nodal Lefschetz fibration of del Pezzo surfaces} is a $1$-nodal Lefschetz fibration  $\cX \to \Spec k[[t]]$ such that the generic fiber is a del Pezzo surface and the surface $\widetilde S$ is a uninodal del Pezzo surface.
\end{defi}

As said above, explicit examples of $1$-nodal Lefschetz fibration of del Pezzo surfaces are given in Section \ref{sec:applications}. Let us now relate the Picard groups of the different fibers of such Lefschetz fibration of del Pezzo surfaces.

\begin{lemma}\label{lm:PicXd=PicXd0}
Let $\cX \to \Spec k[[t]]$ be a $1$-nodal Lefschetz fibration of del Pezzo surfaces. Let $\cX(d)$ denote the corresponding $d$-surgery. The restriction morphism
\[
 \Pic\cX(d) \stackrel{\cong}{\longrightarrow}  \Pic\cX(d)_0
\] is an isomorphism.
\end{lemma}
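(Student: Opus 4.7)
The plan is to use Grothendieck's formal existence theorem and reduce to a cohomological vanishing on the special fiber. Since $\cX(d)$ is proper over $\Spec k[[t]]$ (as the blow-up of the base change of the proper family $\cX$), Grothendieck's existence theorem identifies coherent sheaves on $\cX(d)$ with coherent sheaves on the formal completion $\widehat{\cX(d)}$ along $\cX(d)_0$, giving $\Pic \cX(d) \cong \Pic \widehat{\cX(d)} = \varprojlim_n \Pic \cX(d)_n$, where $\cX(d)_n \subset \cX(d)$ denotes the $n$-th infinitesimal thickening of the special fiber.

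Next, I would compare consecutive thickenings via the short exact sequence of units
\[
0 \to \mathcal{I}^n/\mathcal{I}^{n+1} \to \cO_{\cX(d)_n}^* \to \cO_{\cX(d)_{n-1}}^* \to 0,
\]
where $\mathcal{I}$ is the ideal sheaf of $\cX(d)_0$ in $\cX(d)$ and the first map is $f \mapsto 1+f$. Since $\cX(d)$ is flat over $\Spec k[[t]]$ and $\cX(d)_0$ is cut out by the uniformizer $t$, each graded piece $\mathcal{I}^n/\mathcal{I}^{n+1}$ is free of rank one over $\cO_{\cX(d)_0}$. The resulting long exact cohomology sequence then shows each transition map $\Pic \cX(d)_n \to \Pic \cX(d)_{n-1}$ is an isomorphism, provided $\oH^1(\cX(d)_0, \cO) = 0 = \oH^2(\cX(d)_0, \cO)$.

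The substantive step is to verify these vanishings on the reducible central fiber $\cX(d)_0 = \widetilde S \cup_E Q(d)$ of Proposition~\ref{d-surgery-central-fiber}. Applying the Mayer--Vietoris sequence
\[
0 \to \cO_{\cX(d)_0} \to \cO_{\widetilde S} \oplus \cO_{Q(d)} \to \cO_E \to 0
\]
arising from the transverse gluing of the two smooth components along the smooth curve $E$, I would reduce to $\oH^i(\cO) = 0$ for $i \geq 1$ on each piece. The surface $\widetilde S$ is a uninodal del Pezzo, hence a weak del Pezzo and geometrically rational; $Q(d)$ is geometrically isomorphic to $\P^1 \times \P^1$ by Proposition~\ref{prop:classification_quadric_surfaces_in_P3}; and $E$ is a smooth plane conic. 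For each, the coherent cohomology of the structure sheaf vanishes in positive degrees over $\overline k$, and hence over $k$ by flat base change. The surjectivity $k \oplus k = \oH^0(\cO_{\widetilde S}) \oplus \oH^0(\cO_{Q(d)}) \to \oH^0(\cO_E) = k$ then kills $\oH^1(\cO_{\cX(d)_0})$ in the long exact sequence, and $\oH^2(\cO_{\cX(d)_0}) = 0$ follows immediately.

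The main obstacle I anticipate is this Mayer--Vietoris step: one must justify the scheme-theoretic identification $\widetilde S \cap Q(d) = E$ (supplied by Proposition~\ref{d-surgery-central-fiber}), verify the reducedness and transversality needed for exactness of the gluing sequence, and invoke flat base change to transport vanishing to $k$. Once these are settled, composing $\Pic \cX(d) \cong \Pic \widehat{\cX(d)} = \varprojlim_n \Pic \cX(d)_n \cong \Pic \cX(d)_0$ yields the claim.
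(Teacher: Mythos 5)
Your proof is correct and follows essentially the same route as the paper: the paper cites \cite[Corollary 8.5.6]{IllusieFGA} to reduce the statement to the vanishings $H^1(\cX(d)_0,\cO)=H^2(\cX(d)_0,\cO)=0$, and then establishes these via the same Mayer--Vietoris sequence for $\cX(d)_0=\widetilde S\cup_E Q(d)$. Your first two paragraphs simply unpack the proof of that FGA corollary (formal GAGA plus the exponential/truncated-unit sequence across thickenings), which is fine but not a different argument.
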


\begin{proof}
It suffices to show that for each line bundle $\mathcal{L}_0$ on $\cX(d)_0$, there is a unique line bundle $\mathcal{L}$ on $\cX(d)$ whose pullback to the special fiber is $\mathcal{L}_0$. By \cite[Corollary 8.5.6]{IllusieFGA}, it is sufficient to show that $H^2(\cX(d)_0, \mathcal{O}_{\cX(d)_0 } ) = 0$ and $H^1(\cX(d)_0, \mathcal{O}_{\cX(d)_0 } ) = 0$. 

By Proposition~\ref{d-surgery-central-fiber}, we have $\cX(d)_0 = \widetilde{S} \cup_E Q(d)$. We denote by $i_{\widetilde{S}}:  \widetilde{S} \to \cX(d)_0$, by $i_{Q(d)}:Q(d)\to \cX(d)_0$, and by $i_{E}:E\to \cX(d)_0$  the corresponding closed immersions. We obtain the short exact sequence 
\[
0 \to \cO_{\cX(d)_0} \to i_{\widetilde{S},*} \cO_{\widetilde{S}} \oplus i_{Q(d),*} \cO_{Q(d)} \to i_{E,*} \cO_{E}  \to 0
\] of quasi-coherent sheaves on $\cX(d)_0 $. The corresponding long exact sequence in cohomology begins $0 \to k \to k^2 \to k \stackrel{0}{\to} \ldots$. 

By \cite[Lemma 8.1.12]{Dolg12}, we have 
\[
 \forall i=1,2,\qquad  H^i(\cX(d)_0, i_{\widetilde{S},*} \cO_{\widetilde{S}}) \cong H^i(\widetilde{S}, \cO_{\widetilde{S}}) = 0 = H^i(Q(d), \cO_{\Q(d)})\cong H^i(\cX(d)_0, i_{Q(d),*} \cO_{Q(d)}).
\]
Furthermore $h^i(E,\cO_{E} ) = 0$ for $i=1,2$ as $E_{\overline{k}} \cong \P^1$. It follows that $H^2(\cX(d)_0, \mathcal{O}_{\cX(d)_0 } ) = 0$ and $H^1(\cX(d)_0, \mathcal{O}_{\cX(d)_0 } ) = 0$. 
\end{proof}

\begin{lemma}\label{lm:PicXd0=PicStildex_PicEPicQ}
Let $\cX(d) \to \Spec k[[t]]$ denote a $d$-surgery of a $1$-nodal Lefschetz fibration $\cX \to \Spec k[[t]]$ of del Pezzo surfaces. The restriction map \[
\Pic \cX(d)_0 \stackrel{\cong}{\longrightarrow} \Pic \widetilde S \times_{\Pic E} \Pic Q(d)
\]
is an isomorphism.  
\end{lemma}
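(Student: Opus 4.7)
The plan is to adapt the strategy of Lemma~\ref{lm:PicXd=PicXd0} by replacing the structure sheaf with the sheaf of units, and using the identification $\Pic(Y) = H^1(Y, \cO_Y^*)$. Since $\cX(d)_0 = \widetilde S \cup_E Q(d)$ is a scheme-theoretic pushout along the closed immersions of $E$ into the two smooth components, I expect the Zariski sheaves on $\cX(d)_0$ to fit in a short exact sequence
\[
1 \longrightarrow \cO_{\cX(d)_0}^* \longrightarrow i_{\widetilde S,*}\cO_{\widetilde S}^* \oplus i_{Q(d),*}\cO_{Q(d)}^* \longrightarrow i_{E,*}\cO_E^* \longrightarrow 1,
\]
whose second arrow is $f\mapsto (f|_{\widetilde S},f|_{Q(d)})$ and whose third arrow is $(g_1,g_2)\mapsto (g_1|_E)(g_2|_E)^{-1}$. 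Exactness on the left and in the middle is immediate from the pushout description of $\cO_{\cX(d)_0}$. For exactness on the right, which is the one step that genuinely requires care (multiplicative Mayer--Vietoris can fail when units do not lift), I would argue Zariski-locally: near a point $p \in E$, the inclusion $E \hookrightarrow \widetilde S$ is a Cartier divisor by Proposition~\ref{d-surgery-central-fiber}, so the stalk surjection $\cO_{\widetilde S,p}\twoheadrightarrow \cO_{E,p}$ lifts a unit $h$ on $E$ to some $\widetilde h\in\cO_{\widetilde S,p}$; since $\widetilde h(p)=h(p)\neq 0$, the lift $\widetilde h$ is itself a unit, and pairing it with the constant unit $1$ on $Q(d)$ exhibits $h$ as a boundary.

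Next I would take the associated long exact Zariski cohomology sequence:
\[
1 \to H^0(\cX(d)_0,\cO^*) \to k^*\oplus k^* \to k^* \to \Pic\cX(d)_0 \to \Pic\widetilde S \oplus \Pic Q(d) \to \Pic E.
\]
The identifications of the $H^0$ terms use that each of $\widetilde S$, $Q(d)$, and $E$ is smooth, proper, and geometrically connected over $k$: $\widetilde S$ is a uninodal del Pezzo surface, $Q(d)_{k^s}\cong \P^1_{k^s}\times\P^1_{k^s}$ by Proposition~\ref{prop:classification_quadric_surfaces_in_P3}, and $E$ is a smooth plane conic. Since the map $(a,b)\mapsto ab^{-1}$ from $k^*\oplus k^*$ to $k^*$ is surjective, the connecting homomorphism $k^*\to \Pic\cX(d)_0$ vanishes, so $\Pic\cX(d)_0$ injects into $\Pic\widetilde S\oplus \Pic Q(d)$ with image equal to the kernel of $(L_1,L_2)\mapsto L_1|_E\otimes L_2|_E^{-1}$. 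This kernel is tautologically the fiber product $\Pic\widetilde S\times_{\Pic E}\Pic Q(d)$, yielding the claim.

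The main obstacle, as indicated above, is the surjectivity of the restriction-difference map at the level of sheaves of units; once the Cartier divisor property of $E$ inside the smooth components is invoked, everything else is standard cohomological bookkeeping, exactly parallel to the additive argument of Lemma~\ref{lm:PicXd=PicXd0}.
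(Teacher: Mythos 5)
Your proposal is correct and follows essentially the same route as the paper: the same short exact sequence of unit sheaves $1 \to \cO_{\cX(d)_0}^* \to i_{\widetilde S,*}\cO_{\widetilde S}^* \oplus i_{Q(d),*}\cO_{Q(d)}^* \to i_{E,*}\cO_E^* \to 1$, the long exact cohomology sequence, and the observation that $\cO^*(E)\cong k^*$ is hit by constants so the connecting map vanishes. The only difference is that you spell out the sheaf-level right-exactness by lifting units locally, which the paper leaves implicit.
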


\begin{proof}
Since $\cX(d)_0 $ is reduced and that $\cX(d)_0 = \widetilde S \cup_{E} Q(d)$, we obtain a short exact sequence of sheaves
\[
1 \to  \cO_{\cX(d)_0}^* \to i_{\widetilde{S},*} \cO_{\widetilde{S}}^* \oplus i_{Q(d),*} \cO^*_{Q(d)} \to i_{E,*} \cO^*_{E}  \to 1.
\] Taking the long exact sequence in cohomology yields
\[
 \cO^*(\widetilde S) \times \cO^* (Q(d))\to \cO^*(E) \to \Pic \cX(d)_0 \stackrel{\cong}{\to} \Pic \widetilde S \times_{\Pic E} \Pic Q(d) \to 0
\] Since $\cO^*(E)\cong k$, the claimed isomorphism follows.
\end{proof}

\begin{rem}\label{Rmk:closure_Cartier_divisor_general_fiber}
  Given $\cX(d)$  a $d$-surgery of a $1$-nodal Lefschetz fibration $\cX \to \Spec k[[t]]$ of del Pezzo surfaces, 
  let $D$ be a Cartier divisor on the generic fiber $\Sigma(d)$. The closure $\overline{D}$ of $D$ in $\cX(d)$ is flat. See e.g. \cite[Lemma 9.5(1)]{KLSW-relor}. It follows that $\overline{D}$ is codimension one.
  Since $\cX(d) \to \Spec k[[t]]$ is projective, the scheme $\cX(d)$ is separated and Noetherian. It is moreover integral. Since  $\cX$ is regular, so is $\cX(d)$, whence all the local rings of $\cX(d)$ are unique factorization domains by the Auslander--Buchsbaum theorem. It follows that $\overline{D}$ determines an effective relative Cartier divisor. See for example \cite[II Proposition 6.11]{Hartshorne}.
\end{rem}

\begin{defi}\label{df:varphi}
Let  $\cX(d)$  a $d$-surgery of a $1$-nodal Lefschetz fibration $\cX \to \Spec k[[t]]$ of del Pezzo surfaces.
Let $\varphi_d: \Pic \widetilde S \times_{\Pic E} \Pic Q(d) \to \Pic \Sigma(d)$ denote the composition
\[
\Pic \widetilde S \times_{\Pic E} \Pic Q(d) \stackrel{\cong}{\to} \Pic \cX(d)_0 \stackrel{\cong}{\to} \Pic \cX(d) \to \Pic \Sigma(d),
\] where the first isomorphism is from Lemma~\ref{lm:PicXd0=PicStildex_PicEPicQ} and the second isomorphism is from Lemma~\ref{lm:PicXd=PicXd0}.
\end{defi}

Over $k^s$, the quadric $Q(d)$ becomes isomorphic to $\P^1_{k^s} \times \P^1_{k^s}$. The basechange map $\P^1_{k^s} \times \P^1_{k^s} \to Q(d)$ produces a map $\Pic Q(d) \to \Pic (\P^1_{k^s} \times \P^1_{k^s})$. There is a  spectral sequence $H^i(\Gal(k^s/k), H^j_{\textrm{et}} (Q(d)_{k^s}, \mathbb{G}_m)) \Rightarrow H^{i+j}_{\textrm{et}}(Q(d), \mathbb{G}_m)$. By Hilbert 90, $H^1(\Gal(k^s/k), H^0(Q(d)_{k^s}, \mathbb{G}_m)) \cong H^1(\Gal(k^s/k), k^s)$ is $0$. Thus $\Pic Q(d) \cong \Pic (\P^1_{k^s} \times \P^1_{k^s})^{\Gal(k^s/k)}$, and the map induced by basechange produces the injection. Since $ \Pic (\P^1_{k^s} \times \P^1_{k^s}) \cong \Z \times \Z$ and the Galois action exchanges the rulings by Proposition~\ref{prop:classification_quadric_surfaces_in_P3}, we have
\begin{equation}\label{PicQ(d)}
\Pic Q(d) \cong \begin{cases}
 \Z \times \Z, \text{if } d \in (k^*)^2\\
 \{ (i,i): i \in \Z \} \subset  \Z \times \Z, \text{if } d \notin (k^*)^2
\end{cases}
\end{equation} with the isomorphism being given in the first case by pullback to $\Pic (\P^1_{k^s} \times \P^1_{k^s})$ and in the second by pullback to the diagonal subgroup of $\Pic (\P^1_{k^s} \times \P^1_{k^s})$. In particular, the element  $(1,1)$ in $ \Pic (\P^1_{k^s} \times \P^1_{k^s})$ may also be viewed as an element of $\Pic Q(d)$ for any $d\in k^*$.

\begin{prop}\label{pr:varphi_surjective}
The morphism $\varphi_d$ of Definition~\ref{df:varphi} is surjective with kernel generated by $(-E, (1,1))$ inducing an isomorphism
\[
\overline{\varphi}_d: \frac{\Pic \widetilde S \times_{\Pic E} \Pic Q(d)}{ \Z(-E, (1,1))}\stackrel{\cong}{\longrightarrow} \Pic \Sigma(d).
\]
\end{prop}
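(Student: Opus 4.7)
The plan is to unwind the composition defining $\varphi_d$ in Definition~\ref{df:varphi}. Its first two arrows are already isomorphisms by Lemmas~\ref{lm:PicXd=PicXd0} and~\ref{lm:PicXd0=PicStildex_PicEPicQ}, so the proof reduces to showing that the restriction $r \colon \Pic \cX(d) \to \Pic \Sigma(d)$ is surjective and identifying $\Ker r$, then transporting this kernel back to $\Pic \widetilde S \times_{\Pic E} \Pic Q(d)$ via the inverse isomorphisms.

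Surjectivity of $r$ is immediate from Remark~\ref{Rmk:closure_Cartier_divisor_general_fiber}: for any Cartier divisor $D$ on $\Sigma(d)$, its closure $\overline D$ in the regular scheme $\cX(d)$ is a Cartier divisor restricting to $D$. To compute $\Ker r$, I would use regularity of $\cX(d)$ (so that Weil equals Cartier) to argue that any class in $\Ker r$ is represented by a divisor supported on $\cX(d)_0 = \widetilde S \cup_E Q(d)$, i.e., by $a[\widetilde S] + b[Q(d)]$ with $a,b \in \Z$. A local computation in the blow-up chart with affine coordinates $(x, T, Y, Z)$ coming from $(t, x, y, z) = (xT, x, xY, xZ)$, in which the proper transform of $\widetilde{\cX}(d)$ is cut out by $Q(1,Y,Z) + \frac{d}{\delta} T^2 = 0$, shows that $Q(d) = V(x)$, $\widetilde S = V(T)$, and $t = xT$, whence $\dive_{\cX(d)}(t) = [\widetilde S] + [Q(d)]$. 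To see this is the only relation, I would observe that a rational function on $\cX(d)$ whose divisor is supported on $\cX(d)_0$ restricts to a global unit on the geometrically connected proper $k((t))$-scheme $\Sigma(d)$, hence belongs to $k((t))^*$; since units of $k[[t]]$ yield trivial divisors on $\cX(d)$, principal divisors supported on the special fiber are exactly $\Z \cdot \dive(t)$. Therefore $\Ker r \cong (\Z [\widetilde S] \oplus \Z [Q(d)])/\Z([\widetilde S] + [Q(d)]) = \Z \cdot [\widetilde S]$.

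To conclude, I would trace $[\widetilde S]$ through the chain $\Pic \cX(d) \cong \Pic \cX(d)_0 \cong \Pic \widetilde S \times_{\Pic E} \Pic Q(d)$, which is restriction to the special fiber followed by restriction to each component. Using the standard identity $\cO_X(Y)|_Y \cong N_{Y/X}$, the relation $[\widetilde S] + [Q(d)] = 0$ derived above, and the scheme-theoretic equality $\widetilde S \cap Q(d) = E$ (visible in the chart as $V(T) \cap V(x) \cap \cX(d)$), one obtains
\[
\cO_{\cX(d)}(\widetilde S)|_{\widetilde S} = -\cO_{\cX(d)}(Q(d))|_{\widetilde S} = -\cO_{\widetilde S}(E) \quad\text{and}\quad \cO_{\cX(d)}(\widetilde S)|_{Q(d)} = \cO_{Q(d)}(E).
\]
The latter is the hyperplane section of $Q(d) \subset \mathbb{P}^3_k$, which by Proposition~\ref{prop:classification_quadric_surfaces_in_P3} corresponds to the Galois-fixed class $(1,1) \in \Pic(\mathbb{P}^1_{k^s} \times \mathbb{P}^1_{k^s})$. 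Hence $[\widetilde S] \mapsto (-E, (1,1))$, giving the claimed description of $\Ker \varphi_d$ and the induced isomorphism $\overline{\varphi}_d$.

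The step I expect to be most delicate is the local blow-up calculation showing $\dive_{\cX(d)}(t) = [\widetilde S] + [Q(d)]$ with both components appearing with multiplicity one: the factor $\frac{d}{\delta} t^2$ doubles the multiplicity of the special fiber on $\widetilde{\cX}(d)$, and the single blow-up of the node must redistribute multiplicities so that the central fiber of $\cX(d)$ becomes reduced (semistable reduction). Any miscount here would alter the relation $[\widetilde S] + [Q(d)] = 0$ and hence the kernel, so verifying that this blow-up chart indeed produces reduced components is the key technical point.
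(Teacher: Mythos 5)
Your proposal is correct and follows essentially the same route as the paper: surjectivity via closures of divisors (Remark~\ref{Rmk:closure_Cartier_divisor_general_fiber}), and the kernel computed by observing that a line bundle trivial on $\Sigma(d)$ has a meromorphic section with divisor supported on $\cX(d)_0$, hence lies in the span of $[\widetilde S]$ and $[Q(d)]$, whose common image in the fiber product is $\pm(-E,(1,1))$. The only cosmetic difference is that you identify this generator via the normal-bundle identity and an explicit blow-up chart giving $\dive(t)=[\widetilde S]+[Q(d)]$, whereas the paper reads off the same restrictions from the intersection relations $(\widetilde S+Q(d))\cdot\widetilde S=(\widetilde S+Q(d))\cdot Q(d)=0$ already established in the proof of Lemma~\ref{lm:EE=-2}; your worry about reducedness of the special fiber is settled there by Proposition~\ref{d-surgery-central-fiber}.
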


\begin{proof}
We first show $\varphi_d$ is surjective. Let $\mathcal{L}$ be an invertible sheaf on $\Sigma(d)$. Choose a meromorphic section $s$, and let $\Div (s) = \sum_i n_i D_i$ be the associated divisor. Here $n_i \in \Z$, the $D_i$ are divisors on $\Sigma(d)$ and the sum is finite. By Remark~\ref{Rmk:closure_Cartier_divisor_general_fiber}, $\overline{D_i}$ is an effective relative Cartier divisor on $\cX(d)$. Since the local rings of  $\Sigma(d)$ are unique factorization domains, we have that $\mathcal{L} \cong \cO(\sum_i n_i D_i)$. See for example \cite[II Section 6]{Hartshorne}. It follows that the line bundle $\cO(\sum_i n_i \overline{D_i})$ on $\cX(d)$ restricts to $\mathcal{L}$, showing the surjectivity of $\varphi_d$.  

Suppose that $\mathcal{L}$ is an invertible sheaf on $\cX(d)$, whose restriction to $\Sigma(d)$ is trivial. We may choose a meromorphic section $s$ of $\mathcal{L}$ which is regular and nowhere vanishing on $\Sigma(d)$. Thus the support of $\Div s$ is contained in $\cX(d)_0$, and  $\Div s = m Q(d) + n \widetilde S$ for integers $n$ and $m$. Hence we are left to compute the intersections $\widetilde S^2$, $\widetilde S\cdot Q(d)$, and $Q(d)^2$. Since $\widetilde S\cdot Q(d)=E$ and $(Q(d) + \widetilde S) \cdot Q(d) = (Q(d) + \widetilde S) \cdot \widetilde S =0$  by the proof  of Lemma~\ref{lm:EE=-2}, we obtain
$\widetilde S^2=Q(d)^2=-E$. Hence
\[
 ( m Q(d) + n \widetilde S)\cdot S = -(n-m)E \qquad\mbox{and}\qquad ( m Q(d) + n \widetilde S)\cdot Q(d) = (n-m)E,
\]
which is the desired result since $E$ realizes the class $(1,1)$ in $\Pic Q(d)$.
\end{proof}

\begin{defi}\label{df:vanishing_cycle}
Let $\cX$ be a $1$-nodal Lefschetz fibration of del Pezzo surfaces. Define {\em the vanishing cycle} of $\cX$ to be the element of $\Pic\Sigma(1)$ given by
\[
\gamma=\varphi_1(0,(1,-1)).
\]
\end{defi}
 
Next lemma says that given $d\in k\setminus k^2$, there is a natural identification of $\Pic\Sigma(d)$ with the orthogonal of the vanishing cycle $\gamma$ in $\Pic\Sigma(1)$.

\begin{cor}\label{cor:mapPicSigma_to_PicS}
There is a well-defined injective group homomorphism
\[
\psi_d: \Pic \Sigma(d) \to  \Pic \Sigma(1)
\] given by the composite
\[
 \Pic \Sigma(d) \xrightarrow{\overline{\varphi}_d^{-1}}  \frac{\Pic \widetilde S \times_{\Pic E} \Pic Q(d)}{ \Z(-E, (1,1))} \to \frac{\Pic \widetilde S \times_{\Pic E}  \Pic (\mathbb{P}^1_{k^s} \times \mathbb{P}^1_{k^s} )}{ \Z(-E, (1,1))}
 \]
 \[
  \cong  \frac{\Pic \widetilde S \times_{\Pic E} \Pic Q(1)}{ \Z(-E, (1,1))} \xrightarrow{\overline{\varphi}_1} \Pic \Sigma(1)
\]
\end{cor}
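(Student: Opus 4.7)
The plan is to verify each arrow in the defining composite is a well-defined group homomorphism, and then to deduce injectivity of $\psi_d$ from the injectivity of the basechange map $\Pic Q(d)\hookrightarrow \Pic(\P^1_{k^s}\times \P^1_{k^s})$ recalled just before \eqref{PicQ(d)}.

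First, I would observe that Proposition~\ref{pr:varphi_surjective}, applied with parameters $d$ and $1$, supplies the outer isomorphisms $\overline{\varphi}_d$ and $\overline{\varphi}_1$, so only the middle arrow of the composite requires real attention. The inclusion $\Pic Q(d)\hookrightarrow \Pic(\P^1_{k^s}\times \P^1_{k^s})$ is induced by basechange and is injective by the Hochschild--Serre spectral sequence and Hilbert 90, exactly as recalled in the paragraph before \eqref{PicQ(d)}; its image is all of $\Z\times\Z$ when $d\in(k^*)^2$ and the diagonal otherwise. Since $1\in(k^*)^2$, one also has $\Pic Q(1)\cong \Pic(\P^1_{k^s}\times\P^1_{k^s})$, which supplies the final identification in the composite.

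Second, I would check that the middle arrow descends to the quotients. Naturality of pullback along the commutative square formed by $E\hookrightarrow Q(d)$ and $E_{k^s}\hookrightarrow Q(d)_{k^s}$ shows that the restriction $\Pic Q(d)\to \Pic E$ is compatible with restriction to $\Pic E_{k^s}$ in $\Pic(\P^1_{k^s}\times\P^1_{k^s})$, so one gets an induced map of fiber products over $\Pic E$. Moreover the kernel generator $(-E,(1,1))$ of $\varphi_d$ (and of $\varphi_1$) is preserved, since the class $(1,1)\in \Pic Q(d)$ is by construction the diagonal element $(1,1)\in \Pic(\P^1_{k^s}\times\P^1_{k^s})$, which is the hyperplane class of $Q(1)$. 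Hence the map descends to the quotients, and composing with $\overline{\varphi}_1\circ \overline{\varphi}_d^{-1}$ gives the desired $\psi_d$.

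For injectivity, I would argue directly. Suppose $\psi_d(D)=0$ and lift $\overline{\varphi}_d^{-1}(D)$ to $(L,M)\in \Pic\widetilde S\times_{\Pic E}\Pic Q(d)$. Its image in $\Pic\widetilde S\times_{\Pic E}\Pic Q(1)$ lies in $\Z(-E,(1,1))$, so $(L,M)=n(-E,(1,1))$ for some $n\in\Z$ in the larger fiber product. But $n(1,1)$ belongs to the diagonal subgroup, which sits inside $\Pic Q(d)$, so the relation already holds in $\Pic\widetilde S\times_{\Pic E}\Pic Q(d)$. Hence $(L,M)\in \Z(-E,(1,1))$ in that group and $D=0$. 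The whole argument is essentially formal; the only mildly delicate point is the bookkeeping confirming that the element $(1,1)$ is consistently interpreted in $\Pic Q(d)$ as $d$ varies, but this was addressed explicitly in the paragraph preceding \eqref{PicQ(d)}, so I do not expect any real obstacle.
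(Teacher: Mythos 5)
Your proof is correct and follows the same route as the paper's (which consists of a single sentence invoking basechange for the middle arrow and Proposition~\ref{pr:varphi_surjective} for the outer isomorphisms). You merely fill in the details the paper leaves implicit, in particular the injectivity argument via the observation that $n(1,1)$ lies in the diagonal subgroup identified with $\Pic Q(d)$, which is exactly the intended justification.
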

\begin{proof}
The second arrow and isomorphism are given by basechange, and by Proposition~\ref{pr:varphi_surjective}, $\overline{\varphi}_d$ is an isomorphism.
\end{proof}

Next proposition is a key element in relating the enumerative geometry of $\widetilde S$ with the one of $\Sigma(d)$. 
We thank Burt Totaro for helpful suggestions in its proof. 

\begin{prop}\label{lm:KSD=KStildeD}
Suppose $k$ is a perfect field. Let $\cX \to \Spec k[[t]]$  a $1$-nodal Lefschetz fibration of del Pezzo surfaces and $d\in k^*$. Let $D$ be an element of $\Pic \Sigma(d)$ and let $(D_{\widetilde S}, (i,j))$ in $\varphi_d^{-1}(D)$ correspond to a relative Cartier divisor $\mathcal{D}$ of $\cX(d) \to \Spec k[[t]]$ lifting $D$. Then $\deg(-K_{\widetilde S} \cdot D_{\widetilde S}) = \deg(-K_{\Sigma(d)}\cdot D)$.
\end{prop}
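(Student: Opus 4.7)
The plan is to interpret $-K_{\Sigma(d)}\cdot D$ as the degree of the anticanonical class of $\cX(d)$ paired with the flat family of curves $\mathcal{D}\to\Spec k[[t]]$ (whose existence on the regular three-fold $\cX(d)$ is established in Remark~\ref{Rmk:closure_Cartier_divisor_general_fiber}), to evaluate the same degree on the reducible special fiber, and to observe that the contribution coming from the $Q(d)$-component is exactly cancelled by a boundary term along $E$ forced by the fiber-product compatibility.

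Since $\Spec k[[t]]$ has trivial canonical sheaf, $\omega_{\cX(d)/k[[t]]}=\omega_{\cX(d)}$, and adjunction for the embeddings $\widetilde S\hookrightarrow\cX(d)$ and $Q(d)\hookrightarrow\cX(d)$—combined with the identities $\widetilde S^2=Q(d)^2=-E$ established in the proof of Proposition~\ref{pr:varphi_surjective}—yield
\[
K_{\cX(d)}|_{\widetilde S}=K_{\widetilde S}+E \qquad\text{and}\qquad K_{\cX(d)}|_{Q(d)}=K_{Q(d)}+E.
\]
Flatness of $\mathcal{D}\to\Spec k[[t]]$ implies that the degree of any line bundle on $\cX(d)$ restricted to a fiber of $\mathcal{D}$ is independent of the base point; I will apply this to $\omega_{\cX(d)}^{-1}$. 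Decomposing $[\cX(d)_0]=[\widetilde S]+[Q(d)]$ as Cartier divisors in $\cX(d)$, the special fiber $\mathcal{D}_0=\mathcal{D}\cdot\cX(d)_0$ splits as a sum of 1-cycles $\mathcal{D}\cdot\widetilde S$ and $\mathcal{D}\cdot Q(d)$ whose classes on $\widetilde S$ and $Q(d)$ are, by the fiber-product description of $\Pic \cX(d)_0$, exactly $D_{\widetilde S}$ and $(i,j)$. I thereby obtain
\[
-K_{\Sigma(d)}\cdot D=(-K_{\widetilde S}-E)\cdot D_{\widetilde S}+(-K_{Q(d)}-E)\cdot_{Q(d)}(i,j).
\]

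The $Q(d)$-contribution I will evaluate after basechange to $k^s$, where $Q(d)_{k^s}\cong\P^1\times\P^1$, $-K_{Q(d)}=(2,2)$, and $E=(1,1)$, so $(-K_{Q(d)}-E)\cdot_{Q(d)}(i,j)=(1,1)\cdot(i,j)=i+j$. The fiber-product condition $(D_{\widetilde S},(i,j))\in\Pic\widetilde S\times_{\Pic E}\Pic Q(d)$ forces $D_{\widetilde S}|_E=(i,j)|_E$ in $\Pic E$, and taking degrees yields $E\cdot D_{\widetilde S}=i+j$. Substituting back, the two $\pm(i+j)$ terms cancel, leaving $-K_{\Sigma(d)}\cdot D=-K_{\widetilde S}\cdot D_{\widetilde S}$, as required.

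The main obstacle is making the degree-decomposition step on the reducible special fiber precise: one needs that $\mathcal{D}_0$ really splits as $\mathcal{D}\cdot\widetilde S+\mathcal{D}\cdot Q(d)$ with the claimed Cartier classes on each component, and that degree is additive along this decomposition. This reduces to additivity of the intersection product in the regular three-fold $\cX(d)$ applied to $[\cX(d)_0]=[\widetilde S]+[Q(d)]$, combined with the identifications $\cO(\mathcal{D})|_{\widetilde S}\cong\cO(D_{\widetilde S})$ and $\cO(\mathcal{D})|_{Q(d)}\cong\cO(i,j)$ supplied by Lemmas~\ref{lm:PicXd=PicXd0} and~\ref{lm:PicXd0=PicStildex_PicEPicQ}.
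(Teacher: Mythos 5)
Your proof is correct and follows essentially the same route as the paper's: both compare the degree of the relative dualizing sheaf of $\cX(d)\to\Spec k[[t]]$ against $\mathcal{D}$ on the generic and special fibers via constancy of intersection numbers in the flat family, decompose the special fiber as $\widetilde S+Q(d)$, and observe that the $E$- and $Q(d)$-contributions cancel (your $\pm(i+j)$ is the paper's $E_{\widetilde S}\cdot D_{\widetilde S}-2E_Q\cdot\mathcal{D}|_Q+E_Q\cdot\mathcal{D}|_Q$). The only cosmetic difference is in how the restriction formulas $[\omega_f]|_{\widetilde S}=K_{\widetilde S}+E$ and $[\omega_f]|_{Q(d)}=K_{Q(d)}+E$ are justified: you use adjunction in the regular threefold together with $\widetilde S^2=Q(d)^2=-E$, while the paper invokes the description of the dualizing sheaf of the nodal special fiber via its normalization and log poles along $E$; both yield the same identities.
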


We remind the reader that the characteristic of $k$ is not $2$ in this paper by convention.

\begin{proof}
Let $f$ denote the projection $f:\cX(d) \to \Spec k[[t]]$. Since $f$ is finite type and $\cX(d)$ and $\Spec k[[t]]$ are regular, $f$ is lci \cite[0E9K]{stacks-project}. It follows that $f^! \cO_{\Spec k[[t]]} \simeq \omega_f[2]$ with $\omega_f$ a line bundle, called the dualizing sheaf of $f$. See for example \cite[ArXiv v1 Proposition B.1]{bachmann_wickelgren}. Let $[\omega_f]$ denote the corresponding rational equivalence class of divisors. By local constancy of intersection numbers in flat families \cite[B18]{Kleinman_The_Picard_Scheme}, we have 
\[
[\omega_f]\vert_{\Sigma(d)} \cdot \mathcal{D}\vert_{\Sigma(d)} = [\omega_f]\vert_{\cX(d)_0} \cdot \mathcal{D}\vert_{\cX(d)_0}. 
\] Since $\Sigma(d) \to \Spec k((t))$ is smooth, $[\omega_f]\vert_{\Sigma(d)} = K_{\Sigma(d)}$ is the canonical class of $\Sigma(d)$. Thus 
\[
[\omega_f]\vert_{\Sigma(d)} \cdot \mathcal{D}\vert_{\Sigma(d)} = \deg(K_{\Sigma(d)}\cdot D)
\] It thus remains to show that $[\omega_f]\vert_{\cX(d)_0} \cdot \mathcal{D}\vert_{\cX(d)_0} = \deg(K_{\widetilde S} \cdot D_{\widetilde S})$. We compute 
\begin{align*}
[\omega_f]\vert_{\cX(d)_0} \cdot \mathcal{D}\vert_{\cX(d)_0} &= [\omega_f]\cdot \mathcal{D} \cdot (\widetilde S + Q) \\
 & = [\omega_f]\cdot \mathcal{D} \cdot \widetilde S + [\omega_f]\cdot \mathcal{D} \cdot Q \\
 & = [\omega_f]\vert_{ \widetilde S} \cdot \mathcal{D} \vert_{ \widetilde S} +  [\omega_f]\vert_{ Q} \cdot \mathcal{D} \vert_{ Q} .
\end{align*}
The restriction of $ [\omega_f]$ to the special fiber $\cX(d)_0$ is the dualizing sheaf of $\cX(d)_0$ over $k$ which admits the following description. Let $\nu: \widetilde S \coprod Q \to \cX(d)_0$ denote the normalization map. Let $K_{ \widetilde S \coprod Q}$ denote the canonical sheaf of the normalization and let $K_{ \widetilde S \coprod Q} \otimes \cO(E_{\widetilde S} + E_Q)$ denote the sheaf of $2$-forms possibly having simple poles at the two copies $E_{\widetilde S} \subset \widetilde S$ and $E_Q \subset Q$ of $E$ in the normalization. Then the dualizing sheaf of $\cX(d)_0$ over $k$ is isomorphic to the pushforward under $\nu$ of the subsheaf of $K_{ \widetilde S \coprod Q} \otimes \cO(E_{\widetilde S} + E_Q) $ whose sections over an open subset containing the generic points of $E_1$ and $E_2$ are required to have residues at $E_1$ and $E_2$ summing to $0$, see \cite[Proposition 5.8, $m=1$, $\Delta=0$]{Kollar-Singularities-minimal-model-program} \cite[Remark 2.11]{Friedman83}. Thus the restrictions of $ [\omega_f]$ to the components $\widetilde S$ and $Q$ of the special fiber $\cX(d)_0$ are given by $ [\omega_f]\vert_{ \widetilde S} = [K_{ \widetilde S} \otimes \cO(E_{ \widetilde S} )] =  K_{ \widetilde S}+ E_{ \widetilde S} $ and  $ [\omega_f]\vert_{Q} = K_{ Q}+ E_{ Q} $. Thus 
\[
[\omega_f]\vert_{ \widetilde S} \cdot \mathcal{D} \vert_{ \widetilde S}  = (K_{ \widetilde S}+ E_{ \widetilde S}) \cdot D_{ \widetilde S} = K_{ \widetilde S} \cdot D_{ \widetilde S}+ E_{ \widetilde S} \cdot D_{ \widetilde S}
\]
\[
[\omega_f]\vert_{ Q} \cdot \mathcal{D} \vert_{Q}  = (K_{ Q}+ E_{Q}) \cdot \mathcal{D} \vert_{ Q} = K_{Q} \cdot \mathcal{D} \vert_{Q}+ E_{Q} \cdot \mathcal{D} \vert_{ Q}.
\]
Since $\widetilde S \cap Q = E$, we have 
\[
E_{Q} \cdot \mathcal{D} \vert_{ Q} = \widetilde S \cdot Q \cdot \mathcal{D} = E_{ \widetilde S} \cdot \mathcal{D} \vert_{ \widetilde S}.
\] Since $K_Q = -2E_Q$, we have 
\begin{equation*}
\mathcal{D}\vert_Q \cdot K_Q = \mathcal{D}\vert_Q \cdot (-2E_Q) = - 2  \mathcal{D}  \cdot \widetilde S \cdot Q
\end{equation*} Combining with the above, we have
\[
[\omega_f]\vert_{\cX(d)_0} \cdot \mathcal{D}\vert_{\cX(d)_0} = K_{ \widetilde S} \cdot D_{\widetilde S}
\] as desired.
\end{proof}

\begin{nota}\label{nota:defn_n}
Let $D$ be in $\Pic \Sigma(d)$ and assume that $k$ is perfect. Define the integer $n$ by $n+1 = \deg(-K_{\widetilde S} \cdot \mathcal{D}) = \deg(-K_{\Sigma(d)}\cdot D)$, for any relative Cartier divisor $\mathcal{D}$ on $\cX(d) \to \Spec k[[t]]$ pulling back to $D$. This is well-defined by Proposition~\ref{lm:KSD=KStildeD}. 
\end{nota}

\begin{rem}\label{rem:spec_of_pi}
({\bf Specialization of points}) Let $\cX(d)$ be a $d$-surgery of a $1$-nodal Lefschetz fibration $\cX \to \Spec k[[t]]$ of del Pezzo surfaces. We can specialize a point on the generic fiber $\Sigma(d)$ to the special fiber $\cX(d)_0$: given a point $p$ of $\Sigma(d)$, the closure $\overline p$ of $p$ in $\cX(d)$ is flat over $k[[t]]$ \cite[Lemma 9.5]{KLSW-relor}; the restriction $p_0$ of $\overline p$ to $\cX(d)_0$ is then a point $p_0$ with $[k(p_{0}): k] = [k(p): k((t))]$.
\end{rem}

\begin{rem} ({\bf Twists})
Let $\cX(d)$ be a $d$-surgery of a $1$-nodal Lefschetz fibration $\cX \to \Spec k[[t]]$ of del Pezzo surfaces.
 Let $\sigma = (L_1,\ldots, L_r)$ denote finite \'etale field extensions of $k$ such that $\sum_{i=1}^r [L_i: k] = n$, where $n$ is as in Notation~\ref{nota:defn_n}. Let $\sigma[[t]] = (L_1[[t]],\ldots, L_r[[t]]) $ and similarly for $\sigma((t)) $. Let $\Res_{\sigma[[t]]/ k[[t]]} \cX(d)$ denote the restriction of scalars of $\cX(d)$ along the \'etale extension $k[[t]] \to \sigma[[t]]$. Inverting $\frac{1}{t}$ in this family produces the general fiber $\Res_{\sigma((t))/ k((t))} \Sigma(d)$. At $t=0$, we have special fiber $\Res_{\sigma/ k} (\cX(d)_0)$. See \cite[Tag 05YC]{stacks-project}.
\end{rem}

\section{Twisted binomial coefficients enriched in quadratic forms}\label{Section:twisted_binomial_coefficients_section}
As we will see in Section \ref{sec:enumerative LF},  given a $1$-nodal Lefschetz fibration of del Pezzo surfaces, relating the enumerative geometry of $\widetilde S$ to the one of $\Sigma(d)$ eventually reduces to a combination of combinatoric and Galois theory. To solve this last step,  we will need the quadratic enrichment of binomial coefficients defined in this section. 

\subsection{Definition}
Let $\sGW$ denote the Grothendieck--Witt sheaf. This is the Nisnevich sheaficiation of the presheaf valued in abelian groups on smooth schemes over a base which sends $U$ to the group completion of the isomorphism classes of unimodular $\cO_U$-valued bilinear forms. The tensor product of bilinear forms gives the structure of a sheaf of rings to $\sGW$. Examples to keep in mind include $\sGW(\P^n_k) \cong \sGW(k) \cong \GW(k)$. See for example, \cite[2.3]{degree}for more information. 

Let $x: \Spec \Omega \to X$ be a geometric point of a scheme $X$. Let 
\[
F_x: \Fet_{/X} \to  \pi_1^{\et}(X,x)-\Set
\] denote the fiber functor of $x$ from finite \'etale schemes over $X$ to sets with an action of the \'etale fundamental group. For a set $S$, let ${ S \choose j}$ denote the set of subsets of $S$ of size $j$. Note that if a group $G$ acts on $S$, the set  ${ S \choose j}$ inherits a natural $G$-action. Let $Y \to X$ be finite \'etale. Binomial coefficients enriched in quadratic forms can be defined as follows.

\begin{defi}\label{df:uGWYchoosej}
Let ${ Y/X \choose j}$ in $\sGW(X)$ denote the class of the trace form of the \'etale $X$-scheme corresponding to the set ${F_x(Y) \choose j }$ with its canonical $\pi_1^{\et}(X,x)$-action.
\end{defi} 

For $X$ a field and $j=2,3$ these binomial coefficients appear in \cite[30.12-30.14]{Garibaldi-Serre-Merkurjev}. If $X$ is understood, it is convenient to denote ${ Y/X \choose j}$ by ${Y \choose j}$. For example, the enriched binomial coefficients ${ \mathbb{F}_{q^n}/\mathbb{F}_q \choose j}$ for $n=0,1,\ldots,6$, $j =0,1,2,\ldots$ are given in Figure \ref{figure_Pascal_Triangle_finite_field}. In the figure, $u \in \mathbb{F}_q^*$ is a non-square.

\begin{figure}[h]\label{figure_Pascal_Triangle_finite_field}
\[
  \xymatrix @!=10pt @dr {
    1&1&1&1&1&1&1 \\
    1&{1+\angles u}&3&{3+\angles u}&5&{5+\angles u} \\
    1&3&6&10&15 \\
    1&{3+\angles u}&10&20 \\
    1&5&15\\
    1&{5+\angles u}\\
    1
  }
\]
\caption{Pascal's triangle ${ \mathbb{F}_{q^n} \choose j }$ over $\mathbb{F}_q$}
\end{figure}

\begin{prop}\label{prop:basic_binomial_identities_scheme} Let $X$ be a scheme equipped with a geometric point and let $Y$ be a finite \'etale $X$-scheme.
\begin{enumerate}
\item\label{nchoosej=nchoosen-j}  ${Y\choose j} = {Y \choose n-j}$ for $n=[Y:X]$. 
\item\label{n+mchoosej} If $Y = Y_2 \times Y_1$ with $Y_1$ and $Y_2$ finite \'etale over $X$, then ${Y \choose j} = \sum_{i = 0}^j {Y_1 \choose i}{Y_2 \choose j-i}$
\end{enumerate}
\end{prop}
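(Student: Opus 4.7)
The plan is to reduce both identities to classical combinatorial bijections of finite $\pi_1^{\et}(X,x)$-sets, then translate via the equivalence $F_x : \Fet_{/X} \simeq \pi_1^{\et}(X,x)\text{-}\Set$. The two inputs I will need are the standard functoriality properties of the trace form: it sends coproducts of finite \'etale $X$-schemes (disjoint unions) to sums in $\sGW(X)$, and fiber products to products in $\sGW(X)$. Both can be checked Nisnevich-locally, after a cover trivializing the given finite \'etale schemes, where they reduce to the elementary identities $\Tr_{A \times B} = \Tr_A \oplus \Tr_B$ and $\Tr_{A \otimes B} = \Tr_A \otimes \Tr_B$ for the trace form on direct products and tensor products of separable algebras.

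For (\ref{nchoosej=nchoosen-j}), the set-theoretic complementation map $A \mapsto F_x(Y)\setminus A$ is a $\pi_1^{\et}(X,x)$-equivariant bijection
\[
\binom{F_x(Y)}{j} \;\stackrel{\cong}{\longrightarrow}\; \binom{F_x(Y)}{n-j},
\]
where $n = [Y:X] = |F_x(Y)|$. Applying $F_x^{-1}$ yields an isomorphism of the two corresponding finite \'etale $X$-schemes, and hence an equality of their trace forms in $\sGW(X)$.

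For (\ref{n+mchoosej}), I interpret $Y = Y_2 \times Y_1$ in the product of \'etale $X$-algebras, which on the scheme side is the coproduct, so that $F_x(Y) = F_x(Y_1) \sqcup F_x(Y_2)$ as $\pi_1^{\et}(X,x)$-sets (this is the convention already used in the paper, e.g.\ for $\sigma = \prod_i L_i$). Any $j$-element subset $A$ of $F_x(Y)$ decomposes uniquely as $A = (A\cap F_x(Y_1)) \sqcup (A \cap F_x(Y_2))$, giving a $\pi_1^{\et}(X,x)$-equivariant bijection
\[
\binom{F_x(Y)}{j} \;\stackrel{\cong}{\longrightarrow}\; \coprod_{i=0}^{j} \binom{F_x(Y_1)}{i} \times \binom{F_x(Y_2)}{j-i}.
\]
Transporting across $F_x$, the right-hand side is a disjoint union of fiber products of the \'etale $X$-schemes underlying the various $\binom{Y_\ell}{\bullet}$. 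The additivity and multiplicativity of the trace form recalled above convert this disjoint union of fiber products into the claimed sum of products in $\sGW(X)$.

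There is no serious obstacle here; the statement is a formal consequence of Grothendieck--Galois theory together with the behavior of the trace form under $\sqcup$ and $\times_X$. The only bookkeeping point to verify is that the assignment $Y \mapsto \binom{Y/X}{j}$ of Definition~\ref{df:uGWYchoosej} is, by construction, the trace form of the \'etale scheme corresponding to the functor $\binom{F_x(-)}{j}$ with its natural $\pi_1^{\et}(X,x)$-action, which is exactly what makes the bijections above produce identities in $\sGW(X)$ rather than merely in the underlying sets.
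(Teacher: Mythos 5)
Your proof is correct and follows the same route as the paper, which simply declares both identities "immediate from the corresponding isomorphisms of sets with $\pi_1^{\et}$-action"; you have supplied exactly those isomorphisms (complementation for the first, decomposition of a $j$-subset of a disjoint union for the second) together with the standard additivity and multiplicativity of the trace form. No issues.
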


\begin{proof}
Immediate from the corresponding isomorphisms of sets with $\pi_1^{\et}$-action.
\end{proof}

We give quadratic twists of the enriched binomial coefficients of Definition~\ref{df:uGWYchoosej}. Fix a finite \'etale degree $2$ Galois cover $Z \to X$ and let $q_Z: \pi_1^{\et}(X,x) \to C_2$ denote the corresponding quotient map, where $C_2$ denotes the cyclic group of order $2$. Let $j$ be a non-negative integer, and let $Y \to X$ a finite \'etale cover of degree $2j$. For a set $S$ of size $2j$, the set ${ S \choose j}$ has an action $C_2$ given by taking a subset $A$ of $S$ of size $j$ to its complement $S \setminus A$. If a group $G$ acts on $S$, the set  ${ S \choose j}$ inherits an action of $G \times C_2$ because taking a set to its complement commutes with any automorphism of $S$.

\begin{defi}\label{df:uGWYchoosejtwisted}
Let ${ Y[Z]/X \choose j}$ in $\sGW(X)$ denote the class of the trace form of the \'etale $X$-scheme corresponding to the set ${F_x(Y) \choose j }$ with $\pi_1^{\et}(X,x)$-action given by the homomorphism $$\pi_1^{\et}(X,x) \stackrel{(1,q_Z)}{\to} \pi_1^{\et}(X,x) \times C_2$$ and the canonical action of $\pi_1^{\et}(X,x) \times C_2$.
\end{defi} 

\begin{exa}
The first few twisted enriched binomial coefficients ${ \mathbb{F}_{q^{2j}}[\mathbb{F}_{q^2}]/\mathbb{F}_q \choose j}$ over the finite field with $q$ elements are $2$, $5 + \langle u \rangle$, $20$, and $69+ \langle u \rangle$ for $j=1,2,3,4$ respectively. Here as above, $u \in \mathbb{F}_q^*$ is a non-square. 
\end{exa}

The complete computation of ${ \mathbb{F}_{q^n}/\mathbb{F}_q \choose j}$ and ${ \mathbb{F}_{q^{2j}}[\mathbb{F}_{q^2}]/\mathbb{F}_q \choose j}$ for all appropriate $j$ and $n$ is available in \cite{Chen-Wickelgren}.

\subsection{Identities in twisted enriched binomial coefficients}\label{sec:binomial identity}

We have need for the following identities. Let $k$ be a field of characteristic not $2$ and fix an algebraic closure $k \subseteq \overline{k}$. Let $d$ be a non-square in $ k^*$. We let ${\sigma[\sqrt{d}]/k \choose j}$ denote the twisted enriched binomial coefficients of Definition \ref{df:uGWYchoosejtwisted} for the finite \'etale degree $2$ cover corresponding to $k \subset k[\sqrt{d}]$.

\begin{prop}\label{prop:basic_binomial_identities}  
Let $\sigma$ an \'etale $k$-algebra of degree $2j$, and $d$ be a non-square in $ k^*$. If $\sigma = E \times F$ with $[E:k] = 2m$ even, then $${\sigma[\sqrt{d}]/k \choose j}= {E[\sqrt{d}]/k \choose m}{F[\sqrt{d}]/k \choose j-m} +
    (\sum_{i=0}^{m-1} {E/k \choose i}{F/k \choose j-i})(\langle 2 \rangle + \langle 2d \rangle)$$
\end{prop}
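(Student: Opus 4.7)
The plan is to decompose $\binom{F_x(\sigma)}{j}$ according to the splitting $A = A_E \sqcup A_F$ of a size-$j$ subset along $F_x(\sigma) = F_x(E) \sqcup F_x(F)$. Setting $P_i := \binom{F_x(E)}{i} \times \binom{F_x(F)}{j-i}$, one has $\binom{F_x(\sigma)}{j} = \bigsqcup_i P_i$ as a set. Let $G := \pi_1^{\et}(\Spec k, x)$. The Galois part of the twisted $G$-action preserves each $P_i$. For $g \in G$ with $q_Z(g) = -1$, the complementation in $F_x(\sigma)$ splits as the disjoint union of the complementations in $F_x(E)$ (which has size $2m$) and in $F_x(F)$ (size $2(j-m)$), so the twisted action sends $P_i$ to $P_{2m-i}$. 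Hence the twisted $G$-action preserves $P_m$ and pairs $P_i$ with $P_{2m-i}$ for $i \neq m$, partitioning the remaining indices into pairs $\{i, 2m-i\}$ with $0 \leq i < m$.

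On $P_m$, the restriction of the twisted action factors visibly as the product of the twisted actions defining $\binom{E[\sqrt{d}]/k}{m}$ and $\binom{F[\sqrt{d}]/k}{j-m}$ on the respective factors. Since the product of finite $G$-sets corresponds to the tensor product of étale $k$-algebras, which multiplies trace forms in $\GW(k)$, this contributes exactly $\binom{E[\sqrt{d}]/k}{m}\binom{F[\sqrt{d}]/k}{j-m}$.

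For each pair $\{i, 2m-i\}$ with $0 \leq i < m$, set $L = k(\sqrt{d})$ and $H = \ker q_Z$, a normal index-$2$ subgroup of $G$. The claim is that $P_i \sqcup P_{2m-i}$ equipped with the twisted $G$-action is $G$-isomorphic to the induced $G$-set $\mathrm{Ind}_H^G(P_i)$, where $P_i$ carries the restricted $H$-action (no complementation). Fixing $g_0 \in G \setminus H$, the bijection sends $p \in P_i$ to $p$ and $(g_0, p)$ in the second coset to the complement $c(g_0 p) \in P_{2m-i}$; $G$-equivariance is a direct verification using normality of $H$ and the fact that the Galois action on $F_x(\sigma)$ commutes with complementation. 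Under the correspondence between $H$-sets and étale $L$-algebras, $P_i$ matches an étale $L$-algebra whose trace form in $\GW(L)$ is $\binom{E_L/L}{i}\binom{F_L/L}{j-i}$. Induction from $H$ to $G$ on the set side corresponds to Weil restriction $\Res_{L/k}$ on the algebra side, and to the transfer $\Tr_{L/k}\colon \GW(L) \to \GW(k)$ on trace forms. Thus the pair contributes $\Tr_{L/k}\bigl(\binom{E_L/L}{i}\binom{F_L/L}{j-i}\bigr)$ to $\binom{\sigma[\sqrt{d}]/k}{j}$.

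The projection formula $\Tr_{L/k}(\alpha|_L \cdot \beta) = \alpha \cdot \Tr_{L/k}(\beta)$ for $\alpha \in \GW(k)$, $\beta \in \GW(L)$, together with the identity $\binom{E/k}{i}|_L = \binom{E_L/L}{i}$ (which holds because extending scalars on an étale $k$-algebra restricts its underlying $G$-set to $H$), yields
\[
\Tr_{L/k}\Bigl(\binom{E_L/L}{i}\binom{F_L/L}{j-i}\Bigr) = \binom{E/k}{i}\binom{F/k}{j-i}\cdot \Tr_{L/k}(\langle 1 \rangle) = \binom{E/k}{i}\binom{F/k}{j-i}(\langle 2 \rangle + \langle 2d \rangle),
\]
using $\Tr_{L/k}(\langle 1 \rangle) = \langle 2 \rangle + \langle 2d \rangle$ from the basis $\{1, \sqrt{d}\}$ of $L/k$. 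Summing over $i = 0, \ldots, m-1$ (terms outside the range $i \geq 2m - j$ contribute zero, since $\binom{F/k}{j-i}$ vanishes) and adding the $P_m$ contribution gives the stated identity. The main bookkeeping step is the $G$-equivariant identification of $P_i \sqcup P_{2m-i}$ with $\mathrm{Ind}_H^G(P_i)$; the rest is the projection formula and standard compatibilities between finite $G$-sets, étale algebras, and trace forms.
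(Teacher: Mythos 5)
Your proof is correct and follows essentially the same route as the paper's: the same decomposition of the set of $j$-subsets into the complementation-stable middle piece $P_m$ and the swapped pairs $\{P_i,P_{2m-i}\}$ for $i<m$. The paper identifies each pair directly with the $(G_k\times C_2)$-set ${E\choose i}\times{F\choose j-i}\times C_2$ and uses multiplicativity of trace forms, which is just another phrasing of your induction-from-$\ker q_Z$ plus projection-formula step.
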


\begin{proof}
To ease notation, we suppress the fiber functor associated to a chosen geometric point of $k$ from the notation. Since $[\sigma:k]=2j$, we have $[F:k]=2(j-m)$. The $G_k$-set corresponding to ${\sigma[\sqrt{d}]/k \choose j}$ decomposes as a disjoint union of the $G_k$-sets $$S_i := {E \choose i}{F \choose j-i} \coprod {E \choose 2m-i}{F \choose 2(j-m)-(j-i)} $$ as $i$ ranges over $i=0,1,\ldots,m-1$ and the $G_k$-set corresponding to ${E \choose m}{F \choose j-m}$, as this last set is already stable under the $C_2$ action. This last set contributes ${E[\sqrt{d}]/k \choose m}{F[\sqrt{d}]/k \choose j-m}$ to ${\sigma[\sqrt{d}]/k \choose j}$. The $G_k\times C_2$-set $S_i$ is isomorphic to the product $$S_i \cong {E \choose i}\times{F \choose j-i} \times C_2. $$ The homomorphism $G_k \to G_k \times C_2$ of Definition~\ref{df:uGWYchoosejtwisted} allows one to view the $(G_k \times C_2)$-set $C_2$ as a $G_k$-set. The corresponding \'etale algebra is isomorphic to $k[x]/(x^2 -d)$, which has trace form $(\langle 2 \rangle + \langle 2d \rangle)$. It follows that $S_i$ contributes ${E/k \choose i}{F/k \choose j-i})(\langle 2 \rangle + \langle 2d \rangle)$ to ${\sigma[\sqrt{d}]/k \choose j}$, proving the claim.
\end{proof}

\begin{prop}\label{pr:Binomial_identity}
 If $\sigma$ is an \'etale $k$-algebra with $[\sigma:k] = 2j$, 
 then 
   \begin{align*}
    {\sigma[\sqrt{d}]/k \choose j}=
    {\sigma/k \choose j}\langle d^j \rangle
+ (\langle 2 \rangle - \langle 2d \rangle)
\sum_{l=0}^{j-1}(-1)^l{\sigma/k \choose l} .
  \end{align*}
\end{prop}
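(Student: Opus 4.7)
The plan is to prove the identity by induction on $j$. For the base case $j=1$, an \'etale $k$-algebra of degree $2$ is isomorphic either to $k \times k$ or to $k[\sqrt{a}]$ for some non-square $a \in k^*$. In each case the set ${F_x(\sigma) \choose 1}$ has two elements on which the twisted $G_k$-action factors through an explicit index-$2$ subgroup; identifying the resulting \'etale algebra, a short check using the $\GW(k)$ relation $\langle x,x\rangle = \langle 2x,2x\rangle$ (itself a consequence of the Witt relation $\langle a\rangle+\langle b\rangle=\langle a+b\rangle+\langle ab(a+b)\rangle$) verifies the identity.

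For the inductive step, assume the identity holds for all \'etale $k$-algebras of degree less than $2j$. If $\sigma$ is not a field, we may decompose it as $\sigma = E\times F$ with $[E:k]=2m$ and $[F:k]=2(j-m)$ both positive and even: either some field factor of $\sigma$ has even degree strictly less than $2j$, which we separate off, or all field factors have odd degree, in which case there is an even number of them and we pair them up. Proposition~\ref{prop:basic_binomial_identities} then expresses ${\sigma[\sqrt{d}]/k \choose j}$ in terms of the smaller twisted coefficients ${E[\sqrt{d}]/k \choose m}$ and ${F[\sqrt{d}]/k \choose j-m}$ together with untwisted terms scaled by $\langle 2\rangle+\langle 2d\rangle$. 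Applying the inductive hypothesis to the smaller twisted coefficients and expanding the untwisted ${\sigma/k \choose l}$ via Proposition~\ref{prop:basic_binomial_identities_scheme}, the target identity reduces to a combinatorial manipulation in $\GW(k)$; the key algebraic facts
\[
(\langle 2\rangle-\langle 2d\rangle)(\langle 2\rangle+\langle 2d\rangle) = \langle 4\rangle-\langle 4d^2\rangle = 0, \quad \langle d\rangle^2 = \langle 1\rangle
\]
ensure that the cross terms cancel.

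The main obstacle is the remaining case where $\sigma$ is a field of degree $2j$, for which Proposition~\ref{prop:basic_binomial_identities} does not apply directly. Here the plan is a Galois-theoretic orbit analysis of $T := {F_x(\sigma) \choose j}$, equipped with its $G_k\times C_2$-action in which $C_2$ acts freely by complementation. Each orbit is of one of two types: either it splits into two $G_k$-orbits swapped by complementation, in which case the twisted action identifies the orbit algebra with its tensor product over $k$ with $k[\sqrt{d}]$ while the standard action gives two copies; or it is a single $G_k$-orbit whose stabilizer $H$ admits an index-$2$ overgroup $H'$ with complementation realized by a generator of $H'/H$, in which case the twisted stabilizer is the equalizer of the two homomorphisms $H' \to C_2$ given by $H' \to H'/H$ and by the restriction $q_Z|_{H'}$, and takes different values according to the three possibilities for this pair of characters. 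Summing the contributions of each type of orbit to both sides of the claimed identity, and invoking the symmetries of untwisted binomial coefficients from Proposition~\ref{prop:basic_binomial_identities_scheme}, should complete the argument.
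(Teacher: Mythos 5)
Your reduction to the case $\sigma = E\times F$ via Proposition~\ref{prop:basic_binomial_identities} is in the spirit of the paper's Lemma~\ref{lm:Binomial_identity_ExF} (which does the case $[F:k]=2$ in detail), and your base case is fine. But the decisive step of your plan — the case where $\sigma$ is a field of degree $2j>2$, which no product decomposition can reach — has a genuine gap. Your orbit analysis of $T={F_x(\sigma)\choose j}$ under the twisted $G_k$-action is correct as far as it goes, and it does express the left-hand side ${\sigma[\sqrt{d}]/k\choose j}$ and the term ${\sigma/k\choose j}$ orbit by orbit. It gives you no handle whatsoever, however, on the alternating sum $\sum_{l=0}^{j-1}(-1)^l{\sigma/k\choose l}$: those are trace forms of entirely different $G_k$-sets (the $l$-subsets for $l<j$), and for a general field $\sigma$ the Galois closure can realize an arbitrary transitive subgroup of $S_{2j}$, so there is no uniform relation between the orbit structures of ${F_x(\sigma)\choose j}$ and ${F_x(\sigma)\choose l}$. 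The identity in this case involves genuine cancellation (the right-hand side has negative coefficients), so "summing the contributions of each type of orbit to both sides" cannot work without a sign-reversing bijection or counting argument that you have not supplied. The paper avoids this case entirely: it observes that the difference of the two sides is a Witt invariant in Serre's sense and invokes \cite[Theorem 29.1]{Garibaldi-Serre-Merkurjev}, which says a Witt invariant of \'etale algebras of fixed degree is determined by its values on multiquadratic algebras — precisely the algebras for which the product decomposition always succeeds. That specialization theorem (or some substitute for it) is the missing ingredient in your argument.

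A smaller point on the inductive step for products: the cross terms do not cancel for the reason you give. What appears is $(\langle 2\rangle-\langle 2d\rangle)^2$, which equals $2(\langle 2\rangle-\langle 2d\rangle)$ rather than $0$, together with $\langle d\rangle(\langle 2\rangle-\langle 2d\rangle)=-(\langle 2\rangle-\langle 2d\rangle)$; these terms survive and must be collected into the $(\langle 2\rangle-\langle 2d\rangle)$-multiple on the right-hand side, which is exactly the page-long bookkeeping carried out in Lemma~\ref{lm:Binomial_identity_ExF}. Your more general splitting with $[E:k]=2m$ arbitrary would additionally need the Vandermonde-type expansion of ${\sigma/k\choose l}$ from Proposition~\ref{prop:basic_binomial_identities_scheme}\eqref{n+mchoosej} and the symmetry \eqref{nchoosej=nchoosen-j}; this is plausible but unverified as written, and in any case unnecessary once one restricts to multiquadratic algebras, where peeling off one degree-$2$ factor at a time suffices.
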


\begin{proof}
The ranks of both sides equal ${2j \choose j}$. We may therefore check equality in the Witt ring. A Witt invariant is a natural transformation of functors $\Fet_{/(-)}(2j) \to W(-)$ where $W$ represents the functor on field extensions $k \subseteq F$ taking $F$ to the Witt ring $W(F)$, and $\Fet_{/(-)}(2j) $ denotes the functor on field extensions $k \subseteq F$ taking $F$ to the set of finite \'etale $F$-algebras $\sigma$ with $[\sigma:F]=2j$. See  \cite[27.3]{Garibaldi-Serre-Merkurjev}. The natural transformation taking an extension $k\subseteq F$ and $\sigma \in Fet_{/(-)}$ with $[\sigma:F]=2j$ to the element
\[
 {\sigma[\sqrt{d}]/F \choose j}-
    {\sigma/F \choose j}\langle d^j \rangle
- (\langle 2 \rangle - \langle 2d \rangle)
\sum_{l=0}^{j-1}(-1)^l{\sigma/F \choose l} 
\] of the Witt ring $W(F)$ is thus a Witt invariant.
A multiquadratic $k$-algebra is defined to be a $k$-algebra of the form $\prod_{i=1}^N E_i$ where $E_i$ is a $k$-algebra of rank $1$ or $2$. 
By \cite[Theorem 29.1]{Garibaldi-Serre-Merkurjev}, it suffices to show the claim for $\sigma$ a multiquadratic algebra. The case of degree 2 extensions is straightforward, hence the claim then follows by induction from Lemma~\ref{lm:Binomial_identity_ExF}.
\end{proof}

\begin{lemma}\label{lm:Binomial_identity_ExF}
If Proposition~\ref{pr:Binomial_identity} holds for \'etale $k$-algebras $E$ and $F$, with $[F:k]=2$, then it holds for
$\sigma=E\times F$.
\end{lemma}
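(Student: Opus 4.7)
The plan is a direct algebraic verification in the Witt ring $W(k)$, paralleling the style of the proof of Proposition~\ref{pr:Binomial_identity}. Write $[E:k]=2m$ so that $j=m+1$, and set $\tau=\langle 2\rangle-\langle 2d\rangle$ and $[F]={F/k\choose 1}$. The first step is to apply Proposition~\ref{prop:basic_binomial_identities} to $\sigma=E\times F$ with $j-m=1$. Because $[F:k]=2$, the factors ${F/k\choose r}$ vanish for $r>2$, so only the $i=m-1$ term in the correction sum survives, yielding
\[
{\sigma[\sqrt{d}]/k\choose m+1}={E[\sqrt{d}]/k\choose m}\,{F[\sqrt{d}]/k\choose 1}+{E/k\choose m-1}(\langle 2\rangle+\langle 2d\rangle).
\]

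The second step is to insert the inductive hypothesis: Proposition~\ref{pr:Binomial_identity} applied to $E$ with $j=m$ and to $F$ with $j=1$ expresses ${E[\sqrt{d}]/k\choose m}$ and ${F[\sqrt{d}]/k\choose 1}=[F]\langle d\rangle+\tau$ in terms of ordinary binomial coefficients. Expanding the product of these two formulas and using the $\GW(k)$-identities $\tau\langle d\rangle=-\tau$, $\langle d\rangle^{2}=1$, and $\tau(\langle 2\rangle+\langle 2d\rangle)=0$ collapses several mixed terms.

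The third step is to expand the right-hand side of the target identity. Using Proposition~\ref{prop:basic_binomial_identities_scheme}\eqref{n+mchoosej} I write ${\sigma/k\choose l}={E/k\choose l-2}+[F]{E/k\choose l-1}+{E/k\choose l}$ for every $l$, and use the symmetry ${E/k\choose m+1}={E/k\choose m-1}$ from Proposition~\ref{prop:basic_binomial_identities_scheme}\eqref{nchoosej=nchoosen-j}. Reindexing the alternating sum $\sum_{l=0}^{m}(-1)^{l}{\sigma/k\choose l}$, all contributions proportional to $[F]\langle d^{m+1}\rangle$, $\tau[F]$, and $\tau{E/k\choose m}$ match term by term between the two sides, leaving the residue
\[
S(\tau^{2}-2\tau)+{E/k\choose m-1}\bigl[(\langle 2\rangle+\langle 2d\rangle)-2\langle d^{m+1}\rangle-(-1)^{m}\tau\bigr]=0,
\]
with $S=\sum_{l=0}^{m-1}(-1)^{l}{E/k\choose l}$, to verify.

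The final step is to check that both summands vanish in $W(k)$. The key input is the universal $2$-torsion relation $2\langle c\rangle=2\langle 2c\rangle$, equivalent to $2(1-\langle 2\rangle)=0$, which arises from the isomorphism of binary forms $\langle 1,1\rangle\cong\langle 2,2\rangle$. Applied to the computation $\tau^{2}-2\tau=2(1-\langle d\rangle-\langle 2\rangle+\langle 2d\rangle)$ it makes the first summand vanish, and the bracket reduces to $2(\langle 2\rangle-1)=0$ when $m$ is odd and to $2(\langle 2d\rangle-\langle d\rangle)=0$ when $m$ is even. The principal obstacle is bookkeeping the alternating sums across parity cases in $m$; once the Witt-ring identities above are isolated, the remaining computation is mechanical.
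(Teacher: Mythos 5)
Your proof is correct and takes essentially the same route as the paper's: both apply Proposition~\ref{prop:basic_binomial_identities} to split off the $F$-factor, insert the inductive hypothesis for $E$ and $F$, and close the computation with the identities $\tau\langle d\rangle=-\tau$, $\tau^2=2\tau$, $2\langle a\rangle=2\langle 2a\rangle$, and the symmetry ${E\choose m+1}={E\choose m-1}$. The only difference is presentational — you subtract the two sides and kill the residue by a parity check on $m$, whereas the paper rewrites the left side term by term into the right side.
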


\begin{proof}
  By Proposition~\ref{prop:basic_binomial_identities}, one has
  \begin{align*}
    {\sigma/k \choose j}={E/k \choose j}{F/k \choose 0} \oplus
    {E/k \choose j-2}{F/k \choose 2}\oplus
    {E/k \choose j-1}{F/k \choose 1}
  \end{align*}
  and
  \begin{align*}
       {\sigma[\sqrt{d}]/k \choose j}=
    {E/k \choose j-2}(\langle 2 \rangle + \langle 2d \rangle)\oplus
    {E[\sqrt{d}]/k \choose j-1}{F[\sqrt{d}]/k \choose 1}.
  \end{align*}
  Hence we get by induction
  \begin{align*}
    {\sigma[\sqrt{d}]/k \choose j}&=
    {E/k \choose j-2}(\langle 2 \rangle + \langle 2d \rangle)\oplus
 \\ &\qquad
    \left(
  {E/k \choose j-1}\langle d^{j-1} \rangle
+ (\langle 2 \rangle - \langle 2d \rangle)
\sum_{l=0}^{j-2}(-1)^l{E/k \choose l} 
\right)
\left(
{F/k \choose 1} \langle d \rangle + \langle 2 \rangle - \langle 2d
\rangle
\right).
  \end{align*}
  Since
  \begin{align*}
    {E/k \choose j-2}(\langle 2 \rangle + \langle 2d \rangle)&=
    {E/k \choose j} \langle d^j \rangle \oplus
     {E/k \choose j-2} \langle d^j \rangle \oplus
   {E/k \choose j-2}(\langle 2 \rangle + \langle 2d \rangle-
    2\langle 2d^j \rangle)
    \\ &=
       {E/k \choose j}
       {F/k \choose 0} \langle d^j \rangle  \oplus
     {E/k \choose j-2} {F/k \choose 2} \langle d^j \rangle \oplus
   {E/k \choose j-2}(\langle 2 \rangle + \langle 2d \rangle-
    2\langle 2d^j \rangle) ,
  \end{align*}
  and
  \begin{align*}
  \langle 2 \rangle + \langle 2d \rangle-
    2\langle 2d^j \rangle = (-1)^{j-1}( \langle 2 \rangle - \langle 2d \rangle),
  \end{align*}
  we obtain
  \begin{align}\label{eq:computation}
    {\sigma[\sqrt{d}]/k \choose j}&=
    {\sigma/k \choose j}\oplus
   (-1)^{j-1} {E/k \choose j-2}(\langle 2 \rangle - \langle 2d \rangle)\oplus
    (-1)^{j-1} {E/k \choose j-1}(\langle 2 \rangle - \langle 2d
     \rangle)
     \oplus
     \\&\qquad
    (\langle 2 \rangle - \langle 2d \rangle)
\sum_{l=0}^{j-2}(-1)^{l-1}{E/k \choose l}  {F/k \choose 1} 
\oplus  (\langle 2 \rangle - \langle 2d \rangle)^2
\sum_{l=0}^{j-2}(-1)^l{E/k \choose l}. \nonumber 
  \end{align}
  Since
  \[
  (\langle 2 \rangle - \langle 2d \rangle)^2=
  2(\langle 1 \rangle - \langle d \rangle)=
  2(\langle 2 \rangle - \langle 2d \rangle),
  \]
  we can rewrite
  \begin{align*}
     (\langle 2 \rangle - \langle 2d \rangle)^2
    \sum_{l=0}^{j-2}(-1)^l{E/k \choose l}&=
    (\langle 2 \rangle - \langle 2d \rangle)
    \left(
    \sum_{l=0}^{j}(-1)^l{E/k \choose l-2}{F/k \choose 2}
\oplus    \sum_{l=0}^{j-2}(-1)^l{E/k \choose l}{E/k \choose 0}
    \right).
  \end{align*}
  Finally we have
  \[
  \sum_{l=0}^{j-2}(-1)^{l-1}{E/k \choose l}  {F/k \choose 1}=
  \sum_{l=0}^{j-1}(-1)^{l}{E/k \choose l-1}  {F/k \choose 1}.
  \]
  Hence by  \eqref{eq:computation}, the
  $(\langle 2 \rangle - \langle 2d \rangle)$-factor of
  ${\sigma[\sqrt{d}]/k \choose j}- {\sigma/k \choose j}$ is
    \begin{align*}\label{eq:computation}
      &  (-1)^{j-1} {E/k \choose j-2}\oplus
       (-1)^{j} {E/k \choose j-2}\oplus
      \sum_{l=0}^{j-1}(-1)^{l-1}\left(
          {E/k \choose l}  {F/k \choose 1}
          \oplus  {E/k \choose l-2}{F/k \choose 2}
          \oplus    {E/k \choose l}{E/k \choose 0}
          \right).
    \end{align*}
    which is exactly
    \[
    \sum_{l=0}^{j-1}(-1)^l{\sigma/k \choose l}.
    \]
    The statement is proved.
\end{proof}

\begin{cor}\label{cr:Useful_Binomial_identity}
 If $\sigma$ is an \'etale $k$-algebra with $[\sigma:k] = 2j$, 
 then
    \begin{align*}
     {\sigma'[\sqrt{d}]/k \choose j}\langle d^j \rangle=
     {\sigma'/k \choose j}
 + (-1)^j (\langle 2 \rangle - \langle 2d \rangle)
 \sum_{l=0}^{j-1}(-1)^l{\sigma'/k \choose l} .
   \end{align*}
 \end{cor}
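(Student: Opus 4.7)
The plan is to obtain the corollary as a direct consequence of Proposition~\ref{pr:Binomial_identity} by multiplying both sides of that identity by $\langle d^j \rangle$. Writing $\sigma'$ in place of $\sigma$, the proposition gives
\[
{\sigma'[\sqrt{d}]/k \choose j} = {\sigma'/k \choose j}\langle d^j \rangle + (\langle 2 \rangle - \langle 2d \rangle)\sum_{l=0}^{j-1}(-1)^l{\sigma'/k \choose l}.
\]
Multiplying by $\langle d^j \rangle$ and using that $\langle d^j \rangle \cdot \langle d^j \rangle = \langle d^{2j} \rangle = \langle 1 \rangle$ (since $d^{2j} = (d^j)^2$ is a square), the first term on the right becomes simply ${\sigma'/k \choose j}$.

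It then remains to verify the sign identity
\[
\langle d^j \rangle (\langle 2 \rangle - \langle 2d \rangle) = (-1)^j (\langle 2 \rangle - \langle 2d \rangle)
\]
in $\GW(k)$. This splits into two cases according to the parity of $j$. If $j$ is even then $d^j$ is a square, so $\langle d^j \rangle = \langle 1 \rangle$ and the identity is trivial. If $j$ is odd then $\langle d^j \rangle = \langle d \rangle$, and a direct computation gives $\langle d \rangle(\langle 2 \rangle - \langle 2d \rangle) = \langle 2d \rangle - \langle 2d^2 \rangle = \langle 2d \rangle - \langle 2 \rangle = -(\langle 2 \rangle - \langle 2d \rangle)$, as required.

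Combining these two observations immediately yields the claimed identity. There is essentially no obstacle here: the whole corollary is a short manipulation of Proposition~\ref{pr:Binomial_identity} together with the elementary fact $\langle d \rangle(\langle 2\rangle - \langle 2d\rangle) = -(\langle 2\rangle - \langle 2d\rangle)$ in $\GW(k)$. The only thing to be careful about is the bookkeeping of the sign, which is why the identity is packaged separately as a corollary for later use.
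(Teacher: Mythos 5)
Your proof is correct and is essentially the paper's own argument: multiply Proposition~\ref{pr:Binomial_identity} through by $\langle d^j \rangle$ and use the identity $\langle d^j \rangle(\langle 2 \rangle - \langle 2d \rangle) = (-1)^j(\langle 2 \rangle - \langle 2d \rangle)$, which you verify correctly by the parity case split. Nothing is missing.
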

 \begin{proof}
  This follows from the identity
  \[
    \langle 2 \rangle - \langle 2d \rangle =(-1)^j \langle d^j \rangle (\langle 2 \rangle - \langle 2d \rangle)
  \]
combined with  Proposition \ref{pr:Binomial_identity}, where both sides are multiplied by $\langle d^j \rangle$.
 \end{proof}

\section{Moduli of rational curves on Lefschetz fibrations}\label{sec:enumerative LF}
We now assume that $k$ is a perfect field of characteristic not $2$.

\subsection{Enumerative properties}
Let  $S\to \Spec k$ be a projective smooth algebraic surface, and $D\in \Pic S$. There is a proper moduli stack $\Mbar_{0,n}(S,D)$ over $k$ parametrizing stable maps $f: C \to S$ of degree $D$ where $C$ is a nodal, genus $0$ curve, together with the data of $n$ marked points $q_1, \ldots, q_n$ in the smooth locus of $C$. See \cite[p.90, Theorem 2.8]{Abramovich--Oort-mixed_char}. Given $k \to \sigma$  a finite \'etale algebra of degree $n$,  
we denote by $\Mbar_{0,n}(\Sigma,D)_{\sigma}$  the corresponding  twist of $\Mbar_{0,n}(\Sigma,D)$ defined in \cite[Section 8]{KLSW-relor}, which we recall briefly here. Choose an identification of the set $\{1,\ldots,n\}$ with the set of embeddings of $\sigma$ into the separable closure $k^s$ of $k$. The Galois group of $k^s$ acts on the basechange  $\Mbar_{0,n}(S,D)_{k^s}$ by the canonical action on $k^s$ and by permuting the marked points $q_1, \ldots, q_n$ via the corresponding permutation of the embeddings.
 
Define the evaluation map $\ev: \Mbar_{0,n}(S,D) \to S^n $ by sending $(f: C \to \Sigma, q_1,\ldots,q_n)$ to $(f(q_1),\ldots,f(q_n))$. The map $\ev$ twists to define a map
 \[
 \ev_{\sigma}: \Mbar_{0,n}(S,D)_{\sigma} \to \Res_{\sigma/k}S.
 \]
 A point $p$ of $ \Res_{\sigma/k}S$ determines a moduli stack
 \[
 \Mbar_{0,n}(S,D)_{\sigma}(p)=  \ev_{\sigma}^{-1}(p)
 \] parametrising degree $D$ rational curves on $S$ passing through the points $p_1,\ldots, p_r$ of $S$ corresponding to $p$ under the isomorphism $ \Res_{\sigma/k}S(k(p)) \cong S\big(~k(p)\otimes_{k} \sigma~\big) \cong \prod_{i=1}^r S(L_i)$, where $\prod_{i=1}^rL_i \cong k(p)\otimes_{k} \sigma$ is the decomposition of the tensor product into field extensions.

 Consider again a point $\widetilde{p}$ of $ \Res_{\sigma((t))/k((t))}\Sigma$ and the associated moduli stack
 \[
 \Mbar_{0,n}(\Sigma,D)_{\sigma((t))}(\widetilde{p}):=  \ev_{\sigma((t))}^{-1}(\widetilde{p})
 \] parametrising degree $D$ rational curves on $\Sigma$ passing through the points $p_1,\ldots, p_r$ of $\Sigma$ corresponding to $\widetilde{p}$ as above.
  
 \begin{defi}\label{df:general_fiber_enumerative}
Let $S$ be a  
projective smooth algebraic surface over a field $k$, and $D\in\Pic S$. 
We say that $(S,D)$ is {\em enumerative} if there exists  a finite \'etale extension  $k \to \sigma$ of degree $n=-K_{S} \cdot D -1$
and  a dense open subset $V\subset \Res_{\sigma/k} S$ such that for all $p \in V$,\begin{itemize}
  \item the moduli stack $ \Mbar_{0,n}(S,D)_{\sigma}(p)$ is finite \'etale over the residue field $k(p)$, 
  \item the points of $ \Mbar_{0,n}(S,D)_{\sigma}(p)$ parametrize stable rational degree $D$ maps to $S$ which have irreducible domain curves,
  \item are unramified,
  \item and whose image curves have at worst ordinary double points as singularities. 
  \end{itemize}
\end{defi}

In fact, if $(S,D)$ is enumerative, then for any  finite \'etale extension  $k \to \sigma'$ of degree $n=-K_{S} \cdot D -1$, there exists  a dense open subset $V'\subset \Res_{\sigma'/k} S$ satifying the conditions of Definition \ref{df:general_fiber_enumerative}.
This is because the ordinary double point locus (defined in \cite[Definition 2.9]{KLSW-relor}) of $ \Mbar_{0,n}(S,D)$ is pulled back from $ \Mbar_{0,0}(S,D)$. (The ordinary double point locus is moreover open by the proof of \cite[Lemma 2.14]{KLSW-relor} and the evaluation map is \'etale on this open set \cite[Proof of Lemma 2.27]{KLSW-relor}. The condition that $(S,D)$ is enumerative amounts to the condition that the closed image in $S^n$ of the complement of this open subscheme of $ \Mbar_{0,n}(S,D)$ is not all of $S^n$.)

\begin{exa}\label{exa:dp enum}
  By \cite[Corollary 3.15]{KLSW-relor}, the pair $(S, D)$ is enumerative as soon as the triple $(S,D,k)$ 
  satisfies Hypothesis~\ref{hyp:SDk_with_N}.
\end{exa}

\begin{lemma}\label{lm:enumerative_stable_basechange}
  Let $S \to \Spec k$ be a smooth, projective del Pezzo surface, let $D \in \Pic S$, and $k\to F$ be a field extension. 
  If $(S,D)$ is enumerative, then so is $(S_L,D_L)$.  
\end{lemma}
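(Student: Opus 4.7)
The plan is to exploit the characterization of enumerativity given in the paragraph immediately following Definition~\ref{df:general_fiber_enumerative}, namely that $(S,D)$ is enumerative if and only if the closed image in $S^n$ (under the untwisted evaluation map) of the complement of a certain ``good'' open substack of $\overline{M}_{0,n}(S,D)$ is not all of $S^n$. Since this condition is intrinsic to $(S,D)$ and the involved open loci are pulled back from absolute constructions, it behaves well under base change.

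First, I would let $U \subset \overline{M}_{0,n}(S,D)$ denote the open substack on which the domain curve is irreducible, the stable map is unramified, and the image curve has at worst ordinary double points. Following \cite[Lemma 2.14 and Proof of Lemma 2.27]{KLSW-relor} as recalled in the paper, the ODP locus is pulled back from $\overline{M}_{0,0}(S,D)$ and is open, and the conditions of being unramified and having irreducible domain are similarly open and formulated on $\overline{M}_{0,n}(S,D)$ itself. Let $Z$ denote the closed image of $\overline{M}_{0,n}(S,D)\setminus U$ in $S^n$ under the evaluation map. Enumerativity of $(S,D)$ is equivalent to $Z\neq S^n$, whose complement furnishes the required dense open.

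Next, I would verify that the formation of $U$, $Z$, and the evaluation map commutes with the base change $k \to L$. The moduli stack $\overline{M}_{0,n}(-,-)$ is compatible with base change, giving $\overline{M}_{0,n}(S_L,D_L)\cong \overline{M}_{0,n}(S,D)\times_k L$. Since the open loci cutting out $U$ are defined by geometric pointwise conditions whose defining open substacks pull back from the absolute case, $U_L\subset \overline{M}_{0,n}(S_L,D_L)$ coincides with the corresponding ``good'' open for $(S_L,D_L)$. The evaluation map $\ev$ also commutes with base change, so the image of $\overline{M}_{0,n}(S_L,D_L)\setminus U_L$ in $S_L^n = (S^n)_L$ equals $Z_L$. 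Finally, $-K_{S_L}\cdot D_L = -K_S\cdot D$, so the numerical datum $n$ is unchanged.

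Since $k\to L$ is a field extension, hence (faithfully) flat, the nonempty open $S^n\setminus Z$ pulls back to the nonempty open $S_L^n\setminus Z_L$. Thus $Z_L\neq S_L^n$, and $(S_L,D_L)$ is enumerative. The main (and essentially only) nontrivial point is to confirm that each of the three open conditions defining $U$ (irreducible domain, unramified, ODP) is indeed given by an open substack of $\overline{M}_{0,n}(S,D)$ whose formation commutes with base change; this is a bookkeeping check against the conventions of \cite{KLSW-relor} rather than a substantive obstacle.
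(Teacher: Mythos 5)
Your proposal is correct and is essentially the paper's argument: both rest on the base-change compatibility of $\Mbar_{0,n}(S,D)$, the evaluation map, and the ``good'' open locus, the paper simply taking the witness open $V_k\subset\Res_{\sigma/k}S$ and declaring its preimage in $\Res_{\sigma_L/L}S_L$ to be $V_L$, while you route through the equivalent characterization (already stated in the paper) via the closed image of the bad locus in $S^n$. The two formulations are interchangeable, so no further comment is needed.
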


\begin{proof}
   Let $k \to \sigma$ be a finite \'etale map of rings of degree $n = -K_{S} \cdot D -1$.
   The moduli space $ \Mbar_{0,n}(S_L,D_L)_{\sigma_L}$ is the pullback of $\Mbar_{0,n}(S,D)_{\sigma}$ under $\Spec L \to \Spec k$ and the twisted evaluation maps are stable under basechange.  Since  $(S, D )$ is enumerative, there is a dense open subset of $V_k$ of $\Res_{\sigma /k} S$ satisfying the properties listed in Definition \ref{df:general_fiber_enumerative}. We may let $V_L$ be the inverse image of $V_k$ under the basechange map 
   \[
    \Res_{\sigma_L/L} S_L \to \Res_{\sigma /k} S.
    \]
     \end{proof}

\begin{defi}\label{def:special_fiber_enumerative}
Let $\widetilde S$ be a uninodal del Pezzo surface with $E$ as the smooth $(-2)$-rational curve, and  $D$ be an element of $\Pic \widetilde S$. We say that $(\widetilde S, D )$ is {\em relatively enumerative} if there exsists a finite \'etale extension $k \to \sigma$ of degree $-K_{\widetilde S} \cdot D -1$ and  an open dense subset $V$ of $\Res_{\sigma/k}(\widetilde S)$ such that for all points $p$ of $V$ and for all $i\in \Z$, there are finitely many rational curves on $\widetilde S$ of degree $D-iE$ through the points $p_1,\ldots, p_r$  and, moreover, these finitely many curves satisfy the properties that \begin{itemize}
\item they are unramified,
\item they intersect $E$ transversely,
\item the domain curves are irreducible,
\item their image curves have at worst ordinary double points as singularities,  which lie outside $E$.
\end{itemize} Here, $p_1,\ldots, p_r$ denote the points of $\widetilde S$ corresponding to $p$.

\end{defi}

Analogously to Definition \ref{df:general_fiber_enumerative}, if such $\sigma$  as in Definition \ref{def:special_fiber_enumerative} exists, then any  finite \'etale extension  $k \to \sigma'$ of degree $-K_{\widetilde S} \cdot D -1$ is as in Definition \ref{def:special_fiber_enumerative}. See Remark~\ref{rem:rel_enum_indep_sigma}. 

\begin{rem}\label{rem:family_rel_enum}
  If $(\widetilde S,D)$ is relatively enumerative, so is $(\widetilde S,D-iE)$ for any $i\in \Z$. Let $\cX(d) \to \Spec k[[t]]$ be a $d$-surgery of a $1$-nodal Lefschetz fibration of del Pezzo surfaces. We denote the components of the special fiber by $\widetilde S$ and $Q(d)$ as before. Let the generic fiber be denoted $\Sigma(d)$. For $D$ in $\Pic \Sigma(d)$, we have that $\varphi_d^{-1}(D)$ is of the form $\{ (D_{\widetilde S}, (l,j))+ i (-E, (1,1)): i \in \Z\}$. Thus $(\widetilde S, D_{\widetilde S})$ is relatively enumerative if and only if t$(\widetilde S, D'_{\widetilde S})$ is relatively enumerative for $D'_{\widetilde S}$ the restriction to $\widetilde S$ of any element of  $\varphi_d^{-1}(D)$. 
\end{rem}

\begin{defi}
In the setting of Remark~\ref{rem:family_rel_enum}, we say that $(\widetilde S, D)$ is {\em relatively enumerative} if $(\widetilde S, D_{\widetilde S})$ is relatively enumerative for one (equivalently any) choice of $D_{\widetilde S}$.
\end{defi}

\begin{rem}\label{rem:finitely_many_i}
We have written Definition~\ref{def:special_fiber_enumerative} as infinitely many conditions, one for each $i\in \Z$, which says that there are finitely many rational curves of degree $D-iE$ through the points. However, there are only finitely many $i$ for which there are curves of degree $D-iE$. To see this, note that the arithmetic genus $p_a$ of a curve of degree $D-iE$ is given by \begin{align*}
  2 p_a - 2 &= (D-iE)\cdot ( K_{\widetilde S} + D-iE) \\ 
  &= D \cdot K_{\widetilde S} + D^2 + i (- 2 E \cdot D-E \cdot  K_{\widetilde S}) + i^2 E \cdot E \\
  & =  D\cdot K_{\widetilde S} + D^2 - 2i E \cdot D-2 i^2
  \end{align*} where the last equality follows from Lemma~\ref{lm:EE=-2} and the adjunction formula. Since $0 \leq p_a$, we have 
  \[
  -2 \leq 2 p_a -2  =  D \cdot K_{\widetilde S} + D^2 -2 i E \cdot D -2 i^2.
  \] There are only finitely many $i$ which can satisfy this inequality. Thus Definition~\ref{def:special_fiber_enumerative}  is a condition on curves of finitely many degrees.
\end{rem}

In order to obtain examples of relatively enumerative uninodal del Pezzo surfaces, we give the following (non-surprising) lemma showing that the imposed conditions on the rational curves are open in the moduli stack of rational curves.

\begin{lemma}\label{lm:odp_tE_open}
In the notation of Definition \ref{def:special_fiber_enumerative}, the locus of pointed stable maps in $\Mbar_{0,n}(\widetilde S,D)$ which are
\begin{itemize}
\item unramified,
\item have irreducible domain curves, 
\item at worst double points as singularities in the image which all lie outside $E$,
\item and which intersect $E$ transversly 
\end{itemize}
is represented by an open subscheme $\M_{0,n}(\widetilde S,D)^{\odp,E}$ of $\Mbar_{0,n}(\widetilde S,D)$. 
\end{lemma}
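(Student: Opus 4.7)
The plan is to exhibit the desired locus as a finite intersection of open subschemes of $\Mbar_{0,n}(\widetilde S, D)$. The open subscheme $\M_{0,n}(\widetilde S, D)^{\odp}$ of \cite[proof of Lemma 2.14]{KLSW-relor} already handles the unramifiedness, irreducibility of domain, and ordinary double point conditions. Moreover, the locus of stable maps whose image is contained in $E$ is closed, being the union of closed images of the closed embeddings $\Mbar_{0,n}(E, D') \hookrightarrow \Mbar_{0,n}(\widetilde S, D)$ over the finitely many relevant degrees $D'$; its complement is open. It remains to show that the conditions ``image double points lie off $E$'' and ``map is transverse to $E$'' each carve out an open subscheme.

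Let $U$ denote the intersection of the two open subschemes already described, and let $\pi: \mathcal{C} \to U$ be the universal curve with universal map $f: \mathcal{C} \to \widetilde S$. Since no fiber $C_b$ of $\pi$ has image contained in $E$, the closed subscheme $Z := f^{-1}(E) \subset \mathcal{C}$ is quasi-finite, hence finite over $U$ by properness of $\pi$. A local calculation, using the unramifiedness of $f$ together with smoothness of $E$ as a divisor in $\widetilde S$, shows that transversal intersection of $f(C_b)$ with $E$ at a point $p \in f^{-1}(E)$ is equivalent to étaleness of $Z \to U$ at $p$. Since $Z \to U$ is finite, the non-étale locus of $Z$ is closed, and its image in $U$ is closed; its complement is the open subscheme enforcing transversality.

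For the condition that image double points avoid $E$: on the odp locus the image double points of $f(C_b)$ correspond bijectively to pairs of distinct points of $C_b$ with common image under $f$. These pairs form the non-diagonal part of the closed subscheme $W \subset \mathcal{C} \times_U \mathcal{C}$ defined by $f \circ \pi_1 = f \circ \pi_2$. On the odp locus, $W$ decomposes as the diagonal $\Delta_{\mathcal{C}}$ together with a finite cover $W' \to U$ parametrizing the node preimage pairs. The locus of $W'$ whose common image under $f$ lies in $E$ is closed, and its image in $U$ under the finite morphism $W' \to U$ is closed; the complement is the open subscheme enforcing that image double points lie off $E$. Intersecting all four open subschemes produces $\M_{0,n}(\widetilde S, D)^{\odp,E}$. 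The main technical point will be the equivalence between transversality and étaleness of $Z \to U$, together with the structural claim that $W$ cleanly splits as diagonal-plus-finite-cover on the odp locus; both are local verifications rooted in the standard theory of unramified morphisms and ordinary double point singularities.
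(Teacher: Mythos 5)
Your proposal is correct and follows essentially the same route as the paper: start from the open ordinary-double-point locus of \cite[Lemma 2.14]{KLSW-relor}, then show the tangency locus and the locus of image double points on $E$ are closed by pushing closed conditions forward along proper (finite) maps. The only cosmetic difference is that you encode transversality as unramifiedness of $f^{-1}(E)\to U$ rather than via the incidence scheme $Z\subset \sC\times E$ of tangent directions used in the paper, and you excise maps with image in $E$ explicitly rather than implicitly; both repackagings are sound.
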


\begin{proof}
The locus $\M_{0,n}(\widetilde S,D)$ of stable maps in $\Mbar_{0,n}(\widetilde S,D)$ with irreducible domain curve determines an open substack. (To check this, it suffices to show that for every map $B \to \Mbar_{0,n}(\widetilde S,D)$ with $B$ a scheme, we have that the projection $i_{\M,\Mbar,B}:B \times_{\Mbar_{0,n}(\widetilde S,D)} \M_{0,n}(\widetilde S,D) \to B$ is an open immersion. The map $B \to \Mbar_{0,n}(\widetilde S,D)$ determines a relative nodal curve $\varpi_B: C \to B$. The nodal locus is closed in $C$, whence has closed image in $B$, and the projection  $i_{\M,\Mbar,B}$ is the inclusion of the open complement.) The sublocus $\M_{0,n}(\widetilde S,D)^{\unr}$ of unramified stable maps is an open subscheme of $\M_{0,n}(\widetilde S,D)$ by \cite[Lemma 2.14]{KLSW-relor}. (Although the hypothesis on \cite[Lemma 2.14]{KLSW-relor} includes the del Pezzo assumption, the proof works for any smooth projective surface, including $\widetilde S$.) The inclusion $\M_{0,n}(\widetilde S,D)^{\odp} \subset \M_{0,n}(\widetilde S,D)^{\unr} $ consisting of the locus of stable maps with at worst ordinary double points as singularities is open by \cite[Lemma 2.14]{KLSW-relor}. Let $\varpi: \sC \to \M_{0,n}(\widetilde S,D)^{\odp}$ denote the universal curve. There is a closed immersion $Z \hookrightarrow \sC \times E$ with $Z$ given by the locus of triples of a pointed stable map $\frak{f} = (f: C \to \widetilde S, p_1,\ldots,p_n)$, one more point $p$ on $C$, and a point $e$ of $E$ satisfying the conditions that $f(p)=e$ and the map induced by $f$ on tangent spaces $T_pf: T_p C \to T_p \widetilde S$ factors through the $1$-dimensional subvector space $T_p E \subset T_p \widetilde S$. \[
Z = \{ (\frak{f},p), e): f(p)=e, \xymatrix{T_p C \ar[dr] \ar[rr]^{T_pf} && T_p \widetilde{S} \\ & T_p E \ar[ur]& } \}
\] Since $\sC \times E \to \Mbar_{0,n}(\widetilde S,D)$ is proper, the image of $Z$ is closed.

To complete the proof, it now suffices to show that the locus $Z_E$ of stable maps in $\M_{0,n}(\widetilde S,D)^{\odp}$ whose image curve has a singularity on $E$ is closed. Let $f: \sC \to \widetilde S$ denote the universal map to $\widetilde S$ sending $((f: C \to \widetilde S, p_1,\ldots, p_n),p)$ to $f(p)$. We then consider the map  
\[
(f \times f): \sC \times_{\M_{0,n}(\widetilde S,D)^{\odp}} \sC \to \widetilde{S} \times \widetilde{S} \times \M_{0,n}(\widetilde S,D)^{\odp}.
\] Let $\D \to \M_{0,n}(\widetilde S,D)^{\odp}$ denote the double point locus, defined to be subscheme of $(f \times f)^{-1}(\Delta_{\widetilde S})$ residual to the diagonal $\Delta_{\sC}$. See \cite[Definition 5.3]{KLSW-relor}. (We will discus the double point locus in more detail below as well. See Proposition~\ref{Double_point_locus_our_moduli_finite_etale}.)  Consider the map
\[
f_{\D}:\D \hookrightarrow \sC \times_{\M_{0,n}(\widetilde S,D)^{\odp}} \sC \stackrel{\pi_i}{\longrightarrow} \sC \stackrel{f \times \varpi }{\longrightarrow} \widetilde S \times \M_{0,n}(\widetilde S,D)^{\odp}.
\] (Here $\pi_i$ can denote either the first or second projection without changing the composition.) Since $\sC \to \M_{0,n}(\widetilde S,D)^{\odp}$ is proper and $\widetilde S$ is separated, the map $f_{\D}$ is proper. $Z_E$ is the projection onto $\M_{0,n}(\widetilde S,D)^{\odp}$ of the fiber product
\[
\D \times_{ \widetilde S \times \M_{0,n}(\widetilde S,D)^{\odp}} (E \times \M_{0,n}(\widetilde S,D)^{\odp})
\] Since $f_{\D}$ is proper, $\widetilde S$ is proper, and $E \to \widetilde S$ is a closed immersion, we have that $Z_E$ is closed in $\M_{0,n}(\widetilde S,D)^{\odp}$ as desired.
\end{proof}

\begin{rem}\label{rem:rel_enum_indep_sigma}
Note that the itemized conditions on the maps in Lemma~\ref{lm:odp_tE_open} are pulled map from $ \Mbar_{0,0}(S,D)$. It follows that if $(\widetilde S,D,\sigma)$ is relatively enumerative (Definition~\ref{def:special_fiber_enumerative}) for one finite \'etale $k$ algebra $k \to \sigma$ of degree $n=-K_{S} \cdot D -1$, then $(\widetilde S,D,\sigma')$ is relatively enumerative for any other finite \'etale $k \to \sigma'$ of the same degree. An appropriate twist of the open dense subset $V$ for $\sigma$ gives the desired open for $\sigma'$.
 \end{rem}

\begin{prop}\label{prop:Stildechar0_enumerative}
Let $k$ be a field of characteristic $0$, and $\widetilde S\to k$ be a 
 uninodal del Pezzo surface of degree $\geq 2$. 
Then $(\widetilde S,D)$ is relatively enumerative for any $D\in\Pic \widetilde{S}$. 
\end{prop}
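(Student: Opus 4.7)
Since $E$ is a smooth rational curve with $E^{2}=-2$, adjunction gives $K_{\widetilde S}\cdot E=0$, so $n:=-K_{\widetilde S}\cdot D-1$ equals $-K_{\widetilde S}\cdot(D-iE)-1$ for every $i\in\Z$; in particular the expected dimension of $\Mbar_{0,n}(\widetilde S,D-iE)$ is $2n=\dim\widetilde S^{\,n}$ for every $i$. By Remark~\ref{rem:finitely_many_i}, only finitely many integers $i$ yield classes $D-iE$ containing a rational curve. It therefore suffices to produce, for each such $D':=D-iE$, a dense open $V_{i}\subset\Res_{\sigma/k}\widetilde S$ over which the fiber of the twisted evaluation $\ev_{\sigma}:\Mbar_{0,n}(\widetilde S,D')_{\sigma}\to\Res_{\sigma/k}\widetilde S$ is a finite disjoint union of points lying in the good open subscheme of Lemma~\ref{lm:odp_tE_open}. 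The desired $V$ is then $\bigcap_{i}V_{i}$, and the choice of $\sigma$ is irrelevant by Remark~\ref{rem:rel_enum_indep_sigma}.

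Fix $D'$. My first step would be to show that every irreducible component of $\Mbar_{0,n}(\widetilde S,D')$ whose evaluation image dominates $\widetilde S^{\,n}$ has the expected dimension $2n$ and is generically smooth. The standard unobstructedness criterion requires $H^{1}(\P^{1},f^{*}T_{\widetilde S})=0$ for a general map $f:\P^{1}\to\widetilde S$ in the component. Since a generic element of a dominant component cannot be a multiple cover of $E$ (its image would then be rigid and fail to sweep $\widetilde S^{\,n}$ once $n\geq 1$), one may assume $f(\P^{1})$ meets $E$ transversely at finitely many points; then $f^{*}T_{\widetilde S}$ splits on $\P^{1}$ into line bundles whose sum has non-negative degree $-K_{\widetilde S}\cdot D'$, and a direct positivity argument (using the big and nef anticanonical class together with the degree assumption $K_{\widetilde S}^{2}\ge 2$) gives the $H^{1}$-vanishing. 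This is essentially the argument of \cite[Corollary 3.15]{KLSW-relor}, transferred to the uninodal setting either by a direct verification or by smoothing the node of the anticanonical model of $\widetilde S$ via a $1$-nodal Lefschetz fibration as in Section~\ref{sec:sugery} and invoking upper semi-continuity of fiber dimensions.

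With generic smoothness of dominant components in hand, the restriction of $\ev$ to each such component is generically étale over $\widetilde S^{\,n}$, and Lemma~\ref{lm:odp_tE_open} identifies the good properties as open conditions. It then remains to observe that each individual failure (non-transverse intersection with $E$, a singularity lying on $E$, reducible or ramified domain, non-ODP image singularity) imposes an extra incidence with a proper closed subscheme of $\widetilde S$, so the image in $\widetilde S^{\,n}$ of the failure locus in each dominant component is strictly of dimension less than $2n$. Twisting by $\sigma$ and intersecting the resulting dense opens over the finitely many relevant $i$ produces the required $V$. The principal obstacle is the generic smoothness step when $-K_{\widetilde S}$ is only big and nef: one must carefully exclude pathological dominant components whose generic member behaves badly along $E$, and ensure that components of $\Mbar_{0,n}(\widetilde S,D')$ really do arise as specializations of components on a smoothing of $\widetilde S$ with compatible Picard classes. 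This is where characteristic $0$ is essential, so that generic smoothness and the splitting arguments work uniformly.
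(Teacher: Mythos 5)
Your route is genuinely different from the paper's. The paper does not attempt a direct deformation-theoretic argument at all: it spreads $\widetilde S$ out over a finitely generated subfield of $k$, embeds that field into $\C$ (Lefschetz principle), uses the openness statement of Lemma~\ref{lm:odp_tE_open} together with properness of $\ev$ to reduce the claim to the existence of a single geometric point in the good open locus, and then cites the complex case from \cite[Section 2.3.1]{ItenbergKharlamovShustin-RelativeEnumerative}. Your plan instead tries to reprove the generic smoothness and transversality statements from scratch; that is essentially the content of the cited reference, so what you would be buying is a self-contained proof, at the cost of redoing the hardest part.

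Within your plan there is a genuine gap, and it sits exactly where you flag "the principal obstacle." The assertion that $f^{*}T_{\widetilde S}$ splits into line bundles "whose sum has non-negative degree $-K_{\widetilde S}\cdot D'$" does not yield $H^{1}(\P^{1},f^{*}T_{\widetilde S})=0$: a rank-two bundle $\cO(a)\oplus\cO(b)$ with $a+b\ge 0$ can perfectly well have $a\le -2$ and nonzero $H^{1}$. The correct characteristic-$0$ input is that a general member of a component whose curves sweep out $\widetilde S$ is a free curve (all splitting degrees $\ge 0$), which is a theorem, not a positivity computation from the total degree; and even granting freeness one must separately handle the relative conditions along $E$ (transversality of $f(\P^{1})$ with $E$, nodes off $E$), for which "each failure imposes an extra incidence with a proper closed subscheme" is a heuristic rather than a proof — the delicate point is precisely that $-K_{\widetilde S}\cdot E=0$, so curves can degenerate onto $E$ without changing the constraint count, and excluding dominant components with bad generic behaviour along $E$ requires the dimension analysis carried out in the complex-analytic reference. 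As written, your plan either needs to import that analysis (in which case you are back to citing the same source the paper cites, and the spreading-out step becomes the cleaner way to do so) or to supply it in full.
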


\begin{proof}
Since it suffices to prove the claim over the algebraic closure $\overline{k}$, hence we may assume that $\sigma = \prod_{i=1}^n k$. 
Since $\widetilde S$ is finite type over the field $k$ and $k$ is characteristic $0$, there exists a finitely generated extension $\mathbb{Q} \subseteq \mathbb{Q}'$ so that $\widetilde S$ and $D$ are pulled back from some $\widetilde S'$ and $D'$ satisfying the hypothesis of the proposition. It follows that we may assume that $k$ is finitely generated over $\mathbb{Q}$. Since every finitely generated extension of $\mathbb{Q}$ embeds into $\mathbb{C}$, we may choose an embedding $e: k \to \mathbb{C}$. Pullback by $e$ defines a commutative diagram
\begin{equation}\label{eq:kCbasechange_hyp_special_fiber}
\xymatrix{\M_{0,n}(\widetilde S,D)^{\odp,E}_{\mathbb{C}} \ar[d] \ar[r]^{\subset} & \Mbar_{0,n}(\widetilde S,D)_{\mathbb{C}} \ar[d] \ar[rr]^{\ev_{\mathbb{C}}}&& \widetilde S_{\mathbb{C}}^n \ar[d] \\
\M_{0,n}(\widetilde S,D)^{\odp,E}_k\ar[r]^{\subset} & \Mbar_{0,n}(\widetilde S,D)_k \ar[rr]^{\ev_k} && \widetilde S^n_k .}
\end{equation}
By Lemma~\ref{lm:odp_tE_open}, we have that $\M_{0,n}(\widetilde S,D)^{\odp,E}_k$ is an open subscheme of $\Mbar_{0,n}(\widetilde S,D)_k$. Since $\ev_k$ is proper, the image of $\Mbar_{0,n}(\widetilde S,D)_k \setminus \M_{0,n}(\widetilde S,D)^{\odp,E}_k$ is closed. To show the proposition is equivalent to showing that this closed subset is not all of $ \widetilde S^n_k$.

For $k = \mathbb{C}$, the claim is shown in \cite[Section 2.3.1]{ItenbergKharlamovShustin-RelativeEnumerative}. It follows that there exists a geometric point of the open complement of $\ev_{\mathbb{C}} (\Mbar_{0,n}(\widetilde S,D)_{\mathbb{C}} \setminus \M_{0,n}(\widetilde S,D)^{\odp,E}_{\mathbb{C}})$. By the commutivity of \eqref{eq:kCbasechange_hyp_special_fiber}, it follows that there is a geometric point of the open complement of $$\ev_k(\Mbar_{0,n}(\widetilde S,D)_k \setminus \M_{0,n}(\widetilde S,D)^{\odp,E}_k),$$ completing the proof.
\end{proof}

\begin{hyp}\label{NA1-enumerative-general-finitel-special}
Let $\cX \to \Spec k[[t]]$ be a $1$-nodal Lefschetz fibration and $\Sigma(d)$ be the generic fiber of the $d$-surgery $\cX(d)$. Let  $\widetilde{S}$ and $E$ be as in Notation~\ref{nt:widetildeSE}. Let $D$ be in $\Pic \Sigma(d)$. We say that $(\Sigma(d), D)$ 
satisfies Hypothesis~\ref{NA1-enumerative-general-finitel-special} if 
\begin{enumerate}
\item \label{it:NA1-enumerative-general} $(\Sigma(d),D)$ is enumerative,. 
\item \label{it:finitel-special} $(\widetilde S, D)$  is relatively enumerative.
\end{enumerate} 
\end{hyp}

Note that if $(\Sigma(d), D)$ satisfies Hypothesis \ref{NA1-enumerative-general-finitel-special}, then so does $(\Sigma(d'), D)$ for any $d'$ because the associated moduli stacks become isomorphic after a finite field extension and one can twist the corresponding open sets.

\begin{exa}
  Let $k$ be of characteristic 0.
Combining Example \ref{exa:dp enum} and Proposition~\ref{prop:Stildechar0_enumerative}, we obtain that if  $\cX \to \Spec k[[t]]$ is a a $1$-nodal Lefshetz fibration of del Pezzo surfaces of degree at least 2 such that the general fiber satisfies Hypothesis~\ref{hyp:SDk_with_N}, then $(\cX, d, D)$ 
satisfies Hypothesis~\ref{NA1-enumerative-general-finitel-special} for any $d\in k^*$ and any $D\in \Pic\Sigma(d)$. 
\end{exa}

\begin{lemma}\label{lm:points_specializing_try2}\label{lm:points_specializing_try3}
For any open dense subset $V$ of $\Res_{\sigma/k}(\widetilde S)$ and open dense subset $\widetilde{V}$ of $\Res_{\sigma((t))/k((t))} \Sigma(d)$, there is a dense open subset $W \subseteq V$ such that for any point $p$ of $W$, there is a point $\widetilde{p}$ of $\widetilde{V}$ specializing to $p$. Moreover, if $p$ is a closed point of $W$, then $k \subseteq k(p)$ is a separable extension, the closure of $\widetilde{p}$ is finite \'etale over $k[[t]]$ and $\widetilde{p}$ has residue field $k(p)((t))$.

\end{lemma}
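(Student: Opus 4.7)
My plan is to work with the total family
\[
\mathcal{Y} := \Res_{\sigma[[t]]/k[[t]]}\cX(d) \longrightarrow \Spec k[[t]].
\]
This is flat over $\Spec k[[t]]$, since flatness of $\cX(d) \to \Spec k[[t]]$ is preserved by base change to $\sigma[[t]]$ and by restriction of scalars along the finite \'etale extension $k[[t]] \to \sigma[[t]]$ (after separable base change the latter becomes a finite product and the restriction of scalars becomes a fiber product of flat families). Its generic fiber is $\mathcal{Y}_\eta = \Res_{\sigma((t))/k((t))}\Sigma(d)$ and its special fiber is $\mathcal{Y}_0 = \Res_{\sigma/k}\cX(d)_0$, which contains $\Res_{\sigma/k}\widetilde S$ as an irreducible component. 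After passing to a separable closure, the non-smooth locus of $\mathcal{Y} \to \Spec k[[t]]$ meets $\Res_{\sigma/k}\widetilde S$ in the codimension-one closed subset consisting of points having at least one coordinate on $E$.

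To construct $W$, first remove this non-smooth locus from $V$ to obtain a dense open $V' \subseteq V$. Next let $F \subseteq \mathcal{Y}$ denote the closure of the complement $\mathcal{Y}_\eta \setminus \widetilde V$ in $\mathcal{Y}$. Each irreducible component of $F$ has its generic point in $\mathcal{Y}_\eta$, so each dominates $\Spec k[[t]]$; by flatness and a standard dimension count the intersection $F \cap \mathcal{Y}_0$ has codimension at least one in $\mathcal{Y}_0$, and in particular does not contain the component $\Res_{\sigma/k}\widetilde S$. Set $W := V' \setminus (F \cap \mathcal{Y}_0)$, which is dense open in $V$.

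For an arbitrary point $p$ of $W$, choose any irreducible component $Z$ of $\mathcal{Y}$ through $p$. By flatness $Z$ dominates $\Spec k[[t]]$, so its generic point $\zeta$ lies in $\mathcal{Y}_\eta$. Since $p \in Z$ but $p \notin F$, the component $Z$ is not contained in $F$; hence $\zeta \notin F$, so $\zeta \in \widetilde V$, and $\zeta$ specializes to $p$.

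For the moreover clause, assume $p \in W$ is a closed point and set $\kappa := k(p)$. Because $\Res_{\sigma/k}\widetilde S$ is smooth over $k$, the extension $\kappa/k$ is finite separable. Since $p \in V'$, the morphism $\mathcal{Y} \to \Spec k[[t]]$ is smooth at $p$, and combined with the henselian nature of $k[[t]]$ I lift $p$ to a section $s \colon \Spec \kappa[[t]] \to \mathcal{Y}$ via the smooth lifting criterion (for example, by choosing an \'etale neighborhood of $p$ that maps to $\mathbb{A}^d_{k[[t]]}$ and applying Hensel's lemma, or by iteratively lifting modulo $t^n$ and invoking completeness). The image of $s$ is a closed integral subscheme isomorphic to $\Spec \kappa[[t]]$, hence finite \'etale over $\Spec k[[t]]$, and its generic point $\widetilde p$ has residue field $\kappa((t)) = k(p)((t))$. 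Because this image is irreducible and meets $\mathcal{Y}_0$ at $p \notin F$, it is not contained in $F$, so $\widetilde p \in \widetilde V$. The main technical obstacle is constructing the section $s$; everything else is a routine application of flatness plus dimension counting, while $s$ requires the interplay between smoothness of $\mathcal{Y}$ at $p$ and the henselian base $\Spec k[[t]]$.
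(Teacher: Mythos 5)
Your proposal follows the same overall strategy as the paper: work in the total family $\Res_{\sigma[[t]]/k[[t]]}\cX(d)$, remove the closure of the complement of $\widetilde V$ and the non-smooth locus to define $W$, and use (formal) smooth lifting to produce $\widetilde p$. However, there are three places where your argument deviates or cuts corners compared to the paper.

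First, for the general (not necessarily closed) point $p$, you take $\widetilde p$ to be the generic point $\zeta$ of an irreducible component of $\mathcal{Y}$ through $p$. This gives $p\in\overline{\{\zeta\}}$, but $\zeta$ is ``much bigger'' than $p$; its closure is an entire component whose restriction to the special fiber is not the closure of $p$. The paper instead applies the same formal-smoothness lift uniformly: for any $p$ it produces a morphism $\Spec k(p)[[t]]\to\mathcal{Y}$ and takes $\widetilde p$ to be the image of the generic point. This is the $\widetilde p$ that the ``moreover'' clause is about, so your switch of construction between the general and closed-point cases is a genuine inconsistency with how the lemma is meant to be read, even though in the paper's applications only the closed-point case is used.

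Second, you justify separability of $k(p)/k$ by smoothness of $\Res_{\sigma/k}\widetilde S$. Smoothness does not give separability of residue fields in general (over an imperfect field a smooth variety can have closed points with inseparable residue fields). The correct reason is the standing assumption that $k$ is perfect, from which any finite extension of $k$ is separable.

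Third, the assertion ``the image of $s$ is a closed integral subscheme isomorphic to $\Spec\kappa[[t]]$'' is the crux of the moreover clause and is not automatic; $s$ is a finite dominant map onto the closure $\overline{\widetilde p}$, but there is no a priori reason for it to be a closed immersion. The paper works substantially harder here: it shows that $\overline{\widetilde p}$ is flat over $k[[t]]$ (citing \cite[Lemma 9.5(1)]{KLSW-relor}) and proper, analyzes the ramification locus to conclude the map is unramified, hence finite \'etale, and then invokes the structure theorem \cite[X Th\'eor\`eme 2.1]{sga1} for finite \'etale covers of $\Spec k[[t]]$ to identify $\overline{\widetilde p}\cong\Spec k(p)[[t]]$. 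Some version of this chain is needed to make your claim precise.
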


\begin{proof}
The generic and special fibers of $\Res_{\sigma[[t]]/k[[t]]} \cX(d)$ are $\Res_{\sigma((t))/k((t))} \Sigma(d)$ and $$\Res_{\sigma/k}(\widetilde S) \cup_{\Res_{\sigma/k} E } \Res_{\sigma/k} Q(d)$$ respectively. Let $A = (\Res_{\sigma((t))/k((t))} \Sigma(d)) \setminus \widetilde{V}$ be the closed complement of $\widetilde{V}$ in $\Res_{\sigma((t))/k((t))} \Sigma(d)$. The closure $\overline{A}$ of $A$ in $\Res_{\sigma[[t]]/k[[t]]} \cX(d)$ is flat over $k[[t]]$ by e.g. \cite[Lemma 9.5(1)]{KLSW-relor}. Since $\widetilde{V}$ is dense in  $\Res_{\sigma((t))/k((t))} \Sigma(d)$, the codimension of $A$ is at least one. Since $\overline{A}$ is flat over $k[[t]]$ and the fibers of $\Res_{\sigma[[t]]/k[[t]]} \cX(d)$ are have all components of dimension $2n$, it follows that the intersection $\overline{A}_k$  of $\overline{A}$ with the special fiber has dense complement. Thus 
\[
W = V \cap (\Res_{\sigma/k}\widetilde S \setminus \Res_{\sigma/k} E) \setminus \overline{A}_k
\] is a dense open subset of $V$.

Take $p$ in $W$. Since $p$ does not lie in $\overline{A}_k$, any point $\widetilde{p}$ of $\Res_{\sigma((t))/k((t))} \Sigma(d)$ specializing to $p$ must lie in $\widetilde{V}$. Thus it remains to show that there is a $\widetilde{p}$ in  $\Res_{\sigma((t))/k((t))} \Sigma(d)$ specializing to $p$. Since $\cX(d) \to \Spec k[[t]]$ is smooth at every point of $\widetilde{S} \setminus E$, we have that $\Res_{\sigma[[t]]/k[[t]]} \cX(d)$ is smooth at $p$. By formal smoothness, any solid diagram \eqref{eq:formal_smoothness} admits a lift $p_{n+1}:\Spec k[[t]]/(t^{n+1}) \to \Res_{\sigma[[t]]/k[[t]]}\cX(d)$
\begin{equation}\label{eq:formal_smoothness}
\begin{tikzcd}
\Spec F[[t]]/(t^{n}) \arrow[r,"p_n"] \arrow[d] &  \Res_{\sigma[[t]]/k[[t]]}\cX(d) \arrow[d] \\
\Spec F[[t]]/(t^{n+1}) \arrow[r] \arrow[ru, dashed] & \Spec k[[t]]
\end{tikzcd}
\end{equation}
Let $p_0$ be the point $p$ with $F \cong k(p)$. Since $\varprojlim F[[t]]/(t^n) \cong F[[t]]$, we have $\Spec F[[t]] \cong \varinjlim \Spec k[[t]]/(t^n) $ so the set of compatible maps $\{ p_n: n =1,2,\}$ determines a map $\Spec F[[t]] \to \Res_{\sigma[[t]]/k[[t]]} \cX(d)$ which defines $\widetilde{p}:\Spec F((t)) \to \Res_{\sigma((t))/k((t))} \Sigma(d)$.

Let $p$ be a closed point of $W$. Since $k$ is perfect, $k \subseteq k(p)$ is a separable extension. The closure $\overline{\widetilde{p}}$ of $\widetilde{p}$ in $\Res_{\sigma[[t]]/k[[t]]} \cX(d)$ is flat over $k[[t]]$ by e.g. \cite[Lemma 9.5(1)]{KLSW-relor} and proper because $\cX(d)$ is proper over $k[[t]]$. The locus where $\overline{\widetilde{p}}$ is ramified is the support of the sheaf of relative differentials, which is closed and does not contain any point of the special fiber. Since the image of this closed set in $\Spec k[[t]]$ is a closed subset not containing the special point it is empty, whence $\overline{\widetilde{p}} \to \Spec k[[t]]$ is flat, unramified, and proper, whence \'etale and finite. By \cite[X Th\'eor\`eme 2.1]{sga1}, finite \'etale $k[[t]]$-algebras are of the form $k[[t]] \to A[[t]]$ where $k \to A$ is a finite \'etale $k$-algebra. It follows that $\overline{\widetilde{p}} \cong \Spec k(p)[[t]]$, whence $\widetilde{p}$ has residue field $k(p)((t))$.
\end{proof}

 \subsection{Moduli spaces of stable maps to Lefschetz fibrations of del Pezzo surfaces:  construction}\label{sec:construction moduli}
 Let $\Sigma(d) \to \Spec k((t))$ be the generic fiber of a $d$-surgery of a $1$-nodal Lefschetz fibration $\cX \to \Spec k[[t]]$. Let $D$ in $\Pic \Sigma(d)$, and $k \to \sigma$ be a finite \'etale of degree $n = -K_{\Sigma(d)} \cdot D -1$. Suppose  $(\Sigma(d), D)$ satisfies Hypothesis~\ref{NA1-enumerative-general-finitel-special}. Let $W$ denote the dense open subset of Lemma~\ref{lm:points_specializing_try2} applied to open sets $V$ and $\widetilde{V}$ as in Definitions~\ref{def:special_fiber_enumerative} and \ref{df:general_fiber_enumerative}. Choose a closed point $p$ of $W\subset \Res_{\sigma/k} \widetilde S$.  Let $F$ denote the field of definition of $p$. (So $p$ corresponds to a finite list $p_1,\ldots, p_r$ of points of $\widetilde{S}$ with residue fields $\sigma_F$.) Since $k$ is perfect, we have that $k \subseteq F$ is separable. Let $\widetilde{p}$ in $\widetilde{V}$ be a point specializing to $p$ in $W$.  By Lemma~\ref{lm:points_specializing_try3}, the field of definition of $\widetilde{p}$ is $F((t))$. We now construct an Artin stack 
 \[
 \M_0(\cX(d),D)_{\sigma}(\widetilde{p}) \to \Spec F[[t]]
 \] 
 with generic fiber $ \Mbar_{0,n}(\Sigma(d),D)_{\sigma}(\widetilde{p})$. 

We thank Ravi Vakil for helpful discussions on these moduli spaces.
 
 Since  $(\Sigma(d), D)$ is enumerative, we have that $ \Mbar_{0,n}(\Sigma(d),D)_{\sigma}(\widetilde{p}) \to \Spec F((t))$ is finite \'etale and can thus be identified with a disjoint union $\coprod_{i=1}^m \Spec E_i$ where $F((t)) \to E_i$ are separable field extensions.

Let $F((t)) \subseteq L$ be the finite, separable field extension corresponding to some $E_i$. Let $\mathfrak{f}_i = (f: C \to \Sigma(d)_L, p_1,\ldots,p_n)$ be the corresponding $L$-point of $\Mbar_{0,n}(\Sigma(d),D)_{\sigma}(\widetilde{p})$. The image $f(C)$ of $C$ in $\Sigma(d)_L$ is a Cartier divisor of class $D$. Let $B$ denote the integral closure of $F[[t]]$ in $L$. The extension $F[[t]] \subseteq B$ is a finite extension of discrete valuation rings by \cite[Tag 09E8]{stacks-project}. By Remark~\ref{Rmk:closure_Cartier_divisor_general_fiber}, the closure $\overline{f(C)}$ of $f(C)$ in $\cX(d)_B$ is a relative Cartier divisor, which we will denote by $\D_B$. Let $[\D_B]$ denote the corresponding class in $\Pic \cX(d)_B$. Note that basechange defines a map $\Pic \cX(d) \to \Pic \cX(d)_B$.

\begin{lemma}\label{lm:closure_of_D}
$[\D_B]$ is the image of a unique $\D$ in $\Pic \cX(d)$. 

\end{lemma}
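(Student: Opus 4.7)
The plan is to reduce the claim to the Picard computation of Section~\ref{sec:sugery} and its version for the basechanged family. First I would verify that the analogs of Lemma~\ref{lm:PicXd=PicXd0}, Lemma~\ref{lm:PicXd0=PicStildex_PicEPicQ}, and Proposition~\ref{pr:varphi_surjective} hold for $\cX(d)_B \to \Spec B$. The required cohomology vanishings $H^i(\widetilde{S}_{B_0},\cO) = 0 = H^i(Q(d)_{B_0},\cO)$ for $i=1,2$ and the intersection identity $E_{B_0}\cdot E_{B_0} = -2$ all transport from their $k$-versions under the flat basechange $k\to B_0$, which is a finite separable field extension (as a composite of the separable extensions $k\subseteq F\subseteq B_0$). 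Thus one obtains a compatible surjection
\[
\Pic \cX(d)_B \twoheadrightarrow \Pic \Sigma(d)_L
\]
with kernel $\Z(-E_{B_0},(1,1))$, together with the identification $\Pic \cX(d)_B \cong \Pic \widetilde{S}_{B_0} \times_{\Pic E_{B_0}} \Pic Q(d)_{B_0}$, all compatible with basechange from $\cX(d)$.

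For uniqueness, I would observe that $\Pic \cX(d) \to \Pic \cX(d)_B$ is injective. Combining the previous identifications with Lemma~\ref{lm:PicXd=PicXd0}, this reduces to injectivity of $\Pic \cX(d)_0 \to \Pic (\cX(d)_0)_{B_0}$. Since $\cX(d)_0$ is proper over $k$ and geometrically connected (the components $\widetilde{S}_{\overline k}$ and $Q(d)_{\overline k}$ are connected and meet along $E_{\overline k}$), Hilbert 90 applied to the Leray spectral sequence for $\cO^*$ on $\cX(d)_0$ gives $\Pic \cX(d)_0 \hookrightarrow \Pic (\cX(d)_0)_{\overline k}$, whence injectivity after any field extension. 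Therefore any two classes in $\Pic \cX(d)$ mapping to $[\D_B]$ coincide.

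For existence, Proposition~\ref{pr:varphi_surjective} provides a class $\D' \in \Pic \cX(d)$ restricting to $D \in \Pic \Sigma(d)$. Both $\D'_B$ and $[\D_B]$ restrict to $D_L$ on the generic fiber $\Sigma(d)_L$: $\D'_B$ by naturality of basechange, and $[\D_B]$ because $\D_B$ is the flat closure of the degree-$D$ divisor $f(C)$ by Remark~\ref{Rmk:closure_Cartier_divisor_general_fiber}. Hence $[\D_B] - \D'_B$ lies in the kernel of $\Pic \cX(d)_B \to \Pic \Sigma(d)_L$, so $[\D_B] - \D'_B = n(-E_{B_0},(1,1))$ for some $n \in \Z$. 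Setting $\D := \D' + n(-E,(1,1)) \in \Pic \cX(d)$, viewed via the identification of Lemma~\ref{lm:PicXd0=PicStildex_PicEPicQ}, then yields $\D_B = [\D_B]$. The principal work is the basechange verification in the first paragraph; the rest is formal.
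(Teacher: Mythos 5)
Your proof is correct and follows essentially the same route as the paper: base-change the Picard identifications of Section~\ref{sec:sugery} to $B$, note that $\varphi_d^{-1}(D)$ and $\varphi_{d,B}^{-1}(D_L)$ are torsors under the kernels generated by $(-E,(1,1))$, and transport $[\D_B]$ back. The only difference is in the uniqueness step: the paper deduces it from the induced bijection $\varphi_d^{-1}(D)\to\varphi_{d,B}^{-1}(D_L)$, whereas you prove injectivity of $\Pic \cX(d)\to\Pic \cX(d)_B$ directly via Hilbert 90, which is if anything slightly more complete.
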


\begin{proof}
By Lemmas \ref{lm:PicXd=PicXd0} and \ref{lm:PicXd0=PicStildex_PicEPicQ}, we have $ \Pic \widetilde S \times_{\Pic E} \Pic Q(d) \cong \Pic \cX(d)$. Hence $$ \Pic \widetilde S_{\ell} \times_{\Pic E_{\ell}} \Pic Q(d)_{\ell} \cong \Pic \cX(d)_B$$ where $\ell$ is the residue field of $B$ by the same argument. We have a commutative diagram
\[
\begin{tikzcd}
\Pic \cX(d) \arrow[r, "\otimes_{k[[t]]} B"] \arrow[d, "\varphi_d"] & \Pic \cX(d)_B \arrow[d, "\varphi_{d,B}"] \\
\Pic \Sigma(d) \arrow[r, "\otimes_{k((t))} L"] &\Pic \Sigma(d)_L
\end{tikzcd}
\]
By the argument of Proposition~\ref{pr:varphi_surjective}, the kernels of $\varphi_d$ and $\varphi_{d,B}$ are both identified with $\mathbb{Z}(-E, (1,1))$ under the above isomorphisms. Thus $\otimes_{k[[t]]}  B$ induces a bijection from $\varphi_d^{-1}(D)$ to $\varphi_{d,B}^{-1}(D_L)$. Since $\varphi_{d,B}[\D_B] = D_L$, the lemma is proven. 
\end{proof}

Applying \cite[p.90, Theorem 2.8]{Abramovich--Oort-mixed_char} again, we have a proper moduli stack $\Mbar_{0,n}(\cX(d),\D)$ over $k[[t]]$, parametrizing stable maps $f: C \to \cX(d)$ of degree $\D$ where $C$ is a nodal, genus $0$ curve, together with the data of $n$ marked points $q_1, \ldots, q_n$ in the smooth locus of $C$. We again form the twist $\Mbar_{0,n}(\cX(d),\D)_{\sigma[[t]]}$ corresponding to the finite etale algebra $k[[t]] \to \sigma[[t]] $ and the twisted evaluation map
\[
\ev_{\cX(d),\D,\sigma}: \Mbar_{0,n}(\cX(d),\D)_{\sigma[[t]]} \to \Res_{\sigma[[t]]/k[[t]]} \cX(d).
\] Let $P$ denote the closure of $\widetilde{p}$ in $\Res_{\sigma[[t]]/k[[t]]} \cX(d)$ as in Remark~\ref{rem:spec_of_pi}. By Lemma~\ref{lm:points_specializing_try3}, $P \cong \Spec F[[t]]$. Let 
\[
\Mbar_{0}(\cX(d),\D)_{\sigma}(P) : = \ev_{\cX(d),\D,\sigma}^{-1}(P) \to \Spec F[[t]]
\] denote the fiber of the evaluation map over $P$. The generic fiber of $\Mbar_{0}(\cX(d),\D)_{\sigma}(P)$ is canonically identified with $\Mbar_{0,n}(\Sigma(d),D)_{\sigma}(\widetilde{p})$. Thus $\mathfrak{f}_i = (f: C \to \Sigma_L, p_1,\ldots,p_n)$ determines a point of $\Mbar_{0}(\cX(d),\D)_{\sigma}(P)$. Let $\overline{\mathfrak{f}_i} \to \Spec F[[t]]$ denote the closure of  $\mathfrak{f}_i$ in $\Mbar_{0}(\cX(d),\D)_{\sigma}(P)$. Our moduli space is now defined as the disjoint union of the $\overline{\mathfrak{f}_i}$.
 
 \begin{defi}\label{df:OurModuli}
 Let  $\M_0(\cX(d),D)_{\sigma}(\widetilde{p}) \to \Spec F[[t]]$ be given by the disjoint union $\coprod_{i=1}^m \overline{\mathfrak{f}_i}$.
 \end{defi}
 
   \begin{defi}
 Suppose $f_0$ is a point of $\overline{\mathfrak{f}_i} $ lying over the closed point of $\Spec F[[t]]$. The $L$-point $f_i$ of $\Mbar_{0,n}(\Sigma(d),D)_{\sigma}$ is said to {\em specialize} to $f_0$.
 \end{defi}

\subsection{Moduli spaces of stable maps to Lefschetz fibrations of del Pezzo surfaces: properties}

We show in this section that $\M_0(\cX(d),D)_{\sigma}(\widetilde{p}) \to \Spec F[[t]]$ is finite \'etale. We keep the  notation from  Section \ref{sec:construction moduli}.

\begin{lemma}\label{lm:describe_specialized_curves}
Let $f$ be an $L$-point of $\Mbar_{0,n}(\Sigma(d),D)_{\sigma}(\widetilde{p})$.
Then $f$ specializes to a stable rational curve on $\cX(d)_0$ through the points determined by $p$ of the form :
\[
f_0: C_1 \cup C_2 \to \cX(d)_0
\] satisfying the conditions:
\begin{itemize}
\item $C_1$ is an irreducible rational curve.
\item $f_1 = f_0 \vert_{C_1}: C_1 \to \widetilde S$ is a rational curve on $\widetilde S$ which is transverse to $E$. Let $D_0$ in $\Pic \widetilde S$ be the degree of $f_1$. 
\item $(C_2)_{k^s} \cong \coprod^{D_0 \cdot E} \mathbb{P}^1_{k^s}$ and the restriction $f_2 =  f_0 \vert_{C_2}: C_2 \to Q(d)$ is transverse to $E$. 
\item $f^{-1}(E) = C_1 \cap C_2$.
\item  $(f_2)_{k^s}$ maps each $\mathbb{P}^1_{k^s}$ component of $(C_2)_{k^s}$ to one of the rulings of $Q(d)_{k^s} \cong \mathbb{P}^1_{k^s} \times \mathbb{P}^1_{k^s}$.
\end{itemize}
\end{lemma}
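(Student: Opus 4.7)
My plan has three parts: (1) extract the specialization $f_0$ from the construction of Section~\ref{sec:construction moduli}; (2) apply the relative enumerative hypothesis on $\widetilde S$ to pin down $f_1 = f_0\vert_{C_1}$; and (3) use a dual graph argument to identify $f_2 = f_0\vert_{C_2}$ as a disjoint union of rulings.

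First, $f = \mathfrak f_i$ corresponds to a point of $\overline{\mathfrak f_i}$, which is proper over $\Spec F[[t]]$ as a closed subscheme of $\Mbar_0(\cX(d),\D)_{\sigma}(P)$. After possibly enlarging $L$, the fibre over the closed point of $\Spec F[[t]]$ yields a stable map $f_0 : C_0 \to \cX(d)_0$ with $n$ marked points specializing those of $f$. By Lemma~\ref{lm:points_specializing_try3} and the choice $W \subset V \cap (\Res_{\sigma/k}\widetilde S \setminus \Res_{\sigma/k} E)$, the specialized marked points lie in $\widetilde S \setminus E$. Write $C_0 = C_1 \cup C_2$, where $C_1$ is the union of components of $C_0$ whose image lies in $\widetilde S$ but not in $E$, and $C_2$ is the union of the remaining components; in particular the marked points all lie on $C_1$, and each component of $C_2$ has image in $Q(d)$.

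Second, the class of $f_1$ on $\widetilde S$ is some $D_0 \in \Pic \widetilde S$ with $-K_{\widetilde S}\cdot D_0 - 1 = n$ by Proposition~\ref{lm:KSD=KStildeD}, and $D_0$ coincides with the restriction $\D\vert_{\widetilde S}$ up to subtraction of multiples of $E$ accounting for any components of $C_0$ sitting inside $E$. Since $(\widetilde S, D)$ is relatively enumerative in the sense of Remark~\ref{rem:family_rel_enum} and $p$ lies in the associated dense open $V$ of Definition~\ref{def:special_fiber_enumerative}, any rational curve of such a class through the $n$ points is unramified, has irreducible domain, has image with only nodes as singularities all outside $E$, and meets $E$ transversely. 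This forces $C_1$ irreducible and gives all stated properties of $f_1$. Components of $C_0$ mapping entirely into $E$ are then excluded by the transversality just established: the total intersection of $f_0(C_0)$ with $E$ must be accounted for by the $D_0 \cdot E$ transverse crossings of $f_1(C_1)$ together with nodes of $C_0$ over $E$, leaving no room for a further non-constant component carried inside $E$.

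Third, for $C_2$ I would pass to $k^s$ and exploit the genus zero condition on $C_0$. Each irreducible component $C_{2,i}$ of $(C_2)_{k^s}$ is a smooth rational curve mapping to $Q(d)_{k^s} \cong \P^1_{k^s} \times \P^1_{k^s}$ with some bi-degree $(a_i, b_i)$, meeting $E$ (of class $(1,1)$) in $a_i + b_i$ points. Since $p_a(C_0) = 0$, the dual graph of $C_0$ is a tree; with $C_1$ irreducible and $C_2$ having $s$ components, it has $s+1$ vertices and $s$ edges, each corresponding to a node lying over $E$. Since $C_{2,i}$ contributes at least $a_i + b_i$ such edges through its intersections with $E$, the chain $s \ge \sum_i(a_i + b_i) \ge s$ forces $a_i + b_i = 1$ for every $i$, so each $C_{2,i}$ is a ruling of bi-degree $(1,0)$ or $(0,1)$. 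Transversality of $f_2$ with $E$ and $f^{-1}(E) = C_1 \cap C_2$ follow, and matching intersection counts on the two sides identifies the number of components of $C_2$ with $D_0 \cdot E$.

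The main obstacle will be the careful exclusion of components mapping entirely into $E$, together with the matching of the class of $f_1$ with a representative of $\varphi_d^{-1}(D)$: the $(-E,(1,1))$-ambiguity in the choice of representative (Proposition~\ref{pr:varphi_surjective}) has to be absorbed into the choice of $D_0$ before the relative enumerative hypothesis can be applied cleanly.
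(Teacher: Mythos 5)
Your overall strategy matches the paper's: decompose the central fiber of the limit, use the relative enumerativity of $(\widetilde S,D)$ to control the component over $\widetilde S$, and use the genus-zero (tree) structure of the domain to force the components over $Q(d)$ to be rulings. The first and third parts are essentially the paper's argument.

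The genuine gap is your treatment of components of the domain curve mapping \emph{surjectively onto} $E$. You dismiss these by saying the $D_0\cdot E$ transverse crossings of $f_1(C_1)$ "leave no room" for such a component, but this numerical bookkeeping cannot work: as you yourself note at the end, the class of $f_0\vert_{\widetilde S}$ is only pinned down modulo $\Z E$, and the relative enumerativity hypothesis is deliberately stated for \emph{all} classes $D_{\widetilde S}-iE$. A configuration in which $C_1$ has class $D_{\widetilde S}-mE$ and an extra component of the domain maps $m$-to-one onto $E$ is perfectly consistent with every intersection count, so no counting argument excludes it. Moreover, if such a component exists, the set-theoretic intersection of $f_0(C_0)$ with $E$ is all of $E$ and your "total intersection accounted for by transverse crossings" is not even well posed. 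The paper handles this step with a substantive geometric argument: it blows up $E$ inside the threefold $\cX(d)$, identifies the exceptional divisor as the $\P^1$-bundle $N=\P(\cO_{E_0}(2)\oplus\cO_{E_0})$ with disjoint sections $E_0$ and $E_\infty$ of self-intersection $2$ and $-2$, iterates until no component of the specialized domain maps into the singular locus, and then observes that a component landing in $N$ must meet the section $E_0$ in at least two points, creating a cycle in the dual graph and contradicting rationality of the domain. You would need this (or an equivalent) argument; without it the decomposition $C_0=C_1\cup C_2$ with $C_2$ mapping to $Q(d)$ is not established. Relatedly, the paper derives the transversality of $f_2$ to $E$ and the identity $f_0^{-1}(E)=C_1\cap C_2$ from the fact that $\overline{f(C)}$ is the zero locus of a section $s$ of $\cO(\D)$ whose restrictions to $\widetilde S$ and $Q(d)$ agree on $E$; your proposal asserts these conclusions but supplies no mechanism forcing every point of $C_2$ over $E$ to be a node attached to $C_1$, which is also what your tree-counting inequality $\sum_i(a_i+b_i)\ge s$ silently uses.
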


\begin{proof}
Let $f_0: C_0= C_1 \cup C_2 \cup C_E \to \cX(d)_0 $ be the specialization of $f$, where $C_E : = f_0^{-1}(E)$ and $C_1$ is the union of components of the domain curve which are mapped to $\widetilde S$ but not mapped entirely to $E$ and similarly for $C_2$. Since $f(C_1)$ is constrained by the point configuration $p$, Hypothesis \ref{NA1-enumerative-general-finitel-special} \eqref{it:finitel-special} implies that $C_1$ is irreducible, and that $f(C_1)$ has at worst nodal singularities, and intersects $E$ transversely. We wish to see that 
\begin{itemize}
  \item $C_E$ is reduced to the nodes of $C_0$;
  \item $f_2$ is transverse to $E$.
\end{itemize}
We claim this implies that  
\begin{itemize}
  \item $(C_2)_{k^s} \cong \coprod^{D_0 \cdot E} \mathbb{P}^1_{k^s}$;
  \item $(f_2)_{k^s}$ maps each $\mathbb{P}^1_{k^s}$ component of $(C_2)_{k^s}$ to one of the rulings of $Q(d)_{k^s} \cong \mathbb{P}^1_{k^s} \times \mathbb{P}^1_{k^s}$.
\end{itemize}
Indeed, suppose that a component $\widetilde C$ of $C_2$ is mapped to a curve of class $(a,b)$ with $a+b\ge 2$. Then $E\cdot f(\widetilde C)\ge 2$, implying that $\widetilde C$ intersects $C_1$ in at least two points. But this contradicts the fact that $C_0$ is rational.

The closure of the divisor $f(C)$ in $\cX(d)$ is the vanishing locus of a section $s\in H^0(\cX(d),\mathcal O(\D))$  by Lemma~\ref{lm:closure_of_D}. The curves $f_1(C_1)$ and $f_2(C_2)$ are the vanishing locus of $s|_{\widetilde S}$ and $s|_{Q(d)}$, respectively. 
By definition, an irreducible component of $C_0$ is  mapped surjectively to $E$ if and only if $s|_{E}$ is the 0-section of $\mathcal O(\D)|_E$.
If this is not the case, then $s|_{E}$ equals the restriction to $E$ of both $s|_{\widetilde S}$ and $s|_{Q(d)}$. In particular, since $f_1(C_1)$ is transverse to $E$, so is $f_2(C_2)$. This also implies that if $f(C_E)$ is $0$-dimensional, then so is $C_E$. Indeed, consider a connected component of $C_E$. It is is either a point, or a tree of rational curves. In this latter case, it must contain an irreducible component $\widetilde C$ that intersects $C_E\setminus \widetilde C$ in at most two points. In particular it is a non-stable component of $C_0$, which cannot then be contracted to a point by $f_0$.

\medskip 
Hence we are left to prove that $C_E$ is $0$-dimensional. 
Suppose to the contrary that $C_E$ contains a curve $\widetilde C_E$ such that $f_0|_{\widetilde C_E}:\widetilde C_E\to E$ is surjective. Denote by $\cX(d)'$ the blow-up of $E$ in $\cX(d)$, by $N$ the exceptional divisor of $\cX(d)'$, and by $ f':\widetilde C_0\to  \cX(d)_0'$  the specialization of $f$ in $\cX(d)_0'$. 
Recall that $N$ is a $\P^1$-bundle over $E$.
Analogously to the proof of Lemma \ref{lm:EE=-2}, we have that
\begin{itemize}
  \item $E_0=N\cap \widetilde S$ is a section of $N$ with self-intersection 2 in $N$;
  \item $E_{\infty}=N\cap Q$ is a section of $N$ with self-intersection $-2$ in $N$. 
\end{itemize} 
Since the curves $E_0$ and $E_\infty$ are disjoint, it follows from the classification of $\P^1_{k}$-bundles over $\P^1_{k}$ that $N=\P(\mathcal O_{E_0}(2)\oplus \mathcal O_{E_0})$. In particular, any reduced and irreducible curve in $N$ which intersect $E_0$ in at most one point (counted with multiplicity) is either $E_\infty$ or a fiber of the projection to $E_0$.
If $\widetilde C_0$ still has some irreducible components that are entirely mapped to $E_0$ or $E_\infty$, then blow-up $E_0$ and $E_\infty$ in $ \cX(d)'$. After finitely many such blow-ups, we obtain a variety $\widehat \cX(d)$, and the specialization of $f$ to $\widehat \cX(d)_0$ is a stable map  $\widehat f:\widehat C_0\to \widehat \cX(d)_0$ such that no irreducible component of $\widehat C$ is mapped to the singular locus of $ \widehat \cX(d)_0$. Furthermore, all exceptional divisors of  $\widehat \cX(d)$ are isomorphic to $N$. 
Since $C_0$ has an irreducible component that surjects to $E$, the curve $\widehat C$ has an irreducible component $\widehat C_E$ mapped to one of this copy of $N$ and which intersects the section $E_0$ in at least two distinct points. Analogously to the above argument for $C_Q$, this contradicts the fact that $\widehat C$ is rational. 
\end{proof}

We thank Ilya Tyomkin for help with the following lemma.

\begin{lemma}\label{lemma:unique_generalization}
  Suppose  $(\Sigma(d) D)$ satisfies Hypothesis \ref{NA1-enumerative-general-finitel-special}. Let $\widetilde{p}$ in $\widetilde{V}$ be a point specializing to a closed point $p$ in $W$. The moduli stack $\M_0(\cX(d),D)_{\sigma}(\widetilde{p}) \to \Spec F[[t]]$ is formally \'etale at each stable map \[
f_0: C_1 \cup C_2 \to \cX(d)_0
\]  as described in Lemma~\ref{lm:describe_specialized_curves}.
\end{lemma}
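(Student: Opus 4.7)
The plan is to verify formal \'etaleness of $\M_0(\cX(d),D)_{\sigma}(\widetilde{p}) \to \Spec F[[t]]$ at $f_0$ by checking unique lifts along infinitesimal thickenings. Given a surjection $B \twoheadrightarrow A$ of Artinian local $F[[t]]$-algebras with square-zero kernel and a lift $\Spec A \to \M_0(\cX(d),D)_{\sigma}(\widetilde{p})$ of $f_0$, I want to show there is a unique further lift to $\Spec B$. This reduces to a standard tangent-obstruction analysis for stable maps, combined with a local analysis at each node of the domain curve $C_0 = C_1 \cup C_2$ (each of which lies over a point of $E$).

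First I would describe $\cX(d)$ \'etale-locally near each node $q_j$ of $C_0$. Since $\cX(d)$ is regular, its special fiber is nodal along $E$ with local equation $xy=0$, and $t$ is a regular parameter on $\Spec k[[t]]$, we get \'etale-locally near $f_0(q_j)$ an isomorphism
\[
\cX(d) \;\cong\; \Spec F[[x, y, z, t]]/(xy - t),
\]
with $\widetilde S$ cut out by $y=0$ and $Q(d)$ by $x=0$. By Lemma~\ref{lm:describe_specialized_curves}, $f_0$ sends one branch of $C_0$ at $q_j$ transversally into $\{y=0\}$ and the other into $\{x=0\}$. The classical predeformability statement then pins down the node smoothing: the unique smoothing of $q_j$ compatible with the ambient family is $uv = t$ in uniformizers $u,v$ of the two branches, so the smoothing parameter of each node of the domain is forced to equal (up to units) the family parameter $t|_B$, with no residual freedom beyond what is already prescribed by $\Spec B \to \Spec F[[t]]$.

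Away from the nodes, $\cX(d) \to \Spec k[[t]]$ is smooth, so the deformation theory of $f_0$ is governed by the cohomology of $f_0^* T_{\cX(d)/k[[t]]}$ twisted by the point conditions. The component $f_0|_{C_1} : C_1 \to \widetilde S$ deforms uniquely by Hypothesis~\ref{NA1-enumerative-general-finitel-special}\eqref{it:finitel-special} together with Remark~\ref{rem:finitely_many_i}: the curves of class $D_{\widetilde S} - iE$ through the prescribed points on $\widetilde S$ form a finite, unramified set, so the tangent-obstruction spaces for the pointed deformation of $f_0|_{C_1}$ vanish. Each component of $C_2$ is a ruling of $Q(d)$ passing through a single point of $E$ already determined by $f_0|_{C_1}$; such rulings are rigid in $\P^1\times\P^1$, hence deform uniquely. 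Gluing these unique local deformations produces the unique lift to $\Spec B$.

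The main obstacle is the predeformability analysis at the nodes: one must rigorously identify the smoothing parameter of each node of the domain with $t$ exactly, with no higher-order $t$-ambiguity, using regularity of $\cX(d)$ and the transversality asserted in Lemma~\ref{lm:describe_specialized_curves}. Once this is established, the remaining gluing across the point-constraint deformations of $C_1$ and the rigidity of the $C_2$ components is routine.
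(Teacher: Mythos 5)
Your overall strategy (checking the infinitesimal lifting criterion, splitting the analysis into the component on $\widetilde S$, the components on $Q(d)$, and the nodes over $E$) is a reasonable alternative to the paper's argument, which instead writes down a single tangent--obstruction exact sequence
\[
0 \to \Def^1 \to \oH^0(N_{f_1}) \oplus \oH^0(N_{f_2}) \to \oH^0(N_{f_{1,2}}) \to \Ob \to \oH^1(N_{f_1}) \oplus \oH^1(N_{f_2}) \to 0
\]
and kills everything by computing the normal sheaves explicitly. However, your proposal has a genuine gap at the step you treat as routine, namely the component on $\widetilde S$. You assert that Hypothesis~\ref{NA1-enumerative-general-finitel-special}\eqref{it:finitel-special} and Remark~\ref{rem:finitely_many_i} imply that ``the tangent--obstruction spaces for the pointed deformation of $f_0|_{C_1}$ vanish.'' Relative enumerativity only says that the \emph{set} of such curves through the points is finite with nice singularities; finiteness of the set does not rule out a nonzero tangent space (a fat point of the constrained moduli space) nor a nonzero obstruction space. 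The paper's proof closes exactly this gap by a numerical computation: from the defining sequence of $N_{f_1}$ one gets $\deg N_{f_1} = -K_{\widetilde S}\cdot D_0 - 2 - n$, and Proposition~\ref{lm:KSD=KStildeD} gives $-K_{\widetilde S}\cdot D_0 = -K_{\Sigma(d)}\cdot D = n+1$, so $N_{f_1}\cong \cO(-1)$ and both $\oH^0$ and $\oH^1$ vanish. Without invoking Proposition~\ref{lm:KSD=KStildeD} (or some equivalent count matching the number of point conditions to the anticanonical degree on $\widetilde S$), your argument for $C_1$ does not go through.

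Two smaller points. First, ``such rulings are rigid in $\P^1\times\P^1$'' is false as stated: a $(1,0)$-curve moves in a pencil and has trivial normal bundle. What is true, and what the paper uses, is that $N_{f_2}\cong\cO$ on each component and the restriction map $\oH^0(N_{f_2})\to\oH^0(N_{f_{1,2}})$ to the attaching points is an isomorphism, so the deformations of $f_2$ are exactly those induced by moving the attaching points along $E$ (equivalently: through each point of $E$ there is a unique ruling of each type). Second, you yourself flag the predeformability analysis at the nodes as ``the main obstacle'' and do not carry it out; in the paper this is absorbed into the exact sequence above rather than argued via a local model $xy=t$, so the hard step of your route is precisely the one left open. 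As written, the proposal is therefore incomplete.
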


\begin{proof}
Since $k$ is perfect, $k \subseteq F = k(p)$ is finite and separable.
  Let $C_0 = C_1 \cup C_2$, and let $q_i$ denote a chosen point on $C_1$ mapping to $p_i$. Define $f_1: C_1 \to \widetilde S$ and $f_2: C_2 \to Q(d)$ by restricting $f$. Let $N_{f_i}$ denote the normal sheaf of $f_i$ for $i=1,2$ defined by the short exact sequences
  \begin{equation}\label{eq:Nf1def 2}
  0 \to T_{C_1}(\sum_i q_i) \to f_1^* T_{\widetilde S} \to N_{f_1} \to 0
  \end{equation}
  \begin{equation}\label{eq:Nf2def 2}
  0 \to T_{C_Q(d)} \to f_2^* T_{Q(d)} \to N_{f_2} \to 0
  \end{equation} Let $N_{f_{1,2}}$ denote the normal sheaf of $f_{1,2}: C_1 \cap C_2 \to E$. Let $\Def^1$ denote the tangent space to the deformation space of $f$ and let $\Ob$ denote the obstruction space \cite[Chapter 3]{Illusie-Complexe_cotangent_deformationsI}. Then there is an exact sequence
  \begin{align*}
  0 \to \Def^1 \to \oH^0( N_{f_1}) \oplus \oH^0( N_{f_2}) \to \oH^0( N_{f_{1,2}}) \to
  \Ob \to \oH^1(N_{f_1})  \oplus \oH^1( N_{f_2}) \to 0
  \end{align*}

To show that the moduli stack is formally \'etale at $f_0$, we must show $ \Def^1\cong 0$ and $\Ob \cong 0$. It is thus necessary and sufficient to have
  
  \begin{equation*}
  \oH^0(N_{f_1})\oplus \oH^0(N_{f_2})\to \oH^0(N_{f_{1,2}})
  \end{equation*} be an isomorphism and
  \begin{equation*}
  \oH^1(N_{f_i})=0 \quad \text{ for } \quad i=1,2.
  \end{equation*}
  It is enough enough to check these statements after base change to the algebraic closure, i.e. for the rest of the proof we may work  over an algebraically closed field $k$. In particular  $Q(d)\cong \P^1_k \times \P^1_k$

  Let $D_0$ be the degree of $f_1$.  By \eqref{eq:Nf1def 2}, $\deg  N_{f_1} +2+n = \deg(-K_{\widetilde S} \cdot (D_0))$. By Proposition~\ref{lm:KSD=KStildeD}, we have that $K_{\widetilde S} \cdot (D_0) = K_{\Sigma(d)} \cdot D$, implying that $\deg  N_{f_1} = -1$. Since $N_{f_1}$ is a line bundle over $\P^1$, we have  $ N_{f_1} \cong \cO(-1)$,
  whence $\oH^1(N_{f_1})=\oH^0(N_{f_1})=0$. 

  We claim $ N_{f_2} \cong \cO$.  An irreducible component $C'$ of $C_2$ is a $\P^1$, and  $f_2(C')$ has degree $(1,0)$ or $(0,1)$. By \eqref{eq:Nf2def 2}, we have  $\deg  N_{f_2} +2 = \deg(-K_{Q} \cdot \deg f_2(C'))=2$. Thus $\deg  N_{f_2} = 0$, and $ N_{f_2} \cong \cO$.  In particular $\dim \oH^0(N_{f_2})=D_0\cdot E $, and $\oH^1(N_{f_2})=0$.
  Since $N_{f_{1,2}}$ is the restriction of the trivial line bundle $N_{f_2}$ to $D_0\cdot E$ points, we have that   $\oH^0(N_{f_2})\to \oH^0(N_{f_{1,2}})$ is an isomorphism as claimed.
  \end{proof}
  
  \begin{thm}\label{thm:moduli-stack-finite-etale}
  Suppose  $(\Sigma(d), D)$ satisfies Hypothesis \ref{NA1-enumerative-general-finitel-special}. Let $\widetilde{p}$ in $\widetilde{V}$ be a point specializing to a closed point $p$ in $W$. Then $\M_0(\cX(d),D)_{\sigma}(\widetilde{p}) \to \Spec F[[t]]$ is a finite \'etale morphism of schemes.
  \end{thm}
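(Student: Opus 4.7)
The plan is to deduce the theorem from the two preceding lemmas by combining them with the properness of the moduli stack of stable maps. By construction $\M_0(\cX(d),D)_\sigma(\widetilde{p}) = \coprod_{i=1}^m \overline{\mathfrak{f}_i}$ is a disjoint union of closures inside $\Mbar_0(\cX(d),\D)_\sigma(P) = \ev_{\cX(d),\D,\sigma}^{-1}(P)$, where $P \cong \Spec F[[t]]$. Since $\ev_{\cX(d),\D,\sigma}$ is proper (inheriting this from the properness of the moduli of stable maps over $k[[t]]$), each $\overline{\mathfrak{f}_i}$ is proper over $\Spec F[[t]]$, and so is the total morphism. On the generic fiber the scheme is $\Mbar_{0,n}(\Sigma(d),D)_\sigma(\widetilde p)$, which is already finite \'etale by Hypothesis~\ref{NA1-enumerative-general-finitel-special}\eqref{it:NA1-enumerative-general}; so the remaining analysis concerns the closed fiber.

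For the closed fiber I would use Lemma~\ref{lm:describe_specialized_curves} to enumerate the points. Each specialization is a stable map $f_0 : C_1 \cup C_2 \to \cX(d)_0$ of the rigid combinatorial type described there; passing to $\overline k$, it is determined by (i) the class $D_0$ of $f_1 := f_0|_{C_1}$, which by Remark~\ref{rem:finitely_many_i} ranges over a finite set of shifts $D_{\widetilde S}-iE$, (ii) the curve $f_1$ itself, of which relative enumerativity (Hypothesis~\ref{NA1-enumerative-general-finitel-special}\eqref{it:finitel-special}) provides only finitely many through the points encoded by $p$, and (iii) for each of the $D_0 \cdot E$ transverse intersection points $f_1^{-1}(E)$, a choice among the two rulings of $Q(d)$ through that point. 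This gives quasi-finiteness of the closed fiber, and combined with properness, finiteness over $\Spec F[[t]]$. For \'etaleness, Lemma~\ref{lemma:unique_generalization} gives formal \'etaleness at each such $f_0$; together with finite presentation of $\Mbar_0(\cX(d),\D)_\sigma(P)$ over $F[[t]]$, this upgrades to \'etaleness.

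Finally, I need to check that the source is a scheme rather than merely a Deligne--Mumford stack, i.e.\ that every parametrized stable map has trivial automorphism group. On the generic fiber the maps are unramified and birational onto their image by the enumerative hypothesis, so any automorphism of the domain commuting with $f$ fixes a dense open and is hence the identity. On the closed fiber the same argument applies to $f_1$; each component of $C_2$ is a $\P^1$ mapped isomorphically onto its target ruling of $Q(d)$, so an automorphism commuting with $f_2$ there is also forced to be the identity, and compatibility at the node $C_1 \cap C_2$ pins it down globally. The main obstacle I anticipate is the finiteness enumeration on the closed fiber: one must verify that the list of degenerations produced by Lemma~\ref{lm:describe_specialized_curves} is exhaustive, and that the ruling-choice data on $C_2$ really contributes only a finite combinatorial factor rather than a positive-dimensional moduli.
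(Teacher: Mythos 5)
Your proposal is correct and follows essentially the same route as the paper: properness inherited from the ambient moduli stack of stable maps, finiteness of the special fiber from Lemma~\ref{lm:describe_specialized_curves} together with relative enumerativity, \'etaleness from Lemma~\ref{lemma:unique_generalization}, and representability by a scheme from the absence of automorphisms of unramified stable maps. The only cosmetic difference is that the paper deduces \'etaleness by combining flatness (via \cite[Lemma 9.5(1)]{KLSW-relor}) with unramifiedness of the special fiber, whereas you invoke formal \'etaleness plus finite presentation directly; both are valid and rest on the same lemma.
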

  
  \begin{proof}
Since $k$ is perfect, the extension $k \subseteq F = k(p)$ is finite and separable. By \cite[p.90, Theorem 2.8]{Abramovich--Oort-mixed_char}, the moduli stacks  $\Mbar_{0,n}(\cX(d),\D)$ considered in the construction of $\M_0(\cX(d),D)_{\sigma}(\widetilde{p})$ are proper over $\Spec F[[t]]$. It follows that $\Mbar_{0}(\cX(d),\D)_{\sigma}(P) $ and $\overline{\mathfrak{f}_i}$ are also proper over $\Spec F[[t]]$. Thus $\M_0(\cX(d),D)_{\sigma}(\widetilde{p}) \to \Spec F[[t]]$ is proper. 
  
 Since the geometric fibers of all points of the moduli stack $\M_0(\cX(d),D)_{\sigma}(\widetilde{p})$ represent unramified curves, these curves have no automorphisms (\cite[Lemma 2.8]{KLSW-relor}). Thus $\M_0(\cX(d),D)_{\sigma}(\widetilde{p})$ is a scheme. Since  $(\Sigma(d), D)$ is enumerative and $\widetilde{p}$ is in the dense open set $\widetilde{V}$ of Definition~\ref{df:general_fiber_enumerative},  $\M_{0,n}(\Sigma(d),D)_{\sigma}(\widetilde{p}) \to \Spec F((t))$ is finite \'etale. By construction, the generic fiber of  $\M_0(\cX(d),D)_{\sigma}(\widetilde{p})$ is canonically isomorphic to $\M_{0,n}(\Sigma(d),D)_{\sigma}(\widetilde{p}) \to \Spec F((t))$. By Hypothesis \ref{NA1-enumerative-general-finitel-special} \eqref{it:finitel-special} and Lemma~\ref{lm:describe_specialized_curves}, the special fiber of $\M_0(\cX(d),D)_{\sigma}(\widetilde{p}) \to \Spec F[[t]]$ is finite. Since a proper map with finite fibers is finite, $\M_0(\cX(d),D)_{\sigma}(\widetilde{p}) \to \Spec F[[t]]$ is finite.
 
 Moreover, we have seen that $\M_0(\cX(d),D)_{\sigma}(\widetilde{p}) \to \Spec F[[t]]$ is \'etale at all points of the generic fiber. By \cite[Lemma 9.5(1)]{KLSW-relor}, $\M_0(\cX(d),D)_{\sigma}(\widetilde{p}) \to \Spec F[[t]]$ is flat. By Lemma~\ref{lemma:unique_generalization}, the special fiber of $\M_0(\cX(d),D)_{\sigma}(\widetilde{p}) \to \Spec F[[t]]$ is unramified. This implies the that $$\M_0(\cX(d),D)_{\sigma}(\widetilde{p}) \to \Spec F[[t]]$$ is \'etale.
  \end{proof}
  
 \subsection{Double point locus and deformation invariance} \label{subsection:double_point_locus}
  
    Suppose  $(\Sigma(d), D)$ satisfies Hypothesis \ref{NA1-enumerative-general-finitel-special}. As above, let $\widetilde{p}$ in $\widetilde{V}$ be a point specializing to a closed point $p$ in $W$. Let $F = k(p)$ denote the residue field of $p$ and note that $k \subseteq F$ is finite and separable because $k$ is perfect. We define the double point locus $\D \to \M_0(\cX(d),D)_{\sigma}(\widetilde{p})$ as in Fulton \cite[Chapter 9.3]{fulton98}. The construction is as follows.  By Theorem~\ref{thm:moduli-stack-finite-etale}, the moduli stack $\M_0(\cX(d),D)_{\sigma}(\widetilde{p}) \to \Spec F[[t]]$ corresponds to a finite \'etale extension of rings $F[[t]] \subseteq R$. Let $C \to \M_0(\cX(d),D)_{\sigma}(\widetilde{p})$ denote the universal curve and let $\widetilde{f}: C \to \cX(d)$ denote the universal map to $\cX(d)$. Let 
\[
(\widetilde{f}, \widetilde{f}): C \times_{\M_0(\cX(d),D)_{\sigma}(\widetilde{p})} C \to \cX(d) \times \cX(d) \times  \M_0(\cX(d),D)_{\sigma}(\widetilde{p})
\] denote the map determined by $\widetilde{f}$ on the first factor of $C$,  $\widetilde{f}$ on the second factor of $C$ and the projection. The preimage $(\widetilde{f}, \widetilde{f})^{-1}(\Delta_{\cX(d)})$ of the diagonal 
\[
\Delta_{\cX(d)} \hookrightarrow \cX(d) \times \cX(d) \times  \M_0(\cX(d),D)_{\sigma}(\widetilde{p})
\] of $\cX(d) \times  \M_0(\cX(d),D)_{\sigma}(\widetilde{p}) \to \M_0(\cX(d),D)_{\sigma}(\widetilde{p}) $  contains the diagonal
\[
\Delta_C \hookrightarrow C \times_{\M_0(\cX(d),D)_{\sigma}(\widetilde{p})} C 
\] of $C \to \M_0(\cX(d),D)_{\sigma}(\widetilde{p})$. 

\begin{defi}\label{df:Double_point_locus}
The {\em Double Point Locus} $\sB \to \M_0(\cX(d),D)_{\sigma}(\widetilde{p})$ is defined to be the closed subscheme  of $ C \times_{\M_0(\cX(d),D)_{\sigma}(\widetilde{p})} C $ corresponding to the colon sheaf of ideals $( I_{(\widetilde{f}, \widetilde{f})^{-1}(\Delta_{\cX(d)})} : I_{\Delta_C})$.
\end{defi}

The ideal sheaf $( I_{(\widetilde{f}, \widetilde{f})^{-1}(\Delta_{\cX(d)})} : I_{\Delta_C})$ are local sections $s$ of $\cO_{C \times_{\M_0(\cX(d),D)_{\sigma}(\widetilde{p})} C }$ such that $st$ lies in $I_{(\widetilde{f}, \widetilde{f})^{-1}(\Delta_{\cX(d)})} $ for all $t$ in $I_{\Delta_C}$. See also \cite[Section 5]{KLSW-relor}.

 \begin{prop}\label{Double_point_locus_our_moduli_finite_etale}
$\sB \to \M_0(\cX(d),D)_{\sigma}(\widetilde{p})$ is finite \'etale.
\end{prop}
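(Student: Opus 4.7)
The plan is to reduce the finite \'etale claim to showing it over $\Spec F'[[t]]$ for each connected component of $\M_0 := \M_0(\cX(d),D)_{\sigma}(\widetilde p)$, then verify it via the fibral criterion for \'etaleness over a DVR. Properness of $\sB \to \M_0$ is immediate: $\sB$ is closed in $C \times_{\M_0} C$, and the universal curve $C \to \M_0$ is proper. By Theorem~\ref{thm:moduli-stack-finite-etale} and \cite[X Th\'eor\`eme 2.1]{sga1}, every connected component of $\M_0$ has the form $\Spec F'[[t]]$ with $F\subseteq F'$ a finite separable extension, so it suffices to show that $\sB \to \Spec F'[[t]]$ is finite \'etale. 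Over the regular one-dimensional base $\Spec F'[[t]]$, a proper morphism is finite \'etale as soon as both fibers are reduced, \'etale, and have the same length, so the plan reduces to these three checks.

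On the generic fiber $\Spec F'((t))$, Hypothesis~\ref{NA1-enumerative-general-finitel-special}\eqref{it:NA1-enumerative-general} guarantees that the universal map is a stable map with smooth irreducible source whose image has only ordinary double points. The double-point-locus analysis of \cite[Section 5]{KLSW-relor} then gives that $\sB$ is reduced and \'etale here, of length $2\delta_\eta$ with $\delta_\eta = 1 + \tfrac{1}{2}D\cdot(D+K_{\Sigma(d)})$ the number of nodes computed by adjunction.

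For the special fiber $\Spec F'$, Lemma~\ref{lm:describe_specialized_curves} describes the universal map as $f_0\colon C_1 \cup C_2 \to \cX(d)_0$, with $f_1 := f_0|_{C_1}$ having only ordinary double points in $\widetilde S \setminus E$ and transverse to $E$, and $C_2$ a disjoint union of smooth rulings of $Q(d)$ glued to $C_1$ at the preimages of $E$. The preimage $(\widetilde f, \widetilde f)^{-1}(\Delta_{\cX(d)})$ contains $\Delta_C$ together with three kinds of additional ordered pairs of distinct points sharing an image: (i) the two sheets at a node of $f_1(C_1)$ in $\widetilde S \setminus E$; (ii) the two sheets at an intersection of distinct-ruling components of $C_2$ inside $Q(d)$; (iii) pairs over a point of $E$ where $C_1$ meets $C_2$. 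Type (iii) pairs coincide in the domain (they are precisely the nodes of $C_1 \cup C_2$), hence belong to $\Delta_C$ and are removed by the colon-ideal construction; a small additional check shows that every cross-ruling intersection in (ii) lies in $Q(d)\setminus E$, since otherwise two components of $C_2$ would be glued to $C_1$ at the same point, contradicting the nodal (2-branch) structure of $C_1\cup C_2$. The surviving loci of types (i) and (ii) are reduced and transverse, as their two branches meet transversely in the smooth locus of $\cX(d)_0$, so the special fiber of $\sB$ is a reduced finite \'etale $F'$-scheme.

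Constancy of degree follows from a comparison of arithmetic genera. Writing $\overline D|_{Q(d)}$ as a class of bidegree $(l,j)$ and using local constancy of intersection numbers on the flat family $\cX(d) \to \Spec F'[[t]]$ (so $D^2 = \overline D^2 \cdot [\widetilde S + Q(d)] = D_{\widetilde S}^2 + 2lj$), together with Proposition~\ref{lm:KSD=KStildeD} giving $D \cdot K_{\Sigma(d)} = D_{\widetilde S} \cdot K_{\widetilde S}$, componentwise adjunction yields $\delta_\eta = \delta_1 + \delta_2$ with $\delta_1 = 1 + \tfrac{1}{2}D_{\widetilde S}\cdot(D_{\widetilde S} + K_{\widetilde S})$ counting the nodes of $f_1(C_1)$ and $\delta_2 = lj$ counting the pairwise intersections of distinct-ruling components of $C_2$. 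Fiber-length constancy then upgrades the already established properness and fiberwise \'etaleness to finite \'etaleness of $\sB \to \Spec F'[[t]]$, and hence of $\sB \to \M_0$. The most delicate part of the plan is this degree match, and in particular ensuring that the bookkeeping of the ruling decomposition of $C_2$ survives the Galois twist when $d\notin (k^*)^2$ (so $\Pic Q(d)$ is the diagonal); this is where the careful identification of $\overline D|_{Q(d)}$ with the bidegree class, not just its Galois-invariant image, is needed.
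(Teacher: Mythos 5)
Your argument is correct, but it takes a genuinely different and substantially longer route than the paper's. The paper never compares fiber lengths: it notes that $\sB \to \M_0(\cX(d),D)_{\sigma}(\widetilde{p})$ is proper, uses unramifiedness of all the parametrized stable maps (via the proof of \cite[Lemma 2.1]{KLSW-relor}) to identify $\sB$ with the closed locus $(\widetilde{f},\widetilde{f})^{-1}(\Delta_{\cX(d)})\setminus\Delta_C$, and then observes that because every geometric fiber has only ordinary double points, the restriction of $(\widetilde{f},\widetilde{f})$ to the complement of $\Delta_C$ is transverse to $\Delta_{\cX(d)}$ over the moduli space; this gives \'etaleness over all of $\M_0(\cX(d),D)_{\sigma}(\widetilde{p})$ in one stroke, and properness plus finite fibers then gives finiteness. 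Your fibral criterion over the DVR $F'[[t]]$ (proper with both fibers reduced, \'etale, and of equal length, hence finite, hence torsion-free and flat, hence \'etale) is valid, and your length bookkeeping does check out: $D^2 = D_{\widetilde S}^2 + 2lj$ by local constancy of intersection numbers, $D\cdot K_{\Sigma(d)} = D_{\widetilde S}\cdot K_{\widetilde S}$ by Proposition~\ref{lm:KSD=KStildeD}, and the cross-ruling intersections of $C_2$ avoid $E$ for the reason you give. What this buys you is an explicit conservation-of-nodes statement ($\delta_\eta = \delta_1 + lj$), which is morally the content behind the weight comparison of Proposition~\ref{prop:invariance_NA1_weight}; what it costs is that none of this computation is needed for the proposition itself.

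One step you should make explicit: you treat the special fiber of $\sB$ as the naive double point locus of $f_0$, but $\sB$ is defined by a colon ideal on the total family and colon ideals need not commute with restriction to the special fiber; a priori $\sB_0$ could acquire extra components supported on $\Delta_{C_0}$, which would destroy the length comparison. This is repaired by the same local computation you sketch for your type (iii) pairs, carried out at \emph{every} point of $\Delta_C$ over the closed point (smooth points of $C_0$, nodes of the domain, and preimages of image-nodes): unramifiedness of the whole family of stable maps forces $(\widetilde{f},\widetilde{f})^{-1}(\Delta_{\cX(d)})$ to coincide scheme-theoretically with $\Delta_C$ away from the finitely many off-diagonal double points, so that $\sB$ is disjoint from $\Delta_C$ and its formation commutes with passage to the special fiber. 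This is exactly the role played by \cite[Lemma 2.1]{KLSW-relor} in the paper's proof, so the gap is one of justification rather than of substance.
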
 

\begin{proof}
Since $C \to \M_0(\cX(d),D)_{\sigma}(\widetilde{p})$ is proper, so is $\sB \to \M_0(\cX(d),D)_{\sigma}(\widetilde{p})$.
Since the stable maps parametrised by $\M_0(\cX(d),D)_{\sigma}(\widetilde{p}) \cong \Spec R$ are unramified, it follows from the proof of \cite[Lemma 2.1]{KLSW-relor}, that $(\widetilde{f}, \widetilde{f})^{-1}(\Delta_{\cX(d)}) \setminus \Delta_C$ is closed in $ C \times_{\M_0(\cX(d),D)_{\sigma}(\widetilde{p})} C $, whence the natural map $(\widetilde{f}, \widetilde{f})^{-1}(\Delta_{\cX(d)}) \setminus \Delta_C \to \sB$ is an isomorphism. Because the stable maps parametrised by $\M_0(\cX(d),D)_{\sigma}(\widetilde{p})$ have geometric fibers with only ordinary double point singularities, the morphism
\[
(\widetilde{f}, \widetilde{f}): (C \times_{\M_0(\cX(d),D)_{\sigma}(\widetilde{p})} C ) \setminus \Delta_C \to  \cX(d) \times \cX(d) \times  \M_0(\cX(d),D)_{\sigma}(\widetilde{p})
\] is transverse to $\Delta_{ \cX(d)}$ over $ \M_0(\cX(d),D)_{\sigma}(\widetilde{p})$. Thus $((\widetilde{f}, \widetilde{f})^{-1}\Delta_{ \cX(d)}) \setminus \Delta_C \to  \M_0(\cX(d),D)_{\sigma}(\widetilde{p})$ is \'etale. The fibers of $\sB \to \M_0(\cX(d),D)_{\sigma}(\widetilde{p})$ are thus proper of dimension $0$, whence finite. A proper map with finite fibers is finite.
\end{proof}

 \begin{rem}\label{Rm:disc}({\bf Discriminant of a finite \'etale extension}.)
 Associated to any finite \'etale extension $g: X \to Y$ is a discriminant $\Disc(g) \in \Gamma(\cO^*_Y/ (\cO^*_Y)^2)$. Let us recall a construction. The pushforward $g_* \cO_X$ is locally free because $g$ is finite and flat. Since $g$ is \'etale, there is a trace morphism $\Tr_g: g_* \cO_X \to \cO_Y$ sending a local section $s$ of $g_*\cO_X$ to the trace of a matrix representing multiplication by $s$ as an $\cO_Y$-linear map. Equivalently, the trace morphism $\Tr_g: g_*(\det g^! \cO_Y) \to \cO_Y$ from Serre duality is identified with the previous when $g$ is \'etale because $g^! \cO_Y \simeq 0$, defining a canonical isomorphism $\det g^! \cO_Y \cong \cO_X$. Multiplication on $\cO_X$ composed with the trace defines the non-degenerate, symmetric pairing
 \[
 g_* \cO_X \otimes g_* \cO_X \to g_* \cO_X \xrightarrow{\Tr_g} \cO_Y,
 \] whence an isomorphism
 \[
  g_* \cO_X \to  \Hom( g_* \cO_X, \cO_Y) =: (g_* \cO_X)^{\vee}.
 \] Taking determinants and tensoring with the invertible sheaf $(g_* \cO_X)^{\vee}$, defines a nowhere vanishing section
 \[
 \Disc: \cO_Y \to (g_* \cO_X)^{\otimes -2} 
 \] A section of the square of an invertible sheaf such as $ (g_* \cO_X)^{\otimes -2} $ canonically defines a section $\Disc(g) \in \Gamma(\cO_Y/ (\cO^*_Y)^2)$ by taking local trivializations of the invertible sheaf to identify the section with a section of $\cO_Y$. Since $g$ is \'etale, $\Disc(g)$ is nowhere vanishing. 
 
 For a point $y$ of $Y$, we have $\Disc (X \to Y)(y)$ in $k(y)/(k(y)^*)^2$ and 
 \[
 \Disc (X \to Y)(y) = \Disc (X \times_Y \Spec k(y) \to \Spec k(y)).
 \]
 \end{rem}
 
 By Proposition~\ref{Double_point_locus_our_moduli_finite_etale}, $\sB \to \M_0(\cX(d),D)_{\sigma}(\widetilde{p})$ is finite \'etale. Thus, we have
 \[
 \Disc(\sB \to \M_0(\cX(d),D)_{\sigma}(\widetilde{p})) \in \Gamma(\cO^*/(\cO^*)^2)
 \]
 where $\cO$ abbreviates the sheaf of rings $\cO_{\M_0(\cX(d),D)_{\sigma}(\widetilde{p})}$. 
 
  Let $f$ be a point of the scheme $\M_0(\cX(d),D)_{\sigma}(\widetilde{p})$ with residue field $k(f)$. Let $\pi: \M_0(\cX(d),D)_{\sigma}(\widetilde{p}) \to \Spec F[[t]]$ and let $k(\pi(f))$ be the residue field of the image of $f$ under $\pi$. In particular, $k(\pi(f))$ is either $F$ or $F((t))$. Suppose $\ell$ is an intermediate field extension $k(\pi(f)) \subseteq \ell \subseteq k(f)$. Since $\pi$ is finite and \'etale, it follows that $\ell \subseteq k(f)$ is finite separable.
  
 \begin{defi}
 Define the {\em weight} of $f$ relative to $\ell$ to be 
 \[
 \wt_{\ell} f : = \Tr_{k(f)/\ell} \langle \Disc (\sB \to \M_0(\cX(d),D)_{\sigma}(\widetilde{p}))(f) \rangle.
 \]
 \end{defi}
 
 \begin{rem}
The weight admits the following interpretations. The discriminant has a multiplicative property in towers of field extensions (see e.g. \cite[Lemma 7.5]{degree}) from which one sees (\cite[Proposition 7.4]{degree}) that 
\[
 \wt_{\ell} f := \Tr_{k(f)/\ell} \prod_{x \text{ node}\text{ of }f(C)} \operatorname{mass}(x).
\] Suppose $f$ maps to the generic point of $k[[t]]$ and $\Sigma(d)$ satisfies Hypothesis~\ref{hyp:SDk_with_N}, so we are in the case where $\ev_{\sigma}$ and its pullbacks have well-defined $\mathbb{A}^1$-degrees by the main theorems of \cite{degree}. Then by \cite[Proposition 7.3]{degree}, $\wt_{\ell} f$ is the local $\mathbb{A}^1$-degree of $(\ev \otimes \ell)_{\sigma}$ at $f$. The global degree $N_{\Sigma,D, \sigma}$ is the sum of the local degrees (\cite[Proposition 3.2]{degree}), so we have 
\begin{align*}
N_{\Sigma(d)_\ell,D_\ell, \sigma_\ell}& = \sum_{\substack{f \text{ degree } D \\ \text{ rational curve on } \Sigma(d)_\ell \\\text{ thru } \widetilde{p} }} \Tr_{k(f)/\ell}\prod_{x \text{ node}\text{ of }f(C)} \operatorname{mass}(x)\\
& = \sum_{\substack{f \text{ degree } D \\ \text{ rational curve on } \Sigma(d) \\\text{ thru } \widetilde{p} }} \wt_{\ell} f.
\end{align*}
 \end{rem}
  
  Recall there is a canonical injection $\GW(F) \to \GW(F((t)))$ given by basechange. We use this injection to view an element of $\GW(F)$ as an element of $\GW(F((t)))$. We show that for stable maps associated to points of the generic fiber of $\M_0(\cX(d),D)_{\sigma}(\widetilde{p})$, the weight does not change under specialization.
  
  \begin{prop}\label{prop:invariance_NA1_weight}
Let \[
f_0: C_1 \cup C_2 \to \cX(d)_0
\]  be a stable map as described in Lemma~\ref{lm:describe_specialized_curves} equal to the specialization of a point of $\M_0(\cX(d),D)_{\sigma}(\widetilde{p})$ corresponding to a degree $D$ stable map
\[
f: C \to \Sigma(d)
\] through the points determined by $\widetilde{p}$. Suppose additionally that $\widetilde{p}$ is a rational point in the open dense subset $\widetilde{V}$ of Hypothesis \ref{NA1-enumerative-general-finitel-special} \eqref{it:NA1-enumerative-general}. Then the weight of $f$ equals the weight of $f_0$. 

  \end{prop}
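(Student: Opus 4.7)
The plan is to leverage that both $\M_0(\cX(d),D)_{\sigma}(\widetilde{p}) \to \Spec F[[t]]$ (Theorem~\ref{thm:moduli-stack-finite-etale}) and $\sB \to \M_0(\cX(d),D)_{\sigma}(\widetilde{p})$ (Proposition~\ref{Double_point_locus_our_moduli_finite_etale}) are finite \'etale, so that the whole configuration is controlled by Galois theory and the classical structure of finite \'etale extensions of $F[[t]]$, and then apply Hensel's lemma to transport the discriminant across the specialization.

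First I would reduce to a single connected component. Since $\widetilde{p}$ is rational we have $F=k$, and by \cite[X~Th\'eor\`eme~2.1]{sga1}, the connected component of $\M_0(\cX(d),D)_{\sigma}(\widetilde{p})$ containing both $f$ and $f_0$ is of the form $\Spec A[[t]]$ for $A$ a finite separable field extension of $k$, with $f$ corresponding to the generic point $\Spec A((t))$ and $f_0$ to the closed point $\Spec A$. Pulling $\sB$ back to this component yields a finite \'etale cover which by the same theorem has the form $\Spec B[[t]] \to \Spec A[[t]]$ for a finite \'etale $A$-algebra $B$. By the naturality of the construction in Remark~\ref{Rm:disc}, the discriminant $\delta := \Disc(B[[t]]/A[[t]]) \in A[[t]]^*/(A[[t]]^*)^2$ has values at the generic and closed points equal respectively to $\Disc(B((t))/A((t)))$ and $\Disc(B/A)$.

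The key step is then to invoke Hensel's lemma: since $\mathrm{char}\,k \neq 2$, the subgroup $1+tA[[t]]$ of $A[[t]]^*$ is $2$-divisible, so the reduction map $A[[t]]^*/(A[[t]]^*)^2 \to A^*/(A^*)^2$ is an isomorphism and every unit of $A[[t]]$ differs from its constant term by a square. Applied to $\delta$, this shows that the class $\langle \Disc(B((t))/A((t))) \rangle \in \GW(A((t)))$ is the image of $\langle \Disc(B/A) \rangle \in \GW(A)$ under the basechange map $\GW(A) \to \GW(A((t)))$.

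Finally I would use the compatibility of trace forms with basechange for finite \'etale algebras: the square with vertical arrows $\Tr_{A/k}$ and $\Tr_{A((t))/k((t))}$ and horizontal arrows the basechange maps $\GW(A) \to \GW(A((t)))$ and $\GW(k) \to \GW(k((t)))$ commutes, because $\Tr_{A((t))/k((t))}$ is obtained from $\Tr_{A/k}$ by tensoring with $k((t))$. Stringing these identifications together gives
\[
 \wt_{k((t))}(f) = \Tr_{A((t))/k((t))} \langle \Disc(B((t))/A((t))) \rangle = \bigl(\Tr_{A/k}\langle \Disc(B/A)\rangle\bigr) \otimes_k k((t)) = \wt_k(f_0) \otimes_k k((t)),
\]
which is the desired equality under the canonical injection $\GW(k) \hookrightarrow \GW(k((t)))$. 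The main obstacle is the Hensel step, and in particular verifying that the discriminant restricted to a component of $\M_0(\cX(d),D)_{\sigma}(\widetilde{p})$ of the form $\Spec A[[t]]$ is pulled back from $A$ via reduction; once $\mathrm{char}\,k\neq 2$ is in hand, both points are elementary.
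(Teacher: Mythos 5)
Your proposal is correct and follows essentially the same route as the paper: reduce via Theorem~\ref{thm:moduli-stack-finite-etale} and \cite[X Th\'eor\`eme 2.1]{sga1} to a component $\Spec A[[t]]$ of the moduli space, observe that the discriminant of the double point locus lives over $A[[t]]$ and restricts to the two discriminants at the generic and closed points, and conclude via the identification $\GW(A[[t]]) \cong \GW(A)$ (which the paper invokes directly and you justify explicitly by the $2$-divisibility of $1+tA[[t]]$) together with compatibility of transfers with basechange. The only cosmetic difference is that the paper phrases the argument for an arbitrary intermediate field $\ell$ of the weight rather than just $\ell=k$.
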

  
  \begin{proof}
  By Theorem~\ref{thm:moduli-stack-finite-etale}, $\M_0(\cX(d),D)_{\sigma}(\widetilde{p}) \to \Spec F[[t]]$ is finite \'etale. By \cite[X Th\'eor\`eme 2.1]{sga1}, finite \'etale $F[[t]]$-algebras are of the form $F[[t]] \to A[[t]]$ where $F \to A$ is a finite \'etale $F$-algebra. Express $A$ as a product of finite separable field extensions $F \subseteq A_i$ of $F$, that is $A \cong \prod_{i=1}^m A_i$. Then $f$ and $f_0$ are the maps corresponding to the generic point and closed point of $A_i[[t]]$ for some $i$. 
  
 Since $F[[t]] \subseteq A_i[[t]]$ is finite flat, we have the transfer \[
 \Tr_{A_i[[t]]/F[[t]]}:\GW(A_i[[t]]) \to \GW(F[[t]])
 \] taking a symmetric unimodular bilinear form $\beta: V \times V \to A_i[[t]]$ to the composite 
 \[
 V \times V \to A_i[[t]] \xrightarrow{\Tr} F[[t]]
 \] where $\Tr: A_i[[t]] \to F[[t]]$ denotes the trace map of the finite \'etale extension $F[[t]] \subseteq A_i[[t]]$. See also \cite[Section 2.1]{degree}. For $\ell$ an intermediate field $k\subseteq \ell \subseteq A_i$, we have 
 \[
  \Tr_{A_i[[t]]/\ell[[t]]}:\GW(A_i[[t]]) \to \GW(\ell[[t]])
 \] in the same way.
 
 Let $\sB_i$ denote the pullback of the double point locus to the component $\Spec A_i[[t]]$ of $\M_0(\cX(d),D)_{\sigma}(\widetilde{p})$
 \[
 \sB_i = \sB \times_{\M_0(\cX(d),D)_{\sigma}(\widetilde{p})} \Spec A_i[[t]].
 \] By Proposition~\ref{Double_point_locus_our_moduli_finite_etale}, $\sB_i \to \Spec A_i[[t]]$ is finite \'etale.
 We thus have $\langle \Disc(\sB_i \to \Spec A_i[[t]]) \rangle$ in $\GW(\Spec A_i[[t]])$ whence
 \[
  \wt_{i,\ell} := \Tr_{A_i[[t]]/\ell[[t]]} \langle \Disc(\sB_i \to \Spec A_i[[t]]) \rangle \in \GW(\ell[[t]]).
 \] The pullbacks of $\wt_{i,\ell}$ to the generic and closed points are $\wt_{\ell} f$ and $\wt_{\ell} f_0$ respectively. Since $\GW(\ell[[t]]) \cong \GW(\ell)$ we have 
 \[
\wt_{\ell} f = \wt_{\ell} f_0
 \] as claimed.
  \end{proof}

  \subsection{Stability under basechange}\label{subsection:stable_basechange}
  
  The quadratic Gromov--Witten invariants are stable under basechange, as we now show. In fact, a more general basechange result holds for enumerative del Pezzo surfaces as follows. By Lemma~\ref{lm:enumerative_stable_basechange}, the enumerative property of Definition~\ref{df:general_fiber_enumerative} is stable under basechange and the corresponding open dense subsets $\widetilde{V} \subset \Res_{\sigma/k} S$ pullback. The pullback $\ev_{\widetilde{V}}$ of the twisted evaluation map 
\[
\ev_{S,\sigma}:  \Mbar_{0,n}(S,D)_{\sigma} \to \Res_{\sigma/k} X
\] to $\ev_{\sigma}^{-1}\widetilde{V}$ is \'etale so its cotangent complex $L$ vanishes and $\omega:=\det L$ has a canonical trivialization $c:\omega \xrightarrow{\cong} \cO_{\Mbar_{0,n}(S,D)_{\sigma}}$. There is a double point locus 
\[
\varpi: \sB \to \ev_{\sigma}^{-1}\widetilde{V}
\] as in Section~\ref{subsection:double_point_locus} defined in \cite[Section 5]{KLSW-relor}. The map $\varpi$ is moreover finite \'etale because $\ev_{\sigma}^{-1}\widetilde{V}$ parametrizes stable maps in the ordinary double point locus (by the same proof as Proposition~\ref{Double_point_locus_our_moduli_finite_etale}), from which it follows that the pullback $\ev_{\widetilde{V}}$ admits a double-point orientation 
\begin{equation}\label{eq:double_point_orientation}
\omega \xrightarrow{c}   \cO_{\Mbar_{0,n}(S,D)_{\sigma}} \xrightarrow{\Disc} (\varpi_*\mathcal{O}_{\sB})^{\otimes 2},
\end{equation} where $\Disc$ is the disriminant recalled in Remark~\ref{Rm:disc}. This orientation was discussed in detail in  \cite{degree} \cite[Section 6]{KLSW-relor} under more restrictive hypotheses, however it is immediate that the map is an isomorphism on $\ev_{\sigma}^{-1}\widetilde{V}$, because $\ev_{\sigma}$ is \'etale on this locus and the double point locus $\varpi$ is finite \'etale above it. In other words, for enumerative del Pezzo surfaces, there is a double point orientation defined away from codimension $1$ on the target.

Quadratic Gromov--Witten invariants are defined by taking degrees. Let $p: \Spec k(p) \to \widetilde{V}$ be any point and let $\ev_{\widetilde{V}} \otimes_{\widetilde{V}} k(p)$ denote the pullback of $\ev_{\widetilde{V}}$, which inherits an orientation by \cite[Proposition 2.7]{degree}. Oriented, finite \'etale maps admit an $\A^1$-degree (\cite[Section 2.1]{degree}), defining 
\[
N_{S,D,\sigma}(p):= \deg (\ev_{\widetilde{V}} \otimes_{\widetilde{V}} k(p) ) \in \GW(k(p)), \quad \quad \text{ and }
\]
\[
N_{S,D,\sigma}:= \deg (\ev_{\widetilde{V}}) \in \sGW(\ev^{-1}(\widetilde{V}))
\] where $\sGW(\ev^{-1}(\widetilde{V}))$ denotes the sections of the unramified Grothendieck--Witt sheaf on $\ev^{-1}(\widetilde{V})$. 

\begin{prop}\label{pr:NA1-stable-basechange-at-point}\label{pr:NA1-stable-basechange}
Suppose $S$ is a del Pezzo surface over a field $k$. Let $D$ be in $\Pic S$, and let $k \to \sigma$ be a finite \'etale extension of degree $-K_S \cdot D -1$. Suppose that $(S, D)$ is enumerative with open dense subset $\widetilde{V} \subset \Res_{\sigma/k} S$. Let $k \to E$ be a field extension. Then  $(S_E, D_E)$  is enumerative and we have 
\begin{enumerate}
\item \label{eq:NA-stable-basechange-at-point} For any point $p_E$ of $\ \widetilde{V}_E$ with image $p$ in $\widetilde{V}$  we have
\[
N_{S_E,D_E,\sigma_E}(p_E) = N_{S,D,\sigma}(p) \otimes_{k(p)} k(p_E) \in \GW(k(p_E)).
\] 
\item \label{eq:NA-stable-basechange} If $N_{S,D,\sigma}$ is pulled back from $\GW(k)$, then $N_{S_E,D_E,\sigma_E}$ is pulled back from $\GW(E)$ and 
\[
N_{S_E,D_E,\sigma_E} = N_{S,D,\sigma} \otimes_{k(p)} E \in \GW(E).
\]
\end{enumerate}
\end{prop}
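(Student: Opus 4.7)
The plan is to show that every piece of the construction behind $N_{S,D,\sigma}$ is stable under base change, and then read off both statements by evaluating the relevant sections of $\sGW$ at points. Enumerativeness of $(S_E, D_E)$ is immediate from Lemma~\ref{lm:enumerative_stable_basechange}, and one may take the open $\widetilde V_E$ to be the preimage of $\widetilde V$ under the basechange map $\Res_{\sigma_E/E} S_E \to \Res_{\sigma/k} S$.

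The main point is that the diagram (\ref{eq:double_point_orientation}) defining the orientation is preserved by the extension $k \to E$. Indeed, the moduli stack $\Mbar_{0,n}(S,D)$, its twist $\Mbar_{0,n}(S,D)_\sigma$ (constructed from Galois descent along $k \to \sigma$), and the twisted evaluation map $\ev_\sigma$ are all defined by scheme-theoretic constructions that commute with base change, and the same holds for the double point locus $\sB \to \ev_\sigma^{-1}\widetilde V$, which is built via a colon-ideal (Definition~\ref{df:Double_point_locus}). The canonical trivialization $c$ of $\omega$ comes from the \'etaleness of $\ev_\sigma$ on $\ev_\sigma^{-1}\widetilde V$, which is preserved by base change, and the discriminant $\Disc$ of a finite \'etale morphism is functorial under arbitrary base change by Remark~\ref{Rm:disc}. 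Hence $(\ev_{\widetilde V_E}, \text{orientation})$ is the base change of $(\ev_{\widetilde V}, \text{orientation})$.

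For \ref{eq:NA-stable-basechange-at-point}, given $p_E \in \widetilde V_E$ with image $p \in \widetilde V$, the fiber $\ev_{\widetilde V_E} \otimes_{\widetilde V_E} k(p_E)$ together with its orientation is obtained from $\ev_{\widetilde V} \otimes_{\widetilde V} k(p)$ by base change along $k(p) \to k(p_E)$. Since the $\A^1$-degree of an oriented finite \'etale morphism between spectra of fields is compatible with field extension (this is the pullback $\GW(k(p)) \to \GW(k(p_E))$ applied to the local degree, see \cite[Section 2.1]{degree}), we obtain the claimed identity
\[
N_{S_E,D_E,\sigma_E}(p_E) = N_{S,D,\sigma}(p) \otimes_{k(p)} k(p_E) \in \GW(k(p_E)).
\]

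Finally, \ref{eq:NA-stable-basechange} follows formally: if $N_{S,D,\sigma} \in \sGW(\ev^{-1}\widetilde V)$ is the pullback of some $\alpha \in \GW(k)$, then $N_{S,D,\sigma}(p) = \alpha \otimes_k k(p)$ for every point $p$. Applying \ref{eq:NA-stable-basechange-at-point} yields $N_{S_E,D_E,\sigma_E}(p_E) = \alpha \otimes_k k(p_E) = (\alpha \otimes_k E) \otimes_E k(p_E)$ for every $p_E$, so $N_{S_E,D_E,\sigma_E}$ is the section pulled back from $\alpha \otimes_k E \in \GW(E)$, which is exactly the statement. I expect no serious obstacle; the only mild subtlety is verifying compatibility of the orientation \eqref{eq:double_point_orientation} with base change, which reduces to functoriality of the trace/discriminant for finite \'etale morphisms.
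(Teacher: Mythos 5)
Your proposal is correct and follows essentially the same route as the paper's proof: both reduce everything to the base-change compatibility of the double point orientation \eqref{eq:double_point_orientation} (via functoriality of the discriminant of a finite \'etale morphism) and then invoke the base-change compatibility of the $\A^1$-degree from \cite[Proposition 2.7]{degree}. The only cosmetic difference is in part \eqref{eq:NA-stable-basechange}, where you evaluate sections of $\sGW$ at points while the paper phrases the same conclusion via the restriction map $\sGW(\ev^{-1}(\widetilde{V})) \to \sGW(\ev^{-1}(\widetilde{V}'))$ and a commutative square.
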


\begin{proof}
By \cite[068E,08QL]{stacks-project}, orientations of local complete intersection morphisms pullback under flat basechange. In particular, the double point orientation \eqref{eq:double_point_orientation} of the restricted evaluation map $\ev_{\widetilde{V}}$ associated to $X$ pulls back to an orientation of $\ev_{\widetilde{V}'}$. We obtain induced orientations of $\ev_{\widetilde{V}} \otimes_{\widetilde{V}} k(p)$ and $\ev_{\widetilde{V}'} \otimes_{\widetilde{V}} k(p_E)$. By \cite[Proposition 2.7]{degree}, it follows that  
\[
 \deg (\ev_{\widetilde{V}'} \otimes_{\widetilde{V}'} k(p_E) ) = \deg (\ev_{\widetilde{V}} \otimes_{\widetilde{V}} k(p) ) \otimes k(p_E) \in \GW(k(p_E))
\] where the degree on the left hand side is taken with respect to the pullback orientation. Because taking discriminants commutes with basechange, the double point orientation \eqref{eq:double_point_orientation} of $\ev_{\widetilde{V}}$ pulls back to the double point orientation of $\ev_{\widetilde{V}'}$, completing the proof of \eqref{eq:NA-stable-basechange-at-point}. 

By the same argument, we have that the image of $N_{S,D,\sigma}$ under the restriction map
\[
\sGW(\ev^{-1}(\widetilde{V})) \to \sGW(\ev^{-1}(\widetilde{V}'))
\] is $N_{S_E,D_E,\sigma_E}$. Then \eqref{eq:NA-stable-basechange} follows from the commutative diagram
\[
\xymatrix{
  \GW(k) \ar[r] \ar[d] & \GW(E) \ar[d] \\
  \sGW(\ev^{-1}(\widetilde{V})) \ar[r] &  \sGW(\ev^{-1}(\widetilde{V}')).
}
\]
\end{proof}

  \subsection{Proof of the wall-crossing formula}

 Choose a field extension $\ell$ of $k$. For any integer $N$, let $\Fet_{/\ell}(N)$ be the set of finite, \'etale $\ell$-algebras of degree $N$. Let $f_0: C_1 \cup C_2 \to \cX(d)_0$ be as in Lemma~\ref{lm:describe_specialized_curves} such that $k(f_0) = \ell$. In particular, we have a stable map $f_1:C_1\to \widetilde S$ transverse to $E$ defined over $\ell$ of degree $D_0$ in $\Pic \widetilde S$. 
 
 \begin{defi}\label{df:IntersectionProfile}
 We define the {\em intersection profile of $f_1$}, denoted $\Int_E(f_1)$, in $\Fet_{/\ell} (D_0 \cdot E)$ to be the (finite, \'etale, degree $D_0 \cdot E$) intersection of $f_1(C_1)$ with $E$.  
\end{defi} 
 
$f_0$ also has an associated stable map $f_2: C_2 \to Q(d) $ defined over $\ell$ which is transverse to $E$. We may also define the intersection profile of $f_2$ to be the finite, \'etale, degree $D_0 \cdot E$, $\ell$-algebra given by the intersection of $f_2(C_2)$ with $E$. Note that the intersection profile of $f_2$ is then canonically isomorphic to the intersection profile of $f_1$ and we may also call both the intersection profile of $f_0$. In symbols, we have  $\Int_E(f_0)=\Int_E(f_1) = \Int_E(f_2)$.

Moreover, we have $(C_2)_{k^s} \cong \coprod^{D_0 \cdot E} \mathbb{P}^1_{k^s}$ and $(f_2)_{k^s}$ maps each $\mathbb{P}^1_{k^s}$ component of $(C_2)_{k^s}$ to one of the rulings of $Q(d)_{k^s} \cong \mathbb{P}^1_{k^s}\times \mathbb{P}^1_{k^s}$. Let $\ell \subseteq \overline{\ell}$ denote an algebraic closure of $\ell$ and let 
\[
F_{\overline{\ell}}: \Fet_{/\ell}(D_0 \cdot E) \to \Gal(\overline{\ell}/\ell)-\Set
\]
 denote the restriction of the fiber functor, as in Section~\ref{Section:twisted_binomial_coefficients_section}. Every element of $F_{\overline{\ell}}(\Int_E(f_Q))$ determines a geometric point of the intersection of $f_2(C_2)$ with $E$. Since each geometric point of $f_2(C_2) \cap E$ either belongs to a component of $(C_2)_{k^s}$ mapped to the $(1,0)$-ruling or to the $(0,1)$-ruling, we have a map 
 \[
 \psi_{f_2}: F_{\overline{\ell}}(\Int_E(f_2)) \to \{(1,0), (0,1) \}
 \] which assigns the corresponding ruling to each element of $F_{\overline{\ell}}(\Int_E(f_Q))$. 
 
\begin{defi}
 Let the {\em ruling profile} of $f_2$, denoted $\Rul(f_2)$, be the subset of $F_{\overline{\ell}}(\Int_E(f_2)) $ given by $\psi_{f_2}^{-1}((1,0))$.
\end{defi} 

\begin{defi}\label{df:Nsigmaell}
For a field extension $\ell$ of $k$ and a finite \'etale $\ell$-algebra $\sigma'$ of degree $D_0 \cdot E$, define $N^{\sigma',\ell}_{\widetilde S,D_0,\sigma}(p)$ to be 
\[
N^{\sigma',\ell}_{\widetilde S,D_0,\sigma}(p) = \sum_{\substack{f_1 \text{ degree } D_0\\  \text{ rational curve on } \widetilde{S}\\k(f_1) = \ell \\ \Int_E(f_1)=\sigma' \\\text{ thru } p }} \wt_{\ell} f_1.
\]
the sum of quadratic weights over all rational stable maps with field of definition $\ell$, degree $D_0$, through $p$, with intersection profile $\sigma'$. 
\end{defi}

Note that $N^{\sigma',\ell}_{\widetilde S,D_0,\sigma}(p)$ may depend on the choice of $p$. 

\begin{thm}\label{thm:N_degeneration2}
Let $\cX \to \Spec k[[t]]$ be a $1$-nodal Lefschetz fibration of del Pezzo surfaces, and $d \in k^*$. Let $D \in \Pic \Sigma(d)$ be the class of a Cartier divisor, and $k \to \sigma$  a finite \'etale extension of degree $-K_{\Sigma(d)} \cdot D -1$.
 Suppose  $(\Sigma(d), D)$ satisfies Hypothesis \ref{NA1-enumerative-general-finitel-special}. Let $p=( p_1,\ldots,p_r)$ be a point in the open dense subset $W$ of Lemma~\ref{lm:points_specializing_try2} applied to open sets $V$ and $\widetilde{V}$ as in Definitions~\ref{def:special_fiber_enumerative} and \ref{df:general_fiber_enumerative}. Assume that $p$ is closed. Since $k$ is perfect, $k \subseteq k(p) =:F$ is finite and separable.

  Then for all $\widetilde{p}=(\widetilde p_1,\ldots,\widetilde p_r)$ on $\Sigma(d)$ which specialize to $p$, there is an equality in $\GW(F) \hookrightarrow \GW(F((t)))$  given by
  \begin{align*}
   N_{\Sigma(1),D,\sigma((t))}(\widetilde{p})
   & = \sum_{\ell} Tr_{\ell/F} \big(\sum_{\substack{\sigma' \in \Fet_{/\ell} \\ (D_0,(i,j))\in \varphi_1^{-1}(D)}} {\sigma'/\ell \choose j}N^{\sigma',\ell}_{\widetilde S,D_0,\sigma}(p) \big). 
  \end{align*}
 
 If $d\notin (k^*)^2$, then 
  \begin{align*}
      N_{\Sigma(d),D,\sigma((t))}(\widetilde{p})
      & =\sum_{\ell} Tr_{\ell/F}\big( \sum_{\substack{\sigma' \in \Fet_{/\ell} \\ (D_0,(j,j))\in \varphi_d^{-1}(D)}} {\sigma'[\sqrt{d}]/k \choose j}\langle d^j \rangle N^{\sigma',\ell}_{\widetilde S,D_0,\sigma}(p)\big). 
     \end{align*}
  \end{thm}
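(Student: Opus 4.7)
The plan is to compute $N_{\Sigma(d),D,\sigma((t))}(\widetilde p)$ as a sum of weights of specializations to the special fiber $\cX(d)_0 = \widetilde S \cup_E Q(d)$, then rearrange this sum by first fixing the component $f_1$ on $\widetilde S$ and summing over compatible completions $f_2$ on $Q(d)$; the latter sum will produce the (twisted) binomial factor. Since $\M_0(\cX(d),D)_{\sigma}(\widetilde p) \to \Spec F[[t]]$ is finite \'etale by Theorem \ref{thm:moduli-stack-finite-etale}, I would decompose it as $\coprod_i \Spec A_i[[t]]$ using \cite[X, Th\'eor\`eme 2.1]{sga1} with $F \subseteq A_i$ finite separable. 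Each factor parametrizes a generic-fiber map $f$ over $A_i((t))$ specializing to a unique closed point $f_0$ over $A_i$; Proposition \ref{prop:invariance_NA1_weight} gives $\wt_{A_i((t))} f = \wt_{A_i} f_0$ under $\GW(A_i) \hookrightarrow \GW(A_i((t)))$, so summing and transferring yields
\[
N_{\Sigma(d),D,\sigma((t))}(\widetilde p) = \sum_{f_0} \Tr_{k(f_0)/F}\wt_{k(f_0)} f_0 \in \GW(F),
\]
with the sum over closed points of the special fiber.

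Next I apply Lemma \ref{lm:describe_specialized_curves} to write each $f_0 = f_1 \cup f_2$ with $f_1: C_1 \to \widetilde S$ an irreducible rational curve of some class $D_0$ through the specialization of $\widetilde p$, $f_2$ a disjoint union of rulings on $Q(d)$, and associated Picard class $(D_0,(i_0,j_0)) \in \varphi_d^{-1}(D)$; set $\sigma' := \Int_E(f_1)$ of degree $i_0+j_0$. Regrouping the above sum by the underlying $f_1$ defined over some $\ell \supseteq F$, the completions $f_2$ correspond to Galois-equivariant assignments $\psi: F_{\overline\ell}(\sigma') \to \{(1,0),(0,1)\}$ of fiber sizes $(i_0,j_0)$. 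For $d=1$ both rulings are rational over $k$, so such assignments are exactly the size-$j_0$ subsets of $F_{\overline\ell}(\sigma')$ under the standard Galois action; the associated \'etale $\ell$-algebra has trace form ${\sigma'/\ell \choose j_0}$ (Definition \ref{df:uGWYchoosej}). For $d \notin (k^*)^2$, \eqref{PicQ(d)} forces $i_0=j_0$, and by Proposition \ref{prop:classification_quadric_surfaces_in_P3} the nontrivial element of $\Gal(\ell[\sqrt d]/\ell)$ swaps the two rulings, so the Galois set of assignments is exactly the twisted one of Definition \ref{df:uGWYchoosejtwisted} with trace form ${\sigma'[\sqrt d]/\ell \choose j_0}$.

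For the weight, the double-point locus (Definition \ref{df:Double_point_locus}) splits as $\sB_{f_0} = \sB_{f_1} \sqcup \sB_{f_2}$: the nodes of $f_1(C_1)$ lie in $\widetilde S \setminus E$ by Hypothesis \ref{NA1-enumerative-general-finitel-special}, and $\sB_{f_2}$ comprises the $i_0 j_0$ transverse ruling intersections in $Q(d)$, while the gluing points $C_1 \cap C_2 \subset f_0^{-1}(E)$ are domain nodes but not image double points of $f_0$. Multiplicativity of the discriminant gives $\wt_{k(f_0)} f_0 = (\wt_\ell f_1)_{k(f_0)}\cdot\langle \Disc(\sB_{f_2}/k(f_0))\rangle$. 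For $d=1$ each intersection is a coordinate crossing in $\P^1 \times \P^1$ with trivial local discriminant, so this extra factor is $\langle 1\rangle$. For $d \notin (k^*)^2$, using $Q(d) \cong \Res_{\ell[\sqrt d]/\ell} C$, I expect $j_0$ self-conjugate nodes rational over $k(f_0)$ whose tangent-direction extension is $k(f_0)[\sqrt d]/k(f_0)$ (each contributing mass $\langle d\rangle$), and $j_0(j_0-1)$ non-self-conjugate intersections grouped into $j_0(j_0-1)/2$ points over $k(f_0)[\sqrt d]$ with trivial tangent-direction extension (each contributing mass $\langle 1 \rangle$); the product of masses is then $\langle d\rangle^{j_0} = \langle d^{j_0}\rangle$.

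Assembling everything, the contribution of all completions of a fixed $f_1/\ell$ is $\Tr_{\ell/F}\bigl(\wt_\ell f_1 \cdot {\sigma'/\ell \choose j_0}\bigr)$ for $d=1$ and $\Tr_{\ell/F}\bigl(\wt_\ell f_1 \cdot {\sigma'[\sqrt d]/\ell \choose j_0}\langle d^{j_0}\rangle\bigr)$ for $d \notin (k^*)^2$. Summing over $f_1$ of fixed $(\ell, \sigma', D_0)$ through $p$ aggregates $\wt_\ell f_1$ into $N^{\sigma',\ell}_{\widetilde S, D_0,\sigma}(p)$, and further summing over $\ell$, $\sigma'$ and $(D_0,(i_0,j_0)) \in \varphi_d^{-1}(D)$ yields the two stated identities. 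The most delicate point will be the local tangent-direction computation for $d \notin (k^*)^2$: one must verify that the Galois-conjugate pair of rulings meeting at a self-conjugate node in $\Res_{\ell[\sqrt d]/\ell} C$ has tangent-direction extension $\ell[\sqrt d]/\ell$ (and hence mass $\langle d\rangle$), which is what produces the decisive factor $\langle d^{j_0}\rangle$.
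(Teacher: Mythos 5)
Your proposal follows essentially the same route as the paper: reduce to a weighted count of specializations via Theorem \ref{thm:moduli-stack-finite-etale} and Proposition \ref{prop:invariance_NA1_weight}, regroup by the component $f_1$ on $\widetilde S$, identify the set of completions $f_2$ with the (twisted) $G_{k(f_1')}$-set of $j$-element subsets of $F_{\overline{\ell}}(\sigma')$ to produce the binomial factor, and separate the discriminant into the contribution of $f_1$ and that of the nodes on $Q(d)$. The one step I would flag is your orbit analysis for $d\notin(k^*)^2$: the claim that there are exactly $j_0$ nodes rational over $k(f_0)$ with tangent extension $k(f_0)[\sqrt d]$ and $j_0(j_0-1)/2$ further closed points over $k(f_0)[\sqrt d]$ with split tangents presupposes that the individual rulings are defined over $k(f_0)[\sqrt d]$ and are paired off by conjugation, which need not hold for a general intersection profile $\sigma'$. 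The robust argument (and the one the paper uses) is uniform: at \emph{every} node $v$ of $f_2(C_2)$ the two branches lie on the two rulings, which are exchanged exactly by the elements of the Galois group moving $\sqrt d$, so $\delta(v)=d$ and $\mass(v)=\langle N_{k(v)/k(f_0)}(d)\rangle=\langle d^{[k(v):k(f_0)]}\rangle$; multiplying over all closed nodes gives $\langle d^{\,i^2}\rangle=\langle d^{\,j_0}\rangle$ since $\sum_v[k(v):k(f_0)]=i^2\equiv i\pmod 2$, independently of how the geometric nodes group into closed points. With that replacement your argument matches the paper's proof.
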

  
 \begin{rem}
Recall that $N_{\Sigma(d),D,\sigma((t))}(\widetilde{p})$ 
was defined in Section~\ref{subsection:stable_basechange}.
 \end{rem} 
  
 \begin{proof}
  Let $\widetilde{p}=(\widetilde p_1,\ldots,\widetilde p_r)$ be a collection of closed points on $\Sigma(d)$ which specialize (as in Remark \ref{rem:spec_of_pi}) to $p=( p_1,\ldots,p_r)$, defining the moduli space $\M_0(\cX(d),D)_{\sigma}(\widetilde{p})$.  By Lemma  \ref{lm:describe_specialized_curves}, Lemma \ref{lemma:unique_generalization}, and Proposition \ref{prop:invariance_NA1_weight}, we know that $ N_{\Sigma(d),D,\sigma}$ is the sum of the quadratic weight over all stable maps described in Lemma \ref{lm:describe_specialized_curves}.
  
   Let $f'_1:C'_1\to \widetilde S$ be a stable map passing through $p$ with $C'_1$ an irreducible rational curve, and $f_1'$ transverse to $E$ of degree $D_0$ in $\Pic \widetilde S$. Let $k(f'_1)$ denote the residue field of the point of the moduli space of curves determined by $f'_1$. 
   
    Let $\sigma' = \Int_E(f_1')$ be the intersection profile of $f_1'$. Suppose $f_0:C_1\cup C_2\to \cX(d)_0$ is the specialization of a stable map to $\Sigma(d)$ of genus 0, degree $D$, passing through $\widetilde p$, such that $f_1=(f'_1)_{k(f_1)}.$ By Lemma~\ref{pr:varphi_surjective}, we have $\varphi_d^{-1}(D) = \{ (D_0, (i,j)) + \mathbb{Z}(-E, (1,1)) \}$. Thus the degree of such a map $f_0$ in $\Pic \cX(d)_0$ is $(D_0,(i,j))$, with $(i,j)$ determined by $D_0$ (and the fixed $D$). Moreover $k(f_1') \subseteq \sigma'$ is an extension of degree $i+j$.

  Let us suppose first that $d=1$. Then the nodes of $f_2 = f_0\vert_{C_2}$ are all split. Thus the double point locus over $f_0$ is the pullback of the double point locus over $f_1'$ disjoint union a finite \'etale cover with trivial discriminant. Thus \begin{equation}\label{discDd_0d=1}
   \Disc (\D_{f_0} \to \Spec k(f_0)) = \Disc (\D_{f_1'} \to \Spec k(f_1'))
  \end{equation} where the right hand side is implicitly pulled back from $\GW(k(f_1'))$ to $\GW(k(f_0))$.

Since the weight of $f_0$ over a field extension $k'$ of $k$ is $$\wt_{k'}(f_0) =\Tr_{k(f_0)/k'} \Disc (\D_{f_0} \to \Spec k(f_0)),$$ we have
\begin{align*}
\wt_{k(f_1')}(f_0) &= \Tr_{k(f_0)/k(f_1')} \Disc (\D_{f_1'} \to \Spec k(f_1'))\\
& = (\Disc (\D_{f_1'} \to \Spec k(f_1')) \Tr_{k(f_0)/k(f_1')}\langle 1\rangle \\
&=\wt_{k(f_1')}(f_1')  \Tr_{k(f_0)/k(f_1')}\langle 1\rangle
\end{align*} where the first equality follows from \eqref{discDd_0d=1}.

The field of definition $k(f_0)$ of $f_0$ is the extension field of $k(f_1')$ determined by the subgroup of the absolute Galois group of $k(f_1')$ stabilizing the ruling profile $\Rul(f_2)$ of $f_2$. For a subset $R$ of the $\Gal(\overline{k(f_1')}/k(f_1'))$-set $\Int_{E}(f_1)$, let $L(R)$ denote the subfield $k(f_1')\subseteq L(R) \subseteq \overline{k(f_1')}$ corresponding to the stabilizer $\Stab(R)$ of $R$ in $\Gal(\overline{k(f_1')}/k(f_1'))$. In symbols, $$L(R) = \overline{k(f_1')}^{\Stab(R)}.$$ 

Hence, denoting by $\sB(f'_1)$ the set of all stable maps $f_0:C_1\cup C_2\to \cX(d)_0$ as in Lemma  \ref{lm:describe_specialized_curves} such that $f_0 \vert_{C_1} = (f_1')_{k(f_0)}$, we have
 \begin{align*}
 \sum_{f_0\in \sB(f'_1)} \wt_{k(f_1')}(f_0)& = \wt_{k(f_1')}(f_1') \sum_{f_0\in \sB(f'_1)}  \Tr_{k(f_0)/k(f_1')}\langle 1\rangle \\
 & = \wt_{k(f_1')}(f_1') \sum_{\substack{R \subseteq \Int_E(f_1'): \\ \vert R \vert =j}}  \Tr_{L(R)/k(f_1')}\langle 1\rangle \\
 &=  \wt_{k(f_1')}(f_1')  {\sigma'/k(f_1') \choose j}.
 \end{align*} 
  
 Let $\widetilde{\sC}(D_0,\sigma',\ell)$ denote the set of stable rational maps $f'_1:C_1\to \widetilde{S}$ of degree $D_0$, passing through $p$, and such that $k(f'_1)=\ell$ and $\Int_E(f_1' ) =\sigma'$.
 By Definition, we have that
 \[
 N^{\sigma',\ell}_{\widetilde S,D_0,\sigma}(p) = \sum_{f'_1\in \widetilde{\sC}(D_0,\sigma',\ell)} \wt_{\ell}(f_1')
 \]

Given $(D_0,(i,j))$ in $\Pic \cX(d)_0$, let $\sC(D_0,(i,j))$ denote the set of stable rational maps $f_0:C_1\cup C_2\to \cX(d)_0$ of degree $(D_0,(i,j))$ as described in Lemma~\ref{lm:describe_specialized_curves}.

 \begin{align*}
   N_{\Sigma(1),D,\sigma((t))}(\widetilde{p}) & = \sum_{(D_0,(i,j))\in \varphi_1^{-1}(D)}\sum_{\substack{f_0 \text{ in }\\\sC(D_0,(i,j))}} \Tr_{k(f_0)/F} \wt_{k(f_0)} (f_0)\\
   & = \sum_{(D_0,(i,j)\in \varphi_1^{-1}(D)}\sum_{\ell,\sigma'} \sum_{f'_1\in \widetilde{\sC}(D_0,\sigma',\ell)} \sum_{f_0\in\sB(f'_1)} \Tr_{k(f_0)/F}( \wt_{k(f_0)}(f_0))\\
   & = \sum_{(D_0,(i,j)\in \varphi_1^{-1}(D)}\sum_{\ell,\sigma'} \sum_{f'_1\in \widetilde{\sC}(D_0,\sigma',\ell)} \sum_{f_0\in\sB(f'_1)} \Tr_{\ell/F}\circ \Tr_{k(f_0)/\ell}( \wt_{k(f_0)}(f_0) )\\
   & = \sum_{(D_0,(i,j)\in \varphi_1^{-1}(D)}\sum_{\ell,\sigma'}\Tr_{\ell/F}( \sum_{f'_1\in \widetilde{\sC}(D_0,\sigma',\ell)} \sum_{f_0\in\sB(f'_1)} \Tr_{k(f_0)/\ell}( \wt_{k(f_0)}(f_0) ))\\
   & = \sum_{(D_0,(i,j)\in \varphi_1^{-1}(D)}\sum_{\ell,\sigma'}\Tr_{\ell/F}(  \sum_{f'_1\in \widetilde{\sC}(D_0,\sigma',\ell)}\wt_{\ell}(f_1')  {\sigma'/\ell \choose j})\\
   & = \sum_{(D_0,(i,j)\in \varphi_1^{-1}(D)}\sum_{\ell,\sigma'} Tr_{\ell/F} \big({\sigma'/\ell \choose j}N^{\sigma',\ell}_{\widetilde S,D_0,\sigma}(p) \big)\\
   & = \sum_{\ell} Tr_{\ell/F} \big(\sum_{\substack{\sigma' \in \Fet_{/\ell} \\ (D_0,(i,j))\in \varphi_1^{-1}(D)}} {\sigma'/\ell \choose j}N^{\sigma',\ell}_{\widetilde S,D_0,\sigma}(p) \big). 
  \end{align*}

Now suppose that $d \notin (k^*)^2$. Then $i=j$ by \eqref{PicQ(d)}. Furthermore, the set of nodes of $f_2$ consists of $i^2$ geometric points. These nodes form $v_1,\ldots, v_{m_{f_Q}}$ closed points of $Q(d)$ such that $\sum_i [k(v_i): k] = i^2$. For each node $v_i$ the two tangent directions of $v_i$ give the extension $k(v_i) \subseteq k(v_i)[\sqrt{d}]$ because the two rulings $(1,0)$ and $(0,1)$ of $Q$ form these two tangent directions and they are swapped by elements of the Galois group exchanging $\pm \sqrt{d}$. Thus
\[
 \operatorname{mass}(v_i) = \langle d\rangle^{[k(v_i): k(f_0)]} \quad \quad \text{and} \quad \quad \prod_i \operatorname{mass}(v_i) =  \langle d\rangle^{i^2}.
\] This gives
\begin{align*}
\wt_{k(f_1')}(f_0) &= \Tr_{k(f_0)/k(f_1')} \Disc (\D_{f_1'} \to \Spec k(f_1'))\\
& = (\Disc (\D_{f_1'} \to \Spec k(f_1'))  \langle d\rangle^{i^2} \Tr_{k(f_0)/k(f_1')}\langle 1\rangle \\
&=\wt_{k(f_1')}(f_1') \langle d\rangle^{i} \Tr_{k(f_0)/k(f_1')}\langle 1\rangle.
\end{align*} 

The absolute Galois group of $k(f_1')$ acts on the set of subsets of $\Int_E(f_Q)$ through the quotient 
\[
\Gal(\overline{k(f_1')}/ k(f_1')) \to \Gal(\overline{k(f_1')}/ k(f_1')) \times \Gal(k(f_1')[\sqrt{d}]/ k(f_1'))
\] where $\Gal(\overline{k(f_1')}/ k(f_1'))$ acts on $\Int_E(f_Q) = \Int_E(f_1')$ and a non-trivial element of $\Gal(k(f_1')[\sqrt{d}]/ k(f_1'))$ takes a subset to its complement. The field of definition $k(f_0)$ of $f_0$ is the extension field of $k(f_1')$ determined by the subgroup of the absolute Galois group of $k(f_1')$ stabilizing the ruling profile $\Rul(f_2)$ with this twisted action. For a subset $R$ determining an element of the $\Gal(\overline{k(f_1')}/k(f_1'))$-set of subsets of $\Int_{E}(f_1)$, let $L_d(R)$ denote the subfield $k(f_1')\subseteq L_d(R) \subseteq \overline{k(f_1')}$ corresponding to the stabilizer $\Stab_d(R)$ of $R$ in $\Gal(\overline{k(f_1')}/k(f_1'))$ in the twisted action. In symbols, $$L_d(R) = \overline{k(f_1')}^{\Stab_d(R)}.$$ 

Now compute \begin{align*}
 \sum_{f_0\in \sB(f'_1)} \wt_{k(f_1')}(f_0)& = \wt_{k(f_1')}(f_1')  \langle d\rangle^{i} \sum_{f_0\in \sB(f'_1)}  \Tr_{k(f_0)/k(f_1')}\langle 1\rangle \\
 & = \wt_{k(f_1')}(f_1')  \langle d\rangle^{i} \sum_{\substack{R \subseteq \Int_E(f_1'): \\ \vert R \vert =j}}  \Tr_{L_d(R)/k(f_1')}\langle 1\rangle \\
 &=  \wt_{k(f_1')}(f_1')  \langle d\rangle^{i}  {\sigma'[\sqrt{d}] /k(f_1') \choose j}.
 \end{align*} 
The remainder of the proof follows the case of $d=1$.
 \end{proof}

\begin{thm}\label{thm:N-surgery}
Let $\cX \to \Spec k[[t]]$ be a $1$-nodal Lefschetz fibration of del Pezzo surfaces with vanishing cycle $\gamma$, and  let $d \in k^*$. Let $D \in \Pic \Sigma(d)$ be the class of a Cartier divisor, and $k \to \sigma$  a finite \'etale extension of degree $-K_{\Sigma(d)} \cdot D -1$. Suppose $(\Sigma(d), D)$ satisfies Hypothesis \ref{NA1-enumerative-general-finitel-special}. Let $p=( p_1,\ldots,p_r)$ be a closed point in the open dense subset $W$ of Lemma~\ref{lm:points_specializing_try2} applied to open sets $V$ and $\widetilde{V}$ as in Definitions~\ref{def:special_fiber_enumerative} and \ref{df:general_fiber_enumerative} and let $F=k(p)$. Suppose that for all $j\in \Z$, $N_{\Sigma(1),D-j\gamma,\sigma}(\widetilde{p}) $ is independent of the choice of $p$ in $V$ and lift $\widetilde{p}$ in $\widetilde{V}$ of $p$. Then the same is true for $\Sigma(d)$ and we have the equality in $\GW(F)\hookrightarrow \GW(F((t)))$

  \begin{align*}
  N_{\Sigma(d),D,\sigma((t))}
  & = N_{\Sigma(1),D,\sigma((t))}+
 (\langle 2 \rangle - \langle 2d \rangle) \sum_{j\ge 1} (-1)^j N_{\Sigma(1),D-j\gamma,\sigma((t))}. 
\end{align*}
\end{thm}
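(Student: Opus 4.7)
The plan is to apply the degeneration formula of Theorem~\ref{thm:N_degeneration2} to both sides and match them via the enriched binomial identity of Corollary~\ref{cr:Useful_Binomial_identity}. First reduce to the case $d \notin (k^*)^2$: when $d=c^2 \in (k^*)^2$, Proposition~\ref{prop:classification_quadric_surfaces_in_P3} together with Construction~\ref{const:d-surgery} gives $\cX(d)\cong\cX(1)$ via the rescaling $t\mapsto ct$, so both sides coincide tautologically and the correction $\langle 2\rangle-\langle 2d\rangle$ vanishes in $\GW(k)$. By Proposition~\ref{pr:NA1-stable-basechange-at-point} it suffices to prove the identity after restricting to a single closed point $\widetilde p$ specializing to $p$, and throughout I identify $D\in\Pic\Sigma(d)$ with $\psi_d(D)\in\Pic\Sigma(1)$ via Corollary~\ref{cor:mapPicSigma_to_PicS} when writing $N_{\Sigma(1),D-s\gamma,\sigma((t))}$.

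Fix one preimage $(D_0,(j,j))\in\varphi_d^{-1}(D)$; by Proposition~\ref{pr:varphi_surjective}, all other preimages are $(D_0-lE,(j+l,j+l))$ for $l\in\Z$, and the associated intersection profile $\sigma'$ has degree $[\sigma':\ell]=(D_0-lE)\cdot E=2(j+l)$. Corollary~\ref{cr:Useful_Binomial_identity} with ``$j$'' replaced by $j+l$ rewrites the coefficient appearing in the $\Sigma(d)$ formula of Theorem~\ref{thm:N_degeneration2} as
\[
  {\sigma'[\sqrt d]/\ell \choose j+l}\langle d^{j+l}\rangle = {\sigma'/\ell \choose j+l} + (-1)^{j+l}(\langle 2\rangle-\langle 2d\rangle)\sum_{r=0}^{j+l-1}(-1)^r{\sigma'/\ell \choose r}.
\]
Substituting back into Theorem~\ref{thm:N_degeneration2}, the terms with coefficient ${\sigma'/\ell \choose j+l}$ collectively reassemble, via the $d=1$ case of the same theorem, into $N_{\Sigma(1),\psi_d(D),\sigma((t))}(\widetilde p)=N_{\Sigma(1),D,\sigma((t))}(\widetilde p)$.

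For the remaining terms, apply the Pascal symmetry ${\sigma'/\ell \choose r}={\sigma'/\ell \choose 2(j+l)-r}$ from Proposition~\ref{prop:basic_binomial_identities_scheme}(1) and substitute $s=j+l-r$, which turns $\sum_{r=0}^{j+l-1}(-1)^{j+l+r}{\sigma'/\ell \choose r}$ into $\sum_{s=1}^{j+l}(-1)^s{\sigma'/\ell \choose j+s+l}$. Interchanging the order of summation in $l$ and $s$ and extending the inner sum over all $l\in\Z$ (harmless, since out-of-range binomial coefficients vanish and only finitely many $l$ contribute by Remark~\ref{rem:finitely_many_i}), the inner sum is precisely the Theorem~\ref{thm:N_degeneration2} formula for $N_{\Sigma(1),\psi_d(D)-s\gamma,\sigma((t))}(\widetilde p)$, using that $\psi_d(D)-s\gamma=\varphi_1(D_0,(j-s,j+s))$ and that the preimage $(D_0-lE,(j-s+l,j+s+l))$ has second bidegree index $j+s+l$.

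Independence of the left-hand side from the specific lift $\widetilde p$ then follows from the corresponding independence of the right-hand side assumed in the hypothesis together with this equality. I expect the main obstacle to be the combinatorial bookkeeping in the third paragraph, namely verifying that the substitution $s=j+l-r$ together with the swap of summation orders cleanly reproduces the shifted invariants $N_{\Sigma(1),D-s\gamma,\sigma((t))}$; however, Corollary~\ref{cr:Useful_Binomial_identity} was designed precisely so that this match is transparent once the degeneration formula of Theorem~\ref{thm:N_degeneration2} is in place.
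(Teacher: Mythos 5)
Your proposal is correct and takes essentially the same approach as the paper: both rest on Theorem~\ref{thm:N_degeneration2} applied to $\Sigma(d)$ and to $\Sigma(1)$, the explicit description of $\varphi_d^{-1}(D)$ and $\varphi_1^{-1}(D-s\gamma)$ from Proposition~\ref{pr:varphi_surjective}, and Corollary~\ref{cr:Useful_Binomial_identity} to match coefficients. The only (harmless) difference is that you expand the $\Sigma(d)$ side and reassemble the $\Sigma(1)$ terms, whereas the paper assembles the right-hand side and identifies it with the $\Sigma(d)$ degeneration formula; your explicit treatment of the square case $d\in(k^*)^2$ and of the reindexing $s=j+l-r$ fills in details the paper leaves implicit.
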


 \begin{proof}

   By Corollary~\ref{cor:mapPicSigma_to_PicS}, we have a canonical inclusion  $\Pic \Sigma(d) \subset \Pic \Sigma(1)$. This inclusion is given as follows. By Proposition~\ref{pr:varphi_surjective}, we may choose an element of $\varphi_d^{-1}(D)$. By Remark 4.14, this element is of the form $(D', (i,i)))$. We have $D' \cdot E = 2i$ by construction of the fiber product $\Pic \widetilde S \times_{\Pic E} (\mathbb{Z} \times \mathbb{Z})$. Setting $D_0 = D' + iE$, we may choose $(D_0, (0,0)) \in \varphi_d^{-1}(D)$.
  The image of $D$ in $\Pic \Sigma(1)$ under the canonical inclusion $\Pic \Sigma(d) \subset \Pic \Sigma(1)$ is $\varphi_1 (D_0, (0,0))$.  
  Note that 
  $$ \varphi_1^{-1}(D - l \gamma) ) = \{ (D_0 - i E, (i+l , i -l) ) : i \in \mathbb{Z} \}.$$

  Let us write 
  \[
   A= N_{\Sigma(1),D,\sigma((t))}+
    (\langle 2 \rangle - \langle 2d \rangle) \sum_{l\ge 1} (-1)^j N_{\Sigma(1),D-l\gamma,\sigma((t))}.
  \]

   Then by Theorem~\ref{thm:N_degeneration2}, we compute
   \begin{align*}
    N_{\Sigma(1),D,\sigma((t))}(\widetilde{p})
    & = \sum_{\ell} Tr_{\ell/F} \big(\sum_{\substack{\sigma' \in \Fet_{/\ell} \\ (D_0,(i,j))\in \varphi_1^{-1}(D)}} {\sigma'/\ell \choose j}N^{\sigma',\ell}_{\widetilde S,D_0,\sigma}(p) \big). 
   \end{align*}
  \begin{align*}
    A&= \sum_{\ell} Tr_{\ell/F} \left(\sum_{\substack{\sigma' \in \Fet_{/\ell} \\ (D_0-iE,(i,i))\in \varphi_1^{-1}(D)}} {\sigma'/\ell \choose i}N^{\sigma',\ell}_{\widetilde S,D_0-iE,\sigma}(p) \right.
    \\ &\left. \qquad \qquad\qquad + (\langle 2 \rangle - \langle 2d \rangle) \sum_{l\ge 1} (-1)^l \sum_{\substack{\sigma' \in \Fet_{/\ell} \\ (D_0-iE,(i+l,i-l))\in \varphi_1^{-1}(D-l\gamma)}} {\sigma'/l \choose i-l}N^{\sigma',\ell}_{\widetilde S,D_0-(i+l)E,\sigma}(p)\right)
    \\ &=  \sum_{\ell} Tr_{\ell/F} \left( \sum_{\substack{\sigma' \in \Fet_{/\ell} \\ i\in \Z}} {\sigma'/k \choose i}N^{\sigma',\ell}_{\widetilde S,D_0-iE,\sigma}(p)\right.
    \\ &\left.\qquad \qquad\qquad + (\langle 2 \rangle - \langle 2d \rangle) \sum_{l\ge 1} (-1)^l \sum_{\substack{\sigma' \in \Fet_{/\ell} \\ i\in \Z}}{\sigma'/k \choose i-l}N^{\sigma',\ell}_{\widetilde S,D_0-iE,\sigma}(p) \right)
     \\ &= \sum_{\ell} Tr_{\ell/F} \left( \sum_{\substack{\sigma' \in \Fet_{/\ell} \\ i\in \Z}}\left( {\sigma'/k \choose i}+ (\langle 2 \rangle - \langle 2d \rangle) \sum_{l\ge 1} (-1)^l  {\sigma'/k \choose i-l}\right)N^{\sigma',\ell}_{\widetilde S,D_0-iE,\sigma}(p)\right)
   \\ &=  \sum_{\ell} Tr_{\ell/F} \left( \sum_{\substack{\sigma' \in \Fet_{/\ell} \\ i\in \Z}} \left( {\sigma'/k \choose i}+ (-1)^i(\langle 2 \rangle - \langle 2d \rangle) \sum_{l=0}^{i-1} (-1)^l  {\sigma'/k \choose l}\right)N^{\sigma',\ell}_{\widetilde S,D_0-iE,\sigma}(p)\right).
  \end{align*}
        Thanks to Corollary \ref{cr:Useful_Binomial_identity} and Theorem~\ref{thm:N_degeneration2}, we obtain
        \begin{align*}
          A &= \sum_{\ell} Tr_{\ell/F}\big( \sum_{\substack{\sigma' \in \Fet_{/\ell} \\ (D_0,(j,j))\in \varphi_d^{-1}(D)}} {\sigma'[\sqrt{d}]/k \choose j}\langle d^j \rangle N^{\sigma',\ell}_{\widetilde S,D_0,\sigma}(p)\big)
          \\&= N_{\Sigma(d),D,\sigma((t))},
        \end{align*}
        and the theorem is proven.
  \end{proof}
  
  \begin{cor}\label{cor:N-surgery-5}
    Let $k$ be a perfect field and let $d \in k^*$. Let 
    $\cX \to \Spec k[[t]]$ be a $1$-nodal Lefschetz fibration of del Pezzo surfaces of degree at least 3 and let $\Sigma(d)$ denote the generic fiber of the $d$-surgery. Let $D$ be the class of a Cartier divisor on $\Sigma(d)$.
    
    Suppose that the component $\widetilde{S}$ of the special fiber is $k$-rational, that $(\widetilde S,D)$ is relatively enumerative, that $\Sigma(1)$ is $k((t))$-rational, and that $\Sigma(1)$ is the basechange of a surface satisfying Hypothesis~\ref{hyp:SDk_with_N}.

    Then for any  finite \'etale $k \to \sigma$ of degree $-K_{\Sigma(d)} \cdot D -1$, we have that $N_{\Sigma(d),D,\sigma((t))}(\widetilde{p})$ and $ N_{\Sigma(1),D,\sigma((t))}(\widetilde{p})$ are independent of the choice of $\widetilde{p}$ (chosen to be a lift in $\tilde{V}$ of a $p$ in $W$ with $[k(p):k]$ odd which is possible) and in $\GW(k)\hookrightarrow \GW(k((t)))$ we have the equality:
    
  \begin{align*}
  N_{\Sigma(d),D,\sigma((t))}
  & = N_{\sigma(1),D,\sigma((t))}+
 (\langle 2 \rangle - \langle 2d \rangle) \sum_{j\ge 1} (-1)^j N_{\Sigma(1),D-j\gamma,\sigma((t))} .
\end{align*}
\end{cor}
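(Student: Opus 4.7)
The plan is to verify the hypotheses of Theorem~\ref{thm:N-surgery} and then invoke Pfister's odd-degree injectivity theorem to descend the resulting wall-crossing identity from $\GW(k(p)((t)))$ to $\GW(k) \hookrightarrow \GW(k((t)))$.

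First I check that Hypothesis~\ref{NA1-enumerative-general-finitel-special} holds for $(\Sigma(d), D)$. Relative enumerativity of $(\widetilde S, D)$ is assumed. Writing $\Sigma(1) = S \otimes_k k((t))$ for some $S/k$ satisfying Hypothesis~\ref{hyp:SDk_with_N}, Example~\ref{exa:dp enum} gives that $(S,D)$ is enumerative, Lemma~\ref{lm:enumerative_stable_basechange} transfers enumerativity to $(\Sigma(1), D)$, and the remark after Hypothesis~\ref{NA1-enumerative-general-finitel-special} extends it to $(\Sigma(d), D)$ for any $d \in k^*$. Since $\widetilde S$ is $k$-rational, Proposition~\ref{prop:rational_scheme_points} applied to the dense open $W$ from Lemma~\ref{lm:points_specializing_try2} yields a closed point $p \in W$ with $[k(p):k]$ finite and odd; Lemma~\ref{lm:points_specializing_try3} then supplies a lift $\widetilde{p} \in \widetilde V$ with $k(\widetilde{p}) = k(p)((t))$.

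Next I verify the independence hypothesis of Theorem~\ref{thm:N-surgery} for $\Sigma(1)$. Since $\Sigma(1)$ is the basechange of $S$, each invariant $N_{S,D-j\gamma,\sigma} \in \GW(k)$ is defined, and Proposition~\ref{pr:NA1-stable-basechange}\eqref{eq:NA-stable-basechange} gives
\[
N_{\Sigma(1),D-j\gamma,\sigma((t))} \;=\; N_{S,D-j\gamma,\sigma} \otimes_k k((t)) \;\in\; \GW(k((t))).
\]
Proposition~\ref{pr:NA1-stable-basechange-at-point}\eqref{eq:NA-stable-basechange-at-point} identifies the value at any lift $\widetilde{p}'$ as the basechange of this global element to $\GW(k(\widetilde{p}'))$. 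Since any two lifts of $p$ share the residue field $k(p)((t))$ by Lemma~\ref{lm:points_specializing_try3}, the value is independent of the lift, meeting the hypothesis of Theorem~\ref{thm:N-surgery}.

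Applying Theorem~\ref{thm:N-surgery} at $\widetilde{p}$ now yields, in $\GW(k(p)((t)))$, the identity $N_{\Sigma(d),D,\sigma((t))}(\widetilde{p}) = X \otimes_{k((t))} k(p)((t))$, where
\[
X \;:=\; N_{\Sigma(1),D,\sigma((t))} + (\langle 2 \rangle - \langle 2d \rangle)\sum_{j\geq 1}(-1)^j N_{\Sigma(1),D-j\gamma,\sigma((t))}
\]
is, by the previous step, an element of $\GW(k((t)))$ pulled back from $\GW(k)$. Because $[k(p)((t)):k((t))] = [k(p):k]$ is odd, Pfister's theorem~\cite[Satz 10]{Pfister66} makes the basechange $\GW(k((t))) \to \GW(k(p)((t)))$ injective, so $X$ is the unique lift of $N_{\Sigma(d),D,\sigma((t))}(\widetilde{p})$. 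Setting $N_{\Sigma(d),D,\sigma((t))} := X \in \GW(k) \hookrightarrow \GW(k((t)))$ proves the formula. The same argument applied to any other odd-degree closed point of $W$ recovers the same $X$, establishing independence of $\widetilde{p}$. The main obstacle is bookkeeping: one must track that every summand on the right descends to $\GW(k)$, so that Pfister's theorem supplies a well-defined global lift. Once Proposition~\ref{pr:NA1-stable-basechange} is applied term by term, no new input beyond Theorem~\ref{thm:N-surgery} is required.
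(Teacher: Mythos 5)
Your proposal is correct and follows essentially the same route as the paper: verify Hypothesis~\ref{NA1-enumerative-general-finitel-special} via basechange from the surface underlying $\Sigma(1)$, pick an odd-degree point of $W$ using Proposition~\ref{prop:rational_scheme_points}, lift it, apply Theorem~\ref{thm:N-surgery}, and descend by odd-degree injectivity of $\GW$. The only detail the paper makes explicit that you elide is that $K_{\Sigma(1)}\cdot\gamma=0$ (via Proposition~\ref{lm:KSD=KStildeD}), which is what guarantees each $(\Sigma(1),D-j\gamma)$ is still the basechange of a surface satisfying Hypothesis~\ref{hyp:SDk_with_N} with the same value of $n$.
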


\begin{rem}
If $k$ is of characteristic $0$, note that  $(\widetilde S,D)$ is relatively enumerative by Proposition~\ref{prop:Stildechar0_enumerative}.
\end{rem}

\begin{rem}\label{rem:independence_tildep_woassuming_connectivity}
Note that $\Sigma(d)$ is not assumed to be $\A^1$-connected and indeed $\Sigma(d)$ is not necessarily rational (over its field of definition which is $k((t))$). See also Example~\ref{ex:enumerative_Sigma-1_not_connected}. We are obtaining independence of $N_{\Sigma(d),D,\sigma((t))}(\widetilde{p})$ on the choice $\widetilde{p}$ from the wall-crossing formula: for any rational point $p$ of $W \subseteq \widetilde S$ and any lift $\tilde{p}$ in $\tilde V$ of $p$, the value of $N_{\Sigma(d),D,\sigma}(\widetilde{p})$ will be the same. Here $W$ denotes the dense open subset of Lemma~\ref{lm:points_specializing_try2} applied to open sets $V$ and $\widetilde{V}$ as in Definitions~\ref{def:special_fiber_enumerative} and \ref{df:general_fiber_enumerative}. There are many rational points of $W$ because $\widetilde S$ is $k$-rational.
\end{rem}

\begin{proof}
It follows from \cite[Theorems 1 and 2]{degree} and basechange that $(\Sigma(1), D)$  satisfies Hypothesis \ref{NA1-enumerative-general-finitel-special} \eqref{it:NA1-enumerative-general} because $\Sigma(1)$ is the basechange of a surface satisfying Hypothesis~\ref{hyp:SDk_with_N}. It follows from Proposition~\ref{lm:KSD=KStildeD} that $(D - j \gamma)\cdot K_{\Sigma(1)} = D \cdot K_{\Sigma(1)}$. Thus $(\Sigma(1),D - j \gamma)$ is the basechange of a surface satisfying Hypothesis~\ref{hyp:SDk_with_N} for all $j$. By Proposition~\ref{pr:NA1-stable-basechange-at-point}, it follows that for all $j$, $ N_{\Sigma(1),D-j\gamma,\sigma}(\widetilde{p})$ is independent of the choice of $\widetilde{p}$. Since $\Sigma(d)$ and $\Sigma(1)$ are isomorphic over $\overline{k((t))}$ and  $(\Sigma(1),D)$ is enumerative, it follows that $(\Sigma(d), D)$ is enumerative. By hypothesis, $(\widetilde{S},D)$ is relatively enumerative. Thus $(\Sigma(d),D)$ satisfies Hypothesis~\ref{NA1-enumerative-general-finitel-special}.  By Propsition~\ref{prop:rational_scheme_points}, we may choose a point $p$ with $[k(p):k]$ odd of $\Res_{\sigma/k} \widetilde{S}$ in $W$. Note that $k \subseteq k(p)$ is separable because $k$ is perfect. By Lemma~\ref{lm:points_specializing_try2}, we may lift $p$ to $\Sigma(1)$ and $\Sigma(d)$. The result now follows from Theorem~\ref{thm:N-surgery}.
\end{proof}

We now show that the term $\langle 2 \rangle - \langle 2d \rangle$ in Theorems~\ref{thm:N-surgery-intro}~\ref{thm:N_degeneration2} and Corollary~\ref{cor:N-surgery-5} is the difference of the $\A^1$-Euler characteristic of $\Sigma(1)$ and $\Sigma(d)$ up to a factor of $\langle -1 \rangle$ in characteristic $0$. 

\begin{prop}\label{prop:differenceA1ChiLFdP}
Let $\cX \to \Spec k[[t]]$ denote a Lefschetz fibration of del Pezzo surfaces. Suppose the characteristic of $k$ is $0$. Let $\Sigma(d)$ denote the generic fiber of the $d$-surgery. Then 
$ \chi^{\A^1}(\Sigma(1)) -  \chi^{\A^1}(\Sigma(d)) = \langle -1 \rangle (\langle 2 \rangle - \langle 2d \rangle) $
\end{prop}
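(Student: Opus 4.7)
The plan is to reduce the computation to the difference $\chi^{\A^1}(Q(1)) - \chi^{\A^1}(Q(d))$ by invoking invariance of the $\A^1$-Euler characteristic in the semi-stable degeneration $\cX(d)\to\Spec k[[t]]$, and then to perform an explicit cell decomposition of the quadric $Q(d)$. More precisely, since the generic fiber $\Sigma(d)$ is smooth projective one has $\chi^{\A^1}(\Sigma(d))=\chi^{\A^1}_c(\Sigma(d))$, and in characteristic $0$ the motivic nearby-fiber formalism (in its $\GW$-enriched form, see Remark~\ref{rem:Denef_Loeser} and the references therein) gives $\chi^{\A^1}_c(\Sigma(d))=\chi^{\A^1}_c(\cX(d)_0)$. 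Combining this with additivity of $\chi^{\A^1}_c$ on the locally closed stratification $\cX(d)_0=(\widetilde S\setminus E)\sqcup E\sqcup(Q(d)\setminus E)$ arising from Proposition~\ref{d-surgery-central-fiber} yields the inclusion--exclusion identity
\[
\chi^{\A^1}(\Sigma(d))=\chi^{\A^1}(\widetilde S)+\chi^{\A^1}(Q(d))-\chi^{\A^1}(E).
\]
Since $\widetilde S$ and $E$ do not depend on $d$, subtracting this identity for general $d$ from the same identity at $d=1$ reduces the proposition to the equality $\chi^{\A^1}(Q(1))-\chi^{\A^1}(Q(d))=\langle -1\rangle(\langle 2\rangle-\langle 2d\rangle)$.

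I would then compute $\chi^{\A^1}(Q(d))$ using the Galois-equivariant Bruhat stratification of $Q(d)_{k^s}\cong\P^1_{k^s}\times\P^1_{k^s}$, where by Proposition~\ref{prop:classification_quadric_surfaces_in_P3} the action of $\Gal(k^s/k)$ factors through $\Gal(k[\sqrt{d}]/k)$ and swaps the two rulings. The three strata $\A^1\times\A^1$, $(\A^1\times\{\infty\})\sqcup(\{\infty\}\times\A^1)$ and $\{(\infty,\infty)\}$ descend over $k$ respectively to: the Weil restriction $\Res_{k[\sqrt d]/k}\A^1\cong\A^2_k$, contributing $\chi^{\A^1}_c(\A^2_k)=1$; the $k$-scheme $\Spec k[\sqrt d]\times_k\A^1_k$, contributing $(\langle 2\rangle+\langle 2d\rangle)\cdot\langle -1\rangle$ by multiplicativity of $\chi^{\A^1}_c$ on products together with the identification of the trace form of $k[\sqrt d]/k$ with $\langle 2,2d\rangle$ and the formula $\chi^{\A^1}_c(\A^1_k)=\langle -1\rangle$; and a $k$-rational point, contributing $1$. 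Summing these yields $\chi^{\A^1}(Q(d))=2+\langle -1\rangle(\langle 2\rangle+\langle 2d\rangle)$, and specializing to $d=1$ gives $\chi^{\A^1}(Q(1))=2+2\langle -1\rangle$.

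Subtracting produces $2\langle -1\rangle-\langle -2\rangle-\langle -2d\rangle$. To rewrite this in the claimed form I would invoke the Grothendieck--Witt identity $\langle -1,-1\rangle=\langle -2,-2\rangle$, an instance of the chain equivalence $\langle a,b\rangle=\langle a+b,(a+b)ab\rangle$ (valid when $a+b\neq 0$) applied to $a=b=-1$. Hence $2\langle -1\rangle=2\langle -2\rangle$, and the difference simplifies to $\langle -2\rangle-\langle -2d\rangle=\langle -1\rangle(\langle 2\rangle-\langle 2d\rangle)$. The main obstacle I anticipate is justifying the first step, namely the specialization identity $\chi^{\A^1}_c(\Sigma(d))=\chi^{\A^1}_c(\cX(d)_0)$ in the semi-stable family: this requires a $\GW$-enriched version of the Denef--Loeser motivic-nearby-fiber formalism, which in characteristic zero should follow from the $\A^1$-Euler characteristic theory developed in the references indicated by Remark~\ref{rem:Denef_Loeser}.
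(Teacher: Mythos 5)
Your reduction to $\chi^{\A^1}(Q(1))-\chi^{\A^1}(Q(d))$ and your cell computation of $\chi^{\A^1}(Q(d))$ both land on the right answer (your $2+\langle -1\rangle(\langle 2\rangle+\langle 2d\rangle)$ agrees with Levine's $h+\langle 2\rangle+\langle -2d\rangle$, which is what the paper cites), but the first step of your argument has a genuine gap, on two levels. First, the specialization identity $\chi^{\A^1}_c(\Sigma(d))=\chi^{\A^1}_c(\cX(d)_0)$ is exactly the statement that Remark~\ref{rem:Denef_Loeser} presents as \emph{not} currently available: the remark says the proposition ``would be the case in general if'' a $\GW$-enriched motivic nearby fiber existed, and the references there (\cite{DenLoe01}, \cite{PetSte08}) are the classical, non-enriched theory. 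You cannot cite that remark as justification; the authors wrote their proof precisely to avoid this input. Second, even granting a nearby-fiber formalism, the identity you state is false, because the nearby fiber of a semi-stable degeneration is not the special fiber. A topological sanity check over $\C$ for the family of quadrics: $\chi(\Sigma(1))=\chi(\P^1\times\P^1)=4$, while your inclusion--exclusion gives $\chi(\mathbb{F}_2)+\chi(\P^1\times\P^1)-\chi(\P^1)=4+4-2=6$. The correct Denef--Loeser expression for a two-component semistable special fiber is $[\widetilde S\setminus E]+[Q(d)\setminus E]+(1-\mathbb{L})[E]$, whose Euler characteristic is $\chi(\widetilde S)+\chi(Q(d))-2\chi(E)$, not $\chi(\cX(d)_0)$. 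Your final answer survives only because the erroneous, $d$-independent terms cancel when you subtract the $d=1$ case; the intermediate identity $\chi^{\A^1}(\Sigma(d))=\chi^{\A^1}(\widetilde S)+\chi^{\A^1}(Q(d))-\chi^{\A^1}(E)$ is wrong as stated.

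The paper's proof replaces this entire step by an argument that needs no specialization of Euler characteristics. It quotes Levine's $\chi^{\A^1}(Q(d))=h+\langle 2\rangle+\langle -2d\rangle$, handles the rank separately (the rank can be computed after base change to $K^s$, where $\Sigma(1)\cong\Sigma(d)$), and then proves $\chi^{\A^1}(\Sigma(1))-\chi^{\A^1}(\Sigma(d))=\chi^{\A^1}(Q(1))-\chi^{\A^1}(Q(d))$ in $W(k((t)))$ by representing $\chi^{\A^1}$ of a smooth projective surface by the Serre-duality pairing on $H^1(X,\Omega_{X/K})$, identifying this (via the first Chern class and Lemma~\ref{lm:PicH11}) with the intersection form on $\Pic(X_{K^s})\otimes K^s$ equipped with its Galois descent data, and then using the two exact sequences of Picard lattices from Proposition~\ref{pr:varphi_surjective} to see that the only $d$-dependent summand is the one coming from $\Pic Q(d)$. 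If you want to salvage your route, you would need either to prove an enriched nearby-fiber formula (which would be a result of independent interest, and is what the remark speculates about), or to switch to the paper's lattice-theoretic comparison for the reduction step.
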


\begin{proof}
By \cite[Theorem 11.1]{Levine-EC}, $\chi^{\A^1}( Q(d) ) = h+  \langle 2 \rangle + \langle -2d \rangle $ where $h$ denotes the hyperbolic form. It follows that 
\[
 \chi^{\A^1}(Q(1)) -  \chi^{\A^1}(Q(d)) = \langle -1 \rangle (\langle 2 \rangle - \langle 2d \rangle).
\]
We show that 
\begin{equation}\label{eq:chiA1SigmavQ}
 \chi^{\A^1}(\Sigma(1)) -  \chi^{\A^1}(\Sigma(d)) =  \chi^{\A^1}(Q(1)) -  \chi^{\A^1}(Q(d))
\end{equation} which will complete the proof. Let $X$ be a smooth, projective surface over $K$. By \cite{LevineRaksit_MotivicGaussBonnet} \cite[Corollary 2.13]{bachmann_wickelgren} $\chi^{\A^1}(X)$ is represented by the bilinear form induced by Serre duality on the pushforward to $K$ of the Koszul complex
\[
0 \to \wedge^2 \Omega_{X/K} \to \Omega_{X/K} \to \cO_{X/K} \to 0
\] on $X$. The rank of the pushforward of the Koszul complex may be computed over $K^s$ by \cite[18.2.H]{Vakil-RisingSea}. Since $\Sigma(d)$ and $\Sigma(1)$ are isomorphic over $k((t))^s$, it sufices show \eqref{eq:chiA1SigmavQ} in $W(k((t)))$. Let $K$ abbreviate $k((t))$. In $W(K)$, we have that  $\chi^{\A^1}(X)$ is represented by
\[
\beta_{X,K}: H^1(X, \Omega_{X/K} ) \otimes H^1(X, \Omega_{X/K} ) \to H^2(X,  \Omega_{X/K} \wedge  \Omega_{X/K} ) \to K
\] where the second map is the trace map. This trace form pairing commutes with basechange. So letting $\beta_{X,L}$ denote the analogous pairing over $L$ for $L$ an extension of $K$, we have $\beta_{X,L} = \beta_{X,K} \otimes_K L$. Let 
\[
c_1: \Pic (X) \to H^1(X, \Omega_{X/K} )
\] be the first Chern class in Hodge theory, which commutes with basechange \cite[Tag 0FLE]{stacks-project}. Let $B_{X,L}:\Pic (X_L) \times \Pic (X_L) \to \Z$ denote the intersection pairing of $X_L$. By \cite[V Ex 1.8]{Hartshorne}, we have $B_{X,L}(D_1,D_2) = \beta_{X,L}(c_1(D_1), c_1(D_2))$ for all $D_1$, $D_2$ in $\Pic(X_L)$. 

Let $X$ be a del Pezzo surface over $K$. The induced map 
\[
\Pic (X_{K^s}) \otimes_{\Z} K^s \to H^1(X_{K^s}, \Omega_{X/K^s} )
\] is an isomorphism by Lemma~\ref{lm:PicH11}. 

It follows that the bilinear form $\beta_{X,K}$ is represented by $B_{X,K^s} \otimes_{\Z} K^s$ together with its natural descent data $\Theta_{B,X}$ from $K^s$ to $K$ for $X = \Sigma(d)$ or $Q(d)$. Let $[B_{X,K^s} \otimes_{\Z} K^s, \Theta_{B,X}]$ in $W(K)$ denote the class of the bilinear form $B_{X,K^s} \otimes_{\Z} K^s$ together with the descent data. So on the one hand, we have $\beta_{X,K} = [B_{X,K^s} \otimes_{\Z} K^s, \Theta_{B,X}]$. On the other hand, we have short exact sequences
\[
0 \to \Z(E, (-1,-1)) \to\Pic \widetilde S_L \times_{ \Pic E_L} \Pic Q(d)_L \to  \Pic \Sigma(d)_L \to 0  
\] by Proposition~\ref{pr:varphi_surjective} compatible with the intersection pairings on $\Pic$. By definition, we moreover have short exact sequences
\[
0 \to\Pic \widetilde S_L \times_{ \Pic E_L} \Pic Q(d)_L \to\Pic \widetilde S_L \times\Pic Q(d)_L \to  \Pic E_L \to 0 
\] It follows that 
\begin{align*}
[B_{\Sigma(d),K^s} \otimes_{\Z} K^s, \Theta_{B,\Sigma(d)}] & = [B_{Q(d),K^s} \otimes_{\Z} K^s, \Theta_{B,Q(d)}] + [B_{\widetilde{S},K^s} \otimes_{\Z} K^s, \Theta_{B,\widetilde{S}}]\\
& - [B_{E,K^s} \otimes_{\Z} K^s, \Theta_{B,E}]-[K^2 (E, (-1,-1)] 
\end{align*}

It follows that the difference $\beta_{\Sigma(1),K} - \beta_{\Sigma(d),K}$ in $W(K)$ is given by
\begin{align*}
\beta_{\Sigma(1),K} - \beta_{\Sigma(d),K} &=  [B_{Q(1),K^s} \otimes_{\Z} K^s, \Theta_{B,Q(1)}]  -  [B_{Q(d),K^s} \otimes_{\Z} K^s, \Theta_{B,Q(d)}] \\
& = \beta_{Q(1),K} - \beta_{Q(d),K}, 
\end{align*} which shows Equation~\ref{eq:chiA1SigmavQ} as desired.
 
\end{proof}

\begin{lemma}\label{lm:PicH11}
Let $K$ be a field of characteristic $0$. Let $X$ over $K$ be a blow-up of $\P^2_K$. Then 
\[
\Pic (X_{K}) \otimes_{\Z} K \to H^1(X_{K}, \Omega_{X/K} )
\] is an isomorphism.
\end{lemma}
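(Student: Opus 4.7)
The plan is to argue by induction on the number of closed points blown up to obtain $X$ from $\P^2_K$. Before starting, I would note that in the intended application (Proposition~\ref{prop:differenceA1ChiLFdP}) the field $K$ is separably closed, hence algebraically closed since $\mathrm{char}(K)=0$. I would therefore assume that all blow-ups are at $K$-rational points, which is automatic in the intended setting; without this hypothesis the ranks of the two sides need not agree, as can be seen by blowing up a closed point of residue degree $>1$.

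For the base case $X = \P^2_K$, I would use the Euler sequence
\[
0 \to \Omega^1_{\P^2_K/K} \to \cO(-1)^{\oplus 3} \to \cO \to 0
\]
together with the standard cohomology of line bundles on projective space to show $H^1(\P^2_K, \Omega^1_{\P^2_K/K}) \cong K$, noting that $\Pic(\P^2_K) = \Z \cdot [\cO(1)]$. I would then verify that $c_1(\cO(1))$ generates $H^1(\P^2_K, \Omega^1_{\P^2_K/K})$ via $c_1(\cO(1))^2 = 1 \in H^2(\P^2_K, \Omega^2_{\P^2_K/K}) \cong K$, so the Chern class map is an isomorphism after tensoring with $K$.

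For the inductive step, let $\pi : \tilde X \to X$ be the blow-up at a $K$-rational point $p$ with exceptional divisor $E \cong \P^1_K$. I would combine the standard splitting $\Pic(\tilde X) = \pi^* \Pic(X) \oplus \Z \cdot [E]$ with the Hodge-theoretic blow-up formula
\[
H^1(\tilde X, \Omega^1_{\tilde X/K}) \cong \pi^* H^1(X, \Omega^1_{X/K}) \oplus K \cdot c_1(\cO_{\tilde X}(E)),
\]
noting that these decompositions are compatible with the first Chern class and that $[E] \mapsto c_1(\cO_{\tilde X}(E))$ is an isomorphism of the extra summands. Applying the inductive hypothesis to $X$ then yields the claim for $\tilde X$.

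The main technical input, and the hard part, is verifying the Hodge blow-up formula in the inductive step. It is classical in characteristic zero; in the concrete situation of a blow-up at a point on a smooth surface, I would establish it by computing $\pi_* \Omega^1_{\tilde X/K} = \Omega^1_{X/K}$ (by reflexivity, since $\pi$ is an isomorphism outside codimension two and both sheaves are reflexive) and $R^1 \pi_* \Omega^1_{\tilde X/K} \cong i_{p,*} K$ (via the conormal exact sequence $0 \to \cO_E(1) \to \Omega^1_{\tilde X/K}|_E \to \Omega^1_{E/K} \to 0$ together with standard $\P^1$-cohomology and the theorem on formal functions), and then running the Leray spectral sequence, checking that the boundary $K \to H^2(X, \Omega^1_{X/K})$ vanishes because $c_1(\cO_{\tilde X}(E)) \in H^1(\tilde X, \Omega^1_{\tilde X/K})$ maps onto a generator of $H^0(X, R^1 \pi_* \Omega^1_{\tilde X/K}) \cong K$.
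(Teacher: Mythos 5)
Your proof is correct, but it takes a genuinely different route from the paper's. The paper works transcendentally: it reduces to $K=\mathbb{C}$, invokes the Lefschetz $(1,1)$-theorem for surjectivity, matches dimensions by direct calculation, and then handles a general characteristic-$0$ field by spreading out to a finitely generated subfield and embedding it into $\mathbb{C}$. You instead argue by induction on the number of blown-up points, using the Euler sequence to compute $H^1(\P^2_K,\Omega^1)\cong K$ in the base case and the Leray spectral sequence for $\pi:\tilde X\to X$ (via $\pi_*\Omega^1_{\tilde X}=\Omega^1_X$, $R^1\pi_*\Omega^1_{\tilde X}\cong i_{p,*}K$, and the nonvanishing of $c_1(\cO(E))\vert_E$) in the inductive step. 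Your argument is longer but purely algebraic and self-contained; in particular it makes no real use of characteristic $0$ and would establish the statement in positive characteristic as well, which is relevant to the paper's Remark~\ref{rem:Denef_Loeser}, whereas the paper's proof is shorter at the cost of citing a deep transcendental input. Your preliminary observation that the statement requires the blown-up points to be $K$-rational (otherwise $\operatorname{rk}\Pic(X)$ can be strictly smaller than $h^{1,1}$) is a genuine and worthwhile point: it is harmless for the intended application in Proposition~\ref{prop:differenceA1ChiLFdP}, where $K$ is separably closed, and the paper's own base-change step from $K\subset\C$ tacitly relies on the same hypothesis, since it needs $\Pic(X_K)\otimes\C\to\Pic(X_\C)\otimes\C$ to be an isomorphism.
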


\begin{proof}
First suppose that $K=\mathbb{C}$. By the Lefchetz (1,1)-theorem 
\[
\Pic (X_{K})\to H^1(X_{K}, \Omega_{X/K} ) \cap H^2(X, \mathbb{Z})
\] is surjective, whence $\Pic (X_{K}) \otimes_{\Z} \C \to H^1(X_{K}, \Omega_{X/K} )$ is surjective. $X$ is a blow-up of $\P^2$ at finitely many points. It follows from direct calculation that $\Pic (X_{K}) \otimes_{\Z} \C$ and $H^1(X_{K}, \Omega_{X/K} )$ are both vector spaces of the same dimension. Thus the lemma follows when $K = \mathbb{C}$.

For $K\subset \mathbb{C}$, the lemma follows because the basechange of $\Pic (X_{K}) \otimes_{\Z} K \to H^1(X_{K}, \Omega_{X/K} )$ to $\C$ is an isomorphism by the previous. 

In general, $X$ is obtained by base change from $X'$ defined over a finitely generated field $K'$ with $X'$ also a blow-up of $\P^2_{K'}$. There is an embedding $K' \to \C$. The result follows for $X'$ by the previous case and for $X$ by basechange.
\end{proof}

\begin{rem}\label{rem:Denef_Loeser}
Proposition~\ref{prop:differenceA1ChiLFdP} is presumably true even in characteristic $p$. It is easily checkable in examples from Section \ref{sec:ex quad} and \ref{sec:ex bup}. This would be the case in general if, analogously to the usual topological Euler characteristic, the $\A^1$-Euler characteristic of the central fiber of a semi-stable degeneration could be computed out of the motivic nearby fiber introduced by Denef and Loeser, see \cite[Section 3]{DenLoe01} and \cite[Section 11]{PetSte08}.

\end{rem}

\section{Applications}\label{sec:applications}

We give four sample applications of Theorem \ref{thm:N-surgery}. These are classical when $k=\R$. When $k=\C$ the application regarding Dehn twists is likewise classical. 

          \subsection{Rational quadrics in $\P^3_k$}\label{sec:ex quad}
          Given $d\in k^\times/(k^\times)^2$, we denote by $Q(d)$ the quadric surface in $\P^3_k$ defined by the equation
          \[
          x^2-y^2+z^2 -dw^2=0.
          \]
          This notation is consistent with the notation from Section \ref{sec:sugery}. Indeed, a quadric hypersurface in $\P^n_k$ is the vanishing locus of a quadratic form. Since two quadratic forms define the same quadric if and only if they differ by a non-zero multiplicative scalar, they have the same discriminant when $n$ is odd.
In particular the discriminant of a quadric surface in $\P_k^{3}$
is well defined as an element of $k^\times/(k^\times)^2$.
It follows from Proposition \ref{prop:classification_quadric_surfaces_in_P3} that non-singular quadrics in $\P^3_k$ with a non-empty set of rational points are classified by their discriminant: up to a projective change of coordinates, a  quadric $Q$ in $\P^3_k$ with discriminant $d$ and having rational points is $Q(d)$. Recall that $Q(1)$ is isomorphic to $\P_k^1\times \P_k^1$, so that $\Pic Q(1)=\Z e_1\oplus \Z e_2$, where $e_1$ and $e_2$ are the class of the two rulings of $Q(1)$. The class $H=e_1+e_2$ is the class of the hyperplane section, which is a generator of $\Pic Q(d)$ when $d$ is not a square in $k$.

          \begin{thm}\label{thm:abq quad}
           Choose an integer $a\ge 1$, and  Let $k$ be a perfect field of characteristic either 0 or at least $2a+1$. Choose $k\to \sigma$ a finite étale extension of degree $4a-1$. 
            Then for any $d\in  k^\times/(k^\times)^2$, one has
            \begin{align*}
            N_{Q(d),aH,\sigma}
            & = N_{Q(1),aH,\sigma}+
           (\langle 2 \rangle - \langle 2d \rangle) \sum_{j\ge 1} (-1)^j N_{Q(1),(a+j)e_1+(a-j)e_2,\sigma}.
          \end{align*}
          \end{thm}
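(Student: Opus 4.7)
The plan is to apply Corollary~\ref{cor:N-surgery-5} to the $1$-nodal Lefschetz fibration $\cX \to \Spec k[[t]]$ given by the zero locus of $x^2 - y^2 + z^2 - t w^2$ in $\mathbb{P}^3_{k[[t]]}$, and then convert the resulting identity over $k((t))$ to one over $k$ via Proposition~\ref{pr:NA1-stable-basechange-at-point}.

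First I verify that $\cX$ is a $1$-nodal Lefschetz fibration of del Pezzo surfaces of degree~$8$. The generic fiber is a smooth quadric over $k((t))$, and the special fiber $\cX_0 \subset \mathbb{P}^3_k$ has a unique node at $[0{:}0{:}0{:}1]$; setting $w = 1$ identifies the completed local ring at this node with $k[[t,x,y,z]]/(x^2-y^2+z^2 - t)$, matching Definition~\ref{def:1-nodal-Lefschetz-fibration} with $Q(x,y,z) = x^2-y^2+z^2$ of discriminant $\delta = -1$. The blow-up $\widetilde S = \mathrm{Bl}_q \cX_0$ of the nodal quadric is the second Hirzebruch surface $\mathbb{F}_2$, a uninodal del Pezzo surface as recalled in the example following Definition~\ref{df:uninodal_del_Pezzo}.

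Next I identify $\Sigma(d)$ and the vanishing cycle. Pulling back the family along $t \mapsto (d/\delta) t^2 = -d t^2$ produces a family whose generic fiber $\{x^2 - y^2 + z^2 + d t^2 w^2 = 0\}$ in $\mathbb{P}^3_{k((t))}$ has discriminant $d$ and carries the rational point $[1{:}1{:}0{:}0]$; Proposition~\ref{prop:classification_quadric_surfaces_in_P3} then gives $\Sigma(d) \cong Q(d) \times_k k((t))$. With the identification $\Pic \Sigma(1)_{k^s} \cong \mathbb{Z} e_1 \oplus \mathbb{Z} e_2$ coming from $\Sigma(1)_{k^s} \cong \mathbb{P}^1 \times \mathbb{P}^1$, Definition~\ref{df:vanishing_cycle} identifies the vanishing cycle $\gamma$ with $\pm(e_1 - e_2)$. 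Under the inclusion $\Pic Q(d) \hookrightarrow \Pic Q(1)$ from Corollary~\ref{cor:mapPicSigma_to_PicS}, the hyperplane class $H \in \Pic Q(d)$ corresponds to $e_1 + e_2$, so after an appropriate choice of sign (or, equivalently, composing with the automorphism of $Q(1)$ swapping the two rulings, which preserves quadratic Gromov--Witten invariants), $aH - j\gamma$ corresponds to $(a+j)e_1 + (a-j)e_2$ in $\Pic Q(1)$.

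The remaining hypotheses of Corollary~\ref{cor:N-surgery-5} are direct: $\widetilde S = \mathbb{F}_2$ is $k$-rational, $\Sigma(1) \cong \mathbb{P}^1_{k((t))} \times \mathbb{P}^1_{k((t))}$ is $k((t))$-rational, $Q(1)$ is a del Pezzo of degree~$8$ and thus satisfies Hypothesis~\ref{hyp:SDk_with_N} under our assumption on the characteristic, and the degree-at-least-$3$ hypothesis is trivially met. The corollary then gives the formula in $\GW(k) \hookrightarrow \GW(k((t)))$, and Proposition~\ref{pr:NA1-stable-basechange-at-point} identifies the invariants over $k((t))$ with those over $k$, yielding the stated identity.

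The main obstacle is checking that $(\widetilde S, D)$ is relatively enumerative in the sense of Definition~\ref{def:special_fiber_enumerative} in positive characteristic. In characteristic~$0$ this is immediate from Proposition~\ref{prop:Stildechar0_enumerative}, but for $\mathrm{char}(k) > 0$ the hypothesis $p \geq 2a+1$ must be used to guarantee that rational curves on $\mathbb{F}_2$ of the relevant classes $D_0 - iE$ passing through a general point configuration are unramified, transverse to the $(-2)$-curve $E$, and have only ordinary double point singularities away from $E$. Such a verification should go through using the explicit toric/ruled structure of $\mathbb{F}_2$, most cleanly via the tropical enumeration techniques of \cite{PauliPuentes23,JPMPR25}, which provide the required families of curves in sufficiently large characteristic.
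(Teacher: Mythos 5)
Your proposal follows essentially the same route as the paper: degenerate the pencil of quadrics $x^2-y^2+z^2-tw^2=0$ to the nodal quadric, identify $\widetilde S$ with $\mathbb F_2$ and the vanishing cycle with $\pm(e_1-e_2)$, apply Corollary~\ref{cor:N-surgery-5}, and descend from $k((t))$ to $k$ via Proposition~\ref{pr:NA1-stable-basechange} (for the last step one also needs injectivity of $\GW(k)\to\GW(k((t)))$, which holds by Springer's theorem; you leave this implicit but it is harmless).

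The one genuine gap is the step you yourself flag: relative enumerativity of $(\widetilde S,\widetilde D)$ when $\operatorname{char}(k)>0$. You assert that this ``should go through'' via the tropical techniques of \cite{PauliPuentes23,JPMPR25}, but those references compute quadratic invariants of toric surfaces for multiquadratic $\sigma$; they are not the source of the transversality/genericity statements needed in Definition~\ref{def:special_fiber_enumerative}. The paper closes this step differently and concretely: the classes $\widetilde D$ in $\varphi_1^{-1}((a+j)e_1+(a-j)e_2)$ correspond to polarizations of $\mathbb F_2$ with an explicit trapezoidal Newton polygon with vertices $(0,0)$, $(2a,0)$, $(0,b)$, $(2(a-b),b)$, and one combines the genericity results \cite[Propositions 4.12 and 4.13]{Mikhalkin05} with Tyomkin's characteristic-$p$ correspondence theorem \cite[Theorem 6.2]{Tyomkin12}; the bound $\operatorname{char}(k)\ge 2a+1$ comes from the lattice width of this polygon. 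Without some such input your argument only proves the theorem in characteristic $0$, where Proposition~\ref{prop:Stildechar0_enumerative} applies. The rest of your verification (the $1$-nodal Lefschetz structure, the discriminant computation identifying $\Sigma(d)$ with $Q(d)_{k((t))}$, the identification of $aH-j\gamma$ with $(a+j)e_1+(a-j)e_2$, and the remaining hypotheses of Corollary~\ref{cor:N-surgery-5}) is correct and matches the paper.
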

          \begin{proof}
            We consider 
            the quadric $\cX\to \Spec k[[t]]$ with equation
            \[
              x^2-y^2+z^2 -tw^2=0.
          \]
          This is a $1$-nodal Lefschetz fibration of del Pezzo surfaces for which $\widetilde S$ is the second Hirzebruch surface $\mathbb F_2$. As usual, we denote by  $\Sigma(d)\to \Spec k((t))$ the generic fiber of a $d$-surgery of $\cX\to \Spec k[[t]]$.
          Since $\Sigma(1)=Q(1)_{k((t))}$,  
           the vanishing cycle is $\gamma=\pm (e_1-e_2)\in \Pic Q(1)=\Pic \Sigma(1)$.
           If $k$ is of characteristic zero, then  $(\widetilde S,D)$ is relatively enumerative for any  $D\in\Pic(\widetilde S)$ by Proposition \ref{prop:Stildechar0_enumerative}.
           Note that $\widetilde S$ is a toric surface with $E$ as a component of the toric boundary. Furthermore, any divisor class $\widetilde D\in \Pic \widetilde S$ appearing in $\varphi^{-1}_1((a+j)e_1+(a-j)e_2)$ correspond to a polarization of $\widetilde S$ having the Newton polygon depicted in Figure \ref{fig:Delta F2}.
           \begin{figure}[h!]
            \begin{center}
            \includegraphics[width=4cm, angle=0]{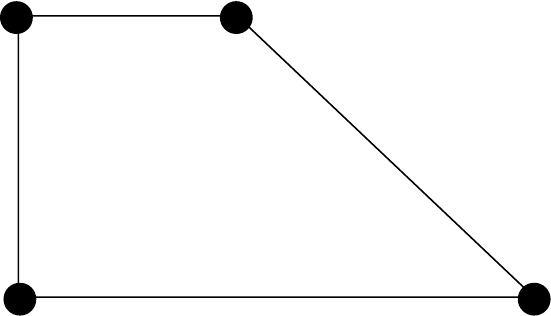}
              \put(-140,0){$(0,0)$}
              \put(-140,60){$(0,b)$}
              \put(-55,60){$(2(a-b),b)$}
              \put(2,0){$(2a,0)$}
            \end{center}
            \caption{}
            \label{fig:Delta F2}
            \end{figure}            
           Hence combining \cite[Propositions 4.12 and 4.13]{Mikhalkin05} with \cite[Theorem 6.2]{Tyomkin12}, we deduce  that $(\widetilde S,\widetilde D)$ is relatively enumerative for all such $\widetilde D\in \Pic \widetilde S$ if $k$ is of characteristic at least  $2a+1$. 
          Hence Corollary~\ref{cor:N-surgery-5} gives
          \begin{align*}
            N_{\Sigma(d),aH,\sigma_{k((t))}}
            & = N_{\Sigma(1),aH,\sigma_{k((t))}}+
           (\langle 2 \rangle - \langle 2d \rangle) \sum_{j\ge 1} (-1)^j N_{\Sigma(1),(a+j)e_1+(a-j)e_2,\sigma_{k((t))}}.
          \end{align*}
          Since $\Sigma(d)=Q(d)_{k((t))}$,  
          by Proposition \ref{pr:NA1-stable-basechange} the invariant $N_{\Sigma(d),D ,\sigma_{k((t))}}$ in $\GW(k((t)))$ is the image of $N_{Q(d),D,\sigma}$ under the natural map $\GW(k)\to \GW(k((t)))$. Since this map is injective by Springer's Theorem, the statement is proved.  
          \end{proof}

\begin{rem}
Remark \ref{rem:non perfect field} allows to partially generalize  Theorem \ref{thm:abq quad} to non perfect fields.
\end{rem}

Recall that a finite étale extension $k\to \sigma$ is \emph{split}, and is denoted by $\sigma_{sp}$, if $\sigma=k^n$. 
The work \cite{PauliPuentes23} of Jaramillo Puentes and Pauli allows one to compute the quadratic Gromov--Witten invariants of toric del Pezzo surfaces when $\sigma$ is split, at least in characteristic $0$ or in sufficiently high characteristic. The answer is of the form $\alpha \langle 1 \rangle + \beta \langle -1 \rangle$ with $\alpha$ and $\beta$ determined by the Gromov--Witten and Welschinger invariants. Let  $GW_{\P^1\times \P^1,D}$ denote the genus $0$ Gromov--Witten invariants of $\P^1\times \P^1$ for the class $D$. Let $W_{Q(d),D,s}$ denote the Weslchinger invariant of $Q(d)$ for the class $D$ and point configurations containing exactly $s$ pairs of complex conjugated points. By \cite[Corollary 1.7]{PauliPuentes23}, one has for any $D\in \Pic(\P^1\times \P^1)$
            \[
              N_{Q(1),D,\sigma_{sp}}= W_{Q(1),D,0}\lra{1} +\frac{GW_{\P^1\times \P^1,D}-W_{Q(1),D,0}}{2}h,
            \]
            where $h=\lra{1}+\lra{-1}\in GW(k)$ is the hyperbolic element, under the above assumption on the characteristic of $k$.

            Theorem \ref{thm:abq quad} allows to extend this result to the quadric $Q(d)$.
            \begin{cor}\label{cor:quad1-1}
 
     Let  $D= a(e_1+e_2)\in \subset \Pic Q(d)$, and let $k$ be a perfect field of characteristic either 0 or at least $2a+1$. Then     
     one has
              \[
                N_{Q(d),D,\sigma_{sp}}= W_{Q(-1),D,0} + \frac{W_{Q(1),D,0}-W_{Q(-1),D,0}}{2}(\lra{2}+\lra{2d}) +\frac{GW_{\P^1\times \P^1,D}-W_{Q(1),D,0}}{2} h.
              \]
            \end{cor}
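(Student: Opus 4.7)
The plan is to combine Theorem~\ref{thm:abq quad} with the Jaramillo Puentes--Pauli evaluation \cite[Corollary 1.7]{PauliPuentes23} for the toric surface $Q(1)=\P^1\times\P^1$, and to use the signature of that theorem over $\R$ to identify an auxiliary sum of Welschinger invariants. Writing $D_j=(a+j)e_1+(a-j)e_2$, Theorem~\ref{thm:abq quad} gives
\[
N_{Q(d),D,\sigma_{sp}}=N_{Q(1),D,\sigma_{sp}}+(\langle 2\rangle-\langle 2d\rangle)\sum_{j\ge 1}(-1)^j N_{Q(1),D_j,\sigma_{sp}}.
\]
Each $D_j$ and $D$ lies in $\Pic(\P^1\times\P^1)$, so the Jaramillo Puentes--Pauli formula expresses every invariant on the right as $W_{Q(1),D',0}\langle 1\rangle+\tfrac{1}{2}(GW_{\P^1\times\P^1,D'}-W_{Q(1),D',0})h$.

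Next I would use the annihilation identity $(\langle 2\rangle-\langle 2d\rangle)\cdot h=0$ in $GW(k)$, which follows from $\langle a\rangle h=h$ for any unit $a$. This kills the hyperbolic contributions of every summand inside the sum indexed by $j$, leaving
\[
(\langle 2\rangle-\langle 2d\rangle)\sum_{j\ge 1}(-1)^j N_{Q(1),D_j,\sigma_{sp}}=(\langle 2\rangle-\langle 2d\rangle)\,T,
\]
where $T=\sum_{j\ge 1}(-1)^j W_{Q(1),D_j,0}\in\Z$ is a purely integer combination of Welschinger invariants.

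To evaluate $T$, I would specialise Theorem~\ref{thm:abq quad} to $k=\R$ with $d=-1$ and apply the signature ring homomorphism $GW(\R)\to\Z$. Since the signature of $\langle 2\rangle-\langle -2\rangle$ equals $2$, and since the signature of a quadratic genus $0$ Gromov--Witten invariant over $\R$ recovers Welschinger's invariant, this produces the integer identity $W_{Q(-1),D,0}=W_{Q(1),D,0}+2T$, so
\[
T=\frac{W_{Q(-1),D,0}-W_{Q(1),D,0}}{2}.
\]
Substituting this value of $T$ into the wall-crossing formula for $N_{Q(d),D,\sigma_{sp}}$ and collecting terms, using also the identity $h=\langle 2\rangle+\langle -2\rangle=\langle 2d\rangle+\langle -2d\rangle$ to interchange the $(\langle 2\rangle-\langle 2d\rangle)$- and $(\langle 2\rangle+\langle 2d\rangle)$-forms, yields the displayed equality.

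The main conceptual step is the signature identification of $T$; it rests on the compatibility between the quadratic Gromov--Witten invariant and Welschinger's real count under the signature homomorphism over $\R$, equivalently the real Abramovich--Bertram wall-crossing of \cite{Bru18}. Everything else is an algebraic manipulation inside $GW(k)$ driven by the annihilation identity $(\langle 2\rangle-\langle 2d\rangle)h=0$.
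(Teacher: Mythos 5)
Your proposal is correct and follows essentially the same route as the paper: Theorem~\ref{thm:abq quad} plus \cite[Corollary 1.7]{PauliPuentes23} to reduce everything to Welschinger numbers of $Q(1)$, and the real wall-crossing to identify the alternating sum $T$. The two cosmetic differences are that the paper works modulo $h$ and recovers the hyperbolic coefficient at the end from the rank identity $\rk N_{Q(d),D,\sigma_{sp}}=GW_{\P^1\times\P^1,D}$, whereas you keep track of it throughout via $(\langle 2\rangle-\langle 2d\rangle)h=0$; and the paper cites \cite[Theorem 2.1]{Bru18} directly for $T=\tfrac{1}{2}(W_{Q(-1),D,0}-W_{Q(1),D,0})$, whereas you rederive that identity by applying the signature to Theorem~\ref{thm:abq quad} at $k=\R$, $d=-1$ — both legitimate, yours resting on the stated compatibility of the signature with Welschinger invariants. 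One small point to tighten: your output is $W_{Q(1),D,0}\langle 1\rangle+\tfrac{1}{2}(W_{Q(-1),D,0}-W_{Q(1),D,0})(\langle 2\rangle-\langle 2d\rangle)+\tfrac{1}{2}(GW-W_{Q(1),D,0})h$, and converting this to the stated form requires precisely $(W_{Q(1),D,0}-W_{Q(-1),D,0})(\langle 1\rangle-\langle 2\rangle)=0$; this holds because the integer $W_{Q(1),D,0}-W_{Q(-1),D,0}=-2T$ is even and $\langle 1,1\rangle\cong\langle 2,2\rangle$ in $\GW(k)$ (since $2=1^2+1^2$ is represented by $\langle 1,1\rangle$), not because of the identity $h=\langle 2\rangle+\langle -2\rangle$ that you cite, which only yields the weaker relation $(2\langle 1\rangle-2\langle 2\rangle)h=0$.
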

\begin{proof}
  It follows from Theorem \ref{thm:abq quad} combined with \cite[Corollary 1.7]{PauliPuentes23} that, modulo $h$, 
  \begin{align*}
    N_{Q(d),D,\sigma_{sp}}=&\left( W_{Q(1),D,0} + \sum_{j\ge 1} W_{Q(1),D-j(e_1-e_2),0} \right)\lra{1}
   -   (\lra{2}+\lra{2d})\sum_{j\ge 1} W_{Q(1),D-j(e_1-e_2),0}.
  \end{align*}
  By \cite[Theorem 2.1]{Bru18}, i.e. the real version of  Theorem \ref{thm:N-surgery}, we get
  \begin{align*}
    N_{Q(d),D,\sigma_{sp}}= W_{Q(-1),D,0} + \frac{W_{Q(1),D,0}-W_{Q(-1),D,0}}{2}(\lra{2}+\lra{2d}) +\alpha h.
  \end{align*}
The relation
  \[
  \rk (N_{Q(d),D,\sigma_{sp}}) = GW_{\P^1\times \P^1,D}
  \]
  determines the coefficient $\alpha$.
\end{proof}
\begin{rem}
   Corollary \ref{cor:quad1-1} is a particular instance of the more general framework discussed in the forthcoming paper \cite{BruRauWic25}.
\end{rem}
            Combining Corollary \ref{cor:quad1-1} with  computations from \cite{ChenZinger21}, we obtain the first values of $ N_{Q(d),aH,[k,\ldots,k]}$ listed 
            in Table \ref{tab:Qd}.
            \begin{table}[!h]
              \[
              {\setlength{\extrarowheight}{3pt}
              \begin{array}{|c|c|}
              \hline  a & N_{Q(d),aH,\sigma_{sp}}
                \\ \hhline{|=|=|}  1 & <1>
                \\ \hline 2 & 6\lra{1}+ \lra{2}+\lra{2d} +2h=7\lra{2}+\lra{2d}+2h
                \\ \hline 3 & 576\lra{1} +255(\lra{2}+\lra{2d})+1212h= 831\lra{2} + 255\lra{2d}+ 1212h
                \\  \hline  4& 294336 \lra{1} + 262432(\lra{2}+\lra{2d}) + 2844720h=556768 \lra{1} + 262432\lra{d} + 2844720h
                \\ \hline
              \end{array}
              }
              \]
              \label{tab:Qd}
              \caption{Quadratic invariants of $Q(d)$}
            \end{table}
 \begin{rem}
 Note that for $d\neq \pm1 \in  k^\times/(k^\times)^2$, it is impossible to express $N_{Q(d),aH,\sigma_{sp}}$ as an element of the form $\alpha \langle 1 \rangle + \beta \langle -1 \rangle$ with $\alpha,\beta \in \Z$ for $a=2,3$ because the discriminant of $N_{Q(d),aH,\sigma_{sp}}$ is 
 $\pm d\in  k^\times/(k^\times)^2$. 
 \end{rem}           

            \subsection{The blow-up of $\P^2_k$ at two points}\label{sec:ex bup}
            For $d\in k$, define the degree two étale $k$-algebra $\sigma_d=k[X]/(X^2-d)$, i.e. $\sigma_1=k\times k$ and $\sigma_d=k[\sqrt d]$ if $d$ is not a square. Note that $\sigma_d\cong\sigma_{u^2d}$ for any $u\in k^*$.
           We denote by $\Bl_{d}(\P^2_k)$ the blow-up of $\P^2_k$ 
            along a $0$-dimensional scheme of length 2 and residual étale algebra $\sigma_d$. We denote by $e_0\in \Pic(\Bl_{d}(\P^2_k))$ the pull-back of the class of a line in $\P^2_k$, by $e_1,e_2\in \Pic(\Bl_{1}(\P^2_k))$  the class of the exceptional divisors, and by $f_1\in \Pic(\Bl_{d}(\P^2_k))$ the class of the exceptional divisor when $d$ is not a square. We use the natural identification 
           $ \Pic(\Bl_{d}(\P^2_k))$ as a subgroup of  $ \Pic(\Bl_{1}(\P^2_k))$ via 
         the linear map defined by $e_0\mapsto e_0$ and $f_1\mapsto e_1+e_2$.

          \begin{thm}\label{thm:abq bup}
            Choose two integers $a,b\ge 0$, and  let $k$ be a perfect field of characteristic either 0 or at least $2+1$. Choose $k\to \sigma$ a finite étale extension of degree $3a-2b-1$.
            Then for any $d\in  k^\times/(k^\times)^2$, one has 
            \begin{align*}
            N_{\Bl_{d}(\P^2_k),ae_0 - bf_1,\sigma}
            & = N_{\Bl_{1}(\P^2)_k,ae_0 -b(e_1+e_2),\sigma}+
           (\langle 2 \rangle - \langle 2d \rangle) \sum_{j\ge 1} (-1)^j N_{\Bl_{1}(\P^2_k),ae_0 -(b+j)e_1 - (b-j)e_2,\sigma}.
          \end{align*}
          \end{thm}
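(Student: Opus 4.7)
The strategy mirrors the proof of Theorem \ref{thm:abq quad}: we realize $\Bl_d(\P^2_k)_{k((t))}$, for each $d \in k^*$, as the generic fiber of the $d$-surgery of a single well-chosen $1$-nodal Lefschetz fibration of del Pezzo surfaces, and then invoke Corollary \ref{cor:N-surgery-5}, descending the resulting identity from $\GW(k((t)))$ to $\GW(k)$ via Proposition \ref{pr:NA1-stable-basechange} and Springer's theorem.

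The candidate family $\cX \to \Spec k[[t]]$ is the blow-up of $\P^2_{k[[t]]}$ along the closure of the length-two subscheme cut out, in an affine chart $\A^2 = \Spec k[x,y] \subset \P^2$, by the ideal $(x, y^2 - t)$. In one of the two standard charts of this blow-up, the total space is locally $\Spec k[[t]][x,y,u]/(xu - y^2 + t)$, which is regular, with a unique singular point in the special fiber whose completed local ring is $k[[t,x,y,u]]/(xu - y^2 + t)$. Since $Q(x,y,u) = xu - y^2$ defines a smooth quadric, this exhibits $\cX$ as a $1$-nodal Lefschetz fibration in the sense of Definition \ref{def:1-nodal-Lefschetz-fibration}, and diagonalizing $Q$ gives $\delta = 1 \in k^*/(k^*)^2$. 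Following Construction \ref{const:d-surgery}, $\Sigma(d)$ is then the blow-up of $\P^2_{k((t))}$ at $V(x, y^2 - dt^2)$, whose two geometric points $(0, \pm t\sqrt d)$ are defined over $\sigma_d \otimes_k k((t))$, identifying $\Sigma(d)$ with $(\Bl_d \P^2_k)_{k((t))}$. By Proposition \ref{d-surgery-central-fiber}, the component $\widetilde S$ of the special fiber of $\cX(d)$ is the minimal resolution of the nodal del Pezzo surface $\Bl_{V(x,y^2)}\P^2_k$, namely the blow-up of $\P^2_k$ at two infinitely near points, listed after Definition \ref{df:uninodal_del_Pezzo} as a uninodal del Pezzo surface.

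With $\Pic \widetilde S = \Z e_0 \oplus \Z e_1 \oplus \Z e_2$ and $[E] = e_1 - e_2$, the fiber product presentation of Proposition \ref{pr:varphi_surjective} identifies $\Pic \Sigma(1)$ with $\Pic \widetilde S$, and a short intersection-theoretic calculation pins down the vanishing cycle as $\gamma = \pm(e_1 - e_2)$. Under the inclusion $\Pic \Sigma(d) \hookrightarrow \Pic \Sigma(1) = \gamma^\perp$ of Corollary \ref{cor:mapPicSigma_to_PicS}, the class $ae_0 - bf_1$ corresponds to $ae_0 - b(e_1 + e_2)$, so that $D - j\gamma = ae_0 - (b+j)e_1 - (b-j)e_2$, exactly the classes appearing on the right hand side of the theorem. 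To apply Corollary \ref{cor:N-surgery-5} one then verifies that $\Sigma(1) = \Bl_1(\P^2_k)_{k((t))}$ is $k((t))$-rational and the basechange of $\Bl_1(\P^2_k)$ (a smooth del Pezzo of degree $7$, which satisfies Hypothesis \ref{hyp:SDk_with_N}), that $\widetilde S$ is $k$-rational, and that $(\widetilde S, D')$ is relatively enumerative for every class $D'$ arising on the right hand side; in characteristic zero the last point is Proposition \ref{prop:Stildechar0_enumerative}, and in positive characteristic it follows, as in the proof of Theorem \ref{thm:abq quad}, from the toricity of $\widetilde S$ (the two infinitely near points may be chosen to be torus-fixed) combined with \cite[Propositions 4.12 and 4.13]{Mikhalkin05} and \cite[Theorem 6.2]{Tyomkin12}, under the stated characteristic hypothesis. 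Corollary \ref{cor:N-surgery-5} then produces the formula in $\GW(k((t)))$, and injectivity of $\GW(k) \to \GW(k((t)))$ descends it to $\GW(k)$. The main technical hurdle will be the explicit verification that $\cX$ is a $1$-nodal Lefschetz fibration with $\delta = 1$ and the concrete identification of $\widetilde S$ as the expected uninodal del Pezzo with the correct $(-2)$-curve; once these are in hand, the remaining steps are formal applications of the machinery set up in the paper.
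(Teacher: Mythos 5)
Your proposal is correct and follows essentially the same route as the paper: the paper also realizes $\Bl_d(\P^2_k)_{k((t))}$ as the generic fiber of the $d$-surgery of the blow-up of $\P^2_{k[[t]]}$ along a length-two subscheme degenerating to two infinitely near points (the paper uses the ideal $(y, x^2-tz^2)$, which is your $(x,y^2-t)$ up to relabeling coordinates), identifies the same vanishing cycle $\gamma=\pm(e_1-e_2)$ and the same toric uninodal $\widetilde S$, and concludes via Corollary \ref{cor:N-surgery-5}, Proposition \ref{pr:NA1-stable-basechange}, and injectivity of $\GW(k)\to\GW(k((t)))$. Your explicit verification of the local model at the node and of $\delta=1$ is a welcome addition of detail the paper leaves implicit, but it is not a different argument.
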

          \begin{proof}
            Let $\cX\to \Spec k[[t]]$ be the blow-up of $\P^2_{k[[t]]}$ along the scheme $(y,x^2-tz^2)$. It is regular with central fiber the blow-up of $\P^2_k$ along the scheme $(y,x^2)$, which is a nodal surface. Hence $\cX$ is a $1$-nodal Lefschetz fibration of del Pezzo surfaces, and $\widetilde S$ is the blow-up of $\P^2_k$ at two infinitely near points.
            If $k$ is of characteristic zero, then  $(\widetilde S,D)$ is relatively enumerative for any  $D\in\Pic(\widetilde S)$ by Proposition \ref{prop:Stildechar0_enumerative}. As in the proof of Theorem \ref{thm:abq bup}, the surface 
           $\widetilde S$ is a toric surface with $E$ as a component of the toric boundary, and $(\widetilde S,\widetilde D)$ is relatively enumerative for classes $\widetilde D\in \Pic \widetilde S$ from $\varphi^{-1}_1(ae_0 -(b+j)e_1 - (b-j)e_2)$ if $k$ is of characteristic at least  $d+1$. 
            Since $\Sigma(1)=(\Bl_{1}(\P^2_{k}))_{k((t))}$,  
            the vanishing cycle is $\gamma=\pm (e_1-e_2)\in \Pic(\Bl_{1}(\P^2_{k}))=\Pic \Sigma(1)$. Hence 
           Corollary \ref{cor:N-surgery-5}   gives in this case
          \begin{align*}
            N_{\Sigma(d),ae_0 - bf_1,\sigma_{k((t))}}
            & = N_{\Sigma(1),ae_0 -b(e_1+e_2),\sigma_{k((t))}}
            \\& \qquad +
           (\langle 2 \rangle - \langle 2d \rangle) \sum_{j\ge 1} (-1)^j N_{\Sigma(1),ae_0 -(b+j)e_1 - (b-j)e_2,\sigma_{k((t))}}.
          \end{align*}
          Since $\Sigma(d)=(\Bl_{d}(\P^2_{k}))_{k((t))}$, the result follows again from  Proposition \ref{pr:NA1-stable-basechange} and the injectivity of the basechange map $\GW(k) \to \GW(k((t)))$.
          \end{proof}
          
          \begin{rem}
            There is an obvious generalization of Theorem \ref{thm:abq bup} and its proof to the blow-up along a scheme of length 2 of a del Pezzo surface of degree at least 4 (so that the blow-up is a del Pezzo surface of degree at least 2). There are nevertheless two issues which a priori   make the corresponding general statement not as pleasant as in the particular case of $\P^2_k$. First, when reducing from $\Sigma(d)$ to $\Bl_d(\P^2_k)$, we use homogeneous coordinates. Next
             there is a positive dimensional moduli space of del Pezzo surfaces of degree at least 4: not all generic configuration of 5 rational points in $\P^2_k$ are projectively equivalent. Even if it sounds highly plausible, it is nevertheless still a conjecture that quadratic invariants of $\P^2$ blown-up along a scheme only depends on the residual étale algebra of this  scheme. To avoid this issue and to keep notations and statements as light as possible, we restrict ourselves in this section to the case of $\P^2$ blown-up along a scheme of length 2.
          \end{rem}

As in Section \ref{sec:ex quad}, we deduce the following, which is again a particular case of \cite{BruRauWic25}. We denote by $GW_{\P^2_\ell,D}$ the genus-0 Gromov-Witten invariant for the class $D$ of the blow-up at $\ell$ points of $\P^2_\C$, and by
            $W_{\P^2_{\ell_1,\ell_2},D,s}$ is the Welschinger invariant  for the class $D$ of the blow-up at $\ell_1$ real points and $\ell_2$ pairs of complex conjugated points of $\P^2_\R$,
            and point configurations containing exactly $s$ pairs of complex conjugated points. (Since the space of such configurations is connected in Euclidean topology and Welschinger and Gromov--Witten invariants do not change under deformation of the surface, these are well defined integers.)
\begin{cor}\label{cor:bup1-1}
  Let  $D=ae_0-bf_1\in \Pic(\Bl_{d}(\P^2_k))$, and let $k$ be a perfect field of characteristic either 0 or at least $a+1$. Then    one has
  \begin{align*}
      N_{\Bl_{d}(\P^2_k),D,\sigma_{sp}}=& W_{\P^2_{0,1},D,0}\lra{1} + \frac{W_{\P^2_{2,0},D,0}-W_{\P^2_{0,1},D,0}}{2}(\lra{2}+\lra{2d}) 
    +\frac{GW_{\P^2_2,D}-W_{\Bl_{2,0}\P^2,D,0}}{2} h.
  \end{align*}
\end{cor}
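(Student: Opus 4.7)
The plan is to mimic precisely the proof of Corollary~\ref{cor:quad1-1}, replacing the quadric by the blown-up projective plane. First, I would apply Theorem~\ref{thm:abq bup} to the split étale extension $\sigma_{sp} = k^{3a-2b-1}$ in order to write
\[
N_{\Bl_d(\P^2_k), D, \sigma_{sp}} = N_{\Bl_1(\P^2_k), ae_0 - b(e_1+e_2), \sigma_{sp}} + (\langle 2\rangle - \langle 2d\rangle)\sum_{j\geq 1}(-1)^j N_{\Bl_1(\P^2_k), ae_0 -(b+j)e_1-(b-j)e_2, \sigma_{sp}}.
\]
Since $\Bl_1(\P^2_k)$ is a toric del Pezzo surface (of degree $7$), the hypotheses on the characteristic of $k$ allow me to invoke \cite[Corollary 1.7]{PauliPuentes23}: each term $N_{\Bl_1(\P^2_k), D', \sigma_{sp}}$ equals $W_{\P^2_{2,0}, D', 0}\langle 1 \rangle$ modulo a multiple of the hyperbolic form $h$.

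Next, I would compute modulo $h$, gathering the $\langle 1\rangle$--coefficient and the $(\langle 2 \rangle + \langle 2d \rangle)$--coefficient separately. Modulo $h$, the identity becomes
\[
N_{\Bl_d(\P^2_k), D, \sigma_{sp}} \equiv \Bigl(W_{\P^2_{2,0}, D, 0} + \sum_{j \geq 1} W_{\P^2_{2,0}, ae_0 - (b+j)e_1 - (b-j)e_2, 0}\Bigr)\langle 1\rangle - (\langle 2 \rangle + \langle 2d \rangle) \sum_{j \geq 1} W_{\P^2_{2,0}, ae_0 - (b+j)e_1 - (b-j)e_2, 0}.
\]
At this point I would invoke the real version of the wall-crossing formula, \cite[Theorem 2.1]{Bru18}, applied to the same family over $\R$: it gives exactly the analogous identity relating $W_{\P^2_{0,1}, D, 0}$ to the sum $W_{\P^2_{2,0}, D, 0} + 2\sum_{j \geq 1}(-1)^j W_{\P^2_{2,0}, D - j(e_1-e_2), 0}$. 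Substituting this identity, the coefficient of $\langle 1 \rangle$ becomes $\frac{1}{2}(W_{\P^2_{2,0}, D, 0} + W_{\P^2_{0,1}, D, 0})$ and the sum multiplying $(\langle 2\rangle + \langle 2d\rangle)$ simplifies to $\frac{1}{2}(W_{\P^2_{2,0}, D, 0} - W_{\P^2_{0,1}, D, 0})$. Rearranging, this produces the expression
\[
W_{\P^2_{0,1}, D, 0}\langle 1\rangle + \frac{W_{\P^2_{2,0}, D, 0} - W_{\P^2_{0,1}, D, 0}}{2}(\langle 2 \rangle + \langle 2d \rangle) \pmod{h}.
\]

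Finally, the rank of $N_{\Bl_d(\P^2_k), D, \sigma_{sp}}$ is a classical (complex) Gromov--Witten invariant: by stability under base change to $\C$ and the fact that $\Bl_d(\P^2_k)_{\overline{k}} \cong \Bl_2 \P^2_{\overline{k}}$, we have $\rk N_{\Bl_d(\P^2_k), D, \sigma_{sp}} = GW_{\P^2_2, D}$. Since the rank of the $\pmod{h}$ part computed above equals $W_{\P^2_{2,0}, D, 0}$, the coefficient of $h$ is forced to be $\frac{1}{2}(GW_{\P^2_2, D} - W_{\P^2_{2,0}, D, 0})$, completing the formula. The only subtle point is confirming that the real wall-crossing \cite[Theorem 2.1]{Bru18} applies to exactly the degeneration used here (same vanishing cycle $e_1 - e_2$ on $\Bl_1 \P^2_\R$), which is immediate because the family is defined over $\Z$.
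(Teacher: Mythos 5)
Your proposal is correct and follows essentially the same route as the paper, which states this corollary as the direct analogue of Corollary~\ref{cor:quad1-1} ("As in Section \ref{sec:ex quad}, we deduce the following"): apply Theorem~\ref{thm:abq bup} with split $\sigma$, feed in \cite[Corollary 1.7]{PauliPuentes23} for the toric surface $\Bl_1(\P^2_k)$ to work modulo $h$, identify the resulting alternating sum via the real wall-crossing \cite[Theorem 2.1]{Bru18}, and pin down the coefficient of $h$ by the rank, which is the complex Gromov--Witten invariant.
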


Combining Corollary \ref{cor:bup1-1} with  computations from \cite{ChenZinger21}, we obtain the values of $ N_{\Bl_{d}(\P^2_k),D,\sigma_{sp}}$ listed 
in Table \ref{tab:Pd}. Note that $N_{\Bl_{d}(\P^2_k),2be_0-bf_1,\sigma_{sp}}=N_{Q(d),bH,\sigma_{sp}}$, so we do not list these values in Table \ref{tab:Pd}.
\begin{table}[!h]
  \[
  {\setlength{\extrarowheight}{3pt}
  \begin{array}{|c|c|}
  \hline  (a,b) & N_{\Bl_{d}(\P^2_k),ae_0-bf_1,\sigma_{sp}}
    \\ \hhline{|=|=|}  (5,2) &  576\lra{1} +255(\lra{2}+\lra{2d})+1212h
    \\ \hline (6,2) & 88992\lra{1}+ 70080(\lra{2}+\lra{2d}) +664560h 
    \\ \hline (7,2) & 22823424 \lra{1} +27315216(\lra{2}+\lra{2d})+515349192h 
    \\  \hline  (7,3)& 294336 \lra{1} + 262432(\lra{2}+\lra{2d}) + 2844720h
    \\ \hline
  \end{array}
  }
  \]
  \label{tab:Pd}
  \caption{Quadratic invariants of $\Bl_{d}(\P^2_k)$}
\end{table}

\medskip
          In the case when $b=1$, blowing down the exceptional divisors, Theorem \ref{thm:abq bup} gives the following. We denote by $\Bl(\P^2_k)$ the blow-up of $\P^2_k$ at a rational point, and by $e_1$ the class of the exceptional divisor. 
          \begin{cor}\label{cor:quad wel}
            For any $a\ge 1$, and any finite étale extension $k\to\sigma$ of degree $3d-3$,
            one has
            \begin{equation*}
              N_{\P^2_k,ae_0,\sigma_d\times\sigma}
               = N_{\P^2_k,ae_0 ,\sigma_1\times\sigma} -
              (\langle 2 \rangle - \langle 2d \rangle) N_{\Bl(\P^2_k),ae_0 -2e_1,\sigma}.
              \end{equation*}
          \end{cor}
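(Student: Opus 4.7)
The plan is to reinterpret both sides via blow-ups, then apply Theorem~\ref{thm:abq bup} with $b=1$. First I would identify $N_{\P^2_k,ae_0,\sigma_d\times\sigma}$ with $N_{\Bl_d(\P^2_k), ae_0 - f_1, \sigma}$: a generic rational curve of degree $a$ on $\P^2_k$ passing through a pair of points with residue algebra $\sigma_d$ is smooth at these points, so its strict transform in $\Bl_d(\P^2_k)$ is a rational curve of class $ae_0 - f_1$ meeting the exceptional divisor $f_1$ transversely. This sets up a weight-preserving bijection, since the nodes of such curves lie away from the blown-up points generically. The same argument yields $N_{\P^2_k,ae_0,\sigma_1\times\sigma} = N_{\Bl_1(\P^2_k), ae_0 - e_1 - e_2, \sigma}$.

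Next I would apply Theorem~\ref{thm:abq bup} with $b=1$ to obtain
\[
N_{\Bl_d(\P^2_k), ae_0 - f_1, \sigma} = N_{\Bl_1(\P^2_k), ae_0 - e_1 - e_2, \sigma} + (\langle 2 \rangle - \langle 2d \rangle) \sum_{j \geq 1} (-1)^j N_{\Bl_1(\P^2_k), D_j, \sigma},
\]
where $D_j := ae_0 - (1+j)e_1 + (j-1)e_2$. The key observation is that only $j=1$ contributes. For $j \geq 2$, any irreducible curve $C$ on $\Bl_1(\P^2_k)$ with $C \neq e_2$ satisfies $C \cdot e_2 \geq 0$; but $D_j \cdot e_2 = -(j-1) < 0$ and $D_j$ is not a multiple of $e_2$, so $M^{\bir}_0(\Bl_1(\P^2_k), D_j)$ is empty and $N_{\Bl_1(\P^2_k), D_j, \sigma} = 0$. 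For $j=1$, the class $ae_0 - 2e_1$ has trivial intersection with $e_2$, so generic rational curves in this class avoid $e_2$; blowing down $e_2$ produces a weight-preserving bijection with rational curves on $\Bl(\P^2_k)$ of the same class, giving $N_{\Bl_1(\P^2_k), ae_0 - 2e_1, \sigma} = N_{\Bl(\P^2_k), ae_0 - 2e_1, \sigma}$. Combining these identifications yields the formula.

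The main obstacle is the first reduction step: carefully justifying that the enumerative $\A^1$-count on $\P^2_k$ imposing a $\sigma_d$-pair of point conditions matches the count on $\Bl_d(\P^2_k)$ for the strict transform class with no such conditions. This requires matching moduli spaces of stable maps on both sides, and in particular checking that the quadratic weights (trace forms of discriminants coming from nodes and their tangent directions) are preserved. The match holds because on a dense open set, the strict transform correspondence is an isomorphism identifying nodes, tangent extensions, and double point orientations, so the local $\A^1$-degrees at corresponding points of the evaluation maps agree. Once these identifications are in place, the corollary reduces to arithmetic in $\GW(k)$.
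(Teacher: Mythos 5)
Your proposal is correct and follows exactly the route the paper intends: the paper's entire proof is the sentence ``In the case when $b=1$, blowing down the exceptional divisors, Theorem~\ref{thm:abq bup} gives the following,'' and you have supplied precisely the implicit steps (the vanishing of the $j\ge 2$ terms because $D_j\cdot e_2<0$, the blow-down of $e_2$ for the $j=1$ term, and the identification of a $\sigma_d$-point condition on $\P^2_k$ with the class $ae_0-f_1$ on $\Bl_d(\P^2_k)$). The only caveat is that the weight-preserving blow-up/blow-down correspondence you flag as the main obstacle is indeed not proved in the paper either, so your treatment is no less complete than the original.
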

          Corollary \ref{cor:quad wel} is a quadratic analogue of Welschinger formula \cite[Theorem 0.4]{Welschinger-invtsReal4mflds}, which has also been proposed independently   in \cite{JaramilloPuentesWall} based on tropical considerations. Corollary \ref{cor:quad wel} should generalize as follow to any $\A^1$-connected del Pezzo surface $S$ of degree at least 3. Let $D\in \Pic(S)$, and  $k\to\sigma$ a finite étale extension of degree $-K_S\cdot D-3$. Denoting by $\Bl(S)$ the blow-up of $S$ at a rational point, and by $e$ the exceptional divisor class, we expect the following formula to be true:
          \begin{equation*}
            N_{S,D,\sigma_d\times \sigma}
             = N_{S,D ,\sigma_1\times \sigma} -
            (\langle 2 \rangle - \langle 2d \rangle) N_{\Bl(S),D -2e,\sigma}.
            \end{equation*}
            An adaptation of the proof by Welschinger of  \cite[Theorem 0.4]{Welschinger-invtsReal4mflds} may provide a simpler strategy than the one used here to prove this latter formula.

          \subsection{Cubic surfaces}\label{section:enumerative_cubic_surfaces}

          Let $P_2(x,y,z)$ and $P_3(x,y,z)$ be two homogeneous polynomials in $k[x,y,z]$ defining a non-singular conic $C_2$ and cubic $C_3$ in $\P^2_k$, respectively. Assume furthermore that $C_2$ and $C_3$ intersect transversely, and consider the cubic surface $\cX\to \Spec k[[t]]$ in $\P^3_{k[[t]]}$ with equation
          \begin{equation}\label{eq:cubic}
          P_3(x,y,z) + P_2(x,y,z)w + tw^3. 
        \end{equation}
        It is a  $1$-nodal Lefschetz fibration of del Pezzo surfaces, and projecting from the point $[0:0:0:1]$ expresses  $\widetilde S$ as  the blow-up of $\P^2_k$ along the intersection points $q_1,\ldots, q_m$ of $C_2$ and $C_3$. Furthermore, the $(-2)$-curve $E$ is the strict transform in $\widetilde S$ of the conic $C_2$.
          We denote by $k_i$ the residual field of $q_i$,
          and by  $\Sigma(d)\to \Spec k((t))$ the generic fiber of a $d$-surgery of $\cX\to \Spec k[[t]]$. We have
          \[
          \sum_{i=1}^m[k_i:k]=6.
          \]
          Recall that a configuration of 6 distinct points in $\P^2_k$ is \emph{generic} if they are not contained in a conic, and  if no three of them lie on a line.
          
          \begin{lemma}\label{lem:bP26}
            The surface $\Sigma(1)$ is the blow-up of $\P^2_{k((t))}$ at a generic configuration of points with residue fields $k_1\otimes k((t)),\ldots, k_m\otimes k((t))$.
          \end{lemma}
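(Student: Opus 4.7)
The plan is to exhibit a birational morphism $\lambda\colon\Sigma(1)\to\P^2_{k((t))}$ whose exceptional locus consists of $m$ disjoint irreducible divisors with residue fields $k_i\otimes k((t))$. I construct $\lambda$ from a distinguished line bundle on $\cX(1)$ via the Picard description $\Pic\cX(1)\cong\Pic\widetilde S\times_{\Pic E}\Pic Q(1)$ of Lemmas~\ref{lm:PicXd=PicXd0} and~\ref{lm:PicXd0=PicStildex_PicEPicQ}. Writing $\widetilde S=\Bl_{q_1,\dots,q_m}\P^2_k$ with hyperplane pullback $L\in\Pic\widetilde S$, exceptional divisors $F_i$ and $E=2L-\sum_iF_i$, for each $i$ set $L_i:=\varphi_1(F_i,([k_i\!:\!k],0))$ and $\lambda:=\varphi_1(L,(2,0))\in\Pic\Sigma(1)$; the compatibilities $F_i\cdot E=[k_i\!:\!k]=([k_i\!:\!k],0)\cdot(1,1)$ and $L\cdot E=2=(2,0)\cdot(1,1)$ place these in the fiber product. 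Using $-K_{\Sigma(1)}=\varphi_1(L,(1,1))$, read off from the description of the relative dualizing sheaf in the proof of Proposition~\ref{lm:KSD=KStildeD}, the identity $3(L,(2,0))+(-E,(1,1))=(L+\sum_i F_i,(7,1))$ in the fiber product yields the key relation $3\lambda=-K_{\Sigma(1)}+\sum_i L_i$ in $\Pic\Sigma(1)$.

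The numerical invariants follow by flat specialization on fibers (using $[\Sigma(1)]=[\widetilde S]+[Q(1)]$ in the Chow ring of $\cX(1)$) together with Proposition~\ref{lm:KSD=KStildeD}: one finds $L_i^2=-[k_i\!:\!k]$, $L_i\cdot L_j=0$ for $i\neq j$, $\lambda^2=1$, $\lambda\cdot L_i=0$, $-K_{\Sigma(1)}\cdot L_i=[k_i\!:\!k]$ and $-K_{\Sigma(1)}\cdot\lambda=3$. By Riemann--Roch and Serre duality on $\Sigma(1)$ one has $h^0(\cO(L_i))\geq 1$ (the Serre dual class $K_{\Sigma(1)}-L_i$ has negative intersection with the ample $-K_{\Sigma(1)}$, hence is not effective); combined with the numerical invariants, this identifies $L_i$ as the class of a Galois orbit of $[k_i\!:\!k]$ disjoint $(-1)$-curves of residue field $k_i\otimes k((t))$.

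The key step is to show that $|\lambda|$ is base-point free of dimension $2$ on $\Sigma(1)$. On the central fiber $\cX(1)_0$, the Mayer--Vietoris sequence
\[
0\to H^0(\cX(1)_0,\lambda)\to H^0(\widetilde S,L)\oplus H^0(Q(1),(2,0))\to H^0(E,\cO_E(2))\to 0
\]
is exact since both restriction maps to $H^0(E,\cO_E(2))$ are isomorphisms between $3$-dimensional spaces: no linear form on $\P^2_k$ vanishes on the smooth conic $C_2\cong E$, and no polynomial of bidegree $(2,0)$ vanishes on the $(1,1)$-curve $E\subset Q(1)$. Hence $h^0(\cX(1)_0,\lambda)=3$ and $|\lambda|$ is base-point free on $\cX(1)_0$, since at every point one assembles a non-vanishing glued section via the surjective restrictions to $E$. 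Upper semicontinuity together with Riemann--Roch gives $h^0(\Sigma(1),\lambda)=3$, and the cokernel of the evaluation map $\cO_{\cX(1)}^3\to\cO(\lambda)$ is a coherent sheaf whose support is closed in $\cX(1)$ and disjoint from the central fiber; its image in $\Spec k[[t]]$ is a closed subset avoiding the closed point, hence empty, yielding base-point freeness on $\Sigma(1)$.

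The morphism $\lambda\colon\Sigma(1)\to\P^2_{k((t))}$ has degree $\lambda^2=1$ and is therefore birational, contracting exactly the irreducible curves $C$ with $\lambda\cdot C=0$. Since $-K_{\Sigma(1)}$ is ample and $3\lambda=-K_{\Sigma(1)}+\sum_i L_i$, any such $C$ must be a component of some $L_i$; the exceptional locus is thus $\bigsqcup_i L_i$, and contracting yields $m$ points $p_i\in\P^2_{k((t))}$ with residue fields $k_i\otimes k((t))$, giving $\Sigma(1)=\Bl_{p_1,\dots,p_m}\P^2_{k((t))}$. Genericity is automatic: if three of the $p_i$ were collinear or all six lay on a conic, the strict transform of the line or conic would be a $(-2)$-curve with zero intersection against $-K_{\Sigma(1)}$, contradicting ampleness. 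The main technical delicacy lies in transferring base-point freeness from the reducible central fiber to the generic fiber via the cokernel/support argument above.
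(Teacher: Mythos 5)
Your strategy is genuinely different from the paper's. The paper deforms explicit reducible curves: it takes $\widetilde E_i\cup C_i$ (exceptional divisor of $\widetilde S$ over $q_i$ union a ruling curve of $Q(1)$ through $\widetilde E_i\cap E$), invokes the formal \'etaleness of the relevant moduli space over $k[[t]]$ (Lemma~\ref{lemma:unique_generalization}) to deform each such configuration to a smooth $(-1)$-curve $E_i$ on $\Sigma(1)$ \emph{whose point in the moduli space has residue field $k_i\otimes k((t))$}, and contracts. Your route instead builds the map to $\P^2_{k((t))}$ directly from the linear system $|\lambda|$ with $\lambda=\varphi_1(L,(2,0))$. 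The Picard-theoretic bookkeeping, the intersection numbers computed by specialization to the central fiber, the Mayer--Vietoris computation of $h^0(\cX(1)_0,\lambda)=3$, the transfer of base-point freeness to the generic fiber via cohomology and base change plus properness, and the genericity argument via ampleness of $-K_{\Sigma(1)}$ are all correct and would be a perfectly viable alternative skeleton.

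There is, however, a genuine gap exactly at the point the lemma is really about: the residue fields. You write that the numerical invariants $L_i^2=-[k_i:k]$, $-K_{\Sigma(1)}\cdot L_i=[k_i:k]$, $\lambda\cdot L_i=0$ ``identify $L_i$ as the class of a Galois orbit of $[k_i:k]$ disjoint $(-1)$-curves of residue field $k_i\otimes k((t))$.'' The numerics do give that the (unique) effective divisor in $|L_i|$ is geometrically a disjoint union of $[k_i:k]$ reduced $(-1)$-curves, but they say nothing about the $\Gal(\overline{k((t))}/k((t)))$-action on that set of components, i.e.\ about \emph{which} degree-$[k_i:k]$ \'etale algebra the contracted points carry. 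For $[k_i:k]=2$, for instance, intersection theory cannot distinguish two individually rational $(-1)$-curves from a conjugate pair over some quadratic extension, let alone identify the correct quadratic extension. To close the gap you must track the Galois action: e.g.\ observe that over $\overline{k((t))}$ the class $L_i$ decomposes as $\sum_j\varphi_1(f_{i,j},(1,0))$, where the $f_{i,j}$ are the geometric components of $F_i$ permuted according to the $G_k$-set $\Hom_k(k_i,\overline k)$ (and the Galois action on $\Pic Q(1)_{k^s}$ is trivial since $d=1$), then check that the components of the effective divisor in $|L_i|$ are precisely the $(-1)$-classes of $\lambda$-degree $0$ occurring in $L_i$, so that the Galois set of components is $\Hom_k(k_i,\overline k)$ and the contracted point has residue field $k_i\otimes k((t))$. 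Alternatively, specialize the effective divisor in $|L_i|$ to the central fiber and identify it with $\widetilde E_i\cup C_i$ — but that essentially reconstructs the paper's argument. As written, the residue-field assertion is unsupported.
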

          \begin{proof}
            Let $\widetilde E_i$ be the  exceptional divisor of $\widetilde S$ corresponding to $q_i$.  Choose one of the two rulings of $Q_1$, and denote by $C_i$ the rational curve in $Q_1$ in this ruling and passing through the point $\widetilde E_i\cap E$. 
            By Lemma \ref{lemma:unique_generalization}, each curve $\widetilde E_i\cup C_i$ deforms to a non-singular $(-1)$-rational curve $E_i$ in $\Sigma(1)$, with residue field $k_i\otimes k((t))$. Furthermore since all curves $\widetilde E_i\cup C_i$ are pairwise disjoint, so are the curves $E_i$.
            Contracting the curves $E_1,\ldots, E_m$ exhibit $\Sigma(1)$ as the desired blow-up of  $\P^2_{k((t))}$.  
          \end{proof}
          
          When $d\in  k^\times/(k^\times)^2$ is not a square, the surface $\Sigma(d)$ may or may not be rational or $\A^1$-connected. 
          \begin{exa}
            Suppose that all points of $C_2\cap C_3$ are rational, that is $\Sigma(1)$ is the blow-up of  $\P^2_{k((t))}$ at six $k((t))$-rational points. By Lemma \ref{lemma:unique_generalization},the strict transform in $\widetilde S$ of the 4 lines $(q_1q_2)$, $(q_1,q_3)$, $(q_1q_4)$, and $(q_1,q_5)$ deform to 4 pairwise disjoint non-singular $(-1)$-rational curve $E'_1,\ldots E'_4$ in $\Sigma(d)$, with residue field $k((t))$.
            Let $\widetilde E'_5$ be the union of twice the curve $\widetilde E_5\subset \widetilde S$ together with the curve $C_5'\subset Q_d$ such that $(C'_5)_{k[\sqrt d]}$ is the union of the two rulings of $Q(d)_{k[\sqrt d]}$  through $E_5\cap E$. Still by Lemma \ref{lem:bP26}, the curve  $\widetilde E'_5$ deforms to a non-singular exceptional rational curve $E'_5$ in $\Sigma(d)$ with residual field  $k((t))[\sqrt d]$. By constructions the curves $E'_1,\ldots, E'_5$ are pairwise distinct, and contracting them exhibits $\Sigma(d)$ as the blow-up of  $\P^2_{k((t))}$ at 4 points with residual field $k((t))$ and one point with residual field $k((t))[\sqrt d]$. 
          \end{exa}

          \begin{exa}\label{ex:enumerative_Sigma-1_not_connected}
            Cubic surface that are not $\A^1$-connected are known, for example real cubic surfaces in $\P^3_\R$ whose real part is not connected. When $k=\R$, the specialization of the parameter $t$ to  $a\in \R^*$ of a family $\Sigma_{-1}$ as above may give such real algebraic surfaces $X_a$: if $C_2\cap C_3$ consists of three points with residual field $\C$, then $X_a(\R)$ is not connected for $a$ small enough as explained in {\cite[Example 2.4]{Bru16}}.
            In this case, we suspect the whole family $\Sigma_{-1}$ to be not $\A^1$-connected.
          \end{exa}

          We denote by $e_i$ the class in $\Pic \Sigma(1)$ of the exceptional curve of $\Sigma(1)$ with residue field $k_i\otimes k((t))$, and by $e_0$ the pullback of class of the class of a line in $\P^2_{k((t))}$. It follows from Lemma \ref{lem:bP26} that
          \[
          \Pic \Sigma(1)\cong \Z e_0\oplus \Z e_1\oplus\cdots\oplus \Z e_m,
          \]
          and that the vanishing cycle is $\gamma =2e_0-e_1-\ldots -e_m\in \Pic \Sigma(1)$.
          Next Theorem is now a direct application of Theorem \ref{thm:N-surgery}.

          \begin{thm}
            Let  $D\in\gamma^\perp\subset \Pic \Sigma(1)$, and $k\to\sigma$ a finite étale extension of degree $-K_{\Sigma(1)}\cdot D- 1 $\.
             Then for any $d\in  k^\times/(k^\times)^2$  such that $\Sigma(d)$ is $\A^1$-connected, one has
             \begin{align*}
             N_{\Sigma(d),D,\sigma_{k((t))}}
             & = N_{\Sigma(1),D,\sigma_{k((t))}}+
            (\langle 2 \rangle - \langle 2d \rangle) \sum_{j\ge 1} (-1)^j N_{\Sigma(1),D-j\gamma,\sigma_{k((t))}}.
           \end{align*} 
          \end{thm}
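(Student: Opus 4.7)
The strategy is to apply Corollary~\ref{cor:N-surgery-5} (a specialization of Theorem~\ref{thm:N-surgery}) directly to the explicit $1$-nodal Lefschetz fibration of del Pezzo surfaces constructed from equation~\eqref{eq:cubic}, since essentially all of the geometric and enumerative inputs have already been set up in the preceding paragraphs. First I would assemble the hypotheses of Corollary~\ref{cor:N-surgery-5} for the family $\cX \to \Spec k[[t]]$: the degree of $\Sigma(d)$ is $3 \geq 3$; the component $\widetilde S$ is the blow-up of $\P^2_k$ along $C_2 \cap C_3$, hence $k$-rational; the generic fiber $\Sigma(1)$ is $k((t))$-rational by Lemma~\ref{lem:bP26}; $\Sigma(d)$ is $\A^1$-connected by the standing assumption of the theorem; and $\Sigma(1)$, being a smooth cubic surface, is the base change of a surface satisfying Hypothesis~\ref{hyp:SDk_with_N} (recall $n=-K_{\Sigma(1)}\cdot D - 1$ is the degree of $\sigma$; in the borderline case $n=5$ and $d_S=3$ excluded by that hypothesis, one argues separately, but for all other degrees the invariant is well-defined by \cite{degree}). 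Relative enumerativity of $(\widetilde S, D - j \gamma)$ for every $j$ that contributes is covered by Proposition~\ref{prop:Stildechar0_enumerative} in characteristic zero and by the toric/tropical inputs cited there for larger characteristic.

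Next I would translate the Picard-group condition. By Corollary~\ref{cor:mapPicSigma_to_PicS}, for $d \notin (k^\times)^2$ the image $\psi_d(\Pic \Sigma(d)) \subset \Pic \Sigma(1)$ is exactly $\gamma^\perp$, so the hypothesis $D \in \gamma^\perp$ is precisely what one needs for the left-hand side $N_{\Sigma(d),D,\sigma_{k((t))}}$ to be defined as an invariant of $\Sigma(d)$. A direct computation on $\Sigma(1) \cong \Bl(\P^2_{k((t))})$ using the identification of $\gamma$ with $2e_0 - e_1 - \cdots - e_m$ and the anticanonical class $-K_{\Sigma(1)} = 3e_0 - e_1 - \cdots - e_m$ (with $\sum [k_i:k]=6$) shows that $-K_{\Sigma(1)} \cdot \gamma = 6 - 6 = 0$. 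Consequently $-K_{\Sigma(1)} \cdot (D - j\gamma) = -K_{\Sigma(1)} \cdot D$ for every $j$, so the same étale algebra $\sigma$ has the correct degree to define each invariant $N_{\Sigma(1), D - j\gamma, \sigma_{k((t))}}$ appearing in the sum. Finally, the sum over $j \geq 1$ is finite by the arithmetic-genus estimate of Remark~\ref{rem:finitely_many_i} (using $\gamma^2 = -2$), so the expression on the right-hand side makes sense.

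With every hypothesis of Corollary~\ref{cor:N-surgery-5} verified, the formula
\[
N_{\Sigma(d),D,\sigma_{k((t))}} = N_{\Sigma(1),D,\sigma_{k((t))}} + (\langle 2 \rangle - \langle 2d \rangle) \sum_{j \geq 1} (-1)^j N_{\Sigma(1), D - j\gamma, \sigma_{k((t))}}
\]
is immediate from that corollary applied to the Lefschetz fibration~\eqref{eq:cubic}. The only real obstacle is bookkeeping: one must ensure that the invariants on the right-hand side are intrinsic (i.e., independent of the choice of lift $\widetilde p$ of the point $p \in \widetilde V$), which is exactly the content of Corollary~\ref{cor:N-surgery-5} once $\Sigma(1)$ is $k((t))$-rational and a basechange of a surface satisfying Hypothesis~\ref{hyp:SDk_with_N}; both are established above.
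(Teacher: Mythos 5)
Your proposal is correct and takes essentially the same route as the paper, which simply observes that the theorem is a direct application of Theorem~\ref{thm:N-surgery} to the fibration \eqref{eq:cubic}; your verification of the hypotheses (rationality of $\widetilde S$ and of $\Sigma(1)$ via Lemma~\ref{lem:bP26}, $-K_{\Sigma(1)}\cdot\gamma=0$ so the same $\sigma$ works for all $D-j\gamma$, finiteness of the sum, and the identification of $\gamma^\perp$ with $\psi_d(\Pic\Sigma(d))$) is exactly the bookkeeping the paper leaves implicit. One small caveat: you route the argument through Corollary~\ref{cor:N-surgery-5}, whose hypothesis that $\Sigma(1)$ be the basechange of a surface over $k$ satisfying Hypothesis~\ref{hyp:SDk_with_N} is not literally available here ($\Sigma(1)$ lives over $k((t))$ and Lemma~\ref{lem:bP26} only identifies it as a blow-up of $\P^2_{k((t))}$, not as a pullback from $k$); it is cleaner to apply Theorem~\ref{thm:N-surgery} directly, discharging its point-independence hypothesis by noting that $\Sigma(1)$ is a $k((t))$-rational, hence $\A^1$-connected, degree-$3$ del Pezzo surface to which the main theorems of \cite{degree} apply over $k((t))$.
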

          It would be very interesting to understand the relations between the invariants  $N_{\Sigma(d),D,\sigma_{k((t))}}$ and the invariants  $N_{X_a,D,\sigma}$ of the cubic surface $X_a$ with equation \eqref{eq:cubic} when specializing $t$ to $a\in k^*$. 
          We address this question in \cite{BruRauWic25}.

          \subsection{Monodromy of $1$-nodal Lefschetz fibration and quadratic invariants}\label{sec:dehn}
          The following generalizes invariance of Gromov-Witten and Welschinger invariants under (real) Dehn twists, see \cite{Arn95} and \cite[Remark 1.3]{Bru16}. Note that if this invariance is straightforward in the framework of symplectic geometry, this is not the case within algebraic geometry. Our proof of next statement follows the line of the algebro-geometric proof of such invariance for complex and real  del Pezzo surfaces provided in \cite[Theorem 4.1 (1)]{IKS15}.

          \begin{thm}\label{thm:dehn}
            Let $\cX \to \Spec k[[t]]$ be a $1$-nodal Lefschetz fibration of del Pezzo surfaces of degree at least 2.  Let $\Sigma(1)$ be the generic fiber of the $1$-surgery, and  $D \in \Pic \Sigma(1)$ satisfying Hypothesis \ref{hyp:SDk_with_N}. 
            Then for any finite étale extension $k\to\sigma$ of degree $-K_{\Sigma(1)}\cdot D-1$,
             one has
            \[
                N_{\Sigma(1),D,\sigma}=N_{\Sigma(1),D+(D\cdot \gamma)\gamma,\sigma}
            \]
          \end{thm}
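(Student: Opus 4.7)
The plan is to apply the degeneration formula of Theorem~\ref{thm:N_degeneration2} (in the case $d=1$) both to $D$ and to $D':= D+(D\cdot\gamma)\gamma$, and to identify the resulting expressions term-by-term via the symmetry ${\sigma'/\ell \choose j} = {\sigma'/\ell \choose i}$ whenever $i+j=[\sigma':\ell]$, which is a direct specialization of Proposition~\ref{prop:basic_binomial_identities_scheme}(1).

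I will first make the Picard--Lefschetz reflection $D\mapsto D'$ explicit in the model $\Pic\Sigma(1)\cong(\Pic\widetilde S\times_{\Pic E}\Pic Q(1))/\Z(-E,(1,1))$ of Proposition~\ref{pr:varphi_surjective}. The natural pairing $((D_0,(i,j)),(D_0',(i',j'))) = D_0 \cdot_{\widetilde S} D_0' + ij' + i'j$ on the fiber product descends through $\varphi_1$ to the intersection form on $\Pic\Sigma(1)$, because the fiber product condition $D_0\cdot E = i+j$ forces $(-E,(1,1))$ into the radical. Since $\gamma = \varphi_1(0,(1,-1))$, any representative $(D_0,(i,j)) \in \varphi_1^{-1}(D)$ satisfies $D\cdot\gamma = j-i$, and therefore $D' = D + (j-i)\gamma = \varphi_1(D_0,(j,i))$. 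Thus the reflection simply swaps the two coordinates of the $\Pic Q(1)$ factor while leaving $D_0$ fixed, and $\varphi_1^{-1}(D)$ and $\varphi_1^{-1}(D')$ are indexed by exactly the same $D_0$'s.

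Next I will verify that the hypotheses needed to apply Theorem~\ref{thm:N_degeneration2} to $D$ also hold for $D'$: since $K_{\Sigma(1)}\cdot\gamma = 0$ by Proposition~\ref{lm:KSD=KStildeD}, both classes pair the same way against $K_{\Sigma(1)}$ and have the same self-intersection, so Hypothesis~\ref{hyp:SDk_with_N} transfers from $D$ to $D'$, and the relative enumerativeness on $\widetilde S$ involves the same $D_0$'s. One can then choose a closed point $p$ in the common dense open subset of $\Res_{\sigma/k}\widetilde S$ provided by Lemma~\ref{lm:points_specializing_try2} for both classes, together with a compatible lift $\widetilde p$ to $\Sigma(1)$.

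Applying Theorem~\ref{thm:N_degeneration2} with $d=1$ to both $D$ and $D'$ then yields two sums indexed by the same triples $(\ell,\sigma',D_0)$, producing the factor ${\sigma'/\ell \choose j}$ in the expression for $N_{\Sigma(1),D,\sigma}$ and the factor ${\sigma'/\ell \choose i}$ in that for $N_{\Sigma(1),D',\sigma}$, after the reindexing $(D_0,(i,j))\mapsto(D_0,(j,i))$ of $\varphi_1^{-1}(D')$. Since $\sigma'$ has degree $i+j = D_0\cdot E$ over $\ell$, Proposition~\ref{prop:basic_binomial_identities_scheme}(1) gives ${\sigma'/\ell \choose i} = {\sigma'/\ell \choose j}$, so the two sums coincide term-by-term, yielding the theorem. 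The main obstacle is bookkeeping---arranging the enumerative hypotheses for $D$ and $D'$ simultaneously and tracking the reindexing of the preimages carefully---since the combinatorial core is an immediate consequence of the symmetry of enriched binomial coefficients.
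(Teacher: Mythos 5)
Your proposal is correct and follows essentially the same route as the paper's own proof: apply Theorem~\ref{thm:N_degeneration2} with $d=1$ to both $D$ and $D+(D\cdot\gamma)\gamma$, observe that the reflection acts on $\varphi_1^{-1}(D)$ by the swap $(D_0,(i,j))\mapsto(D_0,(j,i))$ (the paper verifies this via $\delta+(\delta\cdot\gamma_0)\gamma_0$ and the fact that $\varphi_1$ preserves intersection products), and conclude with the symmetry ${\sigma'/\ell\choose i}={\sigma'/\ell\choose j}$ from Proposition~\ref{prop:basic_binomial_identities_scheme}(1). Your explicit description of the descended pairing and the check that the enumerative hypotheses transfer to $D+(D\cdot\gamma)\gamma$ are correct elaborations of steps the paper leaves implicit.
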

          \begin{proof}
            Let $\widetilde{p}=(\widetilde p_1,\ldots,\widetilde p_r)$ be a collection of closed points on $\Sigma(d)$ which specialize (as in Remark \ref{rem:spec_of_pi}) to $p=( p_1,\ldots,p_r)$ in the open subset of Hypothesis.
  Then by Theorem \ref{thm:N_degeneration2}
   we have the equality 
  \begin{align*}
   N_{\Sigma(1),D,\sigma}
   & = \sum_{\ell} Tr_{\ell/k} \big(\sum_{\substack{\sigma' \in \Fet_{/\ell} \\ (D_0,(i,j))\in \varphi_1^{-1}(D)}} {\sigma'/\ell \choose j}N^{\sigma',\ell}_{\widetilde S,D_0,\sigma}(p) \big). 
  \end{align*}

            Let us first prove that
            \[
            (D_0,(i,j))\in \varphi_1^{-1}(D)\Longleftrightarrow   (D_0,(j,i))\in \varphi_1^{-1}(D+(D\cdot \gamma)\gamma).
            \]
            Indeed, denoting by $\delta=(D_0,(i,j))$ and $\gamma_0=(0,(1,-1))$, one has
            \begin{align*}
                (D_0,(j,i))&=(D_0,(i,j))+ (j-i)(0,(1,-1))
                \\ &= \delta+ \left( \delta\cdot \gamma_0)\right)\gamma_0
            \end{align*}
            Since the map $\varphi_1$ preserves intersection products, we get
            \[
            \varphi_1((D_0,(j,i)))=\varphi_1(\delta)+ (\varphi_1(\delta)\cdot \gamma)\gamma
            \]
            as announced.Hence by Theorem \ref{thm:N_degeneration2} again, we have
            \begin{align*}
                N_{\Sigma(1),D+(D\cdot \gamma)\gamma,\sigma}
                & = \sum_{\ell} Tr_{\ell/k} \big(\sum_{\substack{\sigma' \in \Fet_{/\ell} \\ (D_0,(i,j))\in \varphi_1^{-1}(D)}} {\sigma'/\ell \choose i}N^{\sigma',\ell}_{\widetilde S,D_0,\sigma}(p) \big). 
               \end{align*}           
            From $|\sigma'|=i+j$ we get
               \[
                {\sigma'/k \choose i}={\sigma'/k \choose j},
               \]
               and the result follows from Theorem \ref{thm:N_degeneration2}.
          \end{proof}
          
\begin{exa}
In the situation of rational quadrics and $\P^2_k$ blown-up at two points of Sections \ref{sec:ex quad} and \ref{sec:ex bup}, Theorem \ref{thm:dehn} recovers the obvious relations (with the notation used in the corresponding section)
\[
  N_{Q(1),ae_1+be_2,\sigma}=  N_{Q(1),be_1+ae_2,\sigma}\quad\mbox{and}\quad  N_{\Bl_{1}(\P^2_k),ae_0 -ae_1 - be_2,\sigma}= N_{\Bl_{1}(\P^2_k),ae_0 -be_1 - ae_2,\sigma}.
\]
\end{exa}

\begin{exa}
  Theorem \ref{thm:dehn} also provides a fancy computation of $N_{\P^2_k,2e_0,\sigma}= \lra{1}$, where $e_0$ is the class of a line. Let $X$ be the blow-up of $\P^2_k$ at a generic configuration of points with residual fields given by $\sigma$, and at an extra rational point. We have the equality $N_{\P^2_k,2e_0,\sigma}=N_{X,2e_0-e,\emptyset}$, where $e$ is the sum of the classes realized by the exceptional divisors of the blow-ups corresponding to $\sigma$. Denoting by $e_1$ the class of the exceptional divisor from the additional blow-up, and applying Theorem \ref{thm:dehn} to the class $2e_0-e$ and the vanishing cycle $2e_0-e-e_1$ gives $N_{X,2e_0-e,\emptyset}=N_{X,e_1,\emptyset}=\lra{1}$.
\end{exa}

 \bibliographystyle{alpha}

 \bibliography{Bibli}

\end{document}